\DeclareMathAlphabet{\mathpzc}{OT1}{pzc}{m}{it}
\newtheorem{theorem}{Theorem}[section]
\newtheorem*{claim*}{Claim}
\newtheorem{lemma}[theorem]{Lemma}
\newtheorem{lem}[theorem]{Lemma}
\newtheorem{corollary}[theorem]{Corollary}
\newtheorem{cor}[theorem]{Corollary}
\newtheorem{proposition}[theorem]{Proposition}
\newtheorem{prop}[theorem]{Proposition}
\theoremstyle{definition}
\newtheorem{definition}[theorem]{Definition}
\newtheorem{Def}[theorem]{Definition}
\newtheorem{example}[theorem]{Example}
\theoremstyle{remark}
\newtheorem{rmk}[theorem]{Remark}
\newtheorem{Rmk}[theorem]{Remark}
\numberwithin{equation}{section}
\newcommand{\op}{\operatorname}
\newcommand{\be}{\begin{equation}}
\newcommand{\ee}{\end{equation}}
\newcommand{\Ga}{\Gamma}
\newcommand{\R}{\mathbb R}
\newcommand{\Z}{\mathbb Z}
\newcommand{\N}{\mathbb N}
\newcommand{\ga}{\gamma}
\newcommand{\la}{\lambda}
\newcommand{\La}{\Lambda}
\newcommand{\inte}{\op{int}}
\newcommand{\ba}{\backslash}
\newcommand{\cal}{\mathcal}
\newcommand{\br}{\mathbb R}
\newcommand{\SO}{\op{SO}}
\newcommand{\PSL}{\op{PSL}}
\newcommand{\F}{\cal F}
\newcommand{\bH}{\mathbb H}
\newcommand{\G}{\Gamma}
\newcommand{\T}{\op{T}}
\renewcommand{\frak}{\mathfrak}
\newcommand{\e}{\varepsilon}
\renewcommand{\L}{\mathcal L}
\newcommand{\fa}{\mathfrak a}
\renewcommand{\i}{\op{i}}
\renewcommand{\S}{\mathbb S}
\newcommand{\so}{\SO^\circ}
\newcommand{\C}{\cal C}
\newcommand{\fg}{\frak g}
\newcommand{\Leb}{\op{Leb}}
\newcommand{\supp}{\op{supp}}
\newcommand{\lat}{\La_{\theta}}
\newcommand{\ft}{\F_\theta} 
\newcommand{\sa}{\mathsf A}
\newcommand{\ts}{\theta}
\renewcommand{\epsilon}{\e}
\renewcommand{\t}{\theta}
\begin{document}

\title[Conformal measures]{Properly discontinuous actions, growth indicators, and Conformal measures for transverse subgroups}

\author{Dongryul M. Kim}
\address{Department of Mathematics, Yale University, New Haven, CT 06511}
\email{dongryul.kim@yale.edu}

\author{Hee Oh}
\address{Department of Mathematics, Yale University, New Haven, CT 06511 and Korea Institute for Advanced Study, Seoul}
\email{hee.oh@yale.edu}

\thanks{
 Oh is partially supported by the NSF grant No. DMS-2450703.}

\author{Yahui Wang}
\address{Department of Mathematics, Yale University, New Haven, CT 06511}
\email{amy.wang.yw735@yale.edu}

\begin{abstract} Let $G$ be a connected semisimple real algebraic group.  The class of transverse subgroups of $G$ includes all discrete subgroups of rank one Lie groups and any subgroups of Anosov or relative Anosov subgroups. 
Given a transverse subgroup $\Gamma$,  we show that the $\Ga$-action 
on the Weyl chamber flow space determined by its limit set is properly discontinuous. This allows us to consider the quotient space and define Bowen-Margulis-Sullivan measures. We then establish the ergodic dichotomy for the Weyl chamber flow, in the original spirit of Hopf-Tsuji-Sullivan.
We also introduce  the notion of growth indicators and  discuss their properties and roles in the study of conformal measures, extending the work of Quint.
We discuss several applications as well.
\end{abstract}

\maketitle

\tableofcontents
\section{Introduction}
Patterson-Sullivan theory on conformal measures of a discrete subgroup of a rank one simple real algebraic group $G$ has played a pivotal role  in the study of dynamics on rank one homogeneous spaces. One of the basic results due to Sullivan in 1979  is the relation between the support of a conformal measure and its dimension, which we recall for $G=\so(n,1)$, the identity component of the special orthogonal group $\op{SO}(n,1)$.
 The group $\so(n,1)$ is the group of orientation-preserving isometries of the real hyperbolic space $(\bH^n,d)$.
 The geometric boundary of $\bH^n$ can be identified with the sphere $\S^{n-1}$. For a discrete subgroup $\Ga<G$, denote by $\La^{\mathsf{con}}\subset \S^{n-1}$ the conical set of $\Ga$, which consists of the endpoints of all geodesic rays in $\bH^n$ which accumulate modulo $\Gamma$.
Let $\delta_\Ga$ denote the critical exponent of $\Ga$, which is the abscissa of convergence of the Poincar\'e series $s\mapsto \sum_{\ga\in \Ga} e^{-s d(o, \ga o)}$, $o\in \bH^n$.

For a given $\Ga$-conformal measure $\nu$, we denote by $\mathsf m_\nu$ the 
Bowen-Margulis-Sullivan measure  on the unit tangent bundle $\T^1(\Ga\ba \bH^n)$, which is a
locally finite measure invariant under the geodesic flow.
The following theorem is 
often referred to as the Hopf-Tsuji-Sullivan dichotomy (see \cite{Tsuji1959potential}, \cite{hopf1971}, \cite{Sullivan1979density}, \cite{Aaronson1984rational}, \cite[Theorem 1.7]{Roblin2003ergodicite}).

\begin{theorem}[Sullivan, {\cite[Corollaries 4, 20, Theorem 21]{Sullivan1979density}}, see also \cite{Aaronson1984rational}, \cite{Corlette1999limit}, \cite{Roblin2003ergodicite}]\label{sul}
Let $\Ga<\so(n,1)$, $n\ge 2$, be a non-elementary discrete subgroup.
Suppose that there exists a $\Ga$-conformal measure $\nu$ on $\S^{n-1}$ of dimension $s\ge 0$.
\begin{enumerate}
    \item We have $$s\ge \delta_\Ga.$$
\item  The following are equivalent:
\begin{enumerate}
\item 
$\sum_{\ga\in \Ga} e^{-s d(o, \ga o)} =\infty$ \hspace{1em} (resp. $\sum_{\ga\in \Ga} e^{-s d(o, \ga o)} <\infty$);
    \item  $\nu(\La^{\mathsf{con}})=1$ \hspace{4.8em} (resp. $\nu(\La^{\mathsf{con}})=0$);
    \item  the geodesic flow on $(\T^1(\Ga\ba \bH^n),\mathsf m_\nu)$ is completely conservative and ergodic.
    
    (resp. the geodesic flow  on $(\T^1(\Ga\ba \bH^n),\mathsf m_\nu)$ is completely dissipative and non-ergodic.)
\end{enumerate}
In the former case, $s=\delta_\Ga$ and $\nu$ is the unique
$\Ga$-conformal measure of dimension $\delta_\Ga$.
\end{enumerate}
\end{theorem}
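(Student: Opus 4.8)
The plan is to route everything through Sullivan's shadow lemma, feeding it into two classical mechanisms: a Borel--Cantelli argument tying the Poincar\'e series to the $\nu$-measure of the conical set, and the Hopf argument tying conservativity of the geodesic flow to ergodicity. \emph{Step $0$ (shadow lemma).} First I would prove that for all sufficiently large $R>0$ there is $C=C(R)\ge 1$ with
$$C^{-1}e^{-s\,d(o,\ga o)}\le \nu\big(\mathcal O_R(o,\ga o)\big)\le C\,e^{-s\,d(o,\ga o)}\qquad(\ga\in\Ga),$$
where $\mathcal O_R(o,\ga o)$ denotes the shadow, viewed from $o$, of the ball $B(\ga o,R)$; this uses only conformality of $\nu$ and non-elementarity of $\Ga$. \emph{Step $1$ (part (1)).} Part (1) is then immediate: for each integer $k$ the shadows $\{\mathcal O_R(o,\ga o):k\le d(o,\ga o)<k+1\}$ have multiplicity bounded by some $M=M(R)$, so summing the lower bound gives $\#\{\ga:k\le d(o,\ga o)<k+1\}\lesssim e^{sk}\,\nu(\S^{n-1})$; hence the orbit grows at exponential rate at most $s$, i.e. $\delta_\Ga\le s$.

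For part (2) I would treat the convergence and divergence cases separately; since they are exhaustive and mutually exclusive, proving the chain of implications within each case yields the full equivalence of (a), (b), (c). \emph{Convergence case.} Noting that $\xi\in\La^{\mathsf{con}}$ exactly when $\xi$ lies in infinitely many shadows $\mathcal O_R(o,\ga o)$, the bound $\sum_\ga\nu(\mathcal O_R(o,\ga o))\le C\sum_\ga e^{-s\,d(o,\ga o)}<\infty$ and the easy half of Borel--Cantelli give $\nu(\La^{\mathsf{con}})=0$. Transferring to the flow via the Hopf parametrization $\T^1\bH^n\cong\partial^{(2)}\bH^n\times\R$, under which $\widetilde{\mathsf m}_\nu=e^{s(\beta_\xi(o,x)+\beta_\eta(o,x))}\,d\nu(\xi)\,d\nu(\eta)\,dt$ (with $x$ the point at parameter $t$ on the geodesic from $\eta$ to $\xi$) descends to $\mathsf m_\nu$, the conical set being $\nu$-null forces $\mathsf m_\nu$-almost every geodesic to be non-recurrent, so the flow is completely dissipative, hence non-ergodic by non-elementarity.

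\emph{Divergence case.} This is the crux, and the main obstacle. Here the events $\{\xi\in\mathcal O_R(o,\ga o)\}$ are strongly correlated, so a naive second Borel--Cantelli is unavailable; instead I would establish directly that divergence implies \emph{conservativity} of $\mathsf m_\nu$. The quantitative input is that, for a fixed compact $K\subset\T^1(\Ga\ba\bH^n)$, the $\mathsf m_\nu$-mass of vectors based in $K$ whose forward geodesic returns to $K$ after time $\ge T$ is, via the shadow lemma and the Hopf parametrization, comparable to a tail of the Poincar\'e series; divergence makes the total return mass infinite, and Halmos's recurrence theorem then yields complete conservativity. Complete conservativity forces $\nu(\La^{\mathsf{con}})=1$, since recurrence of the geodesic through $(\xi,\eta,t)$ forces $\xi\in\La^{\mathsf{con}}$, and it upgrades to ergodicity by the Hopf ratio ergodic argument: Birkhoff averages of a compactly supported continuous function are constant along strong stable and strong unstable horospheres, hence $\mathsf m_\nu$-a.e.\ constant.

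It remains to identify $s$ and prove uniqueness in the divergence case. Divergence of $\sum_\ga e^{-s\,d(o,\ga o)}$ means $s\le\delta_\Ga$, which together with part (1) gives $s=\delta_\Ga$. For uniqueness, any $\delta_\Ga$-conformal $\nu'$ is again of divergence type, so $\mathsf m_{\nu'}$ is conservative and ergodic; the shadow lemma applied to both $\nu,\nu'$ gives $\nu(\mathcal O_R(o,\ga o))\asymp\nu'(\mathcal O_R(o,\ga o))$, whence $\nu$ and $\nu'$ are mutually absolutely continuous with $f=d\nu/d\nu'$ bounded. Conformality of equal dimension makes $f$ $\Ga$-invariant, and ergodicity of the $\Ga$-action on $(\S^{n-1},\nu)$ (equivalent to ergodicity of the geodesic flow just shown) forces $f$ to be a.e.\ constant; after normalization $\nu'=\nu$. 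The principal difficulty throughout is the divergence-to-conservativity implication, where the correlation among shadows must be tamed through the recurrence estimate rather than by a soft probabilistic argument.
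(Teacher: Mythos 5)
Your Step 0, your Step 1, the convergence case, the Hopf ratio argument for conservativity $\Rightarrow$ ergodicity, and the uniqueness argument are all sound and follow the classical lines (they also parallel the machinery this paper builds for its higher-rank generalization: shadow lemma plus bounded multiplicity of shadows for part (1), cf.\ Theorem \ref{tgg0} and Proposition \ref{prop.mult}, and the easy Borel--Cantelli for the convergence half of Theorem \ref{ggg}). The gap is exactly at the step you call the crux: ``divergence $\Rightarrow$ complete conservativity.'' You propose to deduce it from the divergence of the total return mass $\int_0^\infty \mathsf m_\nu\bigl(K\cap g_{-t}K\bigr)\,dt$, asserting that ``Halmos's recurrence theorem then yields complete conservativity.'' No such implication exists. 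Halmos's theorem \emph{characterizes} conservativity (every wandering set is null; equivalently a.e.\ point of every positive-measure set returns); it does not convert divergence of an \emph{expected} occupation time into almost-everywhere recurrence, and the implication you need is false for general measure-preserving flows. Indeed, for the translation flow on $W\times\R$ with invariant measure $\nu_W\otimes\operatorname{Leb}$ (a completely dissipative system, and the model for the dissipative part of any flow), one has $\int_{\R}\mathsf m\bigl(A\cap g_{-t}A\bigr)\,dt=\int_W \operatorname{Leb}(A_w)^2\,d\nu_W(w)$, which diverges for sets $A$ of finite measure as soon as the fiber occupation times fail to be square-integrable: take $\nu_W(\{w_n\})=2^{-n}$ and $\operatorname{Leb}(A_{w_n})=(3/2)^n$. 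Compactness of $K$ does not save you: geodesics can spend arbitrarily long times in a fixed compact part of $\T^1(\Ga\ba\bH^n)$ (e.g.\ by shadowing a closed geodesic before escaping), so you have no a priori square-integrability of occupation times on the dissipative part --- which is precisely the structure you are trying to rule out.

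What the first-moment estimate must be supplemented with is a second-moment (quasi-independence) bound, and this is exactly the ``second Borel--Cantelli'' you dismissed as unavailable. The correlations among shadows are in fact tame: if $d(o,\ga o)\le d(o,\ga' o)$ and $\mathcal O_R(o,\ga o)\cap\mathcal O_R(o,\ga' o)\neq\emptyset$, hyperbolicity forces $d(o,\ga' o)$ to be within a constant of $d(o,\ga o)+d(\ga o,\ga' o)$, whence $\nu\bigl(\mathcal O_R(o,\ga o)\cap \mathcal O_R(o,\ga' o)\bigr)\lesssim e^{-s\,d(o,\ga o)}\,e^{-s\,d(o,\,\ga^{-1}\ga' o)}$. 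Feeding this into the Kochen--Stone lemma (Lemma \ref{lem.KS}) gives $\nu(\La^{\mathsf{con}})>0$ directly from divergence, a zero-one law (Lemma \ref{dicc}) upgrades this to $\nu(\La^{\mathsf{con}})=1$, and conservativity then follows because conical forward endpoints produce recurrent geodesics; ergodicity comes from your Hopf argument. This is Sullivan's original route and is also the engine of this paper's higher-rank proof: the $E_1,E_2$ decomposition in the proof of Theorem \ref{ggg} is the quasi-independence estimate, with Proposition \ref{prop.mult} standing in for the rank-one hyperbolicity geometry. (Separately, your ``completely dissipative, hence non-ergodic by non-elementarity'' needs an argument --- an ergodic completely dissipative flow is a translation on $\R$ with Lebesgue measure, which forces atoms and is then excluded by non-elementarity via the convergence-group property, as in the second case of Theorem \ref{ceq} --- but that is a repairable ellipsis rather than a wrong mechanism.) With the divergence step replaced by the Kochen--Stone argument, the rest of your outline goes through.
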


The main aim of this paper is to establish an analogous result for a class of discrete subgroups of a general connected semisimple real algebraic group $G$, called $\theta$-transverse subgroups. The class of $\theta$-transverse subgroups includes all discrete subgroups of rank one Lie groups, $\theta$-Anosov subgroups and their relative versions. This class is 
  regarded as a generalization of all rank one discrete subgroups while  Anosov subgroups are regarded as higher rank analogues of convex cocompact subgroups.
  
We need to introduce some notations to state our results precisely. Let $P<G$ be a minimal parabolic subgroup with a fixed Langlands decomposition $P=MAN$ where $A$ is a maximal real split torus of $G$, $M$ is the maximal compact subgroup of $P$ commuting with $A$ and $N$ is the unipotent radical of $P$.
Let $\fg$ and $\fa$ respectively denote the Lie algebra of $G$
and $A$. Fix a positive closed Weyl chamber $\fa^+<\fa$ so that
$\log N$ consists of positive root subspaces and
set $A^+=\exp \fa^+$. We fix a maximal compact subgroup $K< G$ such that the Cartan decomposition $G=K A^+ K$ holds. We denote by $\mu : G \to \fa^+$ the Cartan projection defined by the condition $g\in K\exp \mu(g) K$ for $g \in G$. 
Let $\Pi$ denote the set of all simple roots for $(\frak g, \frak a^+)$. As usual, the Weyl group is the quotient of the normalizer of $A$ in $K$ by the centralizer of $A$ in $K$. Let 
$\i:\fa\to \fa$ denote the opposition involution, that is,
$\i(u)=-\text{Ad}_{w_0}(u)$ for all $u\in \fa$ where $w_0$ is the longest Weyl element. It induces an involution on $\Pi$ which we denote by the same notation $\i$.
Throughout the introduction, we fix a non-empty subset $$\theta\subset \Pi.$$
Let $\mathfrak{a}_\theta =\bigcap_{\alpha \in \Pi - \theta} \ker \alpha$ and let $p_{\theta} : \fa \to \fa_{\theta}$ be the unique projection, invariant under all Weyl elements fixing $\fa_{\theta}$ pointwise.
Let $P_\theta$ be the standard parabolic subgroup corresponding to $\theta$ (our convention is that $P=P_\Pi$) and consider the $\theta$-boundary: $$\F_\theta=G/P_\theta.$$
 We say that $\xi\in \F_\theta$ and $\eta\in \F_{\i(\theta)}$ are in general position if the pair
$(\xi,\eta)$ belongs to the unique open $G$-orbit in $\F_\theta\times \F_{\i(\theta)}$ under the diagonal action of $G$.

 Let $\Ga<G$ be a discrete subgroup.  The following properties of $\Ga$ are natural to consider in
 studying analogues of Theorem \ref{sul} for $\Ga$-conformal measures on the $\theta$-boundary  $\F_\theta$. Let $\La_\theta=\La_\theta(\Ga)$ denote the $\theta$-limit set of $\Ga $ in $\F_{\theta}$ (Definition \ref{tli}).

\begin{Def} 
     A discrete subgroup $\Ga$ is said to be {\it $\theta$-transverse} if
\begin{itemize}
    \item   $\Ga$ is {\it $\theta$-regular}, i.e.,
$ \liminf_{\ga\in \Ga} \alpha(\mu({\ga}))=\infty $ for all $\alpha\in \theta$; and
\item $\Ga$ is  {\it $\theta$-antipodal}, i.e.,
if any two distinct $\xi, \eta\in \La_{\theta\cup \i(\theta)}$ 
are in general position.   \end{itemize}    
A $\theta$-transverse subgroup $\Ga$ is called {\it non-elementary} if $\# \La_{\theta} \ge 3$.
\end{Def}

 Note that the  $\theta$-transverse property is hereditary:
 a subgroup of a $\theta$-transverse subgroup is also $\theta$-transverse.

 We assume that $\Ga$ is $\theta$-transverse in the rest of the introduction.
We define
the $\theta$-growth indicator $\psi_\Ga^{\theta}:\fa_\theta\to [-\infty, \infty] $  as follows: fixing any norm $\|\cdot\|$ on $\fa_\theta$, if $u \in \fa_\theta$ is non-zero,
\be \psi_\Ga^{\theta}(u)=\|u\| \inf_{u\in \cal C}
\tau^\theta_{\mathcal C}\ee 
where $ \tau^{\theta}_{\cal C}$ is the abscissa of convergence of the series $\sum_{\ga\in \Ga, \mu_\theta(\ga)\in \mathcal C} e^{-s\|\mu_\theta(\ga)\|}$ and $\cal C\subset \fa_\theta$ ranges over all open cones containing $u$. Set $\psi_{\Ga}^{\theta}(0) = 0$. This definition is independent of the choice of a norm on $\fa_\theta$.
For $\theta=\Pi$, $\psi_\Ga^{\Pi}$ coincides with Quint's growth indicator  $\psi_\Ga$ \cite{Quint2002divergence}. For a general $\theta\subset \Pi$, we have:
\be \label{qq}
    \psi_{\Ga}^{\theta}\circ p_\theta \ge  \psi_{\Ga}.
    \ee
     (Lemma \ref{qu}, see also Lemma \ref{qu2}
     for a precise relation for $G$ simple).
We show that $\psi_\Ga^\theta<\infty$, and $\psi_\Ga^\theta$
is a homogeneous, upper semi-continuous and concave function. It also follows from \eqref{qq} that
\be
\{\psi^\theta_\Ga\ge 0\}=\L_\theta\quad\text{and} \quad
\psi^\theta_\Ga>0 \;\; \text{ on $\inte\L_\theta$}
\ee where $\L_\theta=\L_\theta(\Ga)$ is the $\theta$-limit cone of $\Ga$ (Theorem \ref{three}).

Denote by $\fa_\theta^*=\op{Hom}(\fa_\theta, \br)$ the space of all linear forms on $\fa_\theta$.
For $\psi \in \fa_\theta^*$, a Borel probability measure $\nu$ on $\mathcal{F}_\theta$ is called a $(\Gamma, \psi)$-conformal measure if $$\frac{d \gamma_*\nu}{d\nu}(\xi)=e^{\psi(\beta_\xi^\theta(e,\gamma))} \quad \text{for all $\gamma \in \Gamma$ and $ \xi \in \mathcal{F}_\theta$} $$
 where ${\ga}_* \nu(D) = \nu(\ga^{-1}D)$ for any Borel subset $D\subset \F_\theta$ and $\beta_\xi^\theta$ denotes the $\fa_\theta$-valued Busemann map defined in \eqref{Bu}. We find it convenient to call the linear form $\psi$ the {\it dimension} of $\nu$.

For a collection $\{E_n: n\in \N\}$ of subsets of a given metric space $\cal X$, its topological limsup is the set of accumulation points of all sequences $\{x_n \in E_n :  n \in \N\}$, and is denoted by $\limsup_{n} E_n$.
We define the $\theta$-conical set of $\Ga$ as 
\be \La_\theta^{\mathsf{con}}=\left\{gP_\theta \in \F_\theta : \limsup_{\ga \in \Ga} \ga g M_{\theta} A^+ \ne \emptyset \right\},\ee
where $M_\theta=K\cap P_\theta$  (see Lemma \ref{cons} for an equivalent definition). If $\Ga$ is $\theta$-regular, then $\lat^{\mathsf{con}}\subset \lat$  (Proposition \ref{reg}).

 \begin{Def}
    We say $\psi\in \fa_\theta^*$ is $(\Ga, \theta)$-proper
if  $\psi\circ \mu_\theta:\Ga\to [-\e, \infty)$ 
is a proper map for some $\epsilon>0$.  \end{Def}
For example, a linear form $\psi \in \fa_{\theta}^*$ which is positive on $\L_\theta-\{0\}$ is $(\Ga, \theta)$-proper.
For a $(\Ga, \theta)$-proper form $\psi$, the critical exponent
$0<\delta_\psi=\delta_\psi(\Ga)\le \infty$
of the $\psi$-Poincar\'e series $\cal P_\psi(s)=\sum_{\ga\in \Ga}e^{-s\psi(\mu_\theta(\ga))}$ is well-defined and
we have $$\delta_\psi=\limsup_{t\to \infty}\frac{1}{t}\#\log
 \{\ga\in \Ga:\psi(\mu_\theta(\gamma)) <t\}$$ 
 (see Lemma \ref{wd}).
 
A linear form $\psi\in \fa_\theta^*$ is said to be $(\Ga, \theta)$-critical if $\psi$ is tangent to the $\theta$-growth indicator $\psi_\Ga^{\theta}$, i.e., $\psi\ge \psi_\Ga^{\theta}$  and $\psi(u)=\psi_\Ga^{\theta}(u)$ for some  $u\in \fa^+_\theta-\{0\}$.

\subsection*{Main theorems}
Our main theorems extend Theorem \ref{sul} to higher rank.

\begin{theorem}\label{g}\label{main} Let $\G<G$ be a Zariski dense $\t$-transverse subgroup.
Suppose that there exists a $(\Ga, \psi)$-conformal measure $\nu$ on $\F_\theta$ for $\psi \in \fa_{\theta}^*$.
\begin{enumerate}
    \item If $\psi$ is $(\Ga, \theta)$-proper, then
\be\label{mainin} \psi\ge \psi_\Ga^{\theta}.\ee
\item The following are equivalent:
\begin{enumerate}
    \item  $\sum_{\ga \in \Ga} e^{-\psi(\mu_{\theta}(\ga))} = \infty$ \hspace{1em} (resp. $\sum_{\ga \in \Ga} e^{-\psi(\mu_{\theta}(\ga))} <\infty$);
\item  $\nu(\La^{\mathsf{con}}_{\theta}) = 1 $ \hspace{5.05em} (resp.  $\nu(\La^{\mathsf{con}}_{\theta}) = 0 $).
\end{enumerate}
In the former case,  any $(\Ga, \theta)$-proper $\psi$ is necessarily $(\Ga, \theta)$-critical and $\nu$ is the unique $(\Ga,\psi)$-conformal measure on $\F_\theta$.
\end{enumerate}
\end{theorem}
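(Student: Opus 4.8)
\emph{Strategy.} The plan is to base everything on a shadow lemma for $(\Ga,\psi)$-conformal measures, run Borel--Cantelli arguments in both directions, and reserve ergodicity of the Weyl chamber flow for the uniqueness clause. Write $O_r(o,\ga o)\subset\F_\theta$ for the $\theta$-shadow, viewed from the basepoint $o=eK$, of the ball of radius $r$ about $\ga o$. The key estimate I would establish first is the shadow lemma
$$\nu\big(O_r(o,\ga o)\big)\asymp e^{-\psi(\mu_\theta(\ga))},$$
valid for every $\ga$ whose Cartan projection $\mu_\theta(\ga)$ lies at distance at least $\kappa$ from the walls of $\fa_\theta^+$, with implied constants depending on $r$ and $\kappa$; by $\theta$-regularity all but finitely many $\ga\in\Ga$ satisfy this for any fixed $\kappa$. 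Two hypotheses drive this. First, $\theta$-regularity forces $\mu_\theta(\ga)$ to drift into $\inte\fa_\theta^+$, so that $\beta_\xi^\theta(e,\ga)\approx\mu_\theta(\ga)$ (up to an error depending only on $r$) for $\xi\in O_r(o,\ga o)$; feeding this into the conformality relation $\tfrac{d\ga_*\nu}{d\nu}(\xi)=e^{\psi(\beta_\xi^\theta(e,\ga))}$ yields the comparison, with Zariski density ensuring $\supp\nu=\lat$. Second, $\theta$-antipodality forces shadows at a comparable scale to overlap with multiplicity bounded independently of the scale, since distinct points of $\lat$ are in general position.

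\emph{Part (1).} Granting the shadow lemma, this is a packing argument. Fix $u\in\inte\L_\theta$ and a small cone $\cal C\ni u$. At each scale the shadows $\{O_r(o,\ga o):\|\mu_\theta(\ga)\|\in[t,t+1],\ \mu_\theta(\ga)\in\cal C\}$ cover $\lat$ with bounded multiplicity, so integrating the shadow lemma against the probability measure $\nu$ gives
$$\#\{\ga:\|\mu_\theta(\ga)\|\in[t,t+1],\ \mu_\theta(\ga)\in\cal C\}\cdot\min_{\ga}e^{-\psi(\mu_\theta(\ga))}\le C\,\nu(\F_\theta)=C.$$
Here properness of $\psi$ is exactly what guarantees $\psi(\mu_\theta(\ga))\to\infty$, so that the shadows genuinely shrink and this count is meaningful. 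Since $\psi$ is linear, $\psi(\mu_\theta(\ga))=t\,\psi(u)+O(1)$ throughout this window; taking $\tfrac1t\log$ and shrinking $\cal C$ to the ray $\R_{>0}u$ yields $\psi_\Ga^\theta(u)\le\psi(u)$. This holds for every $u\in\inte\L_\theta$, extends to $\L_\theta$ by concavity of $\psi_\Ga^\theta$, and is vacuous off $\L_\theta$ where $\psi_\Ga^\theta\equiv-\infty$; hence $\psi\ge\psi_\Ga^\theta$.

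\emph{Part (2).} Using Lemma~\ref{cons} I would identify $\lac$ with $\limsup_\ga O_r(o,\ga o)$ and apply Borel--Cantelli in both directions. If $\sum_\ga e^{-\psi(\mu_\theta(\ga))}<\infty$, then $\sum_\ga\nu(O_r(o,\ga o))<\infty$ by the shadow lemma (the finitely many non-regular $\ga$ being negligible by $\theta$-regularity), so the first Borel--Cantelli lemma gives $\nu(\lac)=0$. For the divergent case I would prove quasi-independence of the shadow events, namely $\nu\big(O_r(o,\ga o)\cap O_r(o,\ga'o)\big)\lesssim\nu(O_r(o,\ga o))\,\nu(O_r(o,\ga'o))$ up to a correction controlled by the transverse geometry, and then invoke the second Borel--Cantelli lemma to obtain $\nu(\lac)>0$; a zero--one law from the $\Ga$-quasi-invariance of $\nu$ upgrades this to $\nu(\lac)=1$. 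In this case part~(1) gives $\delta_\psi\le1$ while divergence gives $\delta_\psi\ge1$, so $\delta_\psi=1$; combined with $\psi\ge\psi_\Ga^\theta$ this forces $\psi$ to be tangent to $\psi_\Ga^\theta$, i.e.\ $(\Ga,\theta)$-critical. Finally, for uniqueness, any second $(\Ga,\psi)$-conformal measure $\nu'$ is also carried by $\lac$ by the convergence half just proved; the derivative $d\nu'/d(\nu+\nu')$ is $\Ga$-invariant because $\nu$ and $\nu'$ transform under $\Ga$ with the same cocycle, and ergodicity of the $\Ga$-action on $(\lac,\nu)$ --- equivalently conservativity and ergodicity of the Weyl chamber flow on $(\BMS,\mathsf m_\nu)$ in the recurrent regime --- forces it to be constant, so $\nu'=\nu$.

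\emph{Main obstacle.} The crux is the divergent half of part~(2): the quasi-independence of shadows and the ensuing ergodicity. In rank one the geometry of $\bH^n$ supplies shadow-overlap estimates automatically, whereas here one must extract them from the $\theta$-antipodal hypothesis and the interior drift of $\mu_\theta(\ga)$, and to deploy Hopf's ratio ergodic theorem for the $\fa_\theta$-action one must first have the properly discontinuous action and the local finiteness of $\mathsf m_\nu$ on the quotient in hand. Granting the shadow lemma and this ergodicity, the remaining assertions are formal.
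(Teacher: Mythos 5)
Your overall architecture---a shadow lemma for conformal measures, bounded multiplicity of shadows extracted from $\theta$-antipodality, a packing/counting argument for (1), and Borel--Cantelli in both directions for (2)---is essentially the paper's (Lemma \ref{lem.shadow}, Proposition \ref{prop.mult}, Theorems \ref{tgg} and \ref{ggg}). But one step is genuinely unjustified: the upgrade from $\nu(\La_\theta^{\mathsf{con}})>0$ to $\nu(\La_\theta^{\mathsf{con}})=1$. Quasi-invariance of $\nu$ under $\Ga$ gives no zero--one law for the $\Ga$-invariant set $\La_\theta^{\mathsf{con}}$; a zero--one law of that kind is ergodicity-type information, and in this development ergodicity sits \emph{downstream} of the full-measure statement: in the paper it is proved in Theorem \ref{ceq} via complete conservativity, which is itself deduced from $\nu(\La_\theta^{\mathsf{con}})=1$, so invoking it here would be circular. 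The missing idea is the restriction trick (Lemma \ref{lem.posthenfull}), which your Kochen--Stone step already sets up: since the shadow lemma and the divergence hypothesis are measure-independent, your second Borel--Cantelli argument gives $\nu'(\La_\theta^{\mathsf{con}})>0$ for \emph{every} $(\Ga,\psi)$-conformal measure $\nu'$; if some $\nu$ had $\nu(\La_\theta^{\mathsf{con}})<1$, then, $\La_\theta^{\mathsf{con}}$ being $\Ga$-invariant, the normalized restriction of $\nu$ to $F=\F_\theta-\La_\theta^{\mathsf{con}}$ would again be a $(\Ga,\psi)$-conformal measure giving $\La_\theta^{\mathsf{con}}$ zero mass---a contradiction. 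Without this (or an equivalent) observation, the divergent half of (2) does not close.

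The remaining differences are matters of route rather than gaps. For (1), your direct directional count inside small cones is an equivalent reorganization of the paper's argument, which instead proves $\delta_\psi\le 1$ globally (Theorem \ref{tgg0}) and then applies the tent-type Lemma \ref{lem.tent} to get $\psi_\Ga^\theta\le\delta_\psi\psi\le\psi$ on $\L_\theta$; both rest on the same two ingredients. For uniqueness, you propose the classical Radon--Nikodym-plus-ergodicity argument; this is workable but places the ergodicity of the $\Ga$-action on $(\La_\theta^{(2)},\nu\times\nu_\i)$---a later and harder result in the paper, relying on conservativity and an external argument of Blayac--Canary--Zhu--Zimmer---on the critical path of Theorem \ref{main}. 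The paper's route (Corollary \ref{cor.singleconfmeas}) is cheaper: in the divergence case every conformal measure is carried on $\La_\theta$, the Patterson--Sullivan construction (Proposition \ref{measure}) produces a conformal measure on $\La_{\theta\cup\i(\theta)}$ which is unique by Canary--Zhang--Zimmer, and the $\Ga$-equivariant homeomorphism of Lemma \ref{bij} transports uniqueness back to $\F_\theta$. Finally, your parenthetical claim that Zariski density ensures $\supp\nu=\La_\theta$ is false (a general conformal measure on $\F_\theta$ need not be supported on the limit set---that is precisely the generality at stake); what is true, and all the shadow lemma needs, is $\supp\nu\supset\La_\theta$ together with Zariski density of $\La_\theta$.
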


 When $\theta=\Pi$, 
 Theorem \ref{main}(1) for a Zariski dense discrete subgroup was proved by Quint. For a general $\theta$, only a weaker bound as \eqref{quint} was known by \cite[Theorem 8.1]{Quint2002Mesures}. 
 It implies:
 \begin{theorem}\label{this} Let $\G<G$ be a Zariski dense $\t$-transverse subgroup.
     If there exists a $(\Ga, \psi)$-conformal measure on $\F_{\theta}$ for a $(\Ga, \theta)$-proper $\psi \in \fa_{\theta}^*$, then $$\delta_{\psi} \le 1.$$
 \end{theorem}

\begin{Rmk}
\begin{enumerate}
\item  Canary-Zhang-Zimmer \cite[Theorem 1.4]{Canary2023} proved  the equivalence of $(a)$ and $(b)$ in Theorem
\ref{g}(2)  for $\theta$ symmetric, that is, $\theta=\i(\ts)$, and for conformal measures supported on $\La_\theta$. We mention that transverse subgroups are sometimes called RA-subgroups (cf. \cite{Dey}).
  \item 
  For $\theta$ symmetric and conformal measures supported on $\La_{\theta}$, Theorem \ref{this} was shown in \cite[Theorem 1.4]{Canary2023}.
For some special class of $\theta$-{\it Anosov} subgroups and for
conformal measures {\it supported on} $\La_\theta$,
Theorem \ref{this}  was also proved in  \cite[Theorem C]{pozzetti2019anosov} and \cite[Theorem A]{sambarino2022report}. 
\end{enumerate} \end{Rmk}

As in the original Hopf-Tsuji-Sullivan dichotomy (Theorem \ref{sul}), Theorem \ref{main} can be extended to the dichotomy on the ergodicity of the Weyl chamber flow. Recalling the Hopf parametrization $\Ga\ba (
\F_\Pi^{(2)}\times \fa)
\simeq \Ga\ba G/M$, a natural space to consider is the quotient space
$\Ga\ba (\F_\theta^{(2)}\times \fa_\theta)$
where 
$\F_\theta^{(2)}=\{(\xi, \eta)\in \F_\theta\times \F_{\i(\theta)}: \text{$\xi$, $\eta$ are in general position}\}$ and $\Ga$ acts on 
$\F_{\theta}^{(2)} \times \fa_{\theta}$ from the left by
\be\label{hopf0} \ga . (\xi, \eta, u) = (\ga
\xi, \ga \eta, u + \beta_{\xi}^{\theta}(\ga^{-1}, e)) \ee 
for all $\ga \in \Ga$ and $(\xi, \eta, u)\in \F_{\theta}^{(2)} \times \fa_{\theta}$. 
However the $\Ga$-action on $\F_{\theta}^{(2)} \times \fa_{\theta}$ is not properly discontinuous in general; so the quotient space $\Ga\ba (\F_\theta^{(2)}\times \fa_\theta)$ is not locally compact.

On the other hand, the restriction of the $\Ga$-action on  the subspace $\La_\theta^{(2)}\times \fa_\theta$ turns out to be properly discontinuous where 
 $\La_\theta^{(2)}=\F_\theta^{(2)}\cap (\La_\theta\times \La_{\i(\theta)})$ (Theorem \ref{tproper}):

\begin{theorem}[Properly discontinuous action] \label{thm.pdintro}
Let $\Ga < G$ be a non-elementary $\theta$-transverse subgroup. Then the $\Ga$-action on  $\La_\theta^{(2)}\times \fa_\theta$ given by \eqref{hopf0} is properly discontinuous, and hence the quotient space 
$$\Omega_\theta:=\Ga \ba \La_{\theta}^{(2)} \times \fa_{\theta}$$ is a locally compact Hausdorff space on which $\fa_\theta$ acts by translations from the right.
\end{theorem}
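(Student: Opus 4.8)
The plan is to argue by contradiction, reducing proper discontinuity to an escape estimate for the $\mathfrak a_\theta$-valued Busemann cocycle. Suppose the action were not properly discontinuous. Then there would be a compact set $Q \subset \La_\theta^{(2)} \times \fa_\theta$ and infinitely many distinct $\ga_n \in \Ga$ with $\ga_n Q \cap Q \neq \emptyset$; choose $(\xi_n, \eta_n, u_n) \in Q$ whose images $\ga_n . (\xi_n, \eta_n, u_n) = (\ga_n \xi_n, \ga_n \eta_n, u_n + \beta_{\xi_n}^\theta(\ga_n^{-1}, e))$ also lie in $Q$. Passing to a subsequence, by compactness all the data converge: $\xi_n \to \xi$, $\eta_n \to \eta$, $u_n \to u$, and $\ga_n \xi_n \to \xi'$, $\ga_n \eta_n \to \eta'$, $u_n + \beta_{\xi_n}^\theta(\ga_n^{-1}, e) \to u'$, where $(\xi, \eta)$ and $(\xi', \eta')$ both lie in $\La_\theta^{(2)}$, so each pair is in general position. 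Since $\Ga$ is discrete and the $\ga_n$ are distinct, $\mu(\ga_n) \to \infty$; by $\theta$-regularity $\alpha(\mu(\ga_n)) \to \infty$ for every $\alpha \in \theta$, and hence $\|\mu_\theta(\ga_n)\| \to \infty$.

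Next I would extract the attracting and repelling data of the sequence. Writing the Cartan decomposition $\ga_n = k_n \exp(\mu(\ga_n)) \ell_n$ and passing to a further subsequence with $k_n \to k$ and $\ell_n \to \ell$ in $K$, set the attracting flag $y^+ = kP_\theta \in \F_\theta$ and the repelling flag $y^- = \ell^{-1} P_{\i(\theta)} \in \F_{\i(\theta)}$; both arise as limits of $\ga_n$ and hence lie in the limit sets $\La_\theta$, $\La_{\i(\theta)}$. The standard convergence property for $\theta$-regular sequences gives that $\ga_n \zeta \to y^+$ uniformly on compact families of flags $\zeta \in \F_\theta$ in general position with $y^-$, and symmetrically for $\ga_n^{-1}$ on $\F_{\i(\theta)}$. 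The crucial point is then to show that $\xi_n$ stays \emph{uniformly} transverse to $y_n^- = \ell_n^{-1}P_{\i(\theta)}$, equivalently that the limit $\xi$ is in general position with $y^-$.

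This transversality is where $\theta$-antipodality enters, and it is the main obstacle. Using that (for a transverse $\Ga$) $\La_\theta$ and $\La_{\i(\theta)}$ are the projections of $\La_{\theta\cup\i(\theta)}$, the flags $\xi, \eta, y^+, y^-$ all come from points of $\La_{\theta\cup\i(\theta)}$. If $\xi$ were \emph{not} in general position with $y^-$, then by antipodality $\xi$ and $y^-$ must be the two projections of a single limit point $z \in \La_{\theta\cup\i(\theta)}$, so $\xi = z_\theta$ and $y^- = z_{\i(\theta)}$. I would then run the convergence dynamics on the second coordinate: applying $\ga_n^{-1}$ to $\ga_n\eta_n \to \eta'$, and invoking antipodality once more to guarantee the transversality of $\eta'$ (respectively $\xi'$) with the attracting flag, one either identifies $\eta$ with $z_{\i(\theta)}$, forcing $(\xi,\eta) = (z_\theta, z_{\i(\theta)})$, or, in the remaining sub-case, identifies $(\xi',\eta')$ with the two projections of the attracting limit point. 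In every case one of the pairs $(\xi,\eta)$ or $(\xi',\eta')$ is the pair of partner flags of a single limit point and so is \emph{not} in general position, contradicting membership in $\La_\theta^{(2)}$. This is precisely why the theorem is proved on $\La_\theta^{(2)} \times \fa_\theta$ rather than on $\F_\theta^{(2)} \times \fa_\theta$: carrying both coordinates supplies enough constraints to rule out degeneration of the source onto the repelling flag.

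Having established uniform transversality of $\xi_n$ to $y_n^-$, I would conclude with a comparison between the Busemann cocycle and the Cartan projection. As a preliminary lemma, standard for $\theta$-regular elements, on any compact family of flags uniformly transverse to the repelling direction there is a constant $C$ with
$$ \bigl\| \beta_{\xi_n}^\theta(\ga_n^{-1}, e) \bigr\| \ge \|\mu_\theta(\ga_n)\| - C. $$
Since $\|\mu_\theta(\ga_n)\| \to \infty$, the translation part $\beta_{\xi_n}^\theta(\ga_n^{-1}, e)$ is unbounded. On the other hand, the setup gives $\beta_{\xi_n}^\theta(\ga_n^{-1}, e) = \bigl(u_n + \beta_{\xi_n}^\theta(\ga_n^{-1}, e)\bigr) - u_n \to u' - u$, which is bounded, a contradiction. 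Hence only finitely many $\ga$ satisfy $\ga Q \cap Q \neq \emptyset$, and the action is properly discontinuous. That $\Omega_\theta$ is then a locally compact Hausdorff space carrying a right $\fa_\theta$-translation action follows formally, since right translation in the $\fa_\theta$-coordinate commutes with the left $\Ga$-action in \eqref{hopf0}.
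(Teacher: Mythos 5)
Your reduction of proper discontinuity to unboundedness of the Busemann cocycle, and your careful treatment of varying points $(\xi_n,\eta_n,u_n)$, do follow the paper's overall strategy. But the pivotal step of your argument --- that the limit $\xi$ must be in general position with the repelling flag $y^-$ --- is false, and your attempted contradiction in the degenerate case does not go through. Concretely, take $\ga_n=\ga^n$ for an element $\ga\in\Ga$ acting loxodromically on $\La_{\theta\cup\i(\theta)}$ with attracting fixed point $a$ and repelling fixed point $b$, and let $(\xi,\eta)\in\La_\theta^{(2)}$ consist of the $\theta$-projection of $b$ and the $\i(\theta)$-projection of $a$ (they are in general position by antipodality since $a\neq b$). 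Then $\ga_n(\xi,\eta)=(\xi,\eta)$ for all $n$, so all of your convergence hypotheses hold with $\xi_n=\xi$, $\eta_n=\eta$, yet $\xi$ is exactly the $\theta$-projection of the repelling point, i.e.\ $\tilde\xi=b$ and $\xi$ is \emph{not} transverse to $y^-$. Moreover neither of your two claimed sub-conclusions holds: $\eta$ is the projection of $a$, not of $b$, so $(\xi,\eta)\neq(z_\theta,z_{\i(\theta)})$; and $(\xi',\eta')=(\xi,\eta)$ is not the pair of projections of $a$ either. Both pairs remain in general position, so no contradiction can be extracted from flag positions alone. The underlying reason is that convergence-group dynamics give no control of $\ga_n\tilde\xi_n$ when $\tilde\xi_n$ tends to the repelling point $b$, so you cannot force $\tilde\xi'=a$ in your last sub-case.

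In this degenerate case the divergence you need is still true, but for a different reason, and this is precisely where the paper does the real work. The paper shows (Lemma \ref{lem.convinpair}, using the convergence group property together with Lemma \ref{lem.klpconical}) that if $\ga_n(\xi,\eta)$ converges in $\La_\theta^{(2)}$ then $\ga_n^{-1}\to\xi$ or $\ga_n^{-1}\to\eta$ conically; your degenerate case is exactly the first alternative. There, instead of a transversality estimate, one uses the conical convergence to place $\xi$ in the shadows $O_R^\theta(o,\ga_n^{-1}o)$ and invokes the comparison between the Busemann cocycle and the Cartan projection on shadows (Lemma \ref{shsh}) to conclude $\varphi(\beta_\xi^\theta(\ga_n^{-1},e))\to-\infty$ for any $(\Ga,\theta)$-proper $\varphi$; this is Case A of Proposition \ref{mainte}, while your transversality lemma covers only the analogue of Case B. Supplying the missing case requires the conical-convergence machinery, so this is a genuine gap rather than a cosmetic one.
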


    Indeed,  we prove a stronger property of the action: for a $(\Ga, \theta)$-proper $\varphi \in \fa_{\theta}^*$, we have a projection $\La_{\theta}^{(2)} \times \fa_{\theta} \to \La_{\theta}^{(2)} \times \R$ given by $(\xi, \eta, u) \mapsto (\xi, \eta, \varphi(u))$. The action \eqref{hopf0} descends to the action 
    \be \label{eqn.hopf05}
    \ga . (\xi, \eta, s) = (\ga \xi, \ga \eta, s + \varphi (\beta_{\xi}^{\theta}(\ga^{-1}, e)))
    \ee for all $\ga \in \Ga$ and $(\xi, \eta, s) \in \La_{\theta}^{(2)} \times \R$. 
     We show that the action \eqref{eqn.hopf05} is properly discontinuous, and prove the following (Theorem \ref{tproper2}):
    \begin{theorem}
    Let $\Ga < G$ be a non-elementary $\theta$-transverse subgroup.
For any $(\Ga, \theta)$-proper $\varphi \in \fa_{\theta}^*$,
$\Omega_{\varphi} := \Ga \ba \La_{\theta}^{(2)} \times \R$ is a locally compact Hausdorff space.
Moreover,
$\Omega_{\varphi}$ is compact if and only if $\Ga$ is $\theta$-Anosov.
    \end{theorem}
    
Furthermore, we have a trivial $\ker \varphi$-bundle $\Omega_{\theta} \to \Omega_{\varphi}$ so that $ \Omega_\theta$ is homeomorphic to
    $\Omega_\varphi \times \ker \varphi$ \eqref{bdef1}.

For $\psi \in \fa_{\theta}^*$, we denote by $\mathcal{M}_{\psi}^{\theta}$
the space of all $(\Ga,\psi)$-conformal measures {\it  supported on} $\La_\theta$.
For a pair $(\nu, \nu_{\i}) \in \mathcal{M}_{\psi}^{\theta} \times \mathcal{M}_{\psi \circ \i}^{\i(\theta)}$, we denote by $\mathsf m_{\nu, \nu_{\i}}$ the associated Bowen-Margulis-Sullivan measure on  $\Omega_\theta$ (see \eqref{bms} for its definition).

We expand Theorem \ref{g} to the dichotomy on conservativity and ergodicity of the $\fa_\theta$-action on the space $(\Omega_\theta,\mathsf m_{\nu, \nu_{\i}}) $.  See Theorem \ref{ceq} for a more elaborate statement.
\begin{theorem}\label{c}
   Let $\Ga<G$ be a non-elementary
$\ts$-transverse subgroup. Let $\psi \in\fa_\theta^*$ be $(\Ga, \theta)$-proper such that $\mathcal{M}_{\psi}^{\theta} \neq \emptyset$. 
In each of the following complementary cases, the claims
$(1)-(4)$ are equivalent to each other.

The first case:
\begin{enumerate}
\item $\sum_{\ga\in \Ga}e^{-\psi(\mu_\theta(\ga))}=\infty$;
    \item For any $\nu \in \mathcal{M}_{\psi}^{\theta}$, $\nu(\La_{\theta}^{\mathsf{con}}) = 1$;

\item For any $(\nu, \nu_{\i}) \in \mathcal{M}_{\psi}^{\theta} \times \mathcal{M}_{\psi \circ \i}^{\i(\theta)}$,
the $\Ga$-action on $(\La_\theta^{(2)}, \nu\times \nu_{\i})$ is completely conservative and ergodic;
   
\item  For any $(\nu, \nu_{\i}) \in \mathcal{M}_{\psi}^{\theta} \times \mathcal{M}_{\psi \circ \i}^{\i(\theta)}$, the $\fa_\theta$-action on $(\Omega_\theta, {\mathsf m}_{\nu,\nu_{\i}})$  is completely conservative and ergodic.

\end{enumerate}

The second case:
\begin{enumerate}
\item $\sum_{\ga\in \Ga}e^{-\psi(\mu_\theta(\ga))}<\infty$;
    \item For any $\nu \in \mathcal{M}_{\psi}^{\theta}$, $\nu(\La_{\theta}^{\mathsf{con}}) = 0$;

\item For any $(\nu, \nu_{\i}) \in \mathcal{M}_{\psi}^{\theta} \times \mathcal{M}_{\psi \circ \i}^{\i(\theta)}$, 
the $\Ga$-action on $(\La_\theta^{(2)}, \nu\times \nu_{\i})$ is completely dissipative and non-ergodic;
    
\item  For any $(\nu, \nu_{\i}) \in \mathcal{M}_{\psi}^{\theta} \times \mathcal{M}_{\psi \circ \i}^{\i(\theta)}$, the $\fa_\theta$-action on $(\Omega_\theta, {\mathsf m}_{\nu,\nu_{\i}})$  is completely dissipative and non-ergodic.
 \end{enumerate}
 \end{theorem}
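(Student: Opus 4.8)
The plan is to treat the two complementary cases at once by establishing the forward chains $(1)\Rightarrow(2)\Rightarrow(3)\Rightarrow(4)$ in each case and then closing up into equivalences. The point is that the Poincar\'e series $\cal P_\psi(1)=\sum_\ga e^{-\psi(\mu_\theta(\ga))}$ is a genuine dichotomy (it either diverges or converges), and the four conditions of the first case are mutually exclusive with their counterparts in the second (divergent vs.\ convergent; $\nu(\lac)=1$ vs.\ $\nu(\lac)=0$; completely conservative and ergodic vs.\ completely dissipative and non-ergodic). Hence the two forward chains automatically upgrade to the asserted equivalences. The equivalence $(1)\Leftrightarrow(2)$ is essentially free: convergence of $\cal P_\psi(1)$ does not depend on the choice of $\nu$, and each $\nu\in\mathcal M_\psi^\theta$ is a $(\Ga,\psi)$-conformal measure, so Theorem \ref{main}(2) applied to every such $\nu$ gives exactly the stated "series diverges $\Leftrightarrow$ $\nu(\lac)=1$ for all $\nu$" (resp.\ converges $\Leftrightarrow$ $\nu(\lac)=0$). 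Thus the real work is to link the measure-theoretic statement $(2)$ to the dynamical statements $(3)$ and $(4)$.

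I would isolate $(3)\Leftrightarrow(4)$ as a general \emph{suspension principle}, valid in both cases. By Theorem \ref{tproper} the quotient $\Omega_\theta=\Ga\ba\lat^{(2)}\times\fa_\theta$ is a genuine locally compact space, and in Hopf coordinates the measure $\mathsf m_{\nu,\nu_{\i}}$ disintegrates locally as the $\Ga$-invariant weighted product $d\nu(\xi)\,d\nu_{\i}(\eta)\,du$, on which $\fa_\theta$ acts by translation in the $u$-variable, commuting with the $\Ga$-action \eqref{hopf0}. Since Lebesgue measure on $\fa_\theta$ is invariant under translation and the two actions commute, complete conservativity and ergodicity (resp.\ complete dissipativity and non-ergodicity) transfer between the base system $(\lat^{(2)},\nu\times\nu_{\i},\Ga)$ and the suspended system $(\Omega_\theta,\mathsf m_{\nu,\nu_{\i}},\fa_\theta)$ by the standard correspondence for such skew products, following Roblin's argument adapted to the $\fa_\theta$-flow. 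The reduction of Theorem \ref{tproper2}, namely the trivial fibration $\Omega_\theta\cong\Omega_\varphi\times\ker\varphi$ for a $(\Ga,\theta)$-proper $\varphi\in\fa_\theta^*$, would let me import the classical one-parameter machinery along the $\R$-direction $\Omega_\varphi$.

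The core is $(2)\Leftrightarrow(3)$, which I would split into conservativity and ergodicity. For conservativity, the defining condition $gP_\theta\in\lac$ says precisely that the $\Ga$-orbit of $gM_\theta A^+$ returns to a compact set along a sequence escaping to infinity in $\fa_\theta^+$, so $\lac$ coincides, up to null sets, with the recurrent set of the $\Ga$-action on $\lat^{(2)}$ (here $\xi$ conical controls recurrence of $(\xi,\eta)$). Thus $\nu(\lac)=1$ forces $\nu\times\nu_{\i}$-a.e.\ point to be recurrent, i.e.\ complete conservativity via Hopf--Poincar\'e recurrence, while $\nu(\lac)=0$ makes a.e.\ point wandering, giving complete dissipativity with a measurable fundamental domain. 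For ergodicity in the divergent case I would run Hopf's ratio ergodic theorem against the now-conservative action: for a $\Ga$-invariant $f$ on $\lat^{(2)}$, averaging $f$ along the $\fa_\theta$-flow in a regular direction and using the shadow lemma (already used for Theorem \ref{main}) together with the contraction of the Busemann cocycle $\beta^\theta$ on conical limit points shows that the ratio-average limit is independent of $\eta$; running the flow in the opposite direction gives independence of $\xi$; antipodality (the $\theta$-transverse hypothesis) and the product form of $\nu\times\nu_{\i}$ then force $f$ to be constant a.e. In the convergent case, complete dissipativity already precludes ergodicity, since a dissipative action with infinite orbits admits non-constant invariant functions, e.g.\ orbit-sums of a fundamental-domain indicator.

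The main obstacle is the ergodicity half of $(2)\Rightarrow(3)$ in the divergent case, that is, upgrading conservativity to ergodicity through the Hopf argument in higher rank. Two features make this more delicate than the rank-one situation of Theorem \ref{sul}. First, the flow parameter lives in the possibly higher-dimensional $\fa_\theta$ rather than in $\R$, so regular directions must be chosen with care and the multi-parameter Busemann cocycle controlled uniformly; I expect to route this through the one-parameter reduction $\Omega_\varphi$ of Theorem \ref{tproper2}. Second, the limit set need not be all of $\F_\theta$, so the decoupling of the $\xi$- and $\eta$-dependence of invariant functions cannot rely on ambient homogeneity and must instead be extracted from antipodality and the fine asymptotics of conical points. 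Proving the requisite higher-rank shadow lemma with constants uniform along conical directions is where I anticipate the bulk of the technical effort.
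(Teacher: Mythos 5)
Your overall scaffolding (forward chains in each case, closed up by the mutual exclusivity of the two complementary cases), the equivalence $(1)\Leftrightarrow(2)$ via Theorem \ref{ggg}, the suspension equivalence $(3)\Leftrightarrow(4)$, and the conservativity/dissipativity halves of $(2)\Leftrightarrow(3)$ all match the paper's proof of Theorem \ref{ceq} (which runs through Lemmas \ref{ccc}, \ref{lem.klpconical} and \ref{lem.convinpair}). But there is a genuine gap at the last step of the second case: your claim that ``complete dissipativity already precludes ergodicity, since a dissipative action with infinite orbits admits non-constant invariant functions, e.g.\ orbit-sums of a fundamental-domain indicator'' is false as a general principle. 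Translation of $\R$ on itself with Lebesgue measure is completely dissipative \emph{and} ergodic, and for a countable group any orbital (atomic) measure gives a dissipative ergodic action; in both examples the orbit-sum of the indicator of a wandering set is \emph{constant}, so the invariant function you propose need not separate anything. Ruling out exactly this atomic scenario is the substance of the paper's argument: assuming ergodicity, the completely dissipative ergodic $\R$-system $(\Omega_\varphi, \mathsf m^\varphi_{\nu,\nu_{\i}})$ is isomorphic to translation on $\R$, which forces $\nu\times\nu_{\i}$ to have an atom $(\xi_0,\eta_0)$, hence (by ergodicity) to be supported on the single orbit $\Ga(\xi_0,\eta_0)$; antipodality then gives $\Ga\xi_0\subset\Ga_{\eta_0'}\xi_0\cup\{\eta_0'\}$, and since $\Ga$ acts on $\La_\theta$ as a convergence group (Proposition \ref{prop.transisconv}), $\La_\theta$ is perfect and equal to the accumulation set of $\Ga\xi_0$, while the elementary group $\Ga_{\eta_0'}$ accumulates at no more than two points --- a contradiction. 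Without an argument of this kind your second-case chain does not terminate.

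A second, lesser issue concerns the ergodicity step of the first case. The paper does not run a Hopf ratio-ergodic argument: after establishing complete conservativity of the $\Ga$-action on $(\La_\theta^{(2)},\nu\times\nu_{\i})$ it invokes \cite[Lemma 7.6]{KOW}, which rests on an observation of Blayac--Canary--Zhu--Zimmer \cite{BCZZ} and is a shadow-based argument for the $\Ga$-action itself, not for the flow. Your proposed route is essentially that of \cite{burger2021hopf}, but there the Hopf argument naturally produces a dichotomy in terms of \emph{directional} conical sets: recurrence of the one-parameter flow on $\Omega_\varphi$ along the fixed $\varphi$-direction is a priori a strictly stronger condition than membership in the full conical set $\La_{\theta}^{\mathsf{con}}$ appearing in $(2)$, and bridging that mismatch (together with the stable-contraction estimates for a general transverse, possibly non-Anosov, subgroup) is exactly what you have deferred to ``technical effort''. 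So as written, the ergodicity half of $(2)\Rightarrow(3)$ in the divergent case and the non-ergodicity claim in the convergent case are both open; the former is plausibly repairable (by citation or by carrying out the directional comparison), the latter requires the new argument described above.
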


 When $\theta$ is symmetric,
    the equivalences (1)-(3) in both cases were proved in \cite[Theorem 1.4]{Canary2023} by a different approach.

\subsection*{Disjoint dimensions phenomenon}
Let $$\cal D^\theta_{\Ga}=\left\{\psi\in \fa_\theta^*: \text{$(\Ga, \theta)$-proper}, \delta_\psi(\Ga)=1 \mbox{ and }
\cal P_\psi(1)=\infty \right\}.$$
This is in fact same as 
$$\left\{\psi\in \fa_\theta^*: \text{$(\Ga, \theta)$-proper},  \text{$\exists$ a $(\Ga,\psi)$-conformal measure},
\cal P_\psi(1)=\infty \right\}$$
when $\Ga$ is a $\theta$-transverse subgroup (see Lemma \ref{dsame}).

Inspired by the entropy drop phenomenon proved by Canary-Zhang-Zimmer \cite[Theorem 4.1]{Canary2023} for $\theta=\i(\theta)$, we deduce from Theorem \ref{g} the following disjointness of dimensions (Theorem \ref{disss}), which turns out to be equivalent to the entropy drop phenomenon (Corollary \ref{entropy}):
\begin{cor} [Disjoint dimensions]\label{dis0}
    Let $\Ga<G$ be a non-elementary  $\t$-transverse subgroup. For any   subgroup $\Ga_0<\Ga$ with $\La_\theta(\Ga_0)\ne \La_\theta(\Ga)$, we have
     $$\mathcal D^\theta_{\Ga}\cap \mathcal D^\theta_{\Ga_0}=\emptyset.$$
\end{cor}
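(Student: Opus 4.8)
The plan is to argue by contradiction, using the uniqueness clause of the divergence case in Theorem \ref{main}(2) together with the simple fact that a conformal measure for $\Ga$ restricts to a conformal measure for the subgroup $\Ga_0$. First I would record the preliminary reductions: $\Ga_0$ is again $\theta$-transverse by the hereditary property noted after the definition, and it is Zariski dense by hypothesis, so every result of the introduction applies to $\Ga_0$ as well. Suppose then that some $\psi$ lies in $\mathcal{D}^\theta_{\Ga}\cap\mathcal{D}^\theta_{\Ga_0}$. Invoking the equivalent description of these sets from Lemma \ref{dsame}, I obtain a $(\Ga,\psi)$-conformal measure $\nu$ with $\sum_{\ga\in\Ga}e^{-\psi(\mu_\theta(\ga))}=\infty$ and a $(\Ga_0,\psi)$-conformal measure $\nu_0$ with $\sum_{\ga\in\Ga_0}e^{-\psi(\mu_\theta(\ga))}=\infty$, where in both cases $\psi$ is $(\cdot,\theta)$-proper.

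Since the $\Ga_0$-Poincar\'e series diverges, Theorem \ref{main}(2) applied to $\Ga_0$ places us in the divergence (``former'') case: $\nu_0$ is the \emph{unique} $(\Ga_0,\psi)$-conformal measure on $\F_\theta$ and $\nu_0(\La_\theta^{\mathsf{con}}(\Ga_0))=1$. The pivotal, and entirely formal, observation is that the conformality identity $\frac{d\ga_*\nu}{d\nu}(\xi)=e^{\psi(\beta_\xi^\theta(e,\ga))}$ holds for all $\ga\in\Ga$, hence a fortiori for all $\ga\in\Ga_0$; thus $\nu$ is itself a $(\Ga_0,\psi)$-conformal probability measure, and by the uniqueness just quoted $\nu=\nu_0$.

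It then remains to compare supports. On the one hand, $\nu$ is $\Ga$-quasi-invariant, so $\supp\nu$ is a nonempty closed $\Ga$-invariant subset of the compact space $\F_\theta$; by compactness it contains a $\Ga$-minimal set, which by Definition \ref{tli} must be $\La_\theta(\Ga)$, giving $\La_\theta(\Ga)\subseteq\supp\nu$. On the other hand, since $\nu_0(\La_\theta^{\mathsf{con}}(\Ga_0))=1$ and, by Proposition \ref{reg}, $\La_\theta^{\mathsf{con}}(\Ga_0)\subseteq\La_\theta(\Ga_0)$ with $\La_\theta(\Ga_0)$ closed, we get $\supp\nu_0\subseteq\La_\theta(\Ga_0)$. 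Finally, $\La_\theta(\Ga)$ is a nonempty closed $\Ga_0$-invariant set, so it contains the $\Ga_0$-minimal set $\La_\theta(\Ga_0)$, i.e. $\La_\theta(\Ga_0)\subseteq\La_\theta(\Ga)$. Combining everything with $\nu=\nu_0$ yields
$$\La_\theta(\Ga)\subseteq\supp\nu=\supp\nu_0\subseteq\La_\theta(\Ga_0)\subseteq\La_\theta(\Ga),$$
forcing $\La_\theta(\Ga)=\La_\theta(\Ga_0)$ and contradicting the hypothesis $\La_\theta(\Ga_0)\neq\La_\theta(\Ga)$.

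I expect the only delicate point to be the support comparison. One must be careful that the \emph{full} minimal limit set $\La_\theta(\Ga)$, and not merely the support of $\nu$, is pinned down from below (this uses minimality together with compactness of $\F_\theta$, and crucially only the $\Ga$-quasi-invariance of $\nu$, not any finer property), while the conical set controls $\supp\nu_0$ from above. The remaining ingredients are essentially bookkeeping: the genuine analytic content — the divergence/uniqueness dichotomy — is supplied wholesale by Theorem \ref{main}, and the restriction step bridging $\Ga$ and $\Ga_0$ is purely algebraic.
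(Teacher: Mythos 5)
Your proof is correct, but it takes a genuinely different route from the paper's. The paper's argument is direct and uses a single measure: it takes the Patterson--Sullivan measure $\nu$ supported on $\La_\theta(\Ga)$ furnished by Proposition \ref{measure}, observes (as you do) that $\nu$ is automatically $(\Ga_0,\psi)$-conformal, and then notes that $\La_\theta(\Ga)\setminus\La_\theta(\Ga_0)$ is a nonempty relatively open subset of $\supp\nu=\La_\theta(\Ga)$, hence has positive $\nu$-measure; combined with $\La_\theta^{\mathsf{con}}(\Ga_0)\subseteq\La_\theta(\Ga_0)$ (Proposition \ref{reg}) this yields $\nu(\La_\theta^{\mathsf{con}}(\Ga_0))<1$, and the zero-one law (Theorem \ref{g}(2), i.e.\ Theorem \ref{ggg}), applied to $\nu$ viewed as a $(\Ga_0,\psi)$-conformal measure, forces $\sum_{\ga\in\Ga_0}e^{-\psi(\mu_\theta(\ga))}<\infty$, so $\psi\notin\mathcal{D}^\theta_{\Ga_0}$. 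You instead introduce a second measure $\nu_0$ for $\Ga_0$ and invoke the uniqueness clause of Theorem \ref{main}(2) (Corollary \ref{cor.singleconfmeas}) to force $\nu=\nu_0$, then get a contradiction from the support sandwich $\La_\theta(\Ga)\subseteq\supp\nu=\supp\nu_0\subseteq\La_\theta(\Ga_0)\subseteq\La_\theta(\Ga)$. Both arguments share the restriction observation, the inclusion $\La_\theta^{\mathsf{con}}\subseteq\La_\theta$, and the minimality of the limit set, but yours routes through the uniqueness statement, a strictly heavier ingredient (in the paper it rests on the uniqueness result quoted from \cite{Canary2023}), whereas the paper needs only the zero-one law. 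In exchange, your version never uses divergence of the $\Ga$-Poincar\'e series, only the existence of some $(\Ga,\psi)$-conformal measure, so it proves the marginally stronger fact that no $(\Ga_0,\theta)$-proper form $\psi$ admitting any $(\Ga,\psi)$-conformal measure can lie in $\mathcal{D}^\theta_{\Ga_0}$. One cosmetic point: the identification of the unique $\Ga$-minimal set is Lemma \ref{lq}(3), not Definition \ref{tli}; your use of it is otherwise exactly right, including the implicit reliance on Zariski density of $\Ga_0$ for the inclusion $\La_\theta(\Ga_0)\subseteq\La_\theta(\Ga)$.
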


In the rank one case, this corollary says that if $\La(\Ga_0)\ne \La(\Ga)$ and $\Ga_0<\Ga$ are of divergence type, that is, their Poincar\'e series diverge at the critical exponents,
then $\delta_{\Ga_0}<\delta_\Ga$. We refer to \cite{Canary2023} for a more detailed background on this phenomenon.

\subsection*{$\theta$-Anosov subgroups} 
A finitely generated subgroup $\Ga<G$ is a $\theta$-Anosov subgroup if there exists $C>0$ such that for all $\ga\in \Ga$,
\be\label{ma} \min_{\alpha\in \theta} \alpha(\mu(\ga))\ge C |\ga| - C^{-1} \ee 
    where $|\ga|$ denotes the word length of $\ga$ with respect to a fixed finite generating set of $\Ga$ (\cite{Labourie2006anosov}, \cite{Gueritaud2017anosov}, \cite{Kapovich2018discrete}, \cite{Kapovich2017anosov}, \cite{Kapovich2018morse}).  All $\theta$-Anosov subgroups
are $\ts$-transverse and $\lat=\lat^{\mathsf{con}}$ (\cite{Guichard2012anosov}, \cite{Kapovich2017anosov}).  
We deduce the following  from Theorem \ref{g}:

\begin{theorem} \label{g2}\label{main2}
\label{thm.Anosovmain} \label{cor.critdim}\label{thm.Ahlfors}  Let $\G<G$ be a Zariski dense $\theta$-Anosov subgroup.
Suppose that there exists a $(\Ga, \psi)$-conformal measure $\nu$ on $\F_\theta$ for  $\psi \in \fa_{\theta}^*$. We have:
\begin{enumerate}
    \item 
The linear form $\psi$ is $(\Ga, \theta)$-proper and
$\psi\ge \psi_\Ga^{\theta}.$
\item The following are equivalent to each other:
\begin{enumerate}
    \item  $\sum_{\ga \in \Ga} e^{-\psi(\mu_{\theta}(\ga))} = \infty$ \hspace{1em} (resp. $\sum_{\ga \in \Ga} e^{-\psi(\mu_{\theta}(\ga))} <\infty$);
\item  $\nu(\La_{\theta}) = 1 $ \hspace{5.8em} (resp.  $\nu(\La_{\theta}) = 0 $);
\item $\psi$ is $(\Ga, \theta)$-critical \hspace{2.3em} (resp. $\psi$ is not $(\Ga, \theta)$-critical).
\end{enumerate}
\item For each $(\Ga, \theta)$-critical $\psi\in \fa_\theta^*$, there exists a unique $(\Ga, \psi)$-conformal measure, say, $\nu_{\psi}$, on $\F_\theta$, which is necessarily supported on $\La_\theta$. Moreover
the $\fa_\theta$-action on $(\Omega_\theta, \mathsf m_{\nu_{\psi}, \nu_{\psi\circ \i}})$
is completely conservative and ergodic.
\end{enumerate}
\end{theorem}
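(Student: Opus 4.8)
The plan is to derive all three parts from the general dichotomy of Theorem \ref{g} together with Theorem \ref{c}, exploiting two features special to $\theta$-Anosov subgroups: the equality $\lat=\lac$ and the uniform regularity \eqref{ma}. The only genuinely new work beyond invoking these results is (i) showing that the dimension $\psi$ of any conformal measure is automatically $(\Ga,\theta)$-proper, and (ii) showing that a $(\Ga,\theta)$-critical form forces divergence of the $\psi$-Poincar\'e series. I expect (ii) to be the main obstacle, as it is the one point that uses the Anosov growth rate rather than only the transverse structure.

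\textbf{Part (1).} I would first show that $\psi$ is positive on $\L_\theta\setminus\{0\}$, from which $(\Ga,\theta)$-properness is immediate by the remark that a form positive on $\L_\theta-\{0\}$ is $(\Ga,\theta)$-proper; the inequality $\psi\ge \psi_\Ga^{\theta}$ then follows at once from Theorem \ref{g}(1). To prove positivity on the limit cone, fix a nonzero $u\in\L_\theta$ and a sequence $\ga_n\in\Ga$ with $\mu_\theta(\ga_n)/\|\mu_\theta(\ga_n)\|\to u/\|u\|$ and $\|\mu_\theta(\ga_n)\|\to\infty$, which exists because \eqref{ma} forces $\alpha(\mu(\ga))\to\infty$ for $\alpha\in\theta$ as $\ga\to\infty$. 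By $\theta$-regularity the maps $\ga_n$ act on $\ft$ by proximal contraction, so the pushforwards $(\ga_n^{-1})_*\nu$ concentrate at the corresponding limit point; since $\nu$ is a probability measure, the conformal relation $\tfrac{d(\ga_n^{-1})_*\nu}{d\nu}(\xi)=e^{\psi(\beta_\xi^\theta(e,\ga_n^{-1}))}$ together with the comparison between $\beta_\xi^\theta$ and $\mu_\theta$ along regular directions forces $\psi(\mu_\theta(\ga_n))\to+\infty$. Dividing by $\|\mu_\theta(\ga_n)\|$ and passing to the limit gives $\psi(u)\ge 0$, and a uniform version of the estimate over directions in $\L_\theta\cap\S$ upgrades this to $\psi>0$ on $\L_\theta\setminus\{0\}$.

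\textbf{Part (2).} The equivalence of $(a)$ and $(b)$ is immediate from Theorem \ref{g}(2) after substituting $\lac=\lat$. For the link with $(c)$: in the divergent case Theorem \ref{g}(2) already asserts that the $(\Ga,\theta)$-proper form $\psi$ is necessarily $(\Ga,\theta)$-critical, giving the implication divergent $\Rightarrow$ critical. The remaining implication, critical $\Rightarrow$ divergent, is exactly the assertion that $\theta$-Anosov subgroups are of divergence type at every critical form, and this is where I expect the real difficulty. I would handle it by constructing the Patterson--Sullivan measure $\nu_\psi$ for the critical $\psi$ as a weak-$*$ limit of the normalized sums $\mathcal P_\psi(s)^{-1}\sum_{\ga\in\Ga}e^{-s\psi(\mu_\theta(\ga))}\mathcal D_\ga$ as $s\downarrow 1$ (with $\mathcal D_\ga$ a point mass at the attracting point of $\ga$), and using \eqref{ma} to prove a shadow lemma $\nu_\psi(\text{shadow of }\ga)\asymp e^{-\psi(\mu_\theta(\ga))}$ with uniform constants. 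The shadow estimates, together with $\lat=\lac$, show $\nu_\psi(\lac)>0$; by the equivalence $(a)\Leftrightarrow(b)$ already proved, this can only occur in the divergent case. Combined with divergent $\Rightarrow$ critical, this yields critical $\Leftrightarrow$ divergent and completes the equivalence of $(a)$, $(b)$, $(c)$.

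\textbf{Part (3).} Given a $(\Ga,\theta)$-critical $\psi$, existence of a $(\Ga,\psi)$-conformal measure is supplied by the Patterson--Sullivan construction of the previous paragraph, and this measure is of divergence type; uniqueness then follows from the former case of Theorem \ref{g}(2). Since the series diverges, Theorem \ref{g}(2) gives $\nu_\psi(\lac)=1$, and $\lac=\lat$ shows that $\nu_\psi$ is supported on $\lat$, so $\nu_\psi\in\mathcal M_\psi^\theta$; applying the same reasoning to $\i(\theta)$ and the critical form $\psi\circ\i$ produces $\nu_{\psi\circ\i}\in\mathcal M_{\psi\circ\i}^{\i(\theta)}$. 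Finally, as $\psi$ is $(\Ga,\theta)$-proper with $\mathcal M_\psi^\theta\ne\emptyset$ and we are in the divergent case, the complete conservativity and ergodicity of the $\fa_\theta$-action on $(\Omega_\theta,\mathsf m_{\nu_\psi,\nu_{\psi\circ\i}})$ follow directly from the first case of Theorem \ref{c}.
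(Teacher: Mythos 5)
Your high-level architecture (reduce everything to Theorems \ref{g} and \ref{c} via $\lat=\lac$) is the same as the paper's, but both of the points you yourself identify as ``the genuinely new work'' contain the same genuine gap: you never actually obtain properness of $\psi$, nor positivity of critical forms on $\L_\theta-\{0\}$. In Part (1), your concentration argument only yields a uniform lower bound $\psi(\mu_\theta(\ga))\ge -C$ (this is just the lower bound of the shadow lemma, Lemma \ref{lem.shadow}, applied to the probability measure $\nu$), hence $\psi\ge 0$ on the asymptotic cone $\L_\theta$. To conclude $\psi(\mu_\theta(\ga_n))\to+\infty$ you would need $\nu(O^\theta_R(o,\ga_n o))\to 0$, i.e.\ that $\nu$ has no atoms on $\lat$; but an arbitrary conformal measure on $\F_\theta$ can be purely atomic --- that is exactly what happens in the convergent case, where $\nu(\lat)=0$ --- so this step is unjustified. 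Worse, the advertised ``uniform version'' upgrading $\psi\ge 0$ on $\L_\theta$ to $\psi>0$ on $\L_\theta-\{0\}$ does not exist: nonnegativity on the asymptotic cone is strictly weaker than positivity, and positivity of conformal dimensions on all of $\L_\theta-\{0\}$ is essentially equivalent to the vertical tangency of $\psi_\Ga^\theta$ (Theorem \ref{ver}), a substantive theorem which the paper imports from Sambarino's Theorem A via a duality argument. The paper's own route (Lemma \ref{fp}) avoids positivity entirely: when $\cal P_\psi(1)<\infty$ properness is trivial, and when $\cal P_\psi(1)=\infty$ it applies Theorem \ref{ggg} (which requires no properness) to get $\nu(\lat)=1$, and then quotes Sambarino's classification of conformal measures on $\lat$ to obtain the counting bound $\limsup_{T\to\infty}\frac1T\log\#\{\ga\in\Ga:\psi(\mu_\theta(\ga))<T\}<\infty$. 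Your proposal contains no substitute for this input.

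The same missing input breaks ``critical $\Rightarrow$ divergent'' in Part (2). Your Patterson--Sullivan construction at $s\downarrow 1$ tacitly assumes $\delta_\psi=1$; for a critical form this is equivalent (Corollary \ref{oneone}) to positivity of $\psi$ on $\L_\theta$, which is again vertical tangency and is nowhere established --- a priori a critical form could be tangent to $\psi_\Ga^\theta$ at a boundary direction of $\L_\theta$, and then $\delta_\psi=1$ is not forced. Moreover, even granting $\delta_\psi=1$, the argument as written is circular: in the case $\cal P_\psi(1)<\infty$ (the case you are trying to exclude), the weak-$*$ limit of $\cal P_\psi(s)^{-1}\sum_{\ga}e^{-s\psi(\mu_\theta(\ga))}D_\ga$ keeps all its mass on the orbit and is \emph{not} supported on $\lat$, and no shadow estimate can then produce $\nu_\psi(\lac)>0$ --- indeed that conclusion would contradict the convergent half of Theorem \ref{g}(2). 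To close the loop you must either insert Patterson's trick (a slowly varying weight forcing divergence of the modified series, so that the limit measure charges $\lat=\lac$, whereupon Theorem \ref{ggg} forces $\cal P_\psi(1)=\infty$), or do what the paper does and invoke Sambarino's results (Lemma \ref{cri}, whose identity $\cal T_\Ga^\theta=\cal D_\Ga^\theta$ rests on \cite[Section 5.9]{sambarino2022report} and Theorem \ref{ver}). Once properness and $\delta_\psi=1$ are supplied, your remaining steps --- (2)(a)$\Leftrightarrow$(b) from Theorem \ref{g}(2) with $\lat=\lac$, and Part (3) via existence, uniqueness (Corollary \ref{cor.singleconfmeas}), support, and Theorem \ref{ceq} --- do go through, and they coincide with the paper's proof.
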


The equivalence $(a) \Leftrightarrow (b)$ in (2) 
answers  a question asked by Sambarino \cite[Remark 5.10]{sambarino2022report}.

\subsection*{Analogue of Ahlfors measure conjecture for $\theta$-Anosov groups}
We denote by $\Leb_\theta$ the Lebesgue measure on $\F_{\theta}$, which is the unique $K$-invariant probability measure on $\F_{\theta}$. The following corollary is motivated by the Ahlfors measure conjecture \cite{Ahlfors1964finitely}. 
\begin{cor} \label{cor.Ahlforstrans}\label{critical} 
    If $\Ga < G$ is Zariski dense $\t$-Anosov, then $$\text{either }\La_{\theta} = \F_{\theta} \quad \mbox{or} \quad \Leb_{\theta}(\La_{\theta}) = 0.$$
Moreover, in the former case,
   $\theta$ is the simple root of a rank one factor, say $G_0$, of $G$ and
   $\Ga$ projects to a cocompact lattice  of $G_0$. 
\end{cor}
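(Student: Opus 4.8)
The plan is to recognize Lebesgue measure as a conformal measure and feed it into Theorem \ref{g2}. The measure $\Leb_\theta$ is $G$-quasi-invariant on $\F_\theta$ with Radon--Nikodym cocycle given by the modular character of $P_\theta$; concretely, $\Leb_\theta$ is a $(\Ga,2\rho_\theta)$-conformal measure, where $2\rho_\theta\in\fa_\theta^*$ is the character by which $P_\theta$ acts on the top exterior power of $\fg/\fp_\theta$ (the sum of the roots in the nilradical of $P_\theta$, viewed in $\fa_\theta^*$). By Theorem \ref{g2}(1) the form $2\rho_\theta$ is automatically $(\Ga,\theta)$-proper, so Theorem \ref{g2}(2) applies with $\nu=\Leb_\theta$ and $\psi=2\rho_\theta$. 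If $\sum_{\ga}e^{-2\rho_\theta(\mu_\theta(\ga))}=\infty$, then $\Leb_\theta(\La_\theta)=1$; since $\La_\theta$ is closed and $\Leb_\theta$, being the $K$-invariant probability measure with $K$ transitive on $\F_\theta$, is positive on every nonempty open set, the open set $\F_\theta\setminus\La_\theta$ must be empty, i.e.\ $\La_\theta=\F_\theta$. In the complementary case $\sum_{\ga}e^{-2\rho_\theta(\mu_\theta(\ga))}<\infty$, the same theorem gives $\Leb_\theta(\La_\theta)=0$. This establishes the dichotomy.

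For the moreover part, assume $\La_\theta=\F_\theta$, and first extract structure. For a $\theta$-Anosov group the limit map $\zeta_\theta\colon\partial\Ga\to\La_\theta$ is a homeomorphism, so $\zeta_\theta$ is now a homeomorphism $\partial\Ga\xrightarrow{\sim}\F_\theta$; likewise the $\i(\theta)$-limit map $\zeta_{\i(\theta)}\colon\partial\Ga\to\La_{\i(\theta)}$ is a homeomorphism onto its image. The $\theta$-antipodality of $\Ga$ says that for distinct $x,y\in\partial\Ga$ the flags $\zeta_\theta(x)$ and $\zeta_{\i(\theta)}(y)$ are in general position. Fixing $y$ and setting $\eta=\zeta_{\i(\theta)}(y)\in\La_{\i(\theta)}$, and using that $\zeta_\theta$ is onto $\F_\theta$, this says every $\xi\in\F_\theta\setminus\{\zeta_\theta(y)\}$ is in general position with $\eta$. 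Hence for each $\eta\in\La_{\i(\theta)}$ the non-transverse locus $Z_\eta=\{\xi\in\F_\theta:(\xi,\eta)\text{ not in general position}\}$ is a single point.

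The crux is then a purely structural statement. Since $\F_{\i(\theta)}$ is a single $G$-orbit, the sets $Z_\eta$ are all $G$-conjugate, and for the base point $\eta_0$ stabilized by $P_{\i(\theta)}$ the set $Z_{\eta_0}$ is exactly the complement of the open $P_{\i(\theta)}$-orbit (the big Bruhat cell) in $\F_\theta=G/P_\theta$. So $Z_\eta$ is a single point precisely when $\F_\theta$ consists of just two $P_{\i(\theta)}$-orbits, an open dense one and a fixed point; equivalently the big cell $\cong\R^{\dim\F_\theta}$ is compactified by one point, so $\F_\theta\cong S^{\dim\F_\theta}$. Analyzing the double-coset set $W_{L_{\i(\theta)}}\backslash W/W_{L_\theta}$ shows this holds if and only if $\theta=\{\alpha\}$ is a single simple root and the simple factor $G_0$ of $G$ containing $\alpha$ has real rank one: if $\theta$ met two factors, $\F_\theta$ would be a nontrivial product and $Z_\eta$ would contain a positive-dimensional slice, while if $G_0$ had rank $\ge2$ the complement of the big cell would already contain a positive-dimensional Schubert subvariety. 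This yields the first assertion of the moreover part.

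Finally, with $\theta=\{\alpha\}$ and $G_0$ the rank one factor, let $\pi_0\colon G\to G_0$ be the projection. Since $\alpha$ vanishes on the Lie algebras of the complementary factors, the Anosov inequality \eqref{ma} reads $\alpha(\mu_0(\pi_0(\ga)))\ge C|\ga|-C^{-1}$, where $\mu_0$ is the Cartan projection of $G_0$; in particular $\ker\pi_0\cap\Ga$ is finite and $\pi_0(\Ga)$ is a discrete convex cocompact (i.e.\ rank one Anosov) subgroup of $G_0$ whose limit set in $\partial G_0=\F_\theta$ is all of $\F_\theta$. A convex cocompact subgroup of a rank one group with full limit set has convex core equal to the whole symmetric space, hence is a cocompact lattice, which is the last claim. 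The main obstacle is the structural step, namely turning the ``single-point non-transverse locus'' condition into the rank one conclusion; this requires the Bruhat-cell / Weyl-double-coset bookkeeping and the careful exclusion of multi-root and higher-rank $\theta$, whereas the dichotomy and the final reduction are direct consequences of Theorem \ref{g2} and standard rank one convex cocompactness.
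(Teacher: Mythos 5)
Your dichotomy argument is correct and is essentially the paper's own: the paper likewise starts from the fact that $\Leb_\theta$ is a $(\Ga,2\rho\circ p_\theta)$-conformal measure (your $2\rho_\theta$ equals $2\rho\circ p_\theta$ as an element of $\fa_\theta^*$, since roots in $[\Pi-\theta]$ vanish on $\fa_\theta$) and feeds it into the zero-one law. The only organizational difference is that the paper argues contrapositively through Theorem \ref{Ahlfors} and Theorem \ref{ggg}, which requires no properness hypothesis, whereas you invoke Theorem \ref{g2} and get properness from Lemma \ref{fp}; both are fine. Your final step (the projection to the rank one factor is convex cocompact with full limit set, hence a cocompact lattice) is also the paper's.

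The ``moreover'' step is where you genuinely diverge from the paper, and it is also where your proposal has a gap. The paper's Proposition \ref{one} exhibits, in any simple factor of real rank at least $2$, a Weyl element $w$ with $\op{Ad}_w(\fa_\theta^+)$ equal to neither $\fa_\theta^+$ nor $-\fa_{\i(\theta)}^+$ (Lemma \ref{weyl}), so that $w\notin w_0N_\theta^+P_\theta\cup P_\theta$ by Corollary \ref{cor.genweyl}, and then uses $\La_\theta=\F_\theta$ to produce two limit points that are neither equal nor in general position, contradicting antipodality. You instead reduce everything to the classification statement: if the non-transverse locus $Z_\eta\subset\F_\theta$ of a flag $\eta\in\F_{\i(\theta)}$ is a single point, then $\theta$ is a single simple root of a rank one factor. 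Your reduction to this statement is correct (antipodality transported through Lemma \ref{bij}, surjectivity of the limit map, and $G$-homogeneity of $\F_{\i(\theta)}$), and the statement itself is true --- but it is exactly the crux, and you only assert it. Worse, the justification you sketch, that ``the complement of the big cell would already contain a positive-dimensional Schubert subvariety,'' is the complex-group intuition and is false as a general principle for real groups: for a real rank one group the complement of the big cell \emph{is} a single point even though $\F_\theta$ can have large dimension (e.g.\ $\SO(n,1)^\circ$ with boundary $S^{n-1}$). So the dimension count must be run with the restricted root system and restricted Weyl group, which is precisely the bookkeeping you defer; it is not a routine citation.

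The gap is fillable along your lines. By Corollary \ref{cor.genweyl} (after translating by $w_0$), the big cell $P_{\i(\theta)}w_0\xi_\theta$ contains exactly one point of the Weyl orbit $\mathcal W\xi_\theta$, while that orbit has $[\mathcal W:\mathcal W_{\Pi-\theta}]\ge 3$ points when the relevant factor has restricted rank at least $2$ (an index-two parabolic subgroup would be normal, which fails for an irreducible Coxeter group of rank at least $2$); hence $Z_{\eta_0}$ contains at least two Weyl points. A $P_{\i(\theta)}$-fixed point in $\F_\theta$ corresponds to a parabolic conjugate to $P_\theta$ and containing $P_{\i(\theta)}$, of which there is at most one; so at least one of those Weyl points has a positive-dimensional $P_{\i(\theta)}$-orbit, which lies in $Z_{\eta_0}$ by invariance, and $Z_{\eta_0}$ is not a point. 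Together with your product argument excluding $\theta$ meeting two rank one factors, this completes your route, which is then a genuinely different and arguably more structural proof than Proposition \ref{one}. As written, however, the heart of the ``moreover'' clause remains unproved.
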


See Theorem \ref{Ahlfors} for a more general version
stated for a $\theta$-transverse subgroup.

\subsection*{Critical forms and conformal measures} We set $$ \cal T_\Ga^\theta :=\{\psi\in \fa_\theta^*:\text{$\psi$ is $(\Ga, \theta)$-critical}\}.$$ 
Note that  $\cal D_{\Ga}^{\theta} \subset \cal T_{\Ga}^{\theta}$ (Corollary \ref{oneone}).
For $\theta$-Anosov subgroups, we further have $\cal T_\Ga^\theta =\cal D_\Ga^\theta$, which is again same as the set of all
$\psi\in \fa_\theta^*$ for which there exists a $(\Ga, \psi)$-conformal measure supported on $\La_\theta$ (Lemma \ref{cri}). 
A $\Ga$-conformal measure is said to be of {\it critical} dimension if the associated linear form belongs to $ \cal T_{\Ga}^{\theta}$.
Using Sambarino's parametrization of
the space of all conformal measures on $\La_\theta$ as
$\{\delta_\psi=1\}$ \cite[Theorem A]{sambarino2022report}, we deduce:
\begin{cor}    \label{co.equivtheta}\label{coeq}
For any Zariski dense $\theta$-Anosov subgroup $\Ga<G$,
we have a one-to-one correspondence among
\begin{enumerate}
    \item the set $\cal T_\Ga^\theta$ of all $(\Ga,\theta)$-critical forms on $\fa_{\theta}$;
\item the set of all unit vectors in $\inte \L_\theta$;
\item the set of all $\Ga$-conformal measures supported on $\La_\theta$;
\item the set of all $\Ga$-conformal measures on $\F_\theta$ of critical dimensions.
\end{enumerate}
More precisely, for any $\psi\in \cal T_\Ga^\theta$, there exists a unique unit vector $u_\psi\in \fa_\theta^+$
such that $\psi(u_\psi)=\psi^\theta_\Ga(u_\psi)$; moreover $u_\psi\in \inte \L_\theta$. There also exists a unique $(\Ga, \psi)$-conformal measure $\nu_\psi$ on $\F_\theta$, which is necessarily supported on $\La_\theta$. Moreover  every $\Ga$-conformal measure supported on $\La_{\theta}$ arises in this way.

\end{cor}

\begin{cor} [Disjoint critical dimensions]\label{dis00}
Let $\Ga < G$ be a Zariski dense $\theta$-Anosov subgroup. For any subgroup  $\Ga_0<\Ga$ such that $\La_\theta(\Ga_0)\ne \La_\theta(\Ga)$,
    we have
     $$\mathcal T_{\Ga}^\theta\cap \mathcal T^\theta_{\Ga_0}=\emptyset
     \quad \text{and} \quad 
     \psi^\theta_{\Ga_0}< \psi^\theta_\Ga \mbox{ on } \inte \L_{\theta}(\Ga).
     $$
\end{cor}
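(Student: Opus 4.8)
The plan is to derive Corollary \ref{dis00} directly from Corollary \ref{dis0} together with the structural facts that are special to the $\theta$-Anosov case. First I would invoke the identity $\cal T_\Ga^\theta=\cal D_\Ga^\theta$ for $\theta$-Anosov subgroups, which is recorded in Lemma \ref{cri}; the same identity applies to the subgroup $\Ga_0$, since a Zariski dense $\theta$-Anosov subgroup of a $\theta$-Anosov subgroup is again $\theta$-Anosov. Given these identifications, the disjointness of the critical form sets, namely $\cal T_\Ga^\theta\cap\cal T_{\Ga_0}^\theta=\emptyset$, becomes literally the statement $\cal D_\Ga^\theta\cap\cal D_{\Ga_0}^\theta=\emptyset$, which is precisely the conclusion of Corollary \ref{dis0} applied to the pair $\Ga_0<\Ga$ under the hypothesis $\La_\theta(\Ga_0)\ne\La_\theta(\Ga)$. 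So the first half of the statement requires essentially no new work beyond assembling these results.

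For the strict growth indicator inequality $\psi^\theta_{\Ga_0}<\psi^\theta_\Ga$ on $\inte\L_\theta(\Ga)$, I would argue pointwise at each unit vector $u\in\inte\L_\theta(\Ga)$. By Corollary \ref{coeq}, for the ambient group $\Ga$ there is a unique critical form $\psi_u\in\cal T_\Ga^\theta$ tangent to $\psi_\Ga^\theta$ at $u$, so that $\psi_u(u)=\psi_\Ga^\theta(u)$ and $\psi_u\ge\psi_\Ga^\theta$ everywhere. The monotonicity $\psi_{\Ga_0}^\theta\le\psi_\Ga^\theta$ (which follows from $\Ga_0<\Ga$ and the definition of the $\theta$-growth indicator as an abscissa of convergence of a sub-series) already gives the non-strict inequality $\psi^\theta_{\Ga_0}\le\psi^\theta_\Ga$; the content is to rule out equality. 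Suppose for contradiction that $\psi_{\Ga_0}^\theta(u)=\psi_\Ga^\theta(u)=\psi_u(u)$ at some $u\in\inte\L_\theta(\Ga)$. Since $\psi_u\ge\psi_\Ga^\theta\ge\psi_{\Ga_0}^\theta$ and $\psi_u$ agrees with $\psi_{\Ga_0}^\theta$ at $u$, the form $\psi_u$ is also $(\Ga_0,\theta)$-critical, i.e.\ $\psi_u\in\cal T_{\Ga_0}^\theta$. Combined with $\psi_u\in\cal T_\Ga^\theta$ this produces a common critical form, contradicting the disjointness $\cal T_\Ga^\theta\cap\cal T_{\Ga_0}^\theta=\emptyset$ established in the first half. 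This yields the desired strict inequality.

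The step I expect to be the main obstacle is verifying that the candidate form $\psi_u$ is genuinely $(\Ga_0,\theta)$-critical in the sense required for membership in $\cal T_{\Ga_0}^\theta$, rather than merely satisfying a tangency inequality. Two points need care here. First, one must check that the tangency point $u$ lands in $\inte\L_\theta(\Ga_0)$ and is a legitimate point of tangency of $\psi_{\Ga_0}^\theta$, not just a point where $\psi_{\Ga_0}^\theta$ happens to coincide with an upper bound; this uses the concavity and upper semi-continuity of $\psi_{\Ga_0}^\theta$ (Theorem \ref{three}) together with the sandwiching $\psi_{\Ga_0}^\theta\le\psi_u$ and equality at $u$, so that $\psi_u$ tangent to $\psi_\Ga^\theta$ at $u$ forces $\psi_u$ tangent to $\psi_{\Ga_0}^\theta$ at $u$ by the squeeze. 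Second, one should confirm that $\psi_u$ is $(\Ga_0,\theta)$-proper, which follows since properness for $\Ga$ restricts to properness for the subgroup $\Ga_0$. Once these are in place the contradiction is immediate, and the strict inequality propagates to all of $\inte\L_\theta(\Ga)$ because $u$ was arbitrary.
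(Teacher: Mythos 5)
Your proof is correct and follows essentially the same route as the paper: the first half is Lemma \ref{cri} combined with Corollary \ref{dis0}, and the second half produces a common tangent form at a hypothetical point of equality $\psi_{\Ga_0}^\theta(u)=\psi_\Ga^\theta(u)$, contradicting the first half. The only cosmetic differences are that the paper obtains the tangent form from the supporting-hyperplane statement (Corollary \ref{cor.tangentinterior}) rather than from Corollary \ref{coeq}, and the verifications you flag as the ``main obstacle'' are vacuous: by Definition \ref{defcrit}, membership in $\cal T_{\Ga_0}^\theta$ only requires $\psi_u\ge \psi_{\Ga_0}^\theta$ on $\fa_\theta^+$ with equality at some nonzero point, with no properness or interior-tangency condition to check.
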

Indeed, the above two conclusions are equivalent to each other by the vertical tangency  and concavity of $\psi_{\Ga}^{\theta}$ (see Corollary \ref{cor.equiv in 1.15} for the proof).

\begin{Rmk}\label{intror}
 Related dichotomy properties for
conformal measures 
were studied in  \cite{Dey}, \cite{burger2021hopf}, \cite{lee2022dichotomy}, \cite{edwards2021unique}, \cite{sambarino2022report}, \cite{Canary2023}, etc.
In particular, when $\Ga$ is $\Pi$-Anosov,  Theorem \ref{thm.Anosovmain},
     Corollaries \ref{critical} and \ref{coeq} were proved by Lee-Oh  \cite[Theorems 1.3, 1.4]{lee2022dichotomy}.
    The papers \cite{Dey}, \cite{sambarino2022report},  and \cite{Canary2023} study conformal measures {\it  supported on the limit set $\La_\theta$} and  the papers \cite{burger2021hopf}
and \cite{edwards2021unique}
study the role of {\it  directional} conical sets in the ergodic behavior of conformal measures.
Our focus on this paper is to address 
general conformal measures without restriction on their supports following \cite{lee2022dichotomy} and to study the relationship between the dimensions of conformal measures and $\theta$-growth indicators so as to establish an analogue of Sullivan's theorem (Theorem \ref{sul}) and the analogue of the Ahlfors measure conjecture. We also emphasize that the $\theta$-growth indicator is first introduced  in our paper.  
Notably, Theorem \ref{thm.pdintro} provides a new locally compact Hausdorff space $\Omega_{\theta} := \Ga \ba \La_{\theta}^{(2)} \times \fa_{\theta}$ which is a non-wandering set for the Weyl chamber flow $A_{\theta}$. This allows us to define Bowen-Margulis-Sullivan measures  as in the rank one setting. Hence the dynamical properties of the Weyl chamber flow can be studied also in higher rank, fully recovering the original work of Hopf-Tsuji-Sullivan.

\end{Rmk}
Finally, we mention that there is a plethora of examples of $\ts$-transverse subgroups which are not $\theta$-Anosov. First of all, any subgroup of
$\theta$-Anosov subgroups are $\theta$-transverse. For instance, a co-abelian subgroup of a $\theta$-Anosov subgroup of infinite index is $\theta$-transverse but not $\theta$-Anosov. The images of
cusped Hitchin representations
of geometrically finite Fuchsian groups  by
\cite{Canary2022cusped} are also $\theta$-transverse but not $\theta$-Anosov.
Another important examples are self-joinings of geometrically finite subgroups of rank one Lie groups, that is,
$\G=(\prod_{i=1}^k\rho_i)(\Delta)=\{(\rho_i(g))_i:g\in \Delta\}$
where $\Delta$ is a geometrically finite subgroup of a rank one simple real algebraic group $G_0$ and
$\rho_i:\Delta\to G_i$ is a type-preserving isomorphism onto its image $\rho_i(\Delta)$ which is a geometrically finite subgroup
of a rank one simple real algebraic group $G_i$ for each $1\le i \le k$. It follows from \cite[Theorem 3.3]{Tukia1985isomorphisms} and \cite[Theorem A.4]{Das2016tukia} (see also \cite[Theorem 0.1]{Yaman2004topological}) that there exists a $\rho_i$-equivariant homeomorphism between the limit set of $\Delta$ and the limit set of $\rho_i(\Delta)$ for each $1\le i\le k$. This implies
that $\G$ is $\Pi$-transverse.

\subsection*{Organization}
\begin{itemize}
    \item 
In section \ref{sec.convergence}, we introduce the notion of convergence of elements of $G$ to those of $\F_\theta$ and present some basic lemmas which will be used in the proof of our main theorems. 
\item  In section \ref{sec.growthindicator}, we define the $\theta$-growth indicator $\psi_\Ga^{\theta}$ for a $\theta$-discrete subgroup $\Ga<G$. Properties of the $\theta$-growth indicator  and its relationship with Quint's growth indicator \cite{Quint2002divergence} are also discussed. 
\item  In section \ref{sec.linforms},  we introduce $(\Ga, \theta)$-proper linear forms and $(\Ga, \theta)$-critical linear forms and discuss properties of their critical exponents.
\item  In section \ref{sec.conical}, we define the $\theta$-limit set and the $\theta$-conical set of $\Gamma$. For $\theta$-regular subgroups, we show that the $\theta$-conical set is a subset of the $\theta$-limit set and construct conformal measures supported on the $\theta$-limit set for each $\psi\in \cal D_\Ga^\theta$. 
\item In section \ref{sec.shadow},
we prove that for $\ts$-transverse subgroups, $\theta$-shadows with bounded width have bounded multiplicity,
which is one of the key technical ingredients of our main results. 
\item In section \ref{sec.dimensions}, we show that if $\Ga$ is a $\ts$-transverse subgroup, the dimension of a $\Ga$-conformal measure is at least $\psi_{\Ga}^{\theta}$ (Theorem \ref{tgg}).
\item In section \ref{sec.supp}, we prove the zero-one law for
the $\nu$-size of the conical set depending on whether or not
the associated Poincar\'e series converges at its dimension (Theorem \ref{thm.divthensupp}).
\item In section \ref{pdt}, we prove that a $\theta$-transverse subgroup $\Ga$ acts properly discontinuously on $\La_\theta^{(2)}\times \fa_\theta$ and define Bowen-Margulis-Sullivan measures on the space $\Omega_\theta =\Gamma\ba \La_\theta^{(2)}\times \fa_\theta$. For any $(\Ga, \theta)$-proper form $\varphi$, we also show that
 the $\varphi$-twisted $\Ga$-action on $\La_\theta^{(2)}\times \br$ is properly discontinuous
 and gives rise to a trivial vector bundle
 $\Omega_\theta \to \Omega_{\varphi}=\Ga\ba \La_\theta^{(2)} \times \br$.

 \item In section \ref{sec.BMS measures}, we give the definition of Bowen-Margulis-Sullivan measures.

\item In section \ref{ec}, we expand the equivalence between dichotomies to conservativity and ergodicity of the $\fa_\theta$-action on $\Omega_\theta$, proving Theorem \ref{c}.  We also explain how to deduce
Theorem \ref{main} from Theorems \ref{tgg} and \ref{ggg}.

\item In section \ref{app}, we discuss several consequences of Theorem \ref{ggg}, including disjoint dimension phenomenon.
\item Finally, in section \ref{sec.Anosov} we discuss how our theorems are applied for $\theta$-Anosov groups. We also prove Corollary \ref{cor.Ahlforstrans}.
\end{itemize}

\subsection*{Acknowledgement} We would like to thank Jean-Fran\c{c}ois Quint for useful conversations about Lemma \ref{qu2}.

\section{Convergence in $G\cup \F_\theta$.} \label{sec.convergence}
In the whole paper, let $G$ be a connected semisimple real algebraic group. 
 Let $P<G$ be a minimal parabolic subgroup with a fixed Langlands decomposition $P=MAN$ where $A$ is a maximal real split torus of $G$, $M$ is the maximal compact subgroup of $P$ commuting with $A$ and $N$ is the unipotent radical of $P$.
Let $\fg$ and $\fa$ respectively denote the Lie algebras of $G$
and $A$. Fix a positive closed Weyl chamber $\fa^+<\fa$ so that
$\log N$ consists of positive root subspaces and
set $A^+=\exp \fa^+$. We fix a maximal compact subgroup $K< G$ such that the Cartan decomposition $G=K A^+ K$ holds. We denote by 
$$\mu : G \to \fa^+$$ the Cartan projection defined by the condition $g\in K\exp \mu(g) K$ for $g \in G$. 
Let $X = G/K$ be the associated Riemannian symmetric space, and set $o = [K] \in X$.  Fix a $K$-invariant norm $\| \cdot \|$ on $\fg$ induced from the Killing form on $\fg$ and let $d$ denote the Riemannian metric on $X$ induced by $\| \cdot \|$.

\begin{lemma} \cite[Lemma 4.6]{Benoist1997proprietes} \label{lem.cptcartan}
For any compact subset $Q \subset G$, there exists $C=C(Q)>0$ such that for all $g \in G$, $$\sup_{q_1, q_2\in Q} \| \mu(q_1gq_2) -\mu(g)\| \le C .$$  
\end{lemma}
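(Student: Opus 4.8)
The plan is to reduce this vector-valued estimate to finitely many scalar estimates, one for each of a spanning set of linear functionals on $\fa$ coming from highest-weight representations; this is the standard argument of Benoist. First I would fix finitely many irreducible rational representations $\rho_i : G \to \GL(V_i)$, $1 \le i \le r$, whose highest weights $\chi_i \in \fa^*$ span $\fa^*$ over $\R$. Such representations exist because $G$ is a connected semisimple real algebraic group, so the highest weights of its rational representations generate a finite-index sublattice of the weight lattice and in particular span $\fa^*$. On each $V_i$ I would place a Cartan-compatible inner product, namely one for which $\rho_i(K)$ acts by orthogonal transformations and $\rho_i(\exp \fa)$ by positive self-adjoint operators; such an inner product is obtained by averaging an arbitrary inner product over the compact group $K$ and using the Cartan involution to see that $d\rho_i(X)$ is self-adjoint for $X\in\fp$ and skew-adjoint for $X\in\fk$.

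The key point is then the identity $\log \norm{\rho_i(g)}_{\op{op}} = \chi_i(\mu(g))$ for all $g\in G$, where $\norm{\cdot}_{\op{op}}$ is the operator norm. Indeed, writing the Cartan decomposition $g = k_1 \exp(\mu(g)) k_2$, the factorization $\rho_i(g) = \rho_i(k_1)\,\rho_i(\exp \mu(g))\,\rho_i(k_2)$ is a singular value decomposition: the outer factors are orthogonal and the middle factor is positive self-adjoint with eigenvalues $e^{\lambda(\mu(g))}$ as $\lambda$ ranges over the weights of $V_i$ restricted to $\fa$. Since $\mu(g)\in\fa^+$ and $\chi_i$ is the highest weight, every weight $\lambda$ satisfies $\chi_i - \lambda \in \sum_{\alpha\in\Pi}\R_{\ge 0}\,\alpha$, so $\chi_i(\mu(g)) \ge \lambda(\mu(g))$; hence the top singular value is $e^{\chi_i(\mu(g))}$, giving the claimed identity.

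With this identity in hand, submultiplicativity of the operator norm gives, for all $g\in G$ and $q_1,q_2\in Q$,
$$\chi_i(\mu(q_1 g q_2)) \le \chi_i(\mu(g)) + \log\norm{\rho_i(q_1)}_{\op{op}} + \log\norm{\rho_i(q_2)}_{\op{op}},$$
and applying the same inequality to $g = q_1^{-1}(q_1 g q_2) q_2^{-1}$ yields the matching lower bound. Since $Q\cup Q^{-1}$ is compact and $\rho_i$ is continuous, $M_i := \sup_{q\in Q\cup Q^{-1}} \abs{\log\norm{\rho_i(q)}_{\op{op}}}$ is finite, so $\abs{\chi_i(\mu(q_1 g q_2) - \mu(g))} \le 2M_i$ uniformly in $g,q_1,q_2$. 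Finally, because the $\chi_i$ span $\fa^*$, the linear map $v \mapsto (\chi_i(v))_{i}$ is injective on $\fa$, so there is $C_0>0$, depending only on $\norm{\cdot}$ and the $\chi_i$, with $\norm{v} \le C_0\,\max_i \abs{\chi_i(v)}$; applying this to $v = \mu(q_1 g q_2) - \mu(g)$ gives the assertion with $C = 2C_0\max_i M_i$, which depends only on $Q$.

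The main obstacle is establishing the operator-norm identity $\log\norm{\rho_i(g)}_{\op{op}} = \chi_i(\mu(g))$, which is exactly where the choice of a Cartan-compatible inner product and the dominance of the highest weight enter; once that is secured, the remainder is submultiplicativity together with the equivalence of norms on the finite-dimensional space $\fa$.
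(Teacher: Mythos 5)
The paper does not actually prove this lemma -- it cites Benoist's Lemma 4.6 -- and your argument is essentially a reconstruction of that standard proof: reduce the vector-valued estimate to scalar ones via finitely many irreducible representations $\rho_i$ whose highest restricted weights $\chi_i$ span $\fa^*$, establish the identity $\log\norm{\rho_i(g)}_{\op{op}}=\chi_i(\mu(g))$ for a norm adapted to the Cartan decomposition, and conclude by submultiplicativity (applied both to $q_1gq_2$ and to $g=q_1^{-1}(q_1gq_2)q_2^{-1}$) together with the equivalence of norms on the finite-dimensional space $\fa$. That global structure is correct and is the approach of the cited source.

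There is, however, one step whose justification would fail as written: the construction of the Cartan-compatible inner product. Averaging an arbitrary inner product over $K$ gives $K$-invariance, but a $K$-invariant inner product need \emph{not} make $d\rho_i(X)$ self-adjoint for $X\in\fp$; the Cartan involution does not supply this for free. Concretely, for the adjoint representation of $\PSL_3(\R)$ on $\mathfrak{sl}_3(\R)$, the restriction to $K=\SO(3)$ splits into the $5$-dimensional space of symmetric traceless matrices and the $3$-dimensional space of antisymmetric matrices; the inner products $a\langle\cdot,\cdot\rangle \oplus b\langle\cdot,\cdot\rangle$ (trace form on each summand, $a,b>0$) are all $K$-invariant, yet $\op{ad}(X)$ with $X\in\fp$ interchanges the two summands and is self-adjoint only when $a=b$. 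Starting from such an inner product with $a\neq b$, averaging over $K$ changes nothing, the middle factor of the Cartan decomposition is not self-adjoint, and your singular-value computation -- hence the key identity $\log\norm{\rho_i(g)}_{\op{op}}=\chi_i(\mu(g))$ -- breaks down. The standard repair is Weyl's unitary trick over the compact real form: average a Hermitian inner product on $V_i\otimes\bc$ over the maximal compact subgroup $U=K\exp(i\fp)$ of the complexification; since $i\fp\subset\op{Lie}(U)$ then acts by skew-Hermitian operators, $d\rho_i(\fp)$ acts by Hermitian ones, and restricting back to the real form gives the desired norm (alternatively, invoke Mostow's theorem that $\rho_i(G)$ can be conjugated to be self-adjoint, or cite the existence of ``good norms'' as in Benoist--Quint). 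With that ingredient supplied correctly, the rest of your argument goes through verbatim.
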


Let $\Phi=\Phi(\fg, \fa)$ denote the set of all roots,  $\Phi^{+}\subset \Phi$
the set of all positive roots, and $\Pi \subset \Phi^+$  the set of all simple roots. We denote by $N_K(A)$ and $C_K(A)$ the normalizer and centralizer of $A$ in $K$ respectively.
Consider the Weyl group $\cal W=N_K(A)/C_K(A)$.
Fix an element $$w_0\in N_K(A) $$ representing the longest Weyl element so that $\op{Ad}_{w_0}\mathfrak a^+= -\mathfrak a^+$ and $w_0^{-1}=w_0$. Hence the map $$\i= -\op{Ad}_{w_0}:\fa\to \fa$$ defines an involution of $\fa$ preserving $\fa^+$; this is called the opposition involution. It induces a map $\Phi\to \Phi$ preserving $\Pi$, for which we use the same notation $\i$, such that $\i (\alpha ) \circ  \op{Ad}_{w_0}  =-\alpha $ for all $\alpha\in \Phi$. We have 
\be \label{mu}
\mu(g^{-1})=\i (\mu(g))\quad\text{ for all $g\in G$ }.
\ee

In the rest of the paper, we  fix a  non-empty subset $\theta\subset \Pi$. 
Let $ P_\theta$ denote a standard parabolic subgroup of $G$ corresponding to $\theta$; that is,
$P_{\theta}$ is generated by $MA$ and all root subgroups $U_\alpha$,
$\alpha\in \Phi^{+} \cup [\Pi-\theta]$ where $[\Pi-\theta]$ denotes the set of all roots in $\Phi$ which are $\mathbb Z$-linear combinations of $\Pi-\theta$. Hence $P_\Pi=P$.
The subgroup $P_\theta$ is equal to its own normalizer; for $g\in G$,
$gP_\theta g^{-1}=P_\theta$ if and only if $g\in P_\theta$.
Let 

$$
\begin{aligned}
    \mathfrak{a}_\theta &=\bigcap_{\alpha \in \Pi-\theta} \ker \alpha, & \fa_\theta^+ & =\fa_\theta\cap \fa^+, \\
    A_{\theta} & = \exp \fa_{\theta}, \mbox{ and} &    A_{\theta}^+ & = \exp \fa_{\theta}^+.
\end{aligned}
$$

Let $$ p_\theta:\mathfrak{a}\to\mathfrak{a}_\theta$$ denote  the projection invariant under $w\in \cal W$ fixing $\fa_\theta$ pointwise. 

Let $L_\theta$ denote the centralizer of $A_{\theta}$;
it is a Levi subgroup of $P_\theta$ and $P_\theta=L_\theta N_\theta$ where $N_\theta=R_u(P_\theta)$ is the unipotent radical of $P_\theta$. We set $M_{\theta} = K \cap P_{\theta}\subset L_\theta$.
We may then write $L_{\theta} = A_{\theta}S_{\theta}$ where $S_{\theta}$ is an almost direct product of
 a connected semisimple real algebraic subgroup and a compact subgroup.
 Then $B_\theta=S_\theta \cap A$ is a maximal $\br$-split torus of $S_\theta$ and $\Pi-\theta$ is the set of simple roots
 for $(\op{Lie} S_\theta, \op{Lie} B_\theta)$.
 Letting $$B_\theta^+=\{b\in B_\theta: \alpha (\log b)\ge 0
 \text{ for all $\alpha\in \Pi-\theta$}\},$$
 we have the Cartan decomposition of $S_\theta$: 
$$S_\theta = M_{\theta} B_\theta^+ M_{\theta}.$$

Any $u\in \fa$ can be written as $u= u_1+u_2$ for unique $u_1\in \fa_\theta$
and $u_2\in \op{Lie} B_\theta$, and we have $p_\theta(u)=u_1$. 
In particular, we have  $$A=A_\theta B_\theta\quad\text{and}\quad A^+\subset A_\theta^+ B_\theta^+.$$

We denote by $\fa_\theta^*=\op{Hom}(\fa_\theta, \br)$ the dual space of $\fa_\theta$. It can be identified with the subspace of $\fa^*$ which is $p_\theta$-invariant: $\fa_\theta^*=\{\psi\in \fa^*: \psi\circ p_\theta=\psi\}$; so for $\theta_1\subset \theta_2$, we have $\fa_{\theta_1}^*\subset \fa_{\theta_2}^*$.

\subsection*{The $\theta$-boundary $\F_\theta$ and
convergence to $\F_\theta$} 

We set $$\F_\theta=G/P_{\theta} \quad\text{and}\quad \F=G/P.$$
Let $$ \pi_\theta:\F\to \F_\theta$$ denote
 the canonical projection map given by $gP\mapsto gP_\theta$, $g\in G$. 
 We set \be\label{xit} \xi_\theta=[P_\theta] \in \F_{\theta}.\ee 
 By the Iwasawa decomposition $G=KP=KAN$, the subgroup $K$ acts transitively on $\F_\theta$, and hence
 $$\F_\theta\simeq K/ M_\theta.$$

We consider the following notion of convergence of a sequence in $G$ to an element of $\F_\theta$.
\begin{definition} \label{fc} For a sequence $g_i\in G$  and $\xi\in \ft$, we write $\lim_{i\to \infty} g_i=\lim g_i o =\xi$ and
 say $g_i $ (or $g_io \in X$) converges to $\xi$ if \begin{itemize}
     \item $\min_{\alpha\in \theta} \alpha(\mu(g_i)) \to \infty$; and
\item $\lim_{i\to\infty} \kappa_{g_i}\xi_\theta= \xi$ in $\F_\theta$ for some $\kappa_{g_i}\in K$ such that $g_i\in \kappa_{g_i} A^+ K$.
 \end{itemize}         
\end{definition}

\subsection*{Points in general position} Let $P_\theta^+$ be the
standard parabolic subgroup of $G$ opposite to $P_\theta$ such that $P_\theta\cap P_\theta^+=L_\theta$. Set $P^+:=P_\Pi^+$. We have $P_\theta^+ =w_0 P_{\i(\theta)}w_0^{-1}$ and hence 
$$\F_{\i(\theta)}=G/P_\theta^+.$$
In particular, if $\theta$ is symmetric in the sense that $\theta=\i(\theta)$, then $\F_\theta=G/P_\theta^+$. Let $N_\theta^+$ denote the unipotent radical of $P_\theta^+$.
The set $N_\theta^+P_\theta$ is a Zariski open and dense
subset of $G$. In particular, $N_\theta^+\xi_\theta\cap hN_\theta^+\xi_\theta\ne \emptyset$ for any $h\in G$.
The $G$-orbit
of $(P_\theta, P_\theta^+)$ is the unique open $G$-orbit
in $G/P_\theta\times G/P_\theta^+$ under the diagonal $G$-action. 
 Since $P = MAN$ and $P^+ = MAN^+$, $a\in A^+$ centralizes $MA$, and its conjugation action on $N$ (resp. $N^+$) contracts
 (resp. expands),
 the following is immediate:

\begin{lemma} \label{lem.boundedofparabolic}
    Let $Q \subset P$ and $Q^+ \subset P^+$ be bounded subsets. For any sequence $a_i \in A^+$, both sequences $a_i^{-1} Q a_i$ and $a_i Q^+ a_i^{-1}$ are uniformly bounded.
\end{lemma}

\begin{Def}
 Two elements
$\xi\in \F_\theta$ and $\eta\in \F_{\i(\theta)}$ are said to be in general position if $(\xi, \eta)\in G.(P_\theta, w_0 P_{\i(\theta)})=G. (P_\theta, P_\theta^+)$, i.e.,
$\xi=gP_\theta$ and $\eta=gw_0 P_{\i(\theta)}$ for some $g\in G$.
\end{Def}

 We set
\be\label{f2} \F_{\theta}^{(2)}=\{(\xi, \eta) \in \F_{\theta} \times \F_{\i(\theta)}: \text{$\xi$, $\eta$ are in general position}\}, \ee 
which is
the unique open $G$-orbit in $ \F_{\theta} \times \F_{\i(\theta)}$. It follows from the identity $P_\theta^+=N_{\theta}^+(P_\theta\cap P_\theta^+)$ that 
\be \label{op} (gP_\theta, P_\theta^+)\in 
\F_{\theta}^{(2)} \;\; \text{ if and only if }\;\; g\in N_\theta^+P_\theta .\ee

\subsection*{Basic lemmas} We generalize \cite[Lemmas 2.9-11]{lee2020invariant} from $\theta=\Pi$ to a general $\theta$ 
as follows. For subsets $S_i\subset G$,
 we often write $g=g_1g_2g_3 \in S_1S_2S_3$ to mean that $g_i\in S_i$ for each $i$, in addition to $g=g_1g_2g_3$.
\begin{lemma} \label{lem.29inv}
    Consider a sequence $g_i = k_i a_i h_i^{-1}$ where $k_i \in K$, $a_i \in A^+$, and $h_i \in G$. Suppose that $k_i \to k_0\in K$, $h_i \to h_0\in G$, and $\min_{\alpha\in \theta} \alpha(\log a_i) \to \infty$, as $i\to \infty$. Then for any $\xi \in h_0N_\theta^+\xi_\theta$
    (i.e., $\xi$ is in general position with $h_0P_\theta^+$),  we have $$\lim_{i \to \infty} g_i \xi = k_0 \xi_\theta.$$
\end{lemma}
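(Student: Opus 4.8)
The plan is to reduce the convergence in $\F_\theta$ to a contraction statement for $\op{Ad}(a_i)$ on the Lie algebra of $N_\theta^+$. First I would use the hypothesis to write $\xi = h_0 n \xi_\theta$ for some $n \in N_\theta^+$; this is precisely the meaning of $\xi$ being in general position with $h_0 P_\theta^+$, in view of \eqref{op}. Substituting $g_i = k_i a_i h_i^{-1}$ gives
$$g_i \xi = k_i a_i (h_i^{-1} h_0 n)\xi_\theta = k_i a_i m_i \xi_\theta, \qquad m_i := h_i^{-1}h_0 n,$$
and since $h_i \to h_0$ we have $m_i \to n \in N_\theta^+$. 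As $k_i \to k_0$ and the $G$-action on $\F_\theta$ is continuous, it then suffices to prove $a_i m_i \xi_\theta \to \xi_\theta$.

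Next I would separate the $N_\theta^+$-part of $m_i$ from its $P_\theta$-part. Since $N_\theta^+ P_\theta$ is open in $G$ and contains $n$, and the product map $N_\theta^+ \times P_\theta \to N_\theta^+ P_\theta$ is a diffeomorphism (the big-cell factorization for the opposite parabolics $P_\theta, P_\theta^+$), for all large $i$ we may write $m_i = n_i' p_i$ with $n_i' \in N_\theta^+$ and $p_i \in P_\theta$; continuity of the inverse factorization forces $n_i' \to n$ and $p_i \to e$. Because $a_i \in A \subset P_\theta$ and $p_i \in P_\theta$ both fix $\xi_\theta$, this yields
$$a_i m_i \xi_\theta = a_i n_i' p_i \xi_\theta = a_i n_i' \xi_\theta = (a_i n_i' a_i^{-1})\xi_\theta,$$
so everything comes down to showing $a_i n_i' a_i^{-1} \to e$ in $G$.

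For this I would invoke the root-space description $\op{Lie}(N_\theta^+) = \bigoplus_{\beta} \fg_{-\beta}$, where $\beta$ ranges over the positive roots not lying in $[\Pi-\theta]$. Writing $n_i' = \exp(v_i)$ with $v_i = \sum_\beta v_i^\beta \to v$, we have $a_i n_i' a_i^{-1} = \exp\big(\sum_\beta e^{-\beta(\log a_i)} v_i^\beta\big)$. Each such $\beta$ has a strictly positive coefficient on some simple root in $\theta$ (otherwise it would be a combination of $\Pi-\theta$ and hence lie in $[\Pi-\theta]$); combined with $\alpha(\log a_i) \ge 0$ for all $\alpha \in \Pi$ (as $a_i \in A^+$) and $\min_{\alpha\in\theta}\alpha(\log a_i)\to\infty$, this forces $\beta(\log a_i)\to\infty$ for every relevant $\beta$. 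Since the $v_i^\beta$ are bounded (they converge), each summand tends to $0$, so $\op{Ad}(a_i)v_i \to 0$ and thus $a_i n_i' a_i^{-1}\to e$. Combining the three steps gives $a_i m_i\xi_\theta \to \xi_\theta$ and therefore $g_i\xi \to k_0\xi_\theta$.

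I expect the main obstacle to be the uniformity in the final step: both $a_i$ and $n_i'$ vary with $i$, so $a_i n_i' a_i^{-1}\to e$ is not a pointwise attracting-fixed-point statement but requires the decay $e^{-\beta(\log a_i)}\to 0$ to dominate the (bounded) motion of $n_i'$. The convergent factorization $m_i = n_i' p_i$ is exactly what converts the a priori delicate limit $a_i m_i \xi_\theta$ into this uniform contraction estimate, and the structural input that the $\theta$-regularity hypothesis $\min_{\alpha\in\theta}\alpha(\log a_i)\to\infty$ indeed forces $\beta(\log a_i)\to\infty$ for every $\beta$ occurring in $\op{Lie}(N_\theta^+)$ is the heart of the argument.
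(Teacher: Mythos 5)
Your proof is correct and takes essentially the same route as the paper's: both exploit the openness of $N_\theta^+\xi_\theta$ (equivalently, the big-cell factorization $N_\theta^+P_\theta$) to express the relevant points as $n_i\xi_\theta$ with $n_i\in N_\theta^+$ bounded and convergent, and then reduce the whole statement to the contraction $a_i n_i a_i^{-1}\to e$. The only difference is cosmetic: you spell out the root-space computation ($\op{Ad}(a_i)$ acts on $\fg_{-\beta}$ by $e^{-\beta(\log a_i)}$, and every $\beta\in\Phi^+ - [\Pi-\theta]$ satisfies $\beta(\log a_i)\to\infty$) that the paper asserts without proof.
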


\begin{proof}
    Since $h_i^{-1}\xi$ converges to the element $ h_0^{-1} \xi\in N_\theta^+\xi_\theta$ by the hypothesis
    and $N_\theta^+\xi_\theta\subset \F_\theta$ is open, we have $h_i^{-1}\xi\in N_\theta^+\xi_\theta$ for all large $i$. 
    Hence we can write $h_i^{-1}\xi= n_i \xi_\theta$ with $n_i\in N_\theta^+$ uniformly bounded. Since $\min_{\alpha\in \theta} \alpha(\log a_i) \to \infty$
    and $n_i\in N_\theta^+$ is uniformly bounded, we have $a_i n_i a_i^{-1}\to e$ as $i \to \infty$. Therefore
     the sequence $a_i h_i^{-1} \xi = a_i n_i a_i^{-1}  \xi_\theta$ converges to $ \xi_\theta$.
Hence we have $$\lim_{i \to \infty} g_i \xi = \lim_{i \to \infty} k_i(a_i h_i^{-1} \xi) = k_0\xi_\theta.$$
\end{proof}

  \begin{cor} \label{cor.genweyl}
If $w\in N_K(A)$ is such that
$m w\in N_\theta^+P_\theta$ for some $m\in M_\theta$, then $w \in M_{\theta}$.    In particular, if $wP_\theta$ and $P_\theta^+$ are in general position, then $w\in M_\theta$.
\end{cor}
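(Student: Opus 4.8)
The plan is to prove the equivalent assertion $w\xi_\theta = \xi_\theta$: once we know $wP_\theta = P_\theta$, it follows that $w \in P_\theta$, and since $w \in N_K(A) \subset K$ we conclude $w \in K \cap P_\theta = M_\theta$. So the whole problem is to upgrade the \emph{general position} hypothesis (an open, generic condition) into the \emph{equality} $w\xi_\theta = \xi_\theta$.

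First I would dispose of the factor $m$. Because $M_\theta = K \cap P_\theta$ lies in the Levi $L_\theta$, which normalizes $N_\theta^+$, and because $mP_\theta = P_\theta$, we get $m N_\theta^+ P_\theta = (mN_\theta^+ m^{-1})\, mP_\theta = N_\theta^+ P_\theta$; that is, the open cell $N_\theta^+ P_\theta$ is left $M_\theta$-invariant. Hence $mw \in N_\theta^+ P_\theta$ is equivalent to $w \in N_\theta^+ P_\theta$, which by \eqref{op} says precisely that $w\xi_\theta \in N_\theta^+\xi_\theta$, i.e. $w\xi_\theta$ is in general position with $P_\theta^+$. This reduction is genuinely needed: $m$ itself need not normalize $A$, so one cannot feed $mw$ directly into the dynamical argument below, whereas $w \in N_K(A)$ is exactly what that argument exploits.

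The crux is then a contraction-versus-fixed-point comparison. Choose $a_i \in A^+$ with $\min_{\alpha\in\theta}\alpha(\log a_i)\to\infty$ (for instance $a_i = a^i$ with $a$ in the interior of $A^+$). On one hand, since $w$ normalizes $A$, the element $b_i := w^{-1}a_i w$ lies in $A \subset P_\theta$ and therefore fixes $\xi_\theta$; writing $a_i w = w b_i$ gives $a_i(w\xi_\theta) = w b_i \xi_\theta = w\xi_\theta$ for every $i$, so $a_i$ fixes the point $w\xi_\theta$. On the other hand, applying Lemma \ref{lem.29inv} to the sequence $g_i = a_i$ (with $k_i = h_i = e$, hence $k_0 = h_0 = e$) and the point $w\xi_\theta \in N_\theta^+\xi_\theta = h_0 N_\theta^+\xi_\theta$, the contraction yields $a_i(w\xi_\theta) \to \xi_\theta$. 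Comparing the two, the constant sequence $w\xi_\theta$ converges to $\xi_\theta$, forcing $w\xi_\theta = \xi_\theta$ and thus $w \in M_\theta$.

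The main (and essentially the only) obstacle is noticing that $a_i$ fixes $w\xi_\theta$: this single observation, where the hypothesis $w \in N_K(A)$ enters, is what converts the mere contraction of the open cell toward $\xi_\theta$ into a true equality. The final clause of the statement is just the special case $m = e$, since by \eqref{op} the general position of $wP_\theta$ and $P_\theta^+$ is exactly the condition $w \in N_\theta^+ P_\theta$ to which the argument applies.
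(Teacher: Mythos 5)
Your proof is correct and follows essentially the same route as the paper's: both rest on playing the contraction from Lemma \ref{lem.29inv} (applied with $g_i=a_i$, $k_i=h_i=e$, giving $a_i\cdot(\text{point of }N_\theta^+\xi_\theta)\to\xi_\theta$) against the observation that $a_i$ fixes the point in question because $w$ normalizes $A$. The only difference is bookkeeping with $m$: you eliminate it at the start via the $M_\theta$-invariance of the cell $N_\theta^+P_\theta$ (using that $M_\theta\subset L_\theta$ normalizes $N_\theta^+$), whereas the paper keeps $m$ throughout and instead takes $a_i\in A_\theta^+$ so that $a_i$ commutes with $m$ (as $M_\theta\subset L_\theta$ centralizes $A_\theta$); both maneuvers are valid.
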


\begin{proof}
    Choose any sequence $a_i \in A_\theta^+$ such that
    $\min_{\alpha\in \theta}\alpha(\log a_i)\to \infty$.
    Since $mw\xi_\theta\in N_\theta^+\xi_\theta$,
    we deduce from Lemma \ref{lem.29inv} that $a_im w\xi_\theta $ converges to $ \xi_\theta$ as $i \to \infty$. On the other hand,
    since $w\in N_K(A)$, $A\subset P_\theta$ and $m\in M_\theta$,
   we have $a_i mw\xi_\theta=
   m w(w^{-1} a_iw) \xi_\theta= mw\xi_\theta $ for all $i$.  Hence 
    $mw\xi_{\theta} = \xi_{\theta}$. Since $m\in M_\theta$, this implies $w \xi_{\theta} = \xi_{\theta}$ and hence $w\in P_\theta\cap K=M_\theta$.
\end{proof}

It turns out that the convergence of $g_i\to \xi$ is equivalent to $g_i p\to \xi$
for any $p\in X$. More generally, we have 
\begin{lemma} \label{lem.210inv}\label{bdd}
    If a sequence $g_i \in G$ converges to $\xi \in \F_{\theta}$ and $p_i \in X$ is a
    bounded sequence, then $$\lim_{i \to \infty} g_i p_i = \xi.$$
\end{lemma}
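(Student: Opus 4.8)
The plan is to unwind Definition \ref{fc} for the sequence $g_ip_i$. Writing $p_i = h_io$ with $h_i \in G$ ranging in a fixed compact set $Q$ (possible since a bounded subset of $X = G/K$ has relatively compact preimage in $G$ under the proper quotient map), the assertion $\lim_i g_ip_i = \xi$ is exactly the claim that $g_ih_i \in G$ converges to $\xi$ in the sense of Definition \ref{fc}; this is well-posed because both conditions there depend only on the point $g_ih_io$ and are insensitive to right multiplication by $K$. So I must verify the two bullet points of Definition \ref{fc} for $g_ih_i$.

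First I would dispose of the regularity condition $\min_{\alpha\in\theta}\alpha(\mu(g_ih_i))\to\infty$. Since $h_i\in Q$ and $Q$ is compact, Lemma \ref{lem.cptcartan} gives $\|\mu(g_ih_i)-\mu(g_i)\|\le C(Q)$, and because each $\alpha\in\theta$ is a fixed linear form on $\fa$ we get $\alpha(\mu(g_ih_i))\ge \alpha(\mu(g_i))-C'$ for some constant $C'$; the hypothesis $g_i\to\xi$ then forces $\min_{\alpha\in\theta}\alpha(\mu(g_ih_i))\to\infty$.

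The main work, and the step I expect to be the real obstacle, is the second bullet: producing Cartan components $\kappa_{g_ih_i}\in K$ with $g_ih_i\in \kappa_{g_ih_i}A^+K$ and $\kappa_{g_ih_i}\xi_\theta\to\xi$. I would argue by subsequences, so that it suffices to show every subsequence has a further subsequence along which $\kappa_{g_ih_i}\xi_\theta\to\xi$. Fix Cartan decompositions $g_i=\kappa_ia_ik_i$ with $\kappa_i\xi_\theta\to\xi$ and $\min_{\alpha\in\theta}\alpha(\log a_i)\to\infty$, and $g_ih_i=\tilde\kappa_i\tilde a_i\tilde k_i$. Passing to a subsequence and using compactness of $K$ and $Q$, arrange $\kappa_i\to\kappa_0$, $k_i\to k_0$, $h_i\to h_0$, $\tilde\kappa_i\to\tilde\kappa_0$, $\tilde k_i\to\tilde k_0$; in particular $\kappa_0\xi_\theta=\xi$. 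Now I would compute $\lim_i (g_ih_i)\eta$ for a point $\eta\in\F_\theta$ in two ways via Lemma \ref{lem.29inv}. Writing $g_ih_i=\kappa_ia_i(h_i^{-1}k_i^{-1})^{-1}$ and applying Lemma \ref{lem.29inv} (the regularity of $a_i$ being hypothesized) gives $(g_ih_i)\eta\to\kappa_0\xi_\theta=\xi$ for every $\eta$ in general position with $h_0^{-1}k_0^{-1}P_\theta^+$; writing instead $g_ih_i=\tilde\kappa_i\tilde a_i(\tilde k_i^{-1})^{-1}$ and using $\min_{\alpha\in\theta}\alpha(\log\tilde a_i)\to\infty$ (the regularity proved in the previous paragraph, which is exactly why that step must come first) gives $(g_ih_i)\eta\to\tilde\kappa_0\xi_\theta$ for every $\eta$ in general position with $\tilde k_0^{-1}P_\theta^+$. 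Since the locus of points in general position with a fixed element of $\F_{\i(\theta)}$ is open and dense in $\F_\theta$, the two loci intersect; evaluating both limits on a common such $\eta$ yields $\tilde\kappa_0\xi_\theta=\kappa_0\xi_\theta=\xi$, whence $\kappa_{g_ih_i}\xi_\theta=\tilde\kappa_i\xi_\theta\to\tilde\kappa_0\xi_\theta=\xi$ along the subsequence. This closes the subsequence argument and completes the verification of Definition \ref{fc}. The crux is thus the double application of Lemma \ref{lem.29inv} to two different $KA^+K$-presentations of the same sequence, compared on a shared generic point.
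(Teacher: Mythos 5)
Your proposal is correct and follows essentially the same route as the paper: both proofs verify the two conditions of Definition \ref{fc}, handle regularity via Lemma \ref{lem.cptcartan}, and then apply Lemma \ref{lem.29inv} twice to two different $KA^+(\cdot)$-factorizations of the same sequence, comparing the limits at a common point in general position (using that the two open dense loci intersect). The only difference is bookkeeping: you evaluate the full products $g_ih_i$ directly, while the paper first cancels the left $K$-factors and compares $a_ih_i^{-1}=q_ia_i'$; the conclusion $\tilde\kappa_0\xi_\theta=\kappa_0\xi_\theta=\xi$ is the same as the paper's $q\xi_\theta=\xi_\theta$.
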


\begin{proof}  Let $g_i' \in G$ be such that $g_i' o = p_i$; then $g_i'$ is bounded.
Since $\lim g_i=\xi$, we may
    write $g_i = k_i a_i \ell_i^{-1}$ with $k_i, \ell_i\in K$ and $a_i\in A^+$ where $\min_{\alpha\in \theta} \alpha(\log a_i) \to \infty$, and $k_i \xi_\theta \to \xi$ as $i \to \infty$. 
    Write $g_i g_i' = k_i'a_i'(\ell_i')^{-1} \in KA^+K$. Since $g_i'$ is bounded,
    $\lim_{i\to \infty}
    \min_{\alpha\in \theta} \alpha(\log a_i') = \infty$, by Lemma \ref{lem.cptcartan}. 
    Let $q \in K$ be a limit of the sequence $q_i:=k_i^{-1} k_i'$. By passing to a subsequence, we may assume that $q_i \to q$. Since $d(o, p_i) = d(g_i o, g_i p_i) = d( o, a_i^{-1}q_ia_i' o)$, the sequence $h_i^{-1} := a_i^{-1} q_i a_i'$ is bounded. Passing to a subsequence, we may assume that $h_i$ converges to some $h_0 \in G$. Choose any $\eta \in N_\theta^+\xi_\theta\cap h_0N_\theta^+\xi_\theta$. By Lemma \ref{lem.29inv}, we have 
    $$\lim_{i \to \infty} a_i h_i^{-1} \eta = \xi_\theta\;\;\text{  and }\;\; \lim_{i \to \infty} q_i a_i' \eta = q \xi_\theta.$$
    Since $a_i h_i^{-1} = q_i a_i' $, it follows that $ q\xi_\theta = \xi_\theta$; so $q\in K\cap P_\theta$.
    Hence $\xi=\lim k_i\xi_\theta =\lim k_i' \xi_\theta $. It follows that $\lim g_ip_i=\xi$.
\end{proof}

\begin{lemma}  \label{lem.211inv}
    If a sequence $g_i\in G$ converges to $g$ and a sequence $a_i\in A^+$ satisfies $
    \min_{\alpha\in \theta} \alpha(\log a_i) \to \infty$ as $i\to \infty$, then for any $p \in X$, we have $$\lim_{i \to \infty} g_i a_i p = g \xi_\theta .$$
\end{lemma}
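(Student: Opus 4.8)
The plan is to reduce the statement to verifying the two defining conditions of Definition \ref{fc} for the sequence $g_ia_i$, i.e. that $\lim_{i\to\infty} g_ia_i = g\xi_\theta$, and then to invoke Lemma \ref{lem.210inv} with the constant (hence bounded) sequence $p_i=p$ to upgrade the convergence of $g_ia_io$ to that of $g_ia_ip$. Since $g_i\to g$, the elements $g_i$ lie in a fixed compact set $Q$ (which we may take to contain $e$), so Lemma \ref{lem.cptcartan} provides a constant $C$ with $\|\mu(g_ia_i)-\mu(a_i)\|\le C$. As $\mu(a_i)=\log a_i$ for $a_i\in A^+$ and $\alpha(\log a_i)\to\infty$ for every $\alpha\in\theta$, this immediately yields the first condition $\min_{\alpha\in\theta}\alpha(\mu(g_ia_i))\to\infty$.

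The heart of the argument is the second condition: fixing Cartan decompositions $g_ia_i=\kappa_ib_i\ell_i$ with $\kappa_i,\ell_i\in K$ and $b_i\in A^+$, I must produce $K$-parts converging to $g\xi_\theta$. By compactness of $K$ it suffices to show that along any subsequence with $\kappa_i\to\kappa_\infty$ and $\ell_i\to\ell_\infty$ one has $\kappa_\infty\xi_\theta=g\xi_\theta$, for then every subsequential limit of $\kappa_i\xi_\theta$ in the compact Hausdorff space $\F_\theta$ equals $g\xi_\theta$, whence $\kappa_i\xi_\theta\to g\xi_\theta$. The idea is to evaluate $\lim_i (g_ia_i)\xi$ on a single well-chosen $\xi\in\F_\theta$ in two ways. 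Pick $\xi\in N_\theta^+\xi_\theta\cap \ell_\infty^{-1}N_\theta^+\xi_\theta$, which is nonempty since $N_\theta^+\xi_\theta$ meets every translate $hN_\theta^+\xi_\theta$. On one hand, applying Lemma \ref{lem.29inv} to $a_i=e\cdot a_i\cdot e^{-1}$ gives $a_i\xi\to\xi_\theta$, and hence $g_i(a_i\xi)\to g\xi_\theta$ by continuity of the $G$-action on $\F_\theta$. On the other hand, writing $g_ia_i=\kappa_i b_i(\ell_i^{-1})^{-1}$ and using $\min_{\alpha\in\theta}\alpha(\log b_i)=\min_{\alpha\in\theta}\alpha(\mu(g_ia_i))\to\infty$ from the first condition, Lemma \ref{lem.29inv} (with $k_0=\kappa_\infty$ and $h_0=\ell_\infty^{-1}$) gives $(g_ia_i)\xi\to\kappa_\infty\xi_\theta$. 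Comparing the two limits forces $\kappa_\infty\xi_\theta=g\xi_\theta$.

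Combining the two conditions shows $\lim_i g_ia_i=g\xi_\theta$ in the sense of Definition \ref{fc}, and Lemma \ref{lem.210inv} then delivers $\lim_i g_ia_ip=g\xi_\theta$ for every $p\in X$, as desired. I expect the main obstacle to be the bookkeeping in the second condition: one must ensure that the two applications of Lemma \ref{lem.29inv} are compatible, the first holding for all $\xi\in N_\theta^+\xi_\theta$ along the full sequence, the second only along the chosen subsequence and only for $\xi$ in general position with $\ell_\infty P_\theta^+$, so that a single $\xi$ in the intersection of the two big cells can legitimately be used to equate the two limits.
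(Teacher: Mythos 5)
Your proposal is correct and follows essentially the same route as the paper's proof: both reduce to the basepoint case via Lemma \ref{lem.210inv}, take a Cartan decomposition of $g_ia_i$ with $K$-parts converging along a subsequence, and compare the two limits of $g_ia_i\xi$ for a single $\xi$ chosen in $N_\theta^+\xi_\theta$ intersected with the appropriate translate of the big cell, using Lemma \ref{lem.29inv} twice. The only differences are cosmetic: the paper performs the reduction to $p=o$ first, and your subsequence bookkeeping is spelled out slightly more explicitly.
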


\begin{proof}
    By Lemma \ref{lem.210inv}, it suffices to consider the case when $p = o$. Write $g_i a_i = k_i b_i \ell_i^{-1} $
    with $k_i, \ell_i\in K$ and $b_i\in A^+$. Since the sequence $g_i$ is bounded,  $\lim_{i\to \infty}
    \min_{\alpha\in \theta} \alpha(\log b_i) =\infty$ by Lemma \ref{lem.cptcartan}.
Let $k_0$ be a limit of the sequence $k_i$; without loss of generality, we may assume that $k_i$ converges to $k_0$ as $i\to \infty$. Then $\lim_{i \to \infty} g_i a_i o=k_0\xi_\theta$.
We may also assume that $\ell_i $ converges to some $ \ell_0\in K$. Choose $\xi \in \ell_0N_\theta^+\xi_\theta\cap N_\theta^+\xi_\theta $. Then by Lemma \ref{lem.29inv}, as $i\to \infty$,
$g_i a_i \xi \to k_0 \xi_\theta$ and $a_i \xi \to  \xi_\theta$. Since $g_i $ converges to $ g$, this implies that $k_0 \xi_{\theta} = g \xi_{\theta}$. This finishes the proof.
\end{proof}

 \section{Growth indicators}\label{grow} \label{sec.growthindicator}

Let $\Ga<G$ be a discrete subgroup.
We set
\be\label{mut}\mu_\theta:=p_\theta\circ \mu:G\to \fa_\theta^+.\ee 
\begin{Def} \label{def.propergroup} We say that $\Ga$ is $\theta$-discrete 
if  the restriction $\mu_\theta|_{\Ga}:\Ga \to \fa_\theta^+$ is proper.
\end{Def}
The $\theta$-discreteness of $\Ga$ implies that $\mu_\theta(\Ga)$ is a closed discrete subset of $\fa_\theta^+$. Indeed, $\Ga$ is $\theta$-discrete if and only if the counting measure on $\mu_\theta(\Gamma)$ weighted with multiplicity is a Radon measure on $\fa_\theta^+$.

\begin{Def}[$\theta$-growth indicator]  For a $\theta$-discrete subgroup $\Ga<G$,
we define the $\theta$-growth indicator $\psi_\Ga^{\theta}:\fa_\theta\to [-\infty, \infty] $ as follows: if $u \in \fa_\theta$ is non-zero,
\be\label{gi2} \psi_\Ga^{\theta}(u)=\|u\| \inf_{u\in \cal C}
\tau^\theta_{\mathcal C}\ee 
where $\cal C\subset \fa_\theta$ ranges over all open cones containing $u$, and $\psi_{\Ga}^{\theta}(0) = 0$.
Here $-\infty\le \tau^{\theta}_{\cal C}\le \infty$ denotes the abscissa of convergence of the series ${\mathcal P}^{\theta}_{\cal C}(s)=\sum_{\ga\in \Ga, \mu_\theta(\ga)\in \mathcal C} e^{-s\|\mu_\theta(\ga)\|}$, that is,
$$\tau^{\theta}_{\cal C}=\sup\{s\in \br : {\mathcal P}_{\cal C}^{\theta}(s)=\infty\}= \inf\{s\in \br:\mathcal P_{\cal C}^{\theta}(s)<\infty\} .$$
\end{Def}
This definition is independent of the choice of a norm on $\fa_\theta$.  For $\theta=\Pi$,
we set $$\psi_\Ga:=\psi_\Ga^{\Pi}.$$
The main goal of this section is to establish the following properties of $\psi_\Ga^\theta$ for a general $\theta\subset \Pi$: 
for $\theta=\Pi$, this theorem is due to Quint \cite[Theorem 1.1.1]{Quint2002divergence}. 
\begin{theorem}\label{three} Let $\Ga<G$ be 
a Zariski dense $\theta$-discrete subgroup.
\begin{enumerate}
    \item  $\psi_\Ga^{\theta} <\infty$.
   \item $\psi_\Ga^\theta$ is a homogeneous, upper semi-continuous and concave function. 
     \item $\L_\theta=\{\psi_\Ga^{\theta}\ge 0\}$, $\psi_\Ga^{\theta}=-\infty$ outside $\L_\theta$ and
    $\psi_\Ga^{\theta}>0$ on $\inte \L_\theta$.
\end{enumerate}
\end{theorem}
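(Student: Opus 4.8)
The plan is to adapt Quint's proof of the $\theta=\Pi$ case \cite{Quint2002divergence} to the projected Cartan data $\mu_\theta=p_\theta\circ\mu$. Homogeneity of $\psi_\Ga^\theta$ is immediate from \eqref{gi2}, since the open cones containing $u$ and $tu$ coincide for $t>0$, giving $\psi_\Ga^\theta(tu)=t\,\psi_\Ga^\theta(u)$ for $t\ge 0$. For the finiteness in (1), I would first note that the orbit counting function $\#\{\ga\in\Ga:\|\mu_\theta(\ga)\|\le T\}$ grows at most exponentially: as $p_\theta$ is the orthogonal projection for the Killing norm we have $\|\mu_\theta(\ga)\|\le\|\mu(\ga)\|=d(o,\ga o)$, and $\Ga o\subset X$ is discrete of bounded multiplicity (since $\Ga\cap K$ is finite), so a volume comparison in $X$ bounds the count by $e^{cT}$ for some $c>0$. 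Hence $\tau^\theta_{\fa_\theta}\le c$, and since $\tau^\theta_{\cal C}\le\tau^\theta_{\fa_\theta}$ for every open cone $\cal C$, \eqref{gi2} gives $\psi_\Ga^\theta(u)\le c\|u\|<\infty$. Upper semicontinuity then follows from the infimum-over-cones structure: the quotient $u\mapsto\psi_\Ga^\theta(u)/\|u\|$ is upper semicontinuous on $\fa_\theta\setminus\{0\}$, because if $\psi_\Ga^\theta(u_0)/\|u_0\|<t$ one may pick an open cone $\cal C\ni u_0$ with $\tau^\theta_{\cal C}<t$, and then $\psi_\Ga^\theta(u)/\|u\|\le\tau^\theta_{\cal C}<t$ for every nonzero $u\in\cal C$; multiplying by the continuous positive function $\|\cdot\|$ gives upper semicontinuity away from the origin, and the bound $\psi_\Ga^\theta(u)\le c\|u\|\to 0=\psi_\Ga^\theta(0)$ handles $u=0$.

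The crux of the theorem, and where I expect the genuine difficulty, is concavity. As $\psi_\Ga^\theta$ is positively homogeneous of degree one, concavity is equivalent to superadditivity $\psi_\Ga^\theta(u_1+u_2)\ge\psi_\Ga^\theta(u_1)+\psi_\Ga^\theta(u_2)$. The key ingredient I would establish is a \emph{concatenation lemma}: using Zariski density of $\Ga$, there is a finite set $F\subset\Ga$ and a constant $C>0$ so that for all $\ga_1,\ga_2\in\Ga$ some $f\in F$ satisfies $\|\mu_\theta(\ga_1 f\ga_2)-\mu_\theta(\ga_1)-\mu_\theta(\ga_2)\|\le C$. This is the $\theta$-projection of Benoist's additivity estimate for the Cartan projection \cite{Benoist1997proprietes}, the elements of $F$ serving as connectors that place the relevant flags in general position, with finiteness of $F$ coming from compactness of $K$. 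Granting this, I fix unit vectors $u_1,u_2$ and narrow open cones $\cal C_i\ni u_i$ and count balanced products: elements $\ga_1 f\ga_2$ with $\|\mu_\theta(\ga_i)\|\approx R$ have $\mu_\theta(\ga_1 f\ga_2)\approx\mu_\theta(\ga_1)+\mu_\theta(\ga_2)$ pointing near $u_1+u_2$ with norm $\approx\|u_1+u_2\|R$, hence lie in an open cone $\cal C$ around $u_1+u_2$ that shrinks as the $\cal C_i$ shrink. A bounded-multiplicity argument shows these products are essentially distinct, so the annular counts multiply, yielding $\#\{\ga:\mu_\theta(\ga)\in\cal C,\ \|\mu_\theta(\ga)\|\le\|u_1+u_2\|R\}\gtrsim e^{(\tau^\theta_{\cal C_1}+\tau^\theta_{\cal C_2})R}$. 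Taking exponential growth rates gives $\tau^\theta_{\cal C}\ge(\tau^\theta_{\cal C_1}+\tau^\theta_{\cal C_2})/\|u_1+u_2\|\ge(\psi_\Ga^\theta(u_1)+\psi_\Ga^\theta(u_2))/\|u_1+u_2\|$ (using $\tau^\theta_{\cal C_i}\ge\psi_\Ga^\theta(u_i)$); as the $\cal C_i$ shrink the resulting cones $\cal C$ exhaust a neighborhood basis of the ray through $u_1+u_2$, so the infimum in \eqref{gi2} obeys the same bound and $\psi_\Ga^\theta(u_1+u_2)\ge\psi_\Ga^\theta(u_1)+\psi_\Ga^\theta(u_2)$. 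The delicate points here are extracting matching annular lower bounds at a common scale $R$ for both cones (from divergence of the cone-restricted series below $\tau^\theta_{\cal C_i}$) and the bounded multiplicity of the concatenation map; these are routine but must be handled carefully.

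For (3), the identification $\L_\theta=\{\psi_\Ga^\theta\ge 0\}$ together with $\psi_\Ga^\theta=-\infty$ off $\L_\theta$ is a counting argument. If $u\notin\L_\theta$, a sufficiently narrow open cone $\cal C\ni u$ meets $\mu_\theta(\Ga)$ in a bounded set — otherwise a $\theta$-limit direction would lie in $\ov{\cal C}$ — so by $\theta$-discreteness $\cal C$ contains only finitely many $\mu_\theta(\ga)$, whence $\cal P^\theta_{\cal C}(s)$ is a finite sum, $\tau^\theta_{\cal C}=-\infty$, and $\psi_\Ga^\theta(u)=-\infty$. Conversely, if $u\in\L_\theta$ then every open cone about $u$ contains infinitely many $\mu_\theta(\ga)$, so $\cal P^\theta_{\cal C}(s)=\infty$ for all $s\le 0$ and $\tau^\theta_{\cal C}\ge 0$; taking the infimum over cones gives $\psi_\Ga^\theta(u)\ge 0$.

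Strict positivity on $\inte\L_\theta$ is then deduced from concavity. First, $\L_\theta=p_\theta(\L)$ is full-dimensional in $\fa_\theta$ because Benoist's limit cone $\L$ has nonempty interior \cite{Benoist1997proprietes} and $p_\theta$ is surjective; and $\delta:=\max_{\|u\|=1}\psi_\Ga^\theta(u)=\tau^\theta_{\fa_\theta}>0$, since $\Ga$ contains a free subgroup by the Tits alternative and $\|\mu_\theta(\ga)\|$ is dominated by the word length, forcing exponential growth of the $\mu_\theta$-orbit. Choosing $v_0$ with $\psi_\Ga^\theta(v_0)=\delta$ and any $v_1\in\inte\L_\theta$, the point $p_0:=v_0+v_1$ lies in $\inte\L_\theta$ and superadditivity gives $\psi_\Ga^\theta(p_0)\ge\delta>0$. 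Finally, for arbitrary $u\in\inte\L_\theta$, writing $u=\e p_0+(u-\e p_0)$ with $u-\e p_0\in\L_\theta$ for small $\e>0$ and applying superadditivity yields $\psi_\Ga^\theta(u)\ge\e\,\psi_\Ga^\theta(p_0)>0$. Thus the entire theorem rests on the concatenation lemma behind concavity; once that is secured the remaining assertions follow by the above convex-geometric bookkeeping.
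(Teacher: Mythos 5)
Your argument for part (1) has a genuine gap, and it sits exactly where the paper has to do its real work. From $\|\mu_\theta(\ga)\|\le\|\mu(\ga)\|=d(o,\ga o)$ you conclude that $\#\{\ga\in\Ga:\|\mu_\theta(\ga)\|\le T\}$ is controlled by the number of orbit points in the metric ball of radius $T$; but the inclusion runs the other way. Since $\|\mu_\theta\|\le\|\mu\|$, the set $\{\ga:\|\mu_\theta(\ga)\|\le T\}$ \emph{contains} $\{\ga:d(o,\ga o)\le T\}$ and may in addition contain elements with $d(o,\ga o)$ arbitrarily large, because the fiber $\{u\in\fa^+:\|p_\theta(u)\|\le T\}$ is unbounded whenever $p_\theta|_{\fa^+}$ is not proper — which is the typical situation when $G$ is not simple (e.g.\ $G=G_1\times G_2$ with $\theta$ among the simple roots of $G_1$, where $p_\theta$ kills the entire $G_2$-direction). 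So volume comparison in $X$ bounds the wrong count, and neither the exponential bound nor $\delta_\Ga^\theta<\infty$ follows from it. This is precisely the content of Proposition \ref{prop.propertaufinite} in the paper: one writes $G=G_1G_2$ with $G_1$ minimal so that $\theta$ lies among its simple roots, uses $\theta$-discreteness to show $\Ga\cap(\{e\}\times G_2)$ is finite and that, up to finite index, $\Ga$ is the graph of a representation $\sigma:\Ga_1\to G_2$ of its discrete projection $\Ga_1$, deduces $\delta_\Ga^\theta=\delta_{\Ga_1}^\theta$, and only then invokes properness of $p_\theta$ on the relevant factors (Lemma \ref{thetapp}) to conclude finiteness. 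Note also that your upper semicontinuity at the origin uses the bound $\psi_\Ga^\theta\le c\|\cdot\|$, so the gap propagates into part (2).

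The remainder of your plan is essentially sound. The concavity argument via a concatenation lemma is the paper's own route: Lemma \ref{lem.concavemeas} is exactly the $p_\theta$-projection of Quint's additivity-with-connectors statement, and the bounded-multiplicity half of it is another place where $\theta$-discreteness must be invoked (the ambiguity set $\{\ga\in\Ga:\|\mu_\theta(\ga)\|\le R'\}$ is finite only because of it), so that point deserves more than the label ``routine.'' Your deduction of strict positivity on $\inte\L_\theta$ from superadditivity, the attained maximum $\delta_\Ga^\theta$, and the Tits alternative (here the inequality $\|\mu_\theta\|\le\|\mu\|$ is used in the correct direction, as in \eqref{po}) is a legitimately different route from the paper, which instead proves the comparison $\psi_\Ga^\theta\circ p_\theta\ge\psi_\Ga$ (Lemma \ref{qu}) and imports positivity from Quint's growth indicator; the paper's comparison has the added benefit of yielding \eqref{qq}, which is used again later, whereas your route stays internal to $\fa_\theta$ but leans on concavity, hence on the concatenation lemma, for a statement the paper gets independently of it.
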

Here, $\L_\theta\subset \fa_\theta^+$ denotes the $\theta$-limit cone of $\Ga$, which is the asymptotic cone of $\mu_\theta(\Ga)$:
 \be\label{limitc} \L_\theta=\{\lim t_i\mu_\theta(\ga_i): \ga_i\in \Ga, t_i\to 0\}.\ee 
We set $\L=\L_{\Pi}$, which is the usual limit cone.
 By \cite[Sections 1.2, 4.6]{Benoist1997proprietes}, if $\Ga$ is Zariski dense, then $\L$ is a convex cone with non-empty interior and $\mu(\Ga)$ is within a bounded distance from $\L$.
 We have \be\label{pg}
 \L=\{\psi_\Ga \ge 0\},\quad  \text{and}  \quad \psi_\Ga >0
 \text{ on $\inte \L$} \ee
 and $\psi_\Ga=-\infty$ outside $\L$ \cite[Theorem 1.1.1]{Quint2002divergence}. Noting that $\L_\theta=p_\theta(\L)$, we get:
\begin{lemma}\label{tcone}  Let $\Ga$ be a Zariski dense $\theta$-discrete subgroup. The $\theta$-limit cone $\L_\theta$ is a convex cone in $\fa_\theta^+$ with non-empty interior and $\mu_\theta(\Ga)$ is within a bounded distance from $\L_\theta$. 
\end{lemma}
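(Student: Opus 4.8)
The plan is to derive all three assertions from the identity $\L_\theta=p_\theta(\L)$ recorded just above, combined with Benoist's description of the full limit cone $\L$ \cite{Benoist1997proprietes} and four elementary features of the projection $p_\theta:\fa\to\fa_\theta$: it is linear; it is surjective onto $\fa_\theta$ (being a projection); it is $1$-Lipschitz for $\|\cdot\|$, since it is the orthogonal projection with respect to the $\cal W$-invariant inner product induced by the Killing form; and it satisfies $p_\theta(\fa^+)\subseteq\fa_\theta^+$. This last inclusion I would justify by noting that $\mu$ is onto $\fa^+$ while $\mu_\theta=p_\theta\circ\mu$ takes values in $\fa_\theta^+$ by construction, so $p_\theta(\fa^+)=p_\theta(\mu(G))=\mu_\theta(G)\subseteq\fa_\theta^+$.

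First I would establish the convex cone property and the containment in $\fa_\theta^+$. Since $\L\subseteq\fa^+$, the inclusion $p_\theta(\fa^+)\subseteq\fa_\theta^+$ gives $\L_\theta=p_\theta(\L)\subseteq\fa_\theta^+$. Because $\L$ is a convex cone by Benoist and $p_\theta$ is linear, its image is again a convex cone: the cone property follows from $p_\theta(tv)=t\,p_\theta(v)$ for $t>0$, and convexity from the fact that a linear image of a convex set is convex. For the non-empty interior, recall that $\L$ has non-empty interior in $\fa$; since a surjective linear map between finite-dimensional spaces is an open map, $p_\theta$ sends the open set $\inte\L$ to a non-empty open subset $p_\theta(\inte\L)\subseteq\fa_\theta$, and as $p_\theta(\inte\L)\subseteq p_\theta(\L)=\L_\theta$, the cone $\L_\theta$ has non-empty interior in $\fa_\theta$. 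Finally, for the bounded-distance claim, Benoist provides $R>0$ with $\mu(\Ga)$ contained in the $R$-neighborhood of $\L$; applying the $1$-Lipschitz map $p_\theta$ yields $\mu_\theta(\Ga)=p_\theta(\mu(\Ga))$ inside the $R$-neighborhood of $p_\theta(\L)=\L_\theta$, as required.

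Given that the key identity $\L_\theta=p_\theta(\L)$ is already granted, the remaining argument is essentially formal linear algebra, so I do not expect a serious obstacle here; the genuine content lies in that identity itself. Were one to verify it from scratch, the inclusion $p_\theta(\L)\subseteq\L_\theta$ is immediate by continuity of $p_\theta$ applied to convergent rescalings $t_i\mu(\ga_i)$, and the reverse inclusion is the only delicate point: for $w=\lim t_i\mu_\theta(\ga_i)$ one replaces each $\mu(\ga_i)$ by a point $v_i\in\L$ at distance $\le R$ (using the bounded-distance property of $\L$), observes that $t_i p_\theta(v_i)\to w$ with $t_i v_i\in\L$, and concludes $w\in p_\theta(\L)$ using that $p_\theta(\L)$ is closed, which is where the care is needed. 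For the present lemma, however, I would simply invoke $\L_\theta=p_\theta(\L)$ and present the three short deductions above.
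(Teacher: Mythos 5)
Your proof is correct and takes essentially the same route as the paper: the paper deduces the lemma in exactly this way, citing Benoist's properties of $\L$ (convex cone, non-empty interior, $\mu(\Ga)$ within bounded distance of $\L$) and then simply "noting that $\L_\theta=p_\theta(\L)$", so your write-up just supplies the routine details (linearity and openness of the surjective projection $p_\theta$, the inclusion $p_\theta(\fa^+)\subset\fa_\theta^+$, and the norm-decreasing property). Your closing remark correctly identifies that the only genuinely delicate point is the identity $\L_\theta=p_\theta(\L)$ itself, which the paper also invokes without proof.
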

\subsection*{$\psi_\Ga^\theta < \infty$
and $\theta$-critical exponent}
In this subsection, we show Theorem \ref{three}(1), that is, for a Zariski dense $\theta$-discrete $\Ga < G$,  $\psi_\Ga^\theta$ does not take $+\infty$-value. This will be achieved by proving $\delta_\Ga^\theta<\infty$ (Proposition \ref{prop.propertaufinite})
where $$ -\infty\le \delta^{\theta}_\Ga\le \infty $$ denotes the abscissa of convergence of the series $s\mapsto \sum_{\ga\in \Ga} e^{-s\|\mu_\theta(\ga)\|}$. For $\theta=\Pi$, we have $0<\delta_\Ga=\delta_\Ga^{\Pi}<\infty$
\cite[Theorem 4.2.2]{Quint2002divergence}. Since $\|\mu_\theta(g)\|\le \|\mu(g)\|$ for all $g\in G$ and hence
$\sum_{\ga\in \Ga}e^{-s\|\mu(\ga)\|} \le \sum_{\ga\in \Ga}
e^{-s\|\mu_\theta(\ga)\|}$ for all $s\ge 0$, we have
 \be\label{po} 0<\delta_\Ga\le \delta^{\theta}_\Ga .\ee

\begin{lem}
 If $\Ga$ is Zariski dense and  $\theta$-discrete, 
 then $$\delta_\Ga^{\theta}=\limsup_{t\to \infty}\frac{1}{t}\#\log
 \{\ga\in \Ga:\|\mu_\theta(\ga)\|<t\}\in (0, \infty].$$
\end{lem}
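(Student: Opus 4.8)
The plan is to recognize this as the standard identity relating the abscissa of convergence of a Dirichlet-type series to the exponential growth rate of its coefficient-counting function. Writing $N(t)=\#\{\ga\in\Ga:\|\mu_\theta(\ga)\|<t\}$, the $\theta$-discreteness of $\Ga$ (Definition \ref{def.propergroup}) guarantees $N(t)<\infty$ for every $t$, and since $\Ga$ is infinite we have $N(t)\to\infty$, so $\tau:=\limsup_{t\to\infty}\tfrac1t\log N(t)$ is a well-defined element of $[0,\infty]$. I would prove $\delta_\Ga^\theta=\tau$ by the two inequalities separately, and then read off the asserted positivity from \eqref{po}.

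For $\delta_\Ga^\theta\le\tau$: given any $s>\tau$, fix $\tau'$ with $\tau<\tau'<s$, so that $N(t)\le e^{\tau' t}$ for all $t\ge t_0$ by definition of $\tau$. Grouping the elements of $\Ga$ into the shells $\{n\le\|\mu_\theta(\ga)\|<n+1\}$ and using $e^{-s\|\mu_\theta(\ga)\|}\le e^{-sn}$ on each shell (valid since $s>\tau\ge 0$), I would estimate
$$\sum_{\ga\in\Ga}e^{-s\|\mu_\theta(\ga)\|}\le \sum_{n=0}^{\infty}e^{-sn}\bigl(N(n+1)-N(n)\bigr)\le \sum_{n=0}^{\infty}e^{-sn}N(n+1),$$
whose tail is dominated by $\sum_{n}e^{\tau'}e^{-(s-\tau')n}<\infty$ because $s>\tau'$; the finitely many initial terms are finite as each $N(n+1)<\infty$. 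Thus the Poincaré series converges for all $s>\tau$, giving $\delta_\Ga^\theta\le\tau$.

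For $\delta_\Ga^\theta\ge\tau$: given $s$ with $0\le s<\tau$, fix $\tau'$ with $s<\tau'<\tau$. By the definition of the $\limsup$ there is a sequence $t_k\to\infty$ with $N(t_k)\ge e^{\tau' t_k}$, and since $e^{-s\|\mu_\theta(\ga)\|}\ge e^{-st_k}$ for every $\ga$ with $\|\mu_\theta(\ga)\|<t_k$, I would bound the partial sums by
$$\sum_{\ga\in\Ga}e^{-s\|\mu_\theta(\ga)\|}\ge N(t_k)\,e^{-st_k}\ge e^{(\tau'-s)t_k},$$
which tends to $\infty$ as $k\to\infty$ since $\tau'>s$. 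Hence the series diverges for every $s\in[0,\tau)$, giving $\delta_\Ga^\theta\ge\tau$. (For $s<0$ the series diverges trivially, so only the range $s\ge 0$ is at issue.) Combining the two inequalities yields $\delta_\Ga^\theta=\tau$, and then \eqref{po} gives $\delta_\Ga^\theta\ge\delta_\Ga>0$, so the common value lies in $(0,\infty]$.

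The computation is entirely routine; the only point requiring care is the correct bookkeeping of the $\limsup$ in the two directions — an \emph{eventual} upper bound $N(t)\le e^{\tau' t}$ on one side versus a merely \emph{subsequential} lower bound $N(t_k)\ge e^{\tau' t_k}$ on the other — together with checking that both estimates remain valid when $\tau=\infty$, so that no finiteness of $\delta_\Ga^\theta$ need be assumed in advance (that finiteness being the content of the separate Proposition \ref{prop.propertaufinite}).
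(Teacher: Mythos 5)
Your proof is correct, but it takes a more self-contained route than the paper. The paper's entire proof consists of verifying the hypotheses of a general lemma of Quint \cite[Lemma 3.1.1]{Quint2002divergence}: $\theta$-discreteness makes the counting measure $\sum_{\ga\in\Ga} D_{\mu_\theta(\ga)}$ a Radon measure on $\fa_\theta^+$, and $\delta_\Ga^\theta>0$ holds by \eqref{po}; the identity between the abscissa of convergence and the exponential growth rate is then quoted, not proved. You instead reprove that underlying Dirichlet-series fact directly, via the shell decomposition for the convergence direction and the subsequential lower bound for the divergence direction. What the paper's route buys is brevity and uniformity (the same lemma of Quint is invoked again, e.g.\ in Lemma \ref{wd}); what your route buys is transparency about the one hypothesis hidden in the citation: the formula $\delta=\limsup_t \tfrac1t\log N(t)$ for an abscissa of convergence can fail when the series already converges at $s=0$, and your remark that the series trivially diverges for $s\le 0$ (because $\Ga$ is infinite) is precisely what excludes that degeneracy --- this is the role the positivity hypothesis plays in Quint's lemma. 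Note that both arguments still genuinely need \eqref{po} for the strict positivity $\delta_\Ga^\theta>0$ in the conclusion, since $\theta$-discreteness alone would allow $N(t)$ to grow subexponentially; your final sentence handles this the same way the paper does.
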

\begin{proof}
For $x \in \fa_{\theta}$, we denote by $D_x$ the Dirac mass at $x$. Since $\sum_{\ga\in \Ga}D_{\mu_\theta(\gamma)}$
 is a Radon measure on $\fa_\theta^+$ and  $\delta_\Ga^{\theta}>0$ by \eqref{po}, it follows from 
\cite[Lemma 3.1.1]{Quint2002divergence}. 
\end{proof} 

For a general discrete subgroup $\Ga<G$, $\delta_\Ga^{\theta}$ may be infinite (e.g., $\Ga=\Ga_1\times \Ga_2$ where $\Ga_i$ is an infinite discrete subgroup of $G_i$ for both $i=1,2$ and $\theta$ consists of simple roots of $G_1$). Since
$\tau_{\mathcal C}^{\theta}\le \delta_\Ga^\theta$ for all cones $\mathcal C$
in $\fa_\theta$, we have
 $$\sup_{u\in \fa_\theta, \|u\|=1} \psi_\Ga^{\theta}(u)\le \delta_\Ga^\theta.$$ Hence Theorem \ref{three}(1) follows once we show  the that $\delta_\Ga^{\theta}<\infty$ for any $\theta$-discrete subgroup $\Ga<G$ as in Proposition \ref{prop.propertaufinite}.

\begin{figure}[h]
\begin{tikzpicture}[scale=0.8, every node/.style={scale=0.8}]
    \filldraw[draw = white, fill=blue!10, opacity=0.3] (0, 0) -- (3, 0) arc(0:60:3) -- (0, 0);
    \draw (-3, 0) -- (3, 0);
    \draw (-1.5, -2.5980762114) -- (1.5, 2.5980762114);
    \draw (-1.5, 2.5980762114) -- (1.5, -2.5980762114);

    \draw (2, 2.1) node {$\fa^+$};

    \draw (3, 0) node[right] {$\ker \alpha_2$};
    \draw (1.5, 2.5980762114) node[above] {$\ker \alpha_1$};

    \filldraw[blue] (1, 0) circle(1pt);
    \draw (1, 0) node[below] {$u$};

    \draw[blue, thick] (1, 0) -- (1, 1.7320508076);
    \draw[blue] (1, 0.9) node[right] {$p_{\alpha_1}^{-1}(u) \cap \fa^+$};
    
\end{tikzpicture}
\caption{$G = \PSL_3(\R)$ and $\theta = \{\alpha_1\}$.} \label{fig}
\end{figure}

\begin{lem}\label{thetapp}
  If $p_\theta|_{\fa^+}$ is a proper map
  (e.g., $G$ is simple), then $$\delta^{\theta}_\Ga <\infty$$ for any discrete subgroup $\Ga<G$. In particular, if $G$ is simple, any discrete subgroup $\Ga<G$ is $\theta$-discrete.
\end{lem}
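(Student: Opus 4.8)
The plan is to deduce the finiteness of $\delta_\Ga^\theta$ from the already-known finiteness of $\delta_\Ga=\delta_\Ga^\Pi$ by comparing $\|\mu_\theta(\ga)\|$ with $\|\mu(\ga)\|$. The key observation is that properness of $p_\theta|_{\fa^+}$ is, by homogeneity, equivalent to a \emph{linear} lower bound. Indeed, $p_\theta$ is linear and $\fa^+$ is a closed cone, so I would restrict the continuous function $u\mapsto\|p_\theta(u)\|$ to the compact slice $\{u\in\fa^+:\|u\|=1\}$: if $p_\theta|_{\fa^+}$ is proper, its minimum $c$ there must be strictly positive, since a zero minimum would give a unit vector $u_0\in\fa^+$ with $p_\theta(u_0)=0$, whence the entire ray $\R_{\ge 0}u_0$ lies in a single fiber and properness fails. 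Thus $\|p_\theta(u)\|\ge c\|u\|$ for all $u\in\fa^+$, and in particular $\|\mu_\theta(\ga)\|=\|p_\theta(\mu(\ga))\|\ge c\,\|\mu(\ga)\|$ for every $\ga\in\Ga$.

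Granting this bound, the finiteness is immediate: for $s>0$,
$$\sum_{\ga\in\Ga}e^{-s\|\mu_\theta(\ga)\|}\le\sum_{\ga\in\Ga}e^{-sc\|\mu(\ga)\|},$$
and the right-hand side converges as soon as $sc>\delta_\Ga$, so $\delta_\Ga^\theta\le\delta_\Ga/c<\infty$. The same bound also gives $\theta$-discreteness: for a discrete subgroup $\Ga$ the map $\mu|_\Ga$ is automatically proper (the preimage of a ball is $\Ga$ intersected with a $K$-bi-invariant compact set, hence finite), so $\{\ga\in\Ga:\|\mu_\theta(\ga)\|\le R\}\subset\{\ga\in\Ga:\|\mu(\ga)\|\le R/c\}$ is finite for each $R$, i.e.\ $\mu_\theta|_\Ga$ is proper.

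It remains to verify the hypothesis when $G$ is simple, i.e.\ that $p_\theta|_{\fa^+}$ is proper, equivalently $\fa^+\cap\ker p_\theta=\{0\}$. Here I would first identify $p_\theta$ with the orthogonal projection onto $\fa_\theta$ for the Killing form: the subgroup of $\cal W$ fixing $\fa_\theta$ pointwise is generated by the reflections $s_\beta$, $\beta\in\Pi-\theta$, its unique invariant complement to $\fa_\theta$ is $\fa_\theta^\perp$, so the defining invariance of $p_\theta$ forces $p_\theta$ to be this orthogonal projection and $\ker p_\theta=\fa_\theta^\perp$. Identifying $\fa\cong\fa^*$ via the Killing form and writing $H_\beta\in\fa$ for the dual of $\beta$, the relation $\fa_\theta=\bigcap_{\beta\in\Pi-\theta}\ker\beta$ gives $\ker p_\theta=\fa_\theta^\perp=\mathrm{span}\{H_\beta:\beta\in\Pi-\theta\}$.

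The crux, and the step I expect to be the main obstacle, is the following positivity statement, which is exactly where simplicity (connectedness of the Dynkin diagram) of $G$ enters: every nonzero $u\in\fa^+$ has \emph{strictly positive} coordinates in the basis $\{H_\beta\}_{\beta\in\Pi}$. I would prove this by writing $u=\sum_{\alpha\in\Pi}\lambda_\alpha\omega_\alpha$ as a nonnegative combination of the fundamental coweights $\omega_\alpha$ (the extremal generators of $\fa^+$, with $\lambda_\alpha=\alpha(u)\ge 0$ and some $\lambda_\alpha>0$), and then invoking the classical fact that the inverse Cartan matrix of an irreducible root system has strictly positive entries, which makes each $\omega_\alpha$ a strictly positive combination of the $H_\beta$. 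Consequently every $H_\beta$-coordinate of $u$ is strictly positive. Since $\theta\neq\emptyset$, any nonzero $u\in\fa^+\cap\ker p_\theta$ would have vanishing $H_\beta$-coordinate for each $\beta\in\theta$, contradicting strict positivity; hence $\fa^+\cap\ker p_\theta=\{0\}$, $p_\theta|_{\fa^+}$ is proper, and the first part applies to give $\delta_\Ga^\theta<\infty$ and $\theta$-discreteness for every discrete $\Ga<G$.
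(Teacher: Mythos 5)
Your proof is correct, and its reduction steps coincide with the paper's: both arguments rest on upgrading properness of $p_\theta|_{\fa^+}$ to a linear lower bound $\|p_\theta(u)\|\ge c\|u\|$ on $\fa^+$ (the paper states the two-sided bound $C^{-1}\|u\|\le\|p_\theta(u)\|\le C\|u\|$), and then comparing the $\theta$-Poincar\'e series with the ordinary one to conclude $\delta_\Ga^\theta\le \delta_\Ga/c<\infty$ from Quint's finiteness of $\delta_\Ga$. Where you genuinely differ is in the step ``$G$ simple $\Rightarrow$ $p_\theta|_{\fa^+}$ proper.'' The paper disposes of this with a one-line geometric assertion --- the angle between any two walls of $\fa^+$ is strictly smaller than $\pi/2$ --- illustrated by a rank-two figure, whereas you actually prove it: you identify $p_\theta$ with the Killing-orthogonal projection onto $\fa_\theta$ (via uniqueness of the invariant complement for the Weyl subgroup generated by $s_\beta$, $\beta\in\Pi-\theta$, which has no nonzero fixed vectors outside $\fa_\theta$), so that $\ker p_\theta=\op{span}\{H_\beta:\beta\in\Pi-\theta\}$, and then exclude nonzero vectors of $\fa^+\cap\ker p_\theta$ using the strict positivity of the entries of the inverse Cartan matrix of an irreducible root system. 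This is the algebraic incarnation of the same fact (positivity of the pairwise inner products of fundamental coweights is exactly the acute-wall-angle statement), but your version is self-contained where the paper's is an assertion, at the cost of invoking the inverse-Cartan-matrix lemma. A further small difference: you verify $\theta$-discreteness directly from properness of $\mu|_\Ga$ and the linear bound, while the paper obtains it by noting that finiteness of $\delta_\Ga^\theta$ already forces $\theta$-discreteness by definition of the series; both are fine.
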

\begin{proof} First, observe that if $G$ is simple, then
 the angle between any two walls of $\fa^+$ is strictly smaller than $\pi/2$ and hence $p_\theta|_{\fa^+}$ is a proper map (see Figure \ref{fig}).
 Now,  if $p_\theta|_{\fa^+}$ is a proper map, then for some constant $C>1$, we have
$$C^{-1}\|u\|\le \|p_\theta(u)\| \le C \|u\|$$ for all $u\in \fa^+$. 
Hence $\delta_\Ga<\infty$ implies that $$\delta^{\theta}_\Ga <\infty.$$ 
\end{proof}

 It follows from the definition of $\delta_\Ga^\theta$ that the finiteness of $\delta_\Ga^\theta$
implies the $\theta$-discreteness of $\Ga$. Indeed, the converse holds as well from which Theorem \ref{three}(1) follows.
\begin{prop} \label{prop.propertaufinite}
 We have $$\Ga \mbox{ is } \theta \mbox{-discrete} \quad \mbox{if and only if} \quad \delta_\Ga^{\theta}<\infty.$$
\end{prop}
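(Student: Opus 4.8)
The plan is to prove the two implications separately. The implication $\delta_\Ga^{\theta}<\infty\Rightarrow\Ga$ is $\theta$-discrete is the easy one and is essentially the observation preceding the statement: fixing any $s_0>\max(\delta_\Ga^{\theta},0)$, the series $\sum_{\ga}e^{-s_0\|\mu_\theta(\ga)\|}$ converges, and for each $R>0$ the crude bound $\#\{\ga\in\Ga:\|\mu_\theta(\ga)\|\le R\}\le e^{s_0R}\sum_{\ga}e^{-s_0\|\mu_\theta(\ga)\|}<\infty$ shows that $\mu_\theta|_\Ga$ is proper.

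For the converse I would \emph{not} attempt to prove that $\ker p_\theta\cap\L=\{0\}$: this can genuinely fail for $\theta$-discrete groups (for instance a self-joining of a geometrically finite Fuchsian group under two faithful discrete representations that send a fixed parabolic element to, respectively, a parabolic and a loxodromic element is $\theta$-discrete while its limit cone touches $\ker p_\theta$). Instead the idea is to reduce $\delta_\Ga^{\theta}$ to a critical exponent living on the simple factors that $\theta$ actually meets. Decompose $G$ (up to isogeny) as a product $\prod_i G_i$ of almost simple factors, so that $\Pi=\bigsqcup_i\Pi_i$, $\fa=\bigoplus_i\fa^{(i)}$, and $\theta=\bigsqcup_i\theta_i$ with $\theta_i=\theta\cap\Pi_i$. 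Let $J=\{i:\theta_i\ne\emptyset\}$, let $G_J=\prod_{i\in J}G_i$, and let $\pi_J:G\to G_J$ denote the projection. Since $\fa_{\theta_i}=0$ for $i\notin J$, the map $\mu_\theta$ factors as $\mu_\theta=\mu_\theta^{G_J}\circ\pi_J$, where $\mu_\theta^{G_J}$ is the $\theta$-Cartan projection of $G_J$; in particular $\mu_\theta(\ga)$ depends only on $\pi_J(\ga)$.

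The heart of the argument is to extract, from $\theta$-discreteness, discreteness downstairs. First, the kernel $K:=\Ga\cap\ker\pi_J$ is finite: every $\ga\in K$ satisfies $\mu_\theta(\ga)=0$, and $\{\ga\in\Ga:\mu_\theta(\ga)=0\}$ is finite by properness of $\mu_\theta|_\Ga$. Consequently every fibre of $\pi_J|_\Ga$ is a coset of $K$ and has exactly $\#K<\infty$ elements. Second, $\pi_J(\Ga)$ is discrete in $G_J$: if $\bar\ga_n\in\pi_J(\Ga)$ were distinct and convergent, then $\mu_\theta^{G_J}(\bar\ga_n)$ would be bounded, and lifting to distinct $\ga_n\in\Ga$ would produce infinitely many elements with bounded $\|\mu_\theta\|$, contradicting $\theta$-discreteness. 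I expect this reduction --- recognizing that the finiteness must come from the projected discrete group $\pi_J(\Ga)$ rather than from any transversality of $\L$ to $\ker p_\theta$ --- to be the main conceptual step.

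It then remains to invoke the already established simple-factor case. For each $i\in J$ the factor $G_i$ is simple with $\theta_i\ne\emptyset$, so $p_{\theta_i}|_{\fa^{(i)}\cap\fa^+}$ is proper (Lemma \ref{thetapp}); hence $p_\theta=\bigoplus_{i\in J}p_{\theta_i}$ is proper on $\fa_J\cap\fa^+$, the positive chamber of $G_J$. Applying the argument of Lemma \ref{thetapp} to the discrete subgroup $\pi_J(\Ga)<G_J$ gives $\delta_{\pi_J(\Ga)}^{\theta}<\infty$. Finally, since all fibres of $\pi_J|_\Ga$ have the same size $\#K$,
\[
\sum_{\ga\in\Ga}e^{-s\|\mu_\theta(\ga)\|}=\#K\sum_{\bar\ga\in\pi_J(\Ga)}e^{-s\|\mu_\theta^{G_J}(\bar\ga)\|},
\]
so the two Poincar\'e series share the same abscissa of convergence and therefore $\delta_\Ga^{\theta}=\delta_{\pi_J(\Ga)}^{\theta}<\infty$, completing the proof.
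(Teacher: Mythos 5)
Your proposal is correct and is essentially the paper's own argument: both reduce to the almost direct factor of $G$ spanned by the simple factors that $\theta$ meets (the paper's $G_1$, your $G_J$), prove that the kernel of the projection restricted to $\Ga$ is finite and that the projected subgroup is discrete, identify the abscissas of convergence of the two Poincar\'e series, and conclude from the properness of $p_\theta$ on the positive chamber of that factor via Lemma \ref{thetapp}. The only divergence is cosmetic: the paper passes to a finite-index subgroup to make the kernel trivial and realizes $\Ga$ as the graph of a faithful representation $\sigma:\Ga_1\to G_2$, whereas you keep all of $\Ga$ and absorb the finite kernel as the multiplicative constant $\#K$ relating the two series --- a variation that, if anything, sidesteps the question of whether such a finite-index subgroup exists.
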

\begin{proof} 
It suffices to show that the $\theta$-discreteness of $\Ga$ implies $\delta_{\Ga}^{\theta} < \infty$.
Write $G=G_1G_2$ as an almost direct product
of semisimple real algebraic groups where $G_1$ is the smallest group such that
$\theta$ is contained in the set of simple roots for 
$(\frak g_1, \fa_1^+=\fa^+\cap \frak g_1)$.
Then $\mu_\theta(\Gamma)\subset \fa_\theta^+ \subset \fa_1^+$.
Since the kernel of $p_\theta|_{\mu(\Ga)}$ contains
 $\mu(\Ga\cap (\{e\}\times G_2))$, the properness hypothesis implies that $\Ga\cap (\{e\}\times G_2)$ is finite. By passing to a subgroup of finite index, we may assume that $\Ga\cap (\{e\}\times G_2)$ is trivial.
 The properness of
 $\mu_\theta|_{\Ga}$ also implies that
the projection of $\Ga$ to $G_1$ is a discrete subgroup, which we denote by $\Ga_1$. Since there exists a unique element, say, $\sigma(\ga_1)\in G_2$ such that $(\ga_1, \sigma(\ga_1))\in \Ga$ for each $\ga_1\in \Ga_1$, we get a faithful representation $\sigma:\Ga_1\to G_2$, and  $\Ga$ is of the form
 $\{(\ga_1, \sigma(\ga_1)):\ga\in  \Ga_1 \}$.
Since $\mu_\theta(\ga)=\mu_\theta(\ga_1)$ for $\ga=(\ga_1, \sigma(\ga_1))\in \Ga$, we have
$$\delta_\Ga^\theta=\delta_{\Ga_1}^\theta.$$
Hence we may assume without loss of generality that
$\theta$ contains at least one root of each simple factor of $G$. Since the restriction $p_\theta: \fa^+\cap \operatorname{Lie} G_0 \to \fa_\theta\cap \operatorname{Lie} G_0$ is a proper map for each simple factor $G_0$ of $G$ as mentioned before, it follows that
$p_\theta$ is a proper map. Hence the claim $\delta_\Ga^\theta<\infty$ follows by Lemma \ref{thetapp}.
\end{proof}

\begin{rmk} We remark that the
    $\theta$-discreteness of $\Ga$ does not necessarily imply that the map $p_\theta|_\L$ is a proper map. For example, let $\Ga_0$ be a Zariski dense and convex cocompact subgroup of $\SO^\circ(k,1)$, $k\ge 2$, and let $\sigma:\Ga_0\to \SO^\circ(k,1)$ be a discrete faithful representation such that $\sigma(\Ga_0)$ is Zariski dense but not convex cocompact. Consider $\Ga=\{(g, \sigma(g)):g\in \Ga_0\}$ and
    $G= \SO^\circ(k,1)\times \SO^\circ(k,1)$. We may identify $\fa=\{(x_1, x_2)\in \br^2\}$ and
    $\fa^+=\br_{\ge 0}\times \br_{\ge 0}$.
    Then the limit cone of $\Ga$ is a convex cone of $\fa^+$ containing the $x_1$-axis; otherwise, $\sigma$ must be convex cocompact. Hence for $\theta=\{\alpha_2\}$ where $\alpha_2(x_1, x_2)=x_2$, the fiber $p_\theta^{-1}(0)$ is the whole $x_1$-axis, and hence $p_\theta|_\L$ is not proper. On the other hand,
    the discreteness of $\sigma(\Ga_0)$ is same as $\theta$-discreteness of $\Ga$. 
 \end{rmk}

\subsection*{Concavity of $\psi_\Ga^\theta$} The growth indicator $\psi_\Ga^\theta$ is clearly a homogeneous and upper semi-continuous function \cite[Lemma 3.1.7]{Quint2002divergence}.  It is also a concave function, but its proof requires the following lemma, which is proved in \cite[Proposition 2.3.1]{Quint2002divergence} for $\theta=\Pi$.
\begin{lemma} \label{lem.concavemeas}
    Suppose that $\Ga$ is Zariski dense and $\theta$-discrete. Then there exists a map $\pi : \Ga \times \Ga \to \Ga$ satisfying the following: \begin{enumerate}
        \item there exists $\kappa \ge 0$ such that for every $\ga_1, \ga_2 \in \Ga$, $$\|\mu_{\theta}(\pi(\ga_1, \ga_2)) - \mu_{\theta}(\ga_1) - \mu_{\theta}(\ga_2) \| < \kappa; \ \mbox{and}$$
        \item for every $R \ge 0$, there exists a finite subset $H$ of $\Ga$ such that for $\ga_1, \ga_1', \ga_2, \ga_2' \in \Ga$ with $\|\mu_{\theta}(\ga_i) - \mu_{\theta}(\ga_i') \| \le R$ for $i = 1, 2$, $$\pi(\ga_1, \ga_2) = \pi(\ga_1', \ga_2') \Rightarrow \ga_1' \in \ga_1 H \mbox{ and } \ga_2' \in H \ga_2.$$
    \end{enumerate}
\end{lemma}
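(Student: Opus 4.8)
The plan is to define the map $\pi$ by inserting finitely many ``aligning'' elements: I set $\pi(\ga_1,\ga_2)=\ga_1\, f(\ga_1,\ga_2)\,\ga_2$ for a suitable $f(\ga_1,\ga_2)$ drawn from a finite subset $F\subset\Ga$, and reduce everything to the additivity of the full Cartan projection. Concretely, using that $\Ga$ is Zariski dense together with the proximal dynamics on the full flag variety $\F=G/P$ --- this is exactly the input of Quint's argument for $\theta=\Pi$ in \cite[Proposition 2.3.1]{Quint2002divergence} --- I would first produce a finite set $F\subset\Ga$ and a constant $C>0$ with the property that for all $\ga_1,\ga_2\in\Ga$ there is $f=f(\ga_1,\ga_2)\in F$ such that
\be\label{concat}
\|\mu(\ga_1 f\ga_2)-\mu(\ga_1)-\mu(f)-\mu(\ga_2)\|\le C .
\ee
The mechanism is the standard concatenation estimate: writing $\ga_i=k_i\exp(\mu(\ga_i))\ell_i$ in the Cartan decomposition, Lemma \ref{lem.cptcartan} reduces \eqref{concat} to a bound on $\mu(\exp(\mu(\ga_1))\,(\ell_1 f k_2)\,\exp(\mu(\ga_2)))$, which is additive up to $O(1)$ as soon as the middle term $\ell_1 f k_2$ lies in a fixed neighborhood of a point in general position. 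Since $\ell_1,k_2$ range over the compact group $K$, a covering argument together with Zariski density of $\Ga$ yields the finite set $F$ that rectifies every pair.

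Granting \eqref{concat}, assertion (1) is immediate: applying $p_\theta$ to \eqref{concat} and using that $p_\theta$ is bounded together with the boundedness of $\{\mu(f):f\in F\}$ gives $\|\mu_\theta(\pi(\ga_1,\ga_2))-\mu_\theta(\ga_1)-\mu_\theta(\ga_2)\|\le\kappa$, with $\kappa:=\|p_\theta\|\,(C+\max_{f\in F}\|\mu(f)\|)$.

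The heart of the lemma, and the step where $\theta$-discreteness (rather than mere discreteness) is essential, is assertion (2). Suppose $\ga:=\pi(\ga_1,\ga_2)=\pi(\ga_1',\ga_2')$ with $\|\mu_\theta(\ga_i)-\mu_\theta(\ga_i')\|\le R$, and write $f=f(\ga_1,\ga_2)$, $f'=f(\ga_1',\ga_2')$. Setting
$$\epsilon:=(\ga_1 f)^{-1}(\ga_1' f')=\ga_2(\ga_2')^{-1}\in\Ga,$$
it suffices, since $F$ is finite, to show that $\epsilon$ ranges over a finite subset of $\Ga$ depending only on $R$ (and on $\Ga,F$); indeed then $\ga_1^{-1}\ga_1'$ and $\ga_2'\ga_2^{-1}$ both lie in a fixed finite set $H$. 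The plan to bound $\epsilon$ is dynamical. The additivity \eqref{concat} for both factorizations, fed into the convergence Lemmas \ref{lem.29inv}--\ref{lem.211inv}, shows that the attracting $\theta$-flag of $\ga$ in $\F_\theta$ is simultaneously approximated by $\ga_1 f\,\xi_\theta$ and by $\ga_1' f'\,\xi_\theta$, while the corresponding repelling flag in $\F_{\i(\theta)}$ is approximated using $\ga_2,\ga_2'$; consequently $\epsilon$ moves both a point $\xi\in\F_\theta$ and a point $\eta\in\F_{\i(\theta)}$ in general position by a bounded amount. By the rigidity of stabilizers of pairs in general position (Corollary \ref{cor.genweyl}), this forces $\epsilon$ into a bounded neighborhood of $\Stab(\xi)\cap\Stab(\eta)$, a conjugate of the Levi subgroup $L_\theta$. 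Within this description the $\fa_\theta$-component of $\epsilon$ is exactly the $\theta$-translation carrying $\mu_\theta(\ga_2')$ to $\mu_\theta(\ga_2)$, so the constraint $\|\mu_\theta(\ga_2)-\mu_\theta(\ga_2')\|\le R$ yields $\|\mu_\theta(\epsilon)\|\le R+O(1)$.

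Finally I would invoke $\theta$-discreteness: by Definition \ref{def.propergroup} the map $\mu_\theta|_\Ga$ is proper, so the set $\{\epsilon\in\Ga:\|\mu_\theta(\epsilon)\|\le R+O(1)\}$ is finite. Taking $H$ to be this set (enlarged to absorb the finitely many pairs with some $\mu_\theta(\ga_i)$ bounded, which are handled trivially) completes the argument. The main obstacle is precisely the dynamical bound $\|\mu_\theta(\epsilon)\|\le R+O(1)$: in Quint's setting $\theta=\Pi$ one controls the full $\mu(\epsilon)$ directly, whereas for a proper $\theta$ the element $\epsilon$ may a priori carry an unbounded $S_\theta$-component that is invisible to the $\theta$-flags, and it is exactly the $\theta$-discreteness hypothesis --- which prevents $\Ga$ from containing infinitely many elements of bounded $\mu_\theta$ --- that rescues the finiteness of $H$.
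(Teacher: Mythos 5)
Your construction of $\pi$ and your proof of assertion (1) coincide with the paper's: both take $\pi(\ga_1,\ga_2)=\ga_1 f\ga_2$ exactly as in Quint's Proposition 2.3.1 and apply the norm-decreasing projection $p_\theta$ to the full-$\mu$ additivity. The divergence is in assertion (2), where the paper simply re-runs Quint's proof of Proposition 2.3.1(2) and observes that, under the hypotheses $\|\mu_\theta(\ga_i)-\mu_\theta(\ga_i')\|\le R$, that proof already yields $\|\mu_\theta(\ga_1^{-1}\ga_1')\|,\|\mu_\theta(\ga_2'\ga_2^{-1})\|\le R'$, so that $H=\{\ga\in\Ga:\|\mu_\theta(\ga)\|\le R'\}$ works and is finite precisely by $\theta$-discreteness. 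Your dynamical substitute for this step has a first genuine gap: the flag-convergence machinery you invoke (Definition \ref{fc}, Lemmas \ref{lem.29inv}--\ref{lem.211inv}) requires $\min_{\alpha\in\theta}\alpha(\mu(\cdot))$ to be large, i.e.\ a form of $\theta$-regularity, while the lemma assumes only $\theta$-discreteness; the paper explicitly notes after Definition \ref{reg1} that $\theta$-discrete does not imply $\theta$-regular. Already for $\theta=\Pi$ the lemma covers every Zariski dense discrete subgroup, e.g.\ a lattice in $\PSL_3(\R)$, whose elements can have $\mu(\ga)$ arbitrarily close to a wall $\ker\alpha$ with $\alpha\in\theta$; for such elements the ``attracting $\theta$-flag of $\ga$'' is not approximated by $\ga_1 f\xi_\theta$ in any uniform sense, so the claim that $\epsilon$ displaces a general-position pair by a bounded amount is unsupported. (Absorbing ``finitely many pairs with bounded $\mu_\theta$'' does not fix this: $\|\mu_\theta(\ga)\|$ can be large while $\alpha(\mu(\ga))$ stays bounded for some $\alpha\in\theta$.)

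The second, independent, gap is in your last step, and it is fatal as stated: from ``$\epsilon$ lies in a bounded neighborhood of a conjugate of $L_\theta$ with $A_\theta$-part of size $\le R+O(1)$'' one cannot conclude $\|\mu_\theta(\epsilon)\|\le R+O(1)$, because the $S_\theta$-component is \emph{not} invisible to $\mu_\theta$. Concretely, take $G=\PSL_3(\R)$ and $\theta=\{\alpha_1\}$, so $\fa_\theta=\ker\alpha_2$ and $S_\theta$ is the lower $2\times 2$ block. An element $s\in S_\theta$ whose Cartan part in $S_\theta$ is $\exp(0,r,-r)$ has full Cartan projection $\mu(s)=(r,0,-r)$, hence $\mu_\theta(s)=p_{\alpha_1}(r,0,-r)=\tfrac{3r}{2}w_1$, which is unbounded as $r\to\infty$, even though $s$ fixes the pair $(\xi_\theta,w_0P_{\i(\theta)})$ and has trivial $A_\theta$-component. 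So if the $S_\theta$-part of $\epsilon$ is unbounded, then $\mu_\theta(\epsilon)$ is unbounded, $\epsilon$ simply fails to lie in your set $\{\ga:\|\mu_\theta(\ga)\|\le R+O(1)\}$, and the finiteness of that set is irrelevant. You have the logic reversed: $\theta$-discreteness converts the bound $\|\mu_\theta(\epsilon)\|\le R'$ into finiteness of $H$; it cannot produce that bound. Producing it requires controlling the full flag/transversality bookkeeping coming from the inserted element $f$ --- which is exactly what Quint's proof of Proposition 2.3.1(2) does, with no regularity hypothesis --- and that is the route the paper takes; any repair of your argument along these lines essentially reduces to it.
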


\begin{proof}
    Since $p_{\theta}$ is norm-decreasing, (1) follows from \cite[Proposition 2.3.1(1)]{Quint2002divergence}. 
    By the proof of \cite[Proposition 2.3.1(2)]{Quint2002divergence}, the claim (2) holds if we set $H$ to be the subset consisting of all elements $\ga \in \Ga$ such that $\mu_{\theta}(\ga) < R'$ for some $R' > 0$ depending only on $R$. Since $\Ga$ is $\theta$-discrete, this subset $H$ is finite, as desired.
\end{proof}

\begin{prop}\label{st} 
If $\Ga$ is Zariski dense and $\theta$-discrete, then $\psi_\Ga^\theta$ is concave, and hence there exists a unique unit vector $u_{\Ga}^{\theta} \in \fa_{\theta}^+$ such that 
$$\psi_{\Ga}^{\theta}(u_{\Ga}^{\theta}) = \max_{\|u \| = 1, u\in \fa_\theta^+} \psi_{\Ga}^{\theta}(u) = \delta_{\Ga}^{\theta}.$$
\end{prop}
\begin{proof}
By Lemma \ref{lem.concavemeas}, the counting measure $\sum_{\ga \in \Ga} D_{\mu_{\theta}(\ga)}$ is of concave growth (see \cite[Section 3.2]{Quint2002divergence} for details).  It follows from \cite[Theorem 3.2.1]{Quint2002divergence} that $\psi_{\Ga}^{\theta}$ is concave. By \cite[Corollary 3.1.4, Corollary 3.3.5]{Quint2002divergence},
the second claim follows. 
\end{proof}

\begin{Def} \label{tangg} A linear form $\psi\in \fa_\theta^*$ is said to be tangent to $\psi_\Ga^\theta$ (at $u\in \fa_\theta^+-\{0\}$) if  $\psi \ge \psi_{\Ga}^{\theta}$ on $\fa_\theta^+$ and $\psi(u) = \psi_{\Ga}^{\theta}(u)$.
\end{Def}
By the supporting hyperplane theorem, we have the following corollary:

\begin{cor} \label{cor.tangentinterior}
    Let $\Ga < G$ be Zariski dense and $\theta$-discrete. For any $u\in \inte\L_\theta$, there exists a linear form $\psi \in \fa_{\theta}^*$ tangent to $\psi_{\Ga}^{\theta}$ at $u$.
\end{cor}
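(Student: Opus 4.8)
The plan is to apply the supporting hyperplane theorem to the concave function $\psi_\Ga^\theta$ at the point $u \in \inte \L_\theta$. The key structural facts I will invoke are all established above: by Theorem \ref{three}(2), $\psi_\Ga^\theta$ is concave, upper semi-continuous and homogeneous; by Theorem \ref{three}(3), $\L_\theta = \{\psi_\Ga^\theta \ge 0\}$ with $\psi_\Ga^\theta > 0$ on $\inte \L_\theta$, and $\psi_\Ga^\theta = -\infty$ outside $\L_\theta$; and by Lemma \ref{tcone}, $\L_\theta$ is a convex cone with non-empty interior. Since $u \in \inte \L_\theta$ in particular lies in the interior of the effective domain of the concave function $\psi_\Ga^\theta$, the function is finite and locally bounded near $u$, so the standard supporting hyperplane (supergradient) theorem for concave functions applies.

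The main steps are as follows. First I would note that $\psi_\Ga^\theta$ restricted to the open convex set $\inte \L_\theta$ is a finite concave function, so by the supporting hyperplane theorem there is an affine function $\ell : \fa_\theta \to \br$ with $\ell \ge \psi_\Ga^\theta$ on $\inte \L_\theta$ and $\ell(u) = \psi_\Ga^\theta(u)$. Second, I would upgrade $\ell$ to a genuine linear form by exploiting homogeneity: because $\psi_\Ga^\theta$ is homogeneous of degree one and $\ell$ is its supporting affine function at $u$, one checks that the supergradient can be taken to be linear, i.e.\ the constant term vanishes. Concretely, evaluating the supporting inequality along the ray $t \mapsto t u$ together with $\psi_\Ga^\theta(tu) = t\,\psi_\Ga^\theta(u)$ forces the affine function to agree with its linear part at $u$, and one then verifies $\psi(v) := $ the linear part satisfies $\psi \ge \psi_\Ga^\theta$ on all of $\L_\theta$ by a homogeneity/limiting argument. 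Third, I would extend the inequality $\psi \ge \psi_\Ga^\theta$ from $\inte \L_\theta$ to all of $\fa_\theta^+$: on $\L_\theta \setminus \inte \L_\theta$ this follows by upper semi-continuity of $\psi_\Ga^\theta$ and continuity of $\psi$, taking limits from the interior, and outside $\L_\theta$ the inequality is automatic since $\psi_\Ga^\theta = -\infty$ there.

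The one point requiring genuine care — the main obstacle — is the passage from an affine supporting function to a \emph{linear} one, and the extension of the domination inequality to the boundary of $\L_\theta$ and to all of $\fa_\theta^+$. The linearity is where homogeneity must be used rather than bare convexity, and one must make sure the supergradient provided by the abstract theorem can be normalized to kill its affine constant; the cleanest route is to use that for a concave homogeneous function the supergradient set at an interior point consists of linear forms tangent along the whole ray through $u$. For the boundary extension, the subtlety is that $\psi_\Ga^\theta$ need only be upper semi-continuous, so one cannot argue by naive continuity; instead, for any $u' \in \L_\theta$ one approximates by $u_n \in \inte \L_\theta$ with $u_n \to u'$ (possible since $\inte \L_\theta$ is dense in $\L_\theta$ as $\L_\theta$ is a convex cone with nonempty interior), applies $\psi(u_n) \ge \psi_\Ga^\theta(u_n)$, and uses $\psi(u_n) \to \psi(u')$ with $\limsup \psi_\Ga^\theta(u_n) \le \psi_\Ga^\theta(u')$ reversed appropriately — here one must choose the approximation so that upper semi-continuity gives the inequality in the right direction, which is the technically delicate step. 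Everything else is a direct citation of the convexity facts recorded in Theorem \ref{three} and Lemma \ref{tcone}.
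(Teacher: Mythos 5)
Your overall route is the paper's route: the paper's entire proof is an appeal to the supporting hyperplane theorem for the concave, homogeneous function $\psi_\Ga^\theta$ (Proposition \ref{st}, Theorem \ref{three}), and your first two steps (supergradient at the interior point $u$, then killing the affine constant by evaluating along the ray $t\mapsto tu$ and using homogeneity) are correct and are exactly the standard mechanism behind that appeal.

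However, your third step, as you describe it, would fail. Upper semi-continuity at a boundary point $u'\in\partial\L_\theta$ says $\limsup_{v\to u'}\psi_\Ga^\theta(v)\le\psi_\Ga^\theta(u')$: the function is allowed to jump \emph{up} at the boundary relative to its values inside. So from $\psi\ge\psi_\Ga^\theta$ on $\inte\L_\theta$ and $u_n\to u'$ you only get $\psi(u')\ge\limsup_n\psi_\Ga^\theta(u_n)$, and u.s.c.\ bounds that $\limsup$ from \emph{above} by $\psi_\Ga^\theta(u')$ — the inequality chains in the wrong direction, and no choice of approximating sequence fixes this via u.s.c.\ alone. You sense the problem (``reversed appropriately \ldots the technically delicate step'') but do not resolve it. The correct tool is concavity, not semi-continuity: fix $u_0\in\inte\L_\theta$ and set $u_t=(1-t)u'+tu_0\in\inte\L_\theta$ for $t\in(0,1]$; then concavity gives $\psi_\Ga^\theta(u_t)\ge(1-t)\psi_\Ga^\theta(u')+t\,\psi_\Ga^\theta(u_0)$, hence $\psi(u_t)\ge(1-t)\psi_\Ga^\theta(u')+t\,\psi_\Ga^\theta(u_0)$, and letting $t\to0^+$ yields $\psi(u')\ge\psi_\Ga^\theta(u')$. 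Alternatively — and this is the cleaner form of the paper's argument — apply the supporting hyperplane theorem once to the hypograph of $\psi_\Ga^\theta$ in $\fa_\theta\times\br$ (with value $-\infty$ outside $\L_\theta$): the hypothesis $u\in\inte\L_\theta$ is used only to rule out a vertical supporting hyperplane, the resulting supergradient inequality $\psi_\Ga^\theta(v)\le\psi_\Ga^\theta(u)+\psi(v-u)$ holds for \emph{all} $v\in\fa_\theta$ simultaneously, and your ray argument then gives $\psi(u)=\psi_\Ga^\theta(u)$ and $\psi\ge\psi_\Ga^\theta$ everywhere, with no separate boundary step at all.
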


\subsection*{Positivity of $\psi_\Ga^\theta$}
By Lemma \ref{tcone}, we have $\psi_\Ga^{\theta}=-\infty$ outside $\L_\theta$.
If $\Theta\supset \theta$, then any $\theta$-discrete
$\Ga$ is $\Theta$-discrete as well. The following lemma
shows how $\psi_\Ga^{\theta}$ is related to $\psi_\Ga^{\Theta}$ from which Theorem \ref{three}(3) follows:

\begin{lemma} \label{qu}
       For $\Theta\supset \theta$, let $p_\theta=p_\theta|_{\fa_{\Theta}}:\fa_{\Theta}\to \fa_\theta$ by abuse of notation.
 For any Zariski dense $\theta$-discrete $\Ga<G$, we have  \be \label{eqn.growthrel1}
    \psi_{\Ga}^{\theta}\circ p_\theta \ge \psi^{\Theta}_{\Ga}\quad\text{on } \fa_\Theta.
    \ee

    In particular, \be \label{eqn.jul142025}
    \psi^\theta_\Ga \ge 0 \text{ on $ \L_\theta$}\quad\text{ and }\quad   \psi^\theta_\Ga > 0 \text{ on $\inte \L_\theta$}.\ee
    \end{lemma}

    \begin{proof}
    Note that \eqref{eqn.jul142025} follows from \eqref{pg} and \eqref{eqn.growthrel1}.
    By homogeneity,
    it suffices to prove \eqref{eqn.growthrel1} for all $v \in p_{\theta}^{-1}(u) \cap \fa_{\Theta}$, where  $u\in \L_\theta$ is an arbitrary unit vector.
    Let  $v \in p_{\theta}^{-1}(u) \cap \fa_{\Theta}$. 
    Let $\C \subset \fa_{\theta}$ be an open cone containing $u$.   For each $\e > 0$,
 set 
    \be \label{eqn.smallcone}
    \C (v, \e) := \left\{ w \in \fa_\Theta : p_{\theta}(w) \neq 0 \mbox{ and } \left\|  \tfrac{w}{\|p_{\theta}(w)\|} - v \right\|  < \e \right\}.
    \ee
    Since $\|p_\theta(v)\|=\|u\|=1$, $\C(v, \e)$
    is an open cone containing $v$.
   In the following, let $\e>0$ be small enough so that 
     $\C(v, \e) \subset p_{\theta}^{-1}(\C)$.

Then for all $s \in \R$, we have $$\begin{aligned}
        \sum_{\ga \in \Ga, \mu_\Theta(\ga) \in \C(v, \e)} e^{-s \| \mu_\Theta(\ga) \|} & \le \sum_{\ga \in \Ga, \mu_\Theta(\ga) \in \C(v, \e)} e^{- \left( s{\|v\| } - |\e s|  \right) \| \mu_{\theta}(\ga) \|} \\
        & \le \sum_{\ga \in \Ga, \mu_{\theta}(\ga) \in \C} e^{- \left( s{\|v\| } - |\e s| \right) \| \mu_{\theta}(\ga) \|}.
    \end{aligned}$$
    Hence we have $$\tau^\Theta_{\C(v, \e)} \le \left( {\|v\| } - \e \right)^{-1} \tau^{\theta}_{\C}.$$
  Therefore we have $$\psi_{\Ga}^\Theta(v) \le \|v\| \tau^\Theta_{\C(v, \e)} \le \|v\|\left( {\|v\| } - \e \right)^{-1} \tau^{\theta}_{\C}.$$

  Taking $\e \to 0$ yields that $$\psi_{\Ga}^\Theta(v) \le \tau_{\C}^{\theta}.$$ Since $\C\subset\fa_\Theta$ is an arbitrary open cone in $\fa_{\theta}$ containing $u$, it follows that $$\psi^\Theta_{\Ga}(v) \le \psi_{\Ga}^{\theta}(u),$$ and hence \eqref{eqn.growthrel1} is proved. 
  Last claim follows the from \eqref{pg} and \eqref{eqn.growthrel1} applied to $\Theta=\Pi$.
\end{proof}

\subsection*{Comparison between $\psi_\Ga^\theta$ and $\psi^\Theta_\Ga$}
 Note that for a discrete subgroup $\Ga < G$,
the properness of $p_{\theta}|_{\L_\theta}$ implies the $\theta$-discreteness of $\Ga$ as $\mu(\Ga)$
is within a bounded distance from $\L$. The following lemma is to appear in \cite{Quint_inprep} in a more general context. 
\begin{lemma} \label{qu2} 
Let $\Ga < G$ be a Zariski dense discrete subgroup.
   If  $p_\theta|_{\L}$ is a proper map (e.g., $G$ is simple), then for any $\Theta\supset \theta$ and for any $u \in \fa_{\theta}$,
   \be \label{eqn.growthrel2}
    \psi_{\Ga}^{\theta}(u) = \max_{v \in p_{\theta}^{-1}(u)} \psi_{\Ga}^\Theta(v)
    \ee where $p_\theta=p_\theta|_{\fa_\Theta}$ by abuse of notation.
    \end{lemma}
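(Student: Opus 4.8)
The plan is to combine the inequality already recorded in Lemma \ref{qu} with a compactness argument furnished by the properness hypothesis. Observe first that, since $\fa_\theta\subset\fa_\Theta$, the projections are compatible, $p_\theta=(p_\theta|_{\fa_\Theta})\circ p_\Theta$, so that $p_\theta\circ\mu_\Theta=\mu_\theta$ and $\L_\theta=p_\theta(\L)=p_\theta(\L_\Theta)$. The hypothesis that $p_\theta|_\L$ is proper then forces $p_\theta|_{\L_\Theta}\colon\L_\Theta\to\L_\theta$ to be proper as well: any sequence $w_n\in\L_\Theta$ with $p_\theta(w_n)$ bounded can be written $w_n=p_\Theta(\ell_n)$ with $\ell_n\in\L$, and properness of $p_\theta|_\L$ keeps $\{\ell_n\}$ bounded, hence $\{w_n\}$ bounded. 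Consequently the fiber $F:=p_\theta^{-1}(u)\cap\L_\Theta$ is compact for each $u\in\L_\theta$ (and $F=\{0\}$ when $u=0$). Fixing $u\in\L_\theta\setminus\{0\}$, the cases $u=0$ and $u\notin\L_\theta$ being immediate, Lemma \ref{qu} gives $\psi_\Ga^\theta(u)\ge\psi_\Ga^\Theta(v)$ for every $v\in p_\theta^{-1}(u)$, while $\psi_\Ga^\Theta\equiv-\infty$ off $\L_\Theta$; since $\psi_\Ga^\Theta$ is upper semicontinuous and $F$ is compact, $s_1:=\max_{v\in p_\theta^{-1}(u)}\psi_\Ga^\Theta(v)$ is attained and satisfies $s_1\le s_0:=\psi_\Ga^\theta(u)$.

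It remains to prove $s_1\ge s_0$, which I would do by contradiction: assume $s_1<s_0$. For each $v\in F$ the definition gives $\inf_{\C'\ni v}\tau^\Theta_{\C'}=\psi_\Ga^\Theta(v)/\|v\|\le s_1/\|v\|$, so there is an open cone $\C'_v\subset\fa_\Theta$ containing $v$ with $\tau^\Theta_{\C'_v}$ arbitrarily close to $s_1/\|v\|$; shrinking $\C'_v$ I may further arrange, using $p_\theta(v)=u$ and continuity, that $\|p_\theta(w)\|\ge(\|u\|/\|v\|-\eta)\|w\|$ for all $w\in\C'_v$. By compactness I extract a finite subcover $\C'_{v_1},\dots,\C'_{v_n}$ of $F$, whose union is an open cone containing $F$. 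Since $\mu_\Theta(\Ga)$ lies within bounded distance of $\L_\Theta$ (Lemma \ref{tcone} applied to $\Theta$) and $p_\theta\circ\mu_\Theta=\mu_\theta$, for every open cone $\C\subset\fa_\theta$ sufficiently close to the ray $\R_{>0}u$ each $\ga$ with $\mu_\theta(\ga)\in\C$ and $\|\mu_\theta(\ga)\|$ large has $\mu_\Theta(\ga)\in\bigcup_j\C'_{v_j}$.

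Splitting the Poincar\'e sum over $\C$ according to which $\C'_{v_j}$ contains $\mu_\Theta(\ga)$ and using the norm comparison on each piece, I would bound
$$\sum_{\mu_\theta(\ga)\in\C}e^{-s\|\mu_\theta(\ga)\|}\ \le\ \sum_{j=1}^n\ \sum_{\mu_\Theta(\ga)\in\C'_{v_j}}e^{-s(\|u\|/\|v_j\|-\eta)\|\mu_\Theta(\ga)\|}+C_0,$$
where $C_0$ collects the finitely many excluded small elements. The $j$-th inner sum converges once $s(\|u\|/\|v_j\|-\eta)>\tau^\Theta_{\C'_{v_j}}$, i.e. once $s>\tau^\Theta_{\C'_{v_j}}\|v_j\|/(\|u\|-\eta\|v_j\|)$, a threshold tending to $s_1/\|u\|$ as the auxiliary parameters shrink. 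Hence $\tau^\theta_\C\le s_1/\|u\|+o(1)$, and taking the infimum over such $\C$ yields $\psi_\Ga^\theta(u)=\|u\|\inf_{\C\ni u}\tau^\theta_\C\le s_1<s_0=\psi_\Ga^\theta(u)$, a contradiction. Therefore $s_1=s_0$ with the maximum attained, proving \eqref{eqn.growthrel2}. The main obstacle is precisely this uniform covering and norm-comparison step: it is here that properness of $p_\theta|_\L$ is indispensable, since it makes the fiber $F$ compact, so that a finite cone cover exists and the directional growth near $\R_{>0}u$ is controlled by finitely many $\tau^\Theta$-exponents; without compactness the supremum over the fiber need neither be attained nor equal $\psi_\Ga^\theta(u)$.
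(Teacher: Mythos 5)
Your proof is correct and follows essentially the same route as the paper's: the inequality $\psi_\Ga^\theta(u)\ge\psi_\Ga^\Theta(v)$ is quoted from Lemma \ref{qu}; properness (which you transfer from $\L$ to $\L_\Theta$, a step the paper leaves implicit) makes the fiber $F=p_\theta^{-1}(u)\cap\L_\Theta$ compact, so upper semicontinuity gives an attained maximum $s_1$; and the reverse inequality comes from covering $F$ by finitely many cones with controlled $\Theta$-exponents, comparing $\|\mu_\theta(\ga)\|$ with $\|\mu_\Theta(\ga)\|$ on each cone, and bounding $\tau^\theta_{\mathcal C}$ for small cones $\mathcal C\ni u$. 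Two points are compressed relative to the paper. First, your covering claim --- that for a sufficiently small cone $\mathcal C\ni u$, all but finitely many $\ga$ with $\mu_\theta(\ga)\in\mathcal C$ satisfy $\mu_\Theta(\ga)\in\bigcup_j\mathcal C'_{v_j}$ --- does not follow from Lemma \ref{tcone} and $p_\theta\circ\mu_\Theta=\mu_\theta$ alone, as your sentence suggests; it is exactly the step the paper proves by a sequential-compactness contradiction (directions in $\L_\Theta$ lying over directions tending to $u$ must subconverge into $F$, hence eventually lie in the open cover), and that argument again invokes the properness of $p_\theta|_{\L_\Theta}$ which you established at the start, so it is routine for you but should be written out. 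Second, your termwise series comparison is valid only for $s\ge 0$, so a priori it yields $\tau^\theta_{\mathcal C}\le\max\{0,\;s_1/\|u\|+o(1)\}$; the paper removes this truncation by reducing to the case $\tau^\theta_{\mathcal C}\ge\psi_\Ga^\theta(u)>0$, while in your contradiction setup it suffices to observe that $s_1\ge 0$, since $F\ne\emptyset$ and $\psi_\Ga^\Theta\ge 0$ on $\L_\Theta$.
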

   \begin{proof}  Suppose that $p_\theta|_{\L}:\L\to \fa_\theta$ is a proper map. By Lemma \ref{qu}, 
   it suffices to consider a unit vector $u\in \L_\theta$ with $\psi_\Ga^{\theta}(u)>0$.
   Since $p_{\theta}^{-1}(u)\cap \L_\Theta$ is a compact subset and  $\psi^\Theta_\Ga$ is upper semi-continuous, we have
   $$\sup_{v \in p_{\theta}^{-1}(u)} \psi_{\Ga}^{\Theta}(v)=
   \max_{v \in p_{\theta}^{-1}(u)\cap \L_\Theta} \psi^\Theta_{\Ga}(v).$$
 
 For all sufficiently small $\e>0$ and each $v \in p_{\theta}^{-1}(u)$,
    there exists $0 < \e_v < \e$ such that 
    \be \label{eqn.choiceofsmallcone}
    \|v\| \tau^\Theta_{\C(v, \e_v)} < \psi^\Theta_{\Ga}(v) + \e
    \ee
    where $\C(v, \e_v)$ is as defined in \eqref{eqn.smallcone}.  
   Since $p_{\theta}^{-1}(u) \cap \L_\Theta$ is compact, there exist  $v_1, \cdots, v_n \in p_{\theta}^{-1}(u)$ such that $$p_{\theta}^{-1}(u) \cap \L_\Theta \subset \bigcup_{i = 1}^n \C(v_i, \e_{v_i}).$$
    Take an open cone $\C \subset \fa_{\theta}$ containing $u$ such that 
    $$p_{\theta}^{-1}(u) \cap \L_\Theta \subset p_{\theta}^{-1}(\C) \cap \L_\Theta \subset \bigcup_{i = 1}^n \C(v_i, \e_{v_i}).$$
    This is indeed possible; if not, there is a sequence of unit vectors $u_j \in \fa_{\theta}$ converging to $u$ as $j \to \infty$ such that for each $j$, there exists $w_j \in p_{\theta}^{-1}(u_j) \cap \L_\Theta$ that does not belong to $\bigcup_{i = 1}^n \C(v_i, \e_{v_i})$. Since $p_{\theta}|_{\L_\Theta}$ is proper and the unit sphere in $\fa_{\theta}$ is compact, we may assume that the sequence $w_j$ converges to some $w \in \L_\Theta$ after passing to a subsequence. Since $p_{\theta}(w_j) = u_j \to u$ as $j \to \infty$, we have $p_{\theta}(w) = u$, and hence $w \in p_{\theta}^{-1}(u) \cap \L_\Theta$. It implies that $w_j \in \bigcup_{i = 1}^n \C(v_i, \e_{v_i})$ for all large $j$, contradiction.

    Since $\mu_\Theta(\Ga)$ is within a bounded distance from $\L_\Theta$ (Lemma \ref{tcone}), there are only finitely many elements of $\mu_{\Theta}(\Ga) \cap p_{\theta}^{-1}(\cal C)$ outside of $\bigcup_{i=1}^n \cal C(v_i, \e_{v_i})$. 
    Hence for each $s \ge 0$, we have
    $$\begin{aligned}
        \sum_{\ga \in \Ga, \mu_{\theta}(\ga) \in \C} e^{-s \| \mu_{\theta}(\ga) \|}  &\ll  \sum_{i = 1}^n \sum_{\ga \in \Ga, \mu_\Theta(\ga) \in \C(v_i, \e_{v_i})} e^{-s \| \mu_{\theta}(\ga) \|}\\
        & \le \sum_{i = 1}^n \sum_{\ga \in \Ga, \mu_\Theta(\ga) \in \C(v_i, \e_{v_i})} 
        e^{-s \frac{ \| \mu_{\Theta}(\ga) \|}{\| v_i\| + \varepsilon_{v_i}}}.
    \end{aligned}$$
Here and afterwards, the notation $f(s)\ll g(s)$ means that for some uniform constant $C\ge 1$, $f(s) \le C g(s)$ for all $s$ at hand. Since $\tau_{\cal C}^\theta \ge \psi_\Ga^{\theta}(u)>0$ is positive,
  it follows that $$\tau_{\C}^{\theta} \le \max_i  ( \| v_i \| + \varepsilon_{v_i}) \tau^\Theta_{\C(v_i, \e_{v_i})}.$$
    Therefore, together with $0 < \e_{v_i} < \e$ and \eqref{eqn.choiceofsmallcone}, we get $$\begin{aligned}
        \psi_{\Ga}^{\theta}(u)  \le \tau_{\C}^{\theta} & \le  \frac{\| v_i \| + \varepsilon_{v_i}}{\| v_i\|} \left( \max_i \psi^\Theta_{\Ga}(v_i) + \e \right)  \le  \frac{\| v_i \| + \varepsilon}{\| v_i\|} \left( \max_{v \in p_{\theta}^{-1}(u)} \psi^\Theta_{\Ga}(v) + \e \right).
    \end{aligned}$$ Since $0 < \e < 1$ was arbitrary, this proves the claim by Lemma \ref{qu}.
    \end{proof}

\begin{example} \label{hit}
We discuss some  explicit upper bounds for $\psi_\Ga^\theta$
when $G=\PSL_d(\br)$. Identify $\fa^+=\{(t_1, \cdots , t_d): t_1\ge \cdots \ge t_d, t_1+\cdots +t_d=0\}.$
Let 
$\alpha_i(t_1, ... , t_d)=t_i-t_{i+1}$ for $i=1,2, ..., d-1$. 
 Let $$w_i=  \left(\tfrac{d-i}{d}, \cdots ,\tfrac{d-i}{d},-\tfrac{i}{d}, \cdots ,-\tfrac{i}{d}\right),$$ where the first $i$ coordinates are ${d - i \over d}$'s and the last $d-i$ coordinates are $-\frac{i}{d}$'s, so that $\fa_{\alpha_i} = \br w_i $ and $\alpha_i(w_i)=1$.
 We compute that
   $$p_{\alpha_i}(t_1, \cdots, t_d) = {d(t_1 + \cdots + t_i) \over i(d-i)}  w_i $$ and hence $$p_{\alpha_i}^{-1}(w_i) \cap \fa^+ = \{ (t_1, \cdots, t_d) \in \fa^+: d(t_1 + \cdots + t_i) = i(d-i) \}.$$

For any non-lattice discrete subgroup $\Ga<\PSL_d(\br)$, we have
\be\label{pssss} \psi_\Ga(t_1, \cdots, t_d) \le \sum_{i<j} (t_i-t_j) -\frac{1}{2} \sum_{i=1}^{\lfloor d/2\rfloor} (t_i-t_{d+1-i}) \ee 
by (\cite{quint_ka}, \cite{oh}, \cite[Theorem 7.1]{lee2022dichotomy}).  By Lemma \ref{qu2},  for any discrete non-lattice subgroups, we get 
  \be\label{upper} \psi_\Ga^{\alpha_i} (w_i) \le \max \sum_{i<j} (t_i-t_j) -\frac{1}{2} \sum_{i=1}^{\lfloor d/2\rfloor} (t_i-t_{d+1-i}) \ee 
  where the maximum is taken over all
  $(t_1, \cdots, t_d) \in \fa^+$ such that $d(t_1 + \cdots + t_i) = i(d-i)$. 

For instance, for $d=3$, the right hand side is always $3$ and hence for each $i=1,2$,
$\psi_\Ga^{\alpha_i}  \le 3 \alpha_i $ on $\br w_i$. 
\end{example}

\subsection*{Hitchin subgroups} Let $\iota : \PSL_2(\br)\to \PSL_d( \br)$ be the irreducible representation, which is unique up to conjugations. A Hitchin subgroup is the image of a representation $\pi: \Sigma \to \PSL_d(\br)$ of a non-elementary torsion-free geometrically finite subgroup $\Sigma<\PSL_2(\br)$, which is a type-preserving deformation of $\iota|_{\Sigma}$.
Hitchin subgroups are $\Pi$-transverse, as defined in the introduction, by
\cite{Canary2022cusped} and hence $\alpha_i$-discrete for each $i=1, \cdots, d-1$.
For a Zariski dense Hitchin subgroup $\Ga$, it follows from  Lemma \ref{lem.tent} that if $\delta_{\alpha_i}$ denotes the abscissa of convergence of $s \mapsto \sum_{\ga \in \Ga} e^{-s \alpha_i(\mu(\ga))}$,
then
$$\psi_\Ga^{\alpha_i}(w_i) \le \delta_{\alpha_i} \cdot \alpha_i(w_i)=\delta_{\alpha_i} .$$
For Hitchin subgroups, it was proved by Potrie and Sambarino \cite{Potrie2017eigenvalues} for $\Delta$ cocompact and Canary, Zhang and Zimmer \cite{canary2022entropy} for $\Delta$ geometrically finite
 that $$\delta_{\alpha_i} \le 1$$ for all $i$ (see also \cite{pozzetti2019anosov}). Hence $\max_{1\le i\le d-1} \psi_\Ga^{\alpha_i}(w_i) \le 1$.
We get a sharper bound in the following:

\begin{cor}\label{hit1} Let $\Ga<\PSL_d(\br)$ be a Zariski dense Hitchin subgroup. For each $i=1, \cdots, d-1$,
    $$\psi_\Ga^{\alpha_i}  < \frac{\max (i, d - i)}{d-1}\alpha_i \quad\text{on $ \fa_{\alpha_i}-\{0\}$} .$$
   
\end{cor}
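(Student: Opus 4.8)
The plan is to reduce the statement, via Lemma \ref{qu2}, to an elementary optimization over an explicit fiber, and then feed in the Hitchin bound $\delta_{\alpha_j}\le 1$. Since $G=\PSL_d(\br)$ is simple, $p_{\alpha_i}|_{\L}$ is proper, so Lemma \ref{qu2} (applied with $\Theta=\Pi$) gives
$$\psi_\Ga^{\alpha_i}(w_i)=\max_{v\in p_{\alpha_i}^{-1}(w_i)}\psi_\Ga(v).$$
As $\psi_\Ga=-\infty$ off $\L\subset\fa^+$, only $v\in\fa^+$ contribute, and by the computation of $p_{\alpha_i}$ in Example \ref{hit} the relevant fiber is the compact polytope $\{t=(t_1,\dots,t_d)\in\fa^+: t_1+\cdots+t_i=\tfrac{i(d-i)}{d}\}$. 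Thus the whole problem becomes: bound $\psi_\Ga$ on this slice.

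Next I would bound $\psi_\Ga$ there using the entropy input. By Lemma \ref{lem.tent} together with $\delta_{\alpha_j}\le 1$ for every $j$ (\cite{Potrie2017eigenvalues}, \cite{canary2022entropy}), one has $\psi_\Ga\le\delta_{\alpha_j}\alpha_j\le\alpha_j$ on $\fa^+$ for each $j=1,\dots,d-1$; averaging these $d-1$ bounds gives $\psi_\Ga(t)\le\frac{1}{d-1}\sum_{j}\alpha_j(t)=\frac{t_1-t_d}{d-1}$, the latter being exactly the displacement in the principal $\PSL_2$-direction. Maximizing the linear functional $t\mapsto t_1-t_d=\sum_j\alpha_j(t)$ over the slice is a linear program: setting $g_j=\alpha_j(t)\ge 0$, the constraint reads $\sum_j c_jg_j=\tfrac{i(d-i)}{d}$ with $c_j=\tfrac{(d-i)j}{d}$ for $j<i$ and $c_j=\tfrac{i(d-j)}{d}$ for $j\ge i$. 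Since $\min_j c_j=\tfrac{\min(i,d-i)}{d}$, the maximum of $\sum_j g_j$ is achieved by concentrating the entire gap in the single coordinate realizing this minimum, giving $\sum_j g_j=\tfrac{i(d-i)/d}{\min(i,d-i)/d}=\max(i,d-i)$. Hence $\psi_\Ga^{\alpha_i}(w_i)\le\frac{\max(i,d-i)}{d-1}$.

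The remaining, and most delicate, point is the \emph{strict} inequality: the extremal configuration above lies on a wall of $\fa^+$ (all but one of the $g_j$ vanish), so the averaged estimate only yields ``$\le$''. I would obtain strictness by replacing the averaged bound with the sharper pointwise estimate $\psi_\Ga\le\min_j\alpha_j$ (again from $\delta_{\alpha_j}\le1$ for all $j$). The analogous linear program for $\max\min_j\alpha_j$ over the slice is solved by equalizing all gaps (now using $\sum_j c_j=\tfrac{i(d-i)}{2}$), forcing $g_1=\cdots=g_{d-1}=\tfrac{2}{d}$, whence $\psi_\Ga^{\alpha_i}(w_i)\le\frac{2}{d}$; since $\frac{2}{d}<\frac{\max(i,d-i)}{d-1}$ for all $d\ge 3$ (equivalently $(d-2)^2>0$), the strict bound follows immediately. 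Alternatively, a Zariski dense Hitchin subgroup is not Fuchsian, so the rigidity in \cite{Potrie2017eigenvalues}, \cite{canary2022entropy} gives $\delta_{\alpha_j}<1$ strictly and hence strictness already in the averaged estimate. Aside from this strictness issue, the argument is just Lemma \ref{qu2} plus the explicit optimization over the fiber.
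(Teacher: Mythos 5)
Your proof is correct (modulo the caveat below, which the paper shares), and it takes a genuinely different route from the paper's. The paper's proof feeds the strict tent-property bound $\psi_\Ga(t_1,\dots,t_d)<\tfrac{1}{d-1}(t_1-t_d)$ of \cite[Corollary 1.10]{kim2021tent} (displayed as \eqref{kmo}) into Lemma \ref{qu2}, and then maximizes the single functional $t_1-t_d$ over the slice $p_{\alpha_i}^{-1}(w_i)\cap \fa^+$ by an explicit two-variable reduction; strictness is inherited from the strictness of \eqref{kmo}. You bypass \cite{kim2021tent} entirely: your only dynamical input is $\delta_{\alpha_j}\le 1$ for all $j$ (\cite{Potrie2017eigenvalues}, \cite{canary2022entropy} -- exactly what the paper quotes just before the corollary), which via Lemma \ref{lem.tent} applied with $\theta=\Pi$ yields the pointwise bound $\psi_\Ga\le \min_j \alpha_j$ on $\fa^+$; this application is legitimate since each $\alpha_j$ is $(\Ga,\Pi)$-proper (Hitchin subgroups are $\Pi$-transverse, hence $\Pi$-regular) and $\delta_{\alpha_j}<\infty$. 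Your linear programs check out: the $c_j$ are the inverse Cartan matrix entries, $\min_j c_j=\min(i,d-i)/d$ and $\sum_j c_j = i(d-i)/2$, so the averaged LP recovers $\max(i,d-i)/(d-1)$ non-strictly, while the min-LP gives $\psi_\Ga^{\alpha_i}(w_i)\le 2/d$, and $2(d-1)<d\max(i,d-i)$ holds for all $d\ge 3$ (the worst case $i=d/2$ being $(d-2)^2>0$; for odd $d$ it is even easier). So strictness follows by arithmetic alone, and your fallback via Potrie--Sambarino/Canary--Zhang--Zimmer entropy rigidity is also sound. What your route buys: it avoids the tent theorem and proves a stronger and in fact optimal bound, $\psi_\Ga^{\alpha_i}\le \tfrac2d\,\alpha_i$ on $\fa_{\alpha_i}^+$, since $2/d$ is precisely the value of $\psi_\Ga^{\alpha_i}(w_i)$ at the principal Fuchsian locus; this shows the constant $\max(i,d-i)/(d-1)$ in the corollary is not sharp. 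What the paper's route buys is that it optimizes only one functional and rests on \eqref{kmo}, which is strict on all of $\fa^+-\{0\}$ and of independent interest. One shared caveat: both arguments, and the statement itself, really require $d\ge 3$; for $d=2$ a cocompact lattice is a Zariski dense Hitchin subgroup with $\psi_\Ga^{\alpha_1}=\alpha_1$, so \eqref{kmo} fails there and your gap $2/d<\max(i,d-i)/(d-1)$ closes.
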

\begin{proof}
   For a Zariski dense
    Hitchin subgroup $\Ga<G$, it is shown in \cite[Corollary 1.10]{kim2021tent} that
\be\label{kmo} \psi_\Ga (t_1, \cdots, t_d)< \frac{1}{d-1} (t_1 - t_d )\quad\text{for $(t_1, \cdots, t_d)\in \fa^+-\{0\}$} .\ee 
Indeed, \cite[Corollary 1.10]{kim2021tent} is stated only for $\Sigma$ cocompact. However in view of \cite{canary2022entropy} mentioned above, this bound holds for a general Hitchin subgroup.
 Hence by Lemma \ref{qu2}, we get 
\be \label{eqn.hitchinupper}
\psi_\Ga^{\alpha_i}(w_i)< \frac{1}{d-1}
\max { (t_1 - t_d) }
\ee
where the maximum is taken over all
  $t_1\ge \cdots \ge  t_d$ such that  $d\sum_{j=1}^i t_j = i(d-i)$ and $\sum_{j=1}^d t_j=0$.  Suppose that this maximum is realized at $(t_1, \cdots, t_d)$. Since $t_1-t_d$ does not involve any $t_j$, $2\le j\le d-1$, we may assume that $ t_2 = \cdots = t_i$ and $t_{i+1} = \cdots = t_{d-1}$, which we denote by $x$ and $y$ respectively.
  Since $\sum_{j = 1}^i t_j = \tfrac{i(d-i)}d$ and $\sum_{j = i+1}^d t_j = -\tfrac{i(d-i)}d$, we then have  $$ t_1 = \tfrac{i (d-i)}{d} - (i-1)x \quad \mbox{and} \quad t_d = - \tfrac{i(d-i)}{d} - (d-1-i)y.$$
  Therefore \be\label{m8}
  t_1 - t_d = \tfrac{2i (d-i)}{d} - ((i-1)x - (d-1-i)y).
  \ee
  It follows from  $t_j \ge t_{j+1} $ for all $j$ that ${d-i \over d} \ge x \ge y \ge -{i \over d}$.  Therefore, for each fixed $x$, the maximum in \eqref{m8} is obtained when $y = x$. Hence we have $$\begin{aligned}
    \psi_{\Ga}^{\alpha_i}(w_i) & < {1 \over d - 1} \max_{x \in [-i/d, (d-i)/d]} {2i (d-i) \over d} - (2i - d)x \\
    & = {1 \over d-1} \max ( i, d- i ).
\end{aligned}$$
\end{proof}

\section{On the proper and critical linear forms} \label{sec.linforms}
Let $\Ga$ be a $\theta$-discrete infinite subgroup of $G$.
\begin{Def}\label{properl}
A linear form $\psi\in \fa_\theta^*$ is called $(\Ga, \theta)$-proper if  $\op{Im} (\psi\circ \mu_\theta)  \subset [-\e, \infty)$ and  $\psi\circ \mu_\theta:\Ga\to [-\e , \infty)$
is  proper for some $\e>0$.
\end{Def}

Consider the series $\cal P_\psi=\cal P_{\G, \psi}$ given by
 \be\label{ppsi} {\cal P}_\psi(s)=\sum_{\ga \in \Ga} e^{-s \psi(\mu_{\theta}(\ga))}.\ee 

The abscissa of convergence of $\cal P_\psi$ is well-defined for a $(\Ga, \theta)$-proper linear form:

 \begin{lem}\label{wd} If $\psi$ is $(\Ga,\theta)$-proper, the following $\delta_\psi=\delta_\psi(\Ga)$ is well-defined (possibly $+\infty$):
 \be\label{ac}  \delta_\psi:=\sup \{ s \in \R : \cal P_{\psi}(s) = \infty \} = \inf \{ s \in \R : \cal P_{\psi}(s) < \infty \}\in [0, \infty].  \ee 
 Moreover, if $\Ga$ is Zariski dense, then $$0 <  \delta_\psi=\limsup_{t\to \infty}\frac{\log \#\{\ga\in \Ga:\psi(\mu_\theta(\ga))\le t\}}{t}.$$
\end{lem}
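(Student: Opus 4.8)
The plan is to reinterpret $\cal P_\psi$ as the Laplace transform of the counting measure pushed forward by $f:=\psi\circ\mu_{\theta}$, and then invoke the standard Dirichlet-series correspondence between the abscissa of convergence and the exponential growth rate of the counting function. Set $N(T):=\#\{\ga\in\Ga:\psi(\mu_{\theta}(\ga))\le T\}$. Since $\psi$ is $(\Ga,\theta)$-proper, $f$ takes values in $[-\e,\infty)$ and is a proper map, so each sublevel set $\{f\le T\}=f^{-1}([-\e,T])$ is finite; in particular $N(T)<\infty$ for all $T$, and only finitely many $\ga$ satisfy $f(\ga)\le 0$. For well-definedness I would first note that $\cal P_\psi(s_0)<\infty$ forces $s_0\ge 0$: otherwise $e^{-s_0 f(\ga)}=e^{|s_0|f(\ga)}\ge 1$ on the infinite set $\{f>0\}$ (infinite because $\Ga$ is infinite while $\{f\le 0\}$ is finite), giving divergence. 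Splitting $\Ga$ into the finite set $\{f\le 0\}$ and its complement, on which $f>0$ and hence $s\mapsto e^{-sf(\ga)}$ is decreasing, I get that $\cal P_\psi(s_0)<\infty$ implies $\cal P_\psi(s)<\infty$ for every $s>s_0$. Thus $\{s:\cal P_\psi(s)<\infty\}$ is an up-set contained in $[0,\infty)$, and $\sup\{s:\cal P_\psi(s)=\infty\}=\inf\{s:\cal P_\psi(s)<\infty\}=:\delta_\psi$ is well-defined in $[0,\infty]$.

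Next I would prove positivity. For $s\ge 0$, the estimate $\psi(\mu_{\theta}(\ga))\le\|\psi\|\,\|\mu_{\theta}(\ga)\|$ gives $\cal P_\psi(s)\ge\sum_{\ga\in\Ga}e^{-s\|\psi\|\,\|\mu_{\theta}(\ga)\|}$, and by the definition of the critical exponent the right-hand side diverges whenever $s\|\psi\|<\delta_{\Ga}^{\theta}$. Since $\delta_{\Ga}^{\theta}>0$ by \eqref{po}, this yields $\delta_\psi\ge\delta_{\Ga}^{\theta}/\|\psi\|>0$; together with $\cal P_\psi(0)=\#\Ga=\infty$ this places $\delta_\psi\in(0,\infty]$.

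Finally, for the growth formula I would set $L:=\limsup_{T\to\infty}T^{-1}\log N(T)$ and prove $\delta_\psi=L$ by two inequalities. For $\delta_\psi\le L$, fix $s>L$, choose $L<s'<s$ so that $N(T)\le e^{s'T}$ for all large $T$, and apply Abel summation to $\cal P_\psi(s)=\int_{-\e}^{\infty}e^{-sT}\,dN(T)$: the boundary terms are finite and the remaining integral is dominated by a constant plus $\int e^{-(s-s')T}\,dT<\infty$, so $\cal P_\psi(s)<\infty$. For $\delta_\psi\ge L$, take any $0\le s<L$ and a sequence $T_k\to\infty$ with $N(T_k)\ge e^{s'T_k}$ for some $s<s'<L$; restricting to the terms with $f(\ga)\le T_k$ and using $s\ge 0$ gives $\cal P_\psi(s)\ge N(T_k)e^{-sT_k}\ge e^{(s'-s)T_k}\to\infty$. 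Combining the two inequalities gives $\delta_\psi=L$. Alternatively, the formula follows directly from \cite[Lemma 3.1.1]{Quint2002divergence} applied to the Radon measure $\sum_{\ga\in\Ga}D_{\psi(\mu_{\theta}(\ga))}$ on $[-\e,\infty)$, exactly as in the proof of the analogous statement for $\delta_{\Ga}^{\theta}$.

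The genuine content here is not the Dirichlet-series bookkeeping, which is routine, but the verification of its hypotheses from $(\Ga,\theta)$-properness: properness is precisely what guarantees that $N(T)<\infty$ and that the negative part $\{f\le 0\}$ is finite, while the comparison with $\delta_{\Ga}^{\theta}$ is what upgrades $\delta_\psi\ge 0$ to strict positivity. The strict positivity is exactly the regime in which the exponential-growth formula for the abscissa is valid, and it also justifies restricting to $s\ge 0$ in the lower bound of the final step; this sign restriction is the only subtle point, and it becomes harmless once positivity is in hand.
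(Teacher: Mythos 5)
Your proof is correct, and it diverges from the paper's in one genuine way: the positivity step. The paper proves $\delta_\psi>0$ through the growth indicator: it fixes $u\in \inte\L_\theta$, uses Lemma \ref{lem.propform} to get $\psi(u)>0$ and Theorem \ref{three}(3) to get $\psi_\Ga^\theta(u)>0$, chooses $s_0>0$ with $s_0\psi(u)<\psi_\Ga^\theta(u)$, and invokes Quint's Lemma 3.1.3 to conclude $\cal P_\psi(s_0)=\infty$. You instead bypass the growth indicator entirely via the termwise comparison $\cal P_\psi(s)\ge \sum_{\ga}e^{-s\|\psi\|\,\|\mu_\theta(\ga)\|}$ for $s\ge 0$, which diverges once $s\|\psi\|<\delta_\Ga^\theta$, together with \eqref{po}; this is more elementary, gives the explicit bound $\delta_\psi\ge \delta_\Ga^\theta/\|\psi\|$, and substitutes Quint's positivity of the critical exponent (behind \eqref{po}) for his Lemma 3.1.3 — note both routes ultimately lean on Zariski density. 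The trade-off is that the paper's argument yields the generally sharper lower bound $\psi_\Ga^\theta(u)/\psi(u)$ for $u\in\inte\L_\theta$, which is the comparison that matters for the tangency results (e.g.\ Lemma \ref{lem.tent}) later on. For the well-definedness step your argument (split off the finite set $\{\psi\circ\mu_\theta\le 0\}$, use monotonicity on the rest) is the same as the paper's, and for the counting formula the paper simply cites Quint's Lemma 3.1.1 exactly as in your alternative route, while your Abel-summation argument is a correct self-contained replacement; you are also right that strict positivity of $\delta_\psi$ is precisely the hypothesis that makes that formula valid, which is why the paper records it before citing Quint.
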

 \begin{proof}  Since $\psi$ is $(\Ga, \theta)$-proper, $\psi(\mu_{\theta}(\ga)) > 0$ for all but finitely many $\ga \in \Ga$.  Hence
 we may replace $\cal P_\psi(s)$ by the series
 ${\cal P}^+_\psi(s)=\sum_{\ga \in \Ga, \psi(\mu_\theta(\ga))>0} e^{-s \psi(\mu_{\theta}(\ga))}$
 in proving this claim.  Since ${\cal P}^+_\psi(s)$ is a decreasing function of $s\in \br$, 
 $I_1:=\{ \cal P_{\psi}(s) = \infty \} $
 and $ I_2:=\{\cal P_{\psi}(s) < \infty \}$ are disjoint intervals. 
 Since $\Ga$ is infinite, $0\in I_1$, and hence $\delta_\psi=\overline I_1\cap \overline I_2 \in [0, \infty]$ is well-defined. 
 
 Now suppose that $\Ga$ is Zariski dense. By Lemma \ref{tcone}, $\inte \L_{\theta} \neq \emptyset$. To show $\delta_\psi>0$,
 fix $u \in \inte \L_{\theta}$.
     Then $\psi(u) > 0$ by Lemma \ref{lem.propform}. Since $\psi_{\Ga}^{\theta}(u) > 0$ as well by Theorem \ref{three}(3), we have $s_0 \psi(u) < \psi_{\Ga}^{\theta}(u)$ for some $0< s_0 <\infty$. By \cite[Lemma 3.1.3]{Quint2002divergence}, we have $\cal P_{\psi}(s_0) = \infty$, and therefore $\delta_\psi\ge s_0>0$. The last claim follows by \cite[Lemma 3.1.1]{Quint2002divergence} since the counting measure on $\psi(\mu_\theta(\Ga))$ is locally finite and
     $\delta_\psi>0$.
     \end{proof}
Hence for a $(\Ga,\theta)$-proper form $\psi \in \fa_{\theta}^*$, 
 $0<\delta_\psi\le \infty $ 
is the abscissa of convergence of $\cal P_\psi(s)$.

\begin{lem} \label{lem.propform}
We have:
\begin{enumerate}
      \item 
 If $\psi>0$ on $\L_\theta- \{0\}$, then $\psi$ is $(\Ga, \theta)$-proper and $\delta_\psi<\infty$. 

\item If $\psi$ is $(\Ga, \theta)$-proper, then 
$\psi\ge 0$ on $\L_\theta$ and $\psi>0$ on $\inte\L_\theta$.
 \end{enumerate} 
\end{lem}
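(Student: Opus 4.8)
The plan is to prove the two statements separately, with the common engine being Lemma~\ref{tcone}: the set $\mu_\theta(\Ga)$ lies within bounded Hausdorff distance, say $R$, of the closed convex cone $\L_\theta\subset\fa_\theta^+$. (That $\L_\theta$ is closed follows from its definition as a set of limits.)

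For part (1), assume $\psi>0$ on $\L_\theta-\{0\}$. The first step is to extract a uniform linear lower bound
\[
\psi(\mu_\theta(\ga))\ge c\,\|\mu_\theta(\ga)\|-C_0\qquad(\ga\in\Ga)
\]
for constants $c>0$ and $C_0\ge 0$. Since $\L_\theta$ is a closed cone, $\psi$ attains a positive minimum $c$ on the compact set $\L_\theta\cap\{\|u\|=1\}$, whence $\psi(u)\ge c\|u\|$ for all $u\in\L_\theta$; transferring this to each $\mu_\theta(\ga)$ by choosing $u_\ga\in\L_\theta$ with $\|\mu_\theta(\ga)-u_\ga\|\le R$ and using that $\psi$ is $\|\psi\|$-Lipschitz yields the display with $C_0=(c+\|\psi\|)R$. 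This single bound gives everything: it shows $\op{Im}(\psi\circ\mu_\theta)\subset[-C_0,\infty)$; it shows $\{\ga:\psi(\mu_\theta(\ga))\le T\}\subset\{\ga:\|\mu_\theta(\ga)\|\le (T+C_0)/c\}$, which is finite by the $\theta$-discreteness of $\Ga$ (properness of $\mu_\theta|_\Ga$), so $\psi\circ\mu_\theta$ is proper; and finally $\cal P_\psi(s)\le e^{sC_0}\sum_{\ga}e^{-sc\|\mu_\theta(\ga)\|}$ for $s\ge 0$, so that $\cal P_\psi(s)<\infty$ whenever $sc>\delta_\Ga^\theta$. Since $\delta_\Ga^\theta<\infty$ by Proposition~\ref{prop.propertaufinite}, this gives $\delta_\psi\le\delta_\Ga^\theta/c<\infty$.

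For part (2), assume $\psi$ is $(\Ga,\theta)$-proper, so $\psi\circ\mu_\theta\ge-\e$ for some $\e>0$. The inequality $\psi\ge 0$ on $\L_\theta$ is immediate from the limit-cone definition: any $u\in\L_\theta$ is $\lim_i t_i\mu_\theta(\ga_i)$ with $t_i\to 0^+$, and by continuity and linearity $\psi(u)=\lim_i t_i\,\psi(\mu_\theta(\ga_i))\ge\lim_i(-t_i\e)=0$. For strict positivity on $\inte\L_\theta$ I would argue by contradiction: if $\psi(u_0)=0$ for some $u_0\in\inte\L_\theta$, then for all sufficiently small $v$ we have $u_0\pm v\in\L_\theta$, so $\psi\ge 0$ on $\L_\theta$ forces $\psi(v)=\psi(u_0+v)\ge 0$ and $-\psi(v)=\psi(u_0-v)\ge 0$, hence $\psi(v)=0$ on a neighborhood of $0$ and therefore $\psi\equiv 0$. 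But $\psi\equiv 0$ cannot be $(\Ga,\theta)$-proper, since then the preimage under $\psi\circ\mu_\theta$ of the bounded set $\{0\}$ would be the whole infinite group $\Ga$, contradicting properness.

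I do not expect any step to be a genuine obstacle. The only point requiring care is the passage from positivity of $\psi$ on the cone $\L_\theta$ to the uniform bound on the discrete set $\mu_\theta(\Ga)$ in part (1); this is precisely where the bounded-distance statement of Lemma~\ref{tcone}, together with closedness of $\L_\theta$ and $\theta$-discreteness, is indispensable.
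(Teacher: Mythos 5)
Your proof is correct and follows essentially the same route as the paper's: part (1) rests on Lemma \ref{tcone} (bounded distance of $\mu_\theta(\Ga)$ from $\L_\theta$) together with $\delta_\Ga^\theta<\infty$ from Proposition \ref{prop.propertaufinite}, and part (2) on the definition of the limit cone plus the hyperplane argument for strict positivity on $\inte\L_\theta$. The differences are cosmetic---in (1) you transfer positivity from $\L_\theta$ to $\mu_\theta(\Ga)$ via a Lipschitz estimate with an additive constant, where the paper instead enlarges $\L_\theta-\{0\}$ to an open cone on which $\psi$ is norm-comparable and discards the finitely many points of $\mu_\theta(\Ga)$ outside it; in (2) you compute $\psi(u)=\lim_i t_i\,\psi(\mu_\theta(\ga_i))\ge 0$ directly from the lower bound $-\e$, where the paper reaches the same conclusion by a contradiction with a cone of negative directions.
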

\begin{proof}
 If $\psi$ is positive on $\L_\theta - \{0\}$, then
    $\psi>0$ on some open cone $\cal C$ containing $\L_\theta - \{0\}$. Then
      for some $c>1$, $c^{-1} \|u\|\le \psi(u)\le c \|u\|$ for all $u\in \cal C$.
     Since there can be only finitely many points of $\mu_{\theta}(\Ga)$ outside $\cal C$ by Lemma \ref{tcone}, this implies that $\psi$ is $(\Ga, \theta)$-proper.
Since  $\delta_\Ga^{\theta}<\infty$ by Proposition \ref{prop.propertaufinite},
  we also have $\delta_\psi<\infty$.
  
    To prove (2), suppose to the contrary that $\psi(u) < 0$ for some $u \in \L_{\theta}$. Then there exists an open cone $\C \subset \fa_{\theta}$ containing $u$ so that $\psi < 0$ on $\C-\{0\}$. In particular, there are infinitely many $\ga_i \in \Ga$ such that $\psi(\mu_{\theta}(\ga_i)) < 0$, which contradicts $(\Ga, \theta)$-properness of $\psi$. Therefore, $\psi \ge 0$ on $\L_{\theta}$. Since $\ker \psi$ is a hyperplane in $\fa_{\theta}$, 
     it follows  $\psi > 0$ on $\inte \L_{\theta}$.
 \end{proof}

 \subsection*{Critical forms} Analogous to the critical exponent
 of a discrete subgroup of a rank one Lie group, we define:
\begin{Def} \label{defcrit} A linear form $\psi \in \fa_{\theta}^*$ is $(\Ga, \theta)$-critical if it is tangent to $\psi_{\Ga}^{\theta}$.
\end{Def}

The following lemma  can be proved by adapting the proof of \cite[Theorem 2.5]{kim2021tent} replacing $\psi_\Ga$ by $\psi_\Ga^\theta$.
\begin{lemma}  \label{lem.tent} 
Suppose that $\Ga$ is Zariski dense. If  a $(\Ga, \theta)$-proper $\psi \in \fa_{\theta}^*$ satisfies $\delta_{\psi} < \infty$, then
     $\delta_{\psi} \psi$ is $(\Ga, \theta)$-critical; in particular, $$\psi_\Ga^\theta\le \delta_{\psi}\psi.$$
\end{lemma}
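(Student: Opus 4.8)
The plan is to establish the two defining properties of $(\Ga,\theta)$-criticality for $\delta_\psi\psi$ separately: first the global inequality $\delta_\psi\psi\ge\psi_\Ga^\theta$ on $\fa_\theta^+$ (which is exactly the ``in particular'' clause), and then the existence of a touching point $u_0\in\fa_\theta^+-\{0\}$ with $\delta_\psi\psi(u_0)=\psi_\Ga^\theta(u_0)$. The engine for both steps is Quint's dictionary between the abscissa of convergence of a Poincar\'e-type series and the growth indicator: since the counting measure $\sum_{\ga\in\Ga}D_{\mu_\theta(\ga)}$ is a measure of concave growth on $\fa_\theta^+$ with growth indicator $\psi_\Ga^\theta$ (Lemma \ref{lem.concavemeas} and Proposition \ref{st}), the estimates of \cite[Section 3.1]{Quint2002divergence} apply with $\mu$ and $\psi_\Ga$ replaced by $\mu_\theta$ and $\psi_\Ga^\theta$. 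I will use the divergence criterion \cite[Lemme 3.1.3]{Quint2002divergence} --- if $s\psi(u)<\psi_\Ga^\theta(u)$ for some $u\in\inte\L_\theta$ then $\cal P_\psi(s)=\infty$ --- together with its convergence counterpart --- if $s\psi>\psi_\Ga^\theta$ on the compact slice $\{u\in\L_\theta:\|u\|=1\}$ then $\cal P_\psi(s)<\infty$.

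For the inequality I would argue by contradiction on $\inte\L_\theta$. If $\psi_\Ga^\theta(u)>\delta_\psi\psi(u)$ for some $u\in\inte\L_\theta$, then $\psi(u)>0$ by Lemma \ref{lem.propform}, so I can choose $s_0\in(\delta_\psi,\ \psi_\Ga^\theta(u)/\psi(u))$, for which $s_0\psi(u)<\psi_\Ga^\theta(u)$. The divergence criterion then forces $\cal P_\psi(s_0)=\infty$, contradicting $s_0>\delta_\psi$ and the definition of $\delta_\psi$ in Lemma \ref{wd}. Hence $\delta_\psi\psi\ge\psi_\Ga^\theta$ on $\inte\L_\theta$. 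This extends to all of $\L_\theta$ by concavity of $\psi_\Ga^\theta$ (Theorem \ref{three}): for $u\in\p\L_\theta$ (where $0\le\psi_\Ga^\theta(u)<\infty$) and a fixed $v\in\inte\L_\theta$, concavity gives $\psi_\Ga^\theta((1-t)u+tv)\ge(1-t)\psi_\Ga^\theta(u)+t\psi_\Ga^\theta(v)$, and letting $t\to0^+$ in $(1-t)\psi_\Ga^\theta(u)+t\psi_\Ga^\theta(v)\le\delta_\psi\psi((1-t)u+tv)$ yields $\psi_\Ga^\theta(u)\le\delta_\psi\psi(u)$. Off $\L_\theta$ the inequality is trivial since $\psi_\Ga^\theta=-\infty$ there.

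For the tangency point, suppose for contradiction that $\delta_\psi\psi(u)>\psi_\Ga^\theta(u)$ for every $u$ in the compact slice $S=\{u\in\L_\theta:\|u\|=1\}$. At any $u\in S$ with $\psi(u)=0$ this would read $0>\psi_\Ga^\theta(u)$, impossible since $\psi_\Ga^\theta\ge0$ on $\L_\theta$ by Theorem \ref{three}; hence $\psi>0$ on $S$, and by compactness $\psi\ge c_0>0$ there. The function $\delta_\psi\psi-\psi_\Ga^\theta$ is lower semicontinuous (as $\psi_\Ga^\theta$ is upper semicontinuous) and strictly positive on $S$, so it is bounded below by some $\eta>0$; then for small $\e>0$ we get $(\delta_\psi-\e)\psi-\psi_\Ga^\theta\ge\eta-\e\max_S\psi>0$ on $S$. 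The convergence criterion now gives $\cal P_\psi(\delta_\psi-\e)<\infty$, contradicting $\delta_\psi-\e<\delta_\psi$. Therefore equality $\delta_\psi\psi(u_0)=\psi_\Ga^\theta(u_0)$ holds at some $u_0\in S\subset\fa_\theta^+-\{0\}$, so $\delta_\psi\psi$ is tangent to $\psi_\Ga^\theta$, i.e.\ $(\Ga,\theta)$-critical.

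The main obstacle is the convergence half of the dictionary used in the last step: deriving summability of $\cal P_\psi$ from strict domination $s\psi>\psi_\Ga^\theta$ is the substantive direction in Quint's theory, since it rests on the upper counting bound $\#\{\ga:\mu_\theta(\ga)\approx tu\}\le e^{(\psi_\Ga^\theta(u)+o(1))t}$ encoded in the measure-of-concave-growth framework. The only genuinely new point in passing from $\theta=\Pi$ to general $\theta$ is therefore the verification --- already carried out in Lemma \ref{lem.concavemeas} and Proposition \ref{st}, using that $p_\theta$ is norm-decreasing and that $\mu_\theta(\Ga)$ stays within bounded distance of $\L_\theta$ (Lemma \ref{tcone}) --- that $\sum_\ga D_{\mu_\theta(\ga)}$ is of concave growth with indicator $\psi_\Ga^\theta$; once this is in place Quint's estimates transfer directly. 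A secondary technical point is the behaviour on $\p\L_\theta$, handled above by concavity for the inequality and by the observation that the contradiction hypothesis itself forces $\psi>0$ on $\L_\theta-\{0\}$ for the tangency.
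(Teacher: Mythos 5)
Your argument is correct and follows essentially the same route as the paper's proof: the same decomposition into the interior inequality, extension to $\partial\L_\theta$ by concavity of $\psi_\Ga^\theta$ (with the identical limiting argument along $tv_0+(1-t)u$), triviality off $\L_\theta$, and the tangency point produced by contradiction from the convergence half of \cite[Lemma 3.1.3]{Quint2002divergence} (the paper phrases this as: strict inequality on $\fa_\theta-\{0\}$ would force the abscissa of convergence of $s\mapsto\sum_{\ga}e^{-s\delta_\psi\psi(\mu_\theta(\ga))}$ to be strictly less than $1$, contradicting the definition of $\delta_\psi$). The only variation is the interior step, where the paper proves $\psi_\Ga^\theta\le\delta_\psi\psi$ on $\inte\L_\theta$ directly, by bounding $\tau^\theta_{\cal C_\e(v)}$ for the cone $\cal C_\e(v)=\{w\in\fa_\theta:\psi(w)>0,\ |\|w\|/\psi(w)-\|v\|/\psi(v)|<\e\}$ against $\cal P_\psi$, while you obtain it by contradiction from the divergence half of the same Quint lemma --- an equivalent repackaging of the same thin-cone comparison, and one the paper itself employs in the proof of Lemma \ref{wd}, so your step is sound.
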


\begin{proof} Suppose that $\delta_{\psi} < \infty$. By Lemma \ref{wd}, $\delta_\psi>0$.
We first claim 
\be\label{first} \psi^\theta_{\Ga}(v) \le \delta_{\psi} \psi(v) \quad \mbox{for all } v \in  \inte \L_\theta.\ee  Fix 
$v \in \inte \L_\theta$
and $\varepsilon > 0$. Since $\psi$ is $(\Ga, \theta)$-proper, $\psi(v)>0$ by Lemma \ref{lem.propform}.

We then consider 
$$\cal C_{\varepsilon}(v) = \left\{w \in \fa_{\theta} :  \psi(w) > 0 \mbox{ and } \left\| \frac{w}{\psi(w)} - \frac{v}{\psi(v)} \right\| < \varepsilon  \right\};$$ 
since $\psi(v)>0$, this is a well-defined 
open cone containing $v$.
Therefore by the definition of $\psi^\theta_\Ga$,
we have 
\be\label{tau} \psi^{\theta}_{\Ga}(v) \le \|v\| \tau_{\cal C_{\varepsilon}(v)}^{\theta}.\ee

Observe that for any $s\ge 0$,
$$\begin{aligned}
\sum_{\ga \in \Ga, \mu_\theta(\ga) \in \cal C_{\varepsilon}(v)} e^{-s \| \mu_\theta(\ga)\|} & \le \sum_{\ga \in \Ga, \mu_\theta(\ga) \in \cal C_{\varepsilon}(v)} e^{-s \psi(\mu_\theta(\ga)) \left( \frac{\|v\|}{\psi(v)} - \varepsilon \right)} \\
& \le \sum_{\ga \in \Ga}  e^{-s \psi(\mu_\theta(\ga)) \left( \frac{\|v\|}{\psi(v)} - \varepsilon \right)}.
\end{aligned}$$
It follows from the definitions of $\tau^\theta_{\cal C_{ \varepsilon}(v)}$ and $\delta_\psi$ that 
$$\tau^\theta_{\cal C_{ \varepsilon}(v)} \le \frac{\delta_{\psi}}{\|v\|{\psi(v)}^{-1} - \varepsilon} = \frac{\delta_{\psi}\psi(v)}{\|v\| - \varepsilon \psi(v)},$$
and hence $$\psi^\theta_{\Ga}(v) \le \|v\| \frac{\delta_{\psi}\psi(v)}{\|v\| - \varepsilon \psi(v)}.$$
Since $\varepsilon > 0$ is arbitrary, we get $\psi^\theta_{\Ga}(v) \le \delta_{\psi}\psi(v)$, proving the claim \eqref{first}.

We now claim that the inequality \eqref{first} also holds
 for any $v$ in the boundary $\partial \L_\theta$.
 Choose any $v_0 \in \inte \L_\theta$. From the concavity of $\psi^\theta_{\Ga}$ (Theorem \ref{three}), we have $$t\psi_\Ga^\theta(v_0) + (1-t)\psi_\Ga^\theta(v) \le \psi_\Ga^\theta(tv_0 + (1-t)v) \quad \mbox{for all } 0 < t < 1.$$ Since $\L_\theta$ is convex, $t v_0 + (1-t) v \in \inte \L_\theta$ for all $0 < t < 1$. As we have already shown $\psi^\theta_\Ga \le \delta_\psi \psi$ on $\inte\L_\theta$, we get $$t \psi_\Ga^\theta(v_0) + (1 - t) \psi_\Ga^\theta(v) \le \delta_\psi \psi(tv_0 + (1-t)v) \quad \mbox{for all } 0 < t < 1.$$ By sending $t \to 0^+$, we get $$\psi_\Ga^\theta(v) \le \delta_\psi \cdot \psi(v). $$ 

Since $\psi^\theta_\Ga=-\infty$ outside $\L_\theta$, we have established
$\psi^\theta_\Ga\le \delta_\psi  \psi$.  Suppose that
$\psi^\theta_\Ga <\delta_\psi  \psi $ on $\fa - \{0\}$.
Then the abscissa of convergence of the series $s\mapsto \sum_{\ga \in \Ga} e^{-s \delta_{\psi} \psi(\mu_\theta(\ga))}$ is strictly less than $1$ by \cite[Lemma 3.1.3]{Quint2002divergence}. However the abscissa of convergence of this series is equal to $1$ by the definition of $\delta_\psi$. Therefore 
$\delta_\psi \psi$ is tangent to $\psi^\theta_\Ga$, finishing the proof. \end{proof}

\begin{cor}\label{oneone}
Suppose that $\Ga$ is Zariski dense. A $(\Ga, \theta)$-proper linear form
$\psi \in \fa_{\theta}^*$ with $\delta_{\psi} = 1$ is $(\Ga, \theta)$-critical. Moreover, if $\psi > 0$ on $\L_{\theta}$, then $\psi$ is $(\Ga, \theta)$-critical if and only if $\delta_{\psi} = 1$. 
\end{cor}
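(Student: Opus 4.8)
The plan is to derive both assertions from the tangency supplied by Lemma \ref{lem.tent}, together with the sign information in Lemma \ref{lem.propform} and the fact (Theorem \ref{three}(3)) that $\psi_{\Ga}^{\theta}\equiv -\infty$ outside $\L_\theta$. For the first assertion I would argue directly: a $(\Ga,\theta)$-proper form $\psi$ with $\delta_\psi=1$ in particular has $\delta_\psi<\infty$, so Lemma \ref{lem.tent} applies and shows that $\delta_\psi\psi=\psi$ is $(\Ga,\theta)$-critical. Note that no positivity hypothesis enters here; this is purely the $\delta_\psi<\infty$ case of Lemma \ref{lem.tent} evaluated at $\delta_\psi=1$.

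For the second assertion, assume $\psi>0$ on $\L_\theta-\{0\}$. By Lemma \ref{lem.propform}(1) this already forces $\psi$ to be $(\Ga,\theta)$-proper with $\delta_\psi<\infty$, so the implication $\delta_\psi=1\Rightarrow(\Ga,\theta)$-critical is exactly the first assertion. The substantive direction is the converse, and the idea is to play two tangencies against each other. Criticality of $\psi$ gives $\psi\ge\psi_{\Ga}^{\theta}$ on $\fa_\theta^+$ with $\psi(u)=\psi_{\Ga}^{\theta}(u)$ for some $u\in\fa_\theta^+-\{0\}$, while Lemma \ref{lem.tent} gives the reverse-type bound $\psi_{\Ga}^{\theta}\le\delta_\psi\psi$ on $\fa_\theta^+$ with $\psi_{\Ga}^{\theta}(v)=\delta_\psi\psi(v)$ for some $v\in\fa_\theta^+-\{0\}$.

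The key small observation is that both tangent vectors automatically lie in $\L_\theta-\{0\}$: the linear forms $\psi$ and $\delta_\psi\psi$ are finite-valued, so the equalities $\psi(u)=\psi_{\Ga}^{\theta}(u)$ and $\delta_\psi\psi(v)=\psi_{\Ga}^{\theta}(v)$ cannot hold outside $\L_\theta$, where $\psi_{\Ga}^{\theta}=-\infty$; since $u,v\ne 0$ we get $u,v\in\L_\theta-\{0\}$ and hence $\psi(u)>0$ and $\psi(v)>0$ by the positivity hypothesis. Evaluating at $u$ gives $\psi(u)=\psi_{\Ga}^{\theta}(u)\le\delta_\psi\psi(u)$, so dividing by $\psi(u)>0$ yields $\delta_\psi\ge 1$; evaluating at $v$ gives $\delta_\psi\psi(v)=\psi_{\Ga}^{\theta}(v)\le\psi(v)$, so dividing by $\psi(v)>0$ yields $\delta_\psi\le 1$. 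Combining these gives $\delta_\psi=1$, completing the equivalence. I expect the only delicate point to be precisely this bookkeeping at the tangent points, namely confirming that they sit in $\L_\theta-\{0\}$ so that the strict positivity of $\psi$ can be invoked; everything else is a two-line comparison of a linear and a concave function along the two critical directions.
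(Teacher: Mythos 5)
Your proposal is correct and takes the route the paper intends: Corollary \ref{oneone} is stated there without a written proof, as a direct consequence of Lemma \ref{lem.tent}, and your argument supplies exactly the missing details — the first assertion is Lemma \ref{lem.tent} at $\delta_\psi=1$, and the converse in the second assertion follows from playing the tangency of $\psi$ against the tangency of $\delta_\psi\psi$, using that both tangent vectors must lie in $\L_\theta-\{0\}$ (since $\psi_\Ga^\theta=-\infty$ off $\L_\theta$) so the positivity hypothesis yields $\delta_\psi\ge 1$ and $\delta_\psi\le 1$ respectively. There is no gap.
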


Via the identification $\fa_\theta^*=\{\psi\in \fa^*: \psi=\psi \circ p_\theta\}$, we can consider $\fa_{\theta}^*$ as a subspace of $\fa^*$.
Lemma \ref{qu2} implies the following identity: 
 \begin{cor}
Suppose that $\Ga$ is Zariski dense.
     If $p_\theta|_{\L}$ is a proper map, then
$$
   \{\psi \in \fa_{\theta}^* : \psi \mbox{ is } (\Ga, \theta)\mbox{-critical}\} = \{\psi\in \fa^*: \psi=\psi\circ p_\theta,
\psi\text{ is $(\Ga,\Pi)$-critical}\}.
$$
\end{cor}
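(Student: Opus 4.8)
The plan is to unwind the definition of a critical form and reduce the whole statement to Lemma~\ref{qu2} with $\Theta=\Pi$. Since $\fa_\theta^*=\{\psi\in\fa^*:\psi=\psi\circ p_\theta\}$, a form $\psi\in\fa_\theta^*$ lies in the left-hand set exactly when it is tangent to $\psi_\Ga^\theta$ on $\fa_\theta^+$, and in the right-hand set exactly when it is tangent to $\psi_\Ga$ on $\fa^+$ (viewing $\psi$ as a form on $\fa$). So I would prove: for $\psi\in\fa_\theta^*$, $\psi$ is $(\Ga,\theta)$-critical if and only if it is $(\Ga,\Pi)$-critical. The single tool I would use is the identity of Lemma~\ref{qu2},
$$\psi_\Ga^\theta(u)=\max_{v\in p_\theta^{-1}(u)}\psi_\Ga(v),\qquad u\in\fa_\theta,$$
together with the fact, recorded inside its proof, that when $u\in\L_\theta$ this maximum is attained on the compact set $p_\theta^{-1}(u)\cap\L\subset\fa^+$.

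First I would establish the domination equivalence: $\psi\ge\psi_\Ga^\theta$ on $\fa_\theta^+$ if and only if $\psi\ge\psi_\Ga$ on $\fa^+$. For the forward direction, any $v\in\fa^+$ has $p_\theta(v)\in\fa_\theta^+$ (because $\fa^+\subset\fa_\theta^++\log B_\theta^+$), so using $\psi=\psi\circ p_\theta$ and then the identity above at $p_\theta(v)$,
$$\psi(v)=\psi(p_\theta(v))\ge\psi_\Ga^\theta(p_\theta(v))\ge\psi_\Ga(v).$$
For the converse, given $u\in\fa_\theta^+$ the inequality $\psi(u)\ge\psi_\Ga^\theta(u)$ is trivial when $u\notin\L_\theta$; otherwise I pick a maximizer $v^*\in p_\theta^{-1}(u)\cap\L$, so that $\psi(u)=\psi(v^*)\ge\psi_\Ga(v^*)=\psi_\Ga^\theta(u)$.

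Granting the domination, I would then match the tangency points. In the forward direction, if $\psi(u)=\psi_\Ga^\theta(u)$ for some $u\in\fa_\theta^+-\{0\}$, then $\psi_\Ga^\theta(u)$ is finite, hence $u\in\L_\theta$; a maximizer $v^*\in p_\theta^{-1}(u)\cap\L\subset\fa^+-\{0\}$ then satisfies $\psi(v^*)=\psi(u)=\psi_\Ga^\theta(u)=\psi_\Ga(v^*)$, so $\psi$ is tangent to $\psi_\Ga$ at $v^*$. Conversely, if $\psi(v)=\psi_\Ga(v)$ for some $v\in\fa^+-\{0\}$, then $\psi_\Ga(v)$ is finite so $v\in\L$; setting $u=p_\theta(v)\in\fa_\theta^+$ and using the identity gives $\psi(u)=\psi(v)=\psi_\Ga(v)\le\psi_\Ga^\theta(u)\le\psi(u)$, forcing $\psi(u)=\psi_\Ga^\theta(u)$.

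The main obstacle is the lone delicate point in this last step: I must know that the projected vector $u=p_\theta(v)$ is nonzero before I can call it a tangency point of $\psi_\Ga^\theta$. This is precisely where the properness hypothesis is needed beyond its role in Lemma~\ref{qu2}: since $p_\theta|_\L$ is proper and $\L$ is a cone, the set $p_\theta^{-1}(0)\cap\L$ is a compact cone and hence equals $\{0\}$, so every $v\in\L-\{0\}$ has $p_\theta(v)\ne 0$. Combining the domination equivalence with the tangency equivalence then yields the asserted equality of the two sets.
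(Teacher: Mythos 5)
Your proposal is correct and takes essentially the same route as the paper's proof: both directions reduce to the identity $\psi_\Ga^\theta(u)=\max_{v\in p_\theta^{-1}(u)}\psi_\Ga(v)$ of Lemma \ref{qu2} (together with Lemma \ref{qu}), first matching the domination inequalities and then matching tangency points via a maximizer $v^*\in p_\theta^{-1}(u)$. The only substantive difference is that you explicitly verify the delicate point that $u=p_\theta(v)\neq 0$ for a tangency point $v\in\L-\{0\}$, using that properness of $p_\theta|_{\L}$ forces $p_\theta^{-1}(0)\cap\L=\{0\}$; the paper leaves this nonvanishing check implicit, so your argument cleanly fills that small gap.
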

\begin{proof} To show the inclusion $\supset$, suppose $\psi=\psi\circ p_\theta$ and
$\psi$ is $(\Ga,\Pi)$-critical. Then
for any $u\in \fa_\theta$ and any 
$v'\in p_\theta^{-1}(u)$,
$\psi(u)=\psi(v') \ge  \psi_\Ga(v')$ and hence
 $\psi(u)\ge \psi_\Ga^\theta(u)$ by Lemma \ref{qu2}. 
Moreover, if $\psi(v)=\psi_\Ga(v)$, then for $u=p_\theta(v)$, 
$\psi (u)\ge \psi_\Ga^{\theta}(u)\ge \psi_\Ga(v)=\psi(v)=\psi(u)$ and hence $\psi(u)=\psi_\Ga^{\theta}(u)$, proving $\psi$ is
$(\Ga, \theta)$-critical. For the other inclusion $\subset$,
suppose that $\psi\ge \psi_\Ga^{\theta}$ on $\fa_\theta^+$ and
$\psi(u)=\psi_\Ga^{\theta}(u)$ for some $u\in \fa_\theta^+$.
Then for any $v\in \fa^+$, $\psi(v)=\psi(p_\theta(v))\ge
\psi_\Ga^{\theta}(p_\theta(v))\ge \psi_\Ga(v)$ by Lemma  \ref{qu}. Let $v\in p_\theta^{-1}(u)$ be such that
$\psi_\Ga^{\theta}(u)=\psi_\Ga(v)$ given by Lemma \ref{qu2}.
Then $\psi(v)=\psi(u)=\psi_\Ga^{\theta}(u)=\psi_\Ga(v) $; so $\psi$
is $(\Ga,\Pi)$-critical.
\end{proof}

\section{Limit set, $\theta$-conical set, and conformal measures} \label{sec.conical}
Let $\Ga<G$ be a closed subgroup.
\begin{Def}[$\theta$-limit set]\label{tli}
    We define the $\theta$-limit set of $\Ga$ as follows:
$$\lat=\lat(\Ga):=\{\lim {\ga}_i\in \F_\theta: {\ga}_i\in \Ga\}$$ where $\lim \ga_i$ is defined as in Definition \ref{fc}.
 \end{Def}
 This is a $\Ga$-invariant closed subset of $\F_\theta$, which may be empty in general. Set $\La=\La_{\Pi}$.
 Denote by $\op{Leb}_{\theta}$ the $K$-invariant probability measure on $\ft$. This definition of $\lat$ coincides with that of Benoist:
\begin{lem}[{\cite{Benoist1997proprietes}, \cite[Corollary 5.2, Lemma 6.3, Theorem 7.2]{Quint2002Mesures}, \cite[Lemma 2.13]{lee2020invariant}}] \label{lq} 
If $\Ga$ is Zariski dense in $G$, we have the following:
\begin{enumerate}
\item $\lat=\{\xi\in \ft:
({\ga}_i)_*\op{Leb}_\theta\to D_{\xi} \;\; \text{for some infinite sequence ${\ga}_i\in \Ga$}\} $ where $D_{\xi}$ is the Dirac measure at $\xi$;
\item $\lat =\pi_\theta(\La)$;
     \item  $\lat$ is the unique $\Ga$-minimal subset of $\F_\theta$.
\end{enumerate}
\end{lem}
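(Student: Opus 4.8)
The plan is to designate as the central step the \emph{minimality statement}: for a Zariski dense $\Ga$, the $\theta$-limit set $\lat$ is contained in every nonempty closed $\Ga$-invariant subset $E\subset\ft$. Granting this, part (3) is immediate, since a smallest nonempty closed invariant set is automatically the unique minimal one; and part (2) follows by combining it with the elementary inclusion $\pi_\theta(\La)\subseteq\lat$. Part (1) is then obtained from the same contraction phenomenon transported to the level of the $K$-invariant measure $\Leb_\theta$. Throughout I use that $\pi_\theta:\F\to\ft$ is a continuous, $G$-equivariant surjection with $\pi_\theta(\kappa\xi_\Pi)=\kappa\xi_\theta$ for $\kappa\in K$, and that $\theta$-regularity is weaker than $\Pi$-regularity, since $\min_{\alpha\in\theta}\alpha(\mu(g))\ge\min_{\alpha\in\Pi}\alpha(\mu(g))$ for all $g$.

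First I would record the easy inclusion. If $\hat\xi=\lim\ga_i\in\La$ in the sense of Definition~\ref{fc} (with $\theta=\Pi$), choosing $\kappa_{\ga_i}\in K$ with $\ga_i\in\kappa_{\ga_i}A^+K$ and $\kappa_{\ga_i}\xi_\Pi\to\hat\xi$, the weaker $\theta$-regularity gives $\min_{\alpha\in\theta}\alpha(\mu(\ga_i))\to\infty$, while equivariance gives $\kappa_{\ga_i}\xi_\theta=\pi_\theta(\kappa_{\ga_i}\xi_\Pi)\to\pi_\theta(\hat\xi)$; hence $\pi_\theta(\hat\xi)=\lim\ga_i\in\lat$. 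As $\La$ is nonempty by Zariski density \cite{Benoist1997proprietes} and compact, $\pi_\theta(\La)$ is a nonempty closed $\Ga$-invariant subset of $\lat$.

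For the minimality statement I would fix a nonempty closed $\Ga$-invariant $E$ and show that every attracting fixed point of a $\theta$-loxodromic element of $\Ga$ lies in $E$; since these are dense in $\lat$ by \cite[Corollary 5.2]{Quint2002Mesures}, this gives $\lat\subseteq E$. Given $\eta_0\in E$, I would choose a $\theta$-loxodromic $g\in\Ga$ whose repelling fixed point $y_g^-\in\F_{\i(\theta)}$ is in general position with $\eta_0$; then the north--south dynamics afforded by the contraction Lemma~\ref{lem.29inv} gives $g^n\eta_0\to y_g^+$, so the attracting point $y_g^+\in E$. The main obstacle is precisely this choice of $g$: for a fixed $\eta_0$ the set of points of $\F_{\i(\theta)}$ not in general position with $\eta_0$ is a proper subvariety $Z_{\eta_0}$, and one must produce a loxodromic element with $y_g^-\notin Z_{\eta_0}$. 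This is where Zariski density is indispensable: the repelling fixed points of $\theta$-loxodromic elements of $\Ga$ are dense in $\La_{\i(\theta)}$, and $\La_{\i(\theta)}$ is not contained in any proper subvariety, so some loxodromic works. Taking $E=\pi_\theta(\La)$ yields $\lat\subseteq\pi_\theta(\La)$, hence (2), and the general statement yields (3).

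Finally, for (1) I would run the contraction argument on $\Leb_\theta$. If $\xi=\lim\ga_i\in\lat$, write $\ga_i=\kappa_{\ga_i}\exp(\mu(\ga_i))\kappa'_{\ga_i}$; passing to any subsequence and extracting so that $\kappa_{\ga_i}\to\kappa_0$ and $\kappa'_{\ga_i}\to\kappa'_0$, one has $\kappa_0\xi_\theta=\xi$, and Lemma~\ref{lem.29inv} gives $\ga_i\eta\to\xi$ for every $\eta$ with $\kappa'_0\eta$ in the big cell $N_\theta^+\xi_\theta$, i.e.\ for $\Leb_\theta$-a.e.\ $\eta$; since this limit does not depend on the subsequence, bounded convergence yields $(\ga_i)_*\Leb_\theta\to D_\xi$. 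Conversely, if $(\ga_i)_*\Leb_\theta\to D_\xi$, then $\min_{\alpha\in\theta}\alpha(\mu(\ga_i))\to\infty$: were some $\alpha(\mu(\ga_i))$ bounded along a subsequence, the pushforward would retain positive spread transverse to $\ker\alpha$ and could not converge to a Dirac mass, which is the content of \cite[Lemma 6.3]{Quint2002Mesures}. Given this regularity, the first half identifies the limit as $\kappa_0\xi_\theta$ for a subsequential limit $\kappa_0$ of $\kappa_{\ga_i}$, forcing $\xi=\kappa_0\xi_\theta\in\lat$. This completes (1); the delicate point here is the concentration lower bound, but the genuinely hard step of the whole lemma remains the general-position argument in the minimality step.
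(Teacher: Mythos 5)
The paper never proves this lemma: it is quoted wholesale from Benoist, Quint, and Lee--Oh, so your sketch can only be measured against that standard argument rather than against anything in the text. Your overall architecture --- minimality as the central step, north--south dynamics of $\theta$-proximal elements, Zariski density to guarantee general position, and the contraction Lemma \ref{lem.29inv} for the measure statement (1) --- is exactly the Benoist--Quint route. Part (1) and the inclusion $\pi_\theta(\La)\subseteq\lat$ are essentially correct as written, modulo two points you use silently: that for $\theta$-regular sequences the flag $\kappa_{\ga_i}\xi_\theta$ is independent of the choice of Cartan decomposition up to right multiplication by $M_\theta$, and that ``Dirac limit forces $\theta$-regularity'' is a standard fact (your attribution of it to \cite[Lemma 6.3]{Quint2002Mesures} is likely off --- that lemma is about $\Leb_\theta$ being conformal --- but the fact itself is fine to cite).

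The genuine gap is in the minimality step, and it is a quantifier error. You announce that you will show that \emph{every} attracting fixed point of a $\theta$-loxodromic element of $\Ga$ lies in $E$, but your argument fixes $\eta_0\in E$ and then \emph{chooses} one $g$ whose repelling point $y_g^-$ avoids $Z_{\eta_0}$; this yields only that \emph{some} attracting fixed point lies in $E$. Density of attracting fixed points in $\lat$ is useless unless all of them (or a dense family of them) are caught, and nothing in your argument handles an arbitrary $\theta$-loxodromic $h$: for such an $h$, the fixed point $\eta_0$ may well fail to be in general position with $y_h^-$, and iterating your own step only produces a sequence of attracting points, never all of them. The repair is standard but invokes Zariski density of a different object than the one you used: for any $\xi\in\F_\theta$, the orbit $\Ga\xi$ is Zariski dense in $\F_\theta$ (its Zariski closure is invariant under the Zariski closure of $\Ga$, which is $G$, and $G$ acts transitively), so $\Ga\eta_0\subseteq E$ cannot be contained in the proper subvariety of points of $\F_\theta$ not in general position with $y_h^-$; hence some $\ga\eta_0\in E$ is in general position with $y_h^-$, and north--south dynamics gives $y_h^+=\lim_n h^n\ga\eta_0\in E$ for \emph{every} $\theta$-loxodromic $h\in\Ga$. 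Note that this repair makes your detour through density of \emph{repelling} points in $\La_{\i(\theta)}$ unnecessary. Only after it is your appeal to density of attracting points legitimate --- and there you should also flag a mild circularity: Quint's Corollary 5.2 concerns his definition of the limit set, so to use ``attracting fixed points are dense in $\lat$'' for the set $\lat$ of Definition \ref{tli} you must bridge the definitions, checking both that attracting points of $\theta$-proximal elements of $\Ga$ lie in $\lat$ (easy, via the Cartan decomposition of $g^n$) and that $\lat$ lies in their closure (the nontrivial cited input, resting on Benoist's proximal-perturbation arguments).
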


In the rest of this section, suppose that $\Ga$ is discrete.

\begin{Def}[$\theta$-conical set]    
We define the $\theta$-conical set of $\Ga$ as 
\be\label{cd} \La_\theta^{\mathsf{con}}=\left\{gP_\theta \in \F_\theta : \limsup_{\ga \in \Ga} \ga g M_{\theta} A^+ \ne \emptyset \right\}.\ee
\end{Def} 

For $\theta=\Pi$,
 $\La_\Pi^{\mathsf{con}}=\{gP\in \F:  \limsup_{\ga \in \Ga} \ga g A^+ \ne \emptyset\}$ because $M_\Pi=M$ commutes with $A$.
Note that  the conical set
is not contained in the limit set $\La$ in general even for $\theta=\Pi$.
For example, if $G = \PSL_2(\R) \times \PSL_2(\R)$ and $\Ga = \Ga_1 \times \Ga_2 $ is a product of two convex cocompact subgroups, then
$\La =\La(\Ga_1)\times \La(\Ga_2)$ while $ \La^{\mathsf{con}}=(\La(\Ga_1) \times \S^1) \cup (\S^1\times \La (\Ga_2)) $.

\subsection*{$\theta$-shadows} For $ q \in X$ and $R>0$, let
$B(q,R)=\{x\in X: d(x, q) \le  R\}$.
For $p\in X$, the $\theta$-shadow
$O^\theta_R(p, q) \subset\F_\theta$ of $B(q,R)$ viewed from $p$ is defined as
\begin{align}\label{sh} O^\theta_R(p,q) &= 
\{gP_{\theta} \in \F_\theta : g\in G,\; go=p,\; g A^+o \cap B(q, R)\ne \emptyset \} \\  &= 
\{gP_{\theta} \in \F_\theta : g\in G,\; go=p,\; gM_\theta A^+o \cap B(q, R)\ne \emptyset \} \notag
. \end{align}

Clearly, for $O_R(p, q)=O_R^{\Pi}(p, q)$, we have
 $$O_R^{\theta}(p, q) :=\pi_\theta (O_R(p, q)) .$$

\begin{lem}\label{cons}
     We have
     $\xi \in \La_\theta^{\mathsf{con}}$  if and only if
there exist an infinite sequence $ \ga_i\in \Ga $ and $N>0$ such that
$ \xi\in \bigcap_i O^\theta_N(o,\ga_i o)$.
\end{lem}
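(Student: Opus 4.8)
The plan is to prove both implications by translating between two equivalent bookkeeping devices: \emph{``a point of the flat $gM_\theta A^+o$ lies within bounded distance of $\ga_i o$''} and \emph{``$\ga_i^{-1} g m_i a_i$ remains in a fixed compact subset of $G$''}. First I would use that $K$ acts transitively on $\F_\theta$ to fix a representative $g\in K$ with $gP_\theta=\xi$; any other representative in $K$ differs from $g$ by right multiplication by an element of $M_\theta=K\cap P_\theta$, which is precisely why the two descriptions of the shadow in \eqref{sh} (via $gA^+o$ and via $gM_\theta A^+o$) coincide. I would also unwind the defining condition $\limsup \Ga gM_\theta A^+\neq\emptyset$: since $\Ga$ is discrete while $g$ and $M_\theta$ are bounded, an accumulation point of $\Ga gM_\theta A^+$ in $G$ can only arise along a sequence $\ga_i g m_i a_i$ with $\ga_i\in\Ga$, $m_i\in M_\theta$, $a_i\in A^+$ for which $\|\log a_i\|\to\infty$ (equivalently $gm_i a_i\to\infty$ in $G$) while $\ga_i g m_i a_i$ converges in $G$; in particular $\ga_i\to\infty$, so the $\ga_i$ are automatically infinitely many distinct elements.

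For $(\Leftarrow)$, suppose $\xi\in\bigcap_i O^\theta_N(o,\ga_i o)$ for an infinite sequence $\ga_i\in\Ga$ and some $N>0$. By the $gM_\theta A^+o$ description of the shadow, for each $i$ there are $m_i\in M_\theta$ and $a_i\in A^+$ with $d(g m_i a_i o,\ga_i o)\le N$, i.e. $\ga_i^{-1} g m_i a_i\in\Omega_N:=\{h\in G: d(ho,o)\le N\}$, which is compact. Since the $\ga_i$ are distinct and $\Ga$ is discrete, $\ga_i o\to\infty$, whence $g m_i a_i o\to\infty$ and therefore $\|\log a_i\|=d(g m_i a_i o,o)\to\infty$ (using $gm_i\in K$). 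Passing to a subsequence, $\ga_i^{-1} g m_i a_i$ converges in $\Omega_N$ to some $h$, and this $h$ is an accumulation point of $\Ga gM_\theta A^+$ along a sequence with $gm_i a_i\to\infty$; hence $\limsup\Ga gM_\theta A^+\neq\emptyset$, i.e. $\xi\in\lac$.

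For $(\Rightarrow)$, suppose $\xi\in\lac$, so by the unwinding above there are $\ga_i\in\Ga$, $m_i\in M_\theta$, $a_i\in A^+$ with $gm_i a_i\to\infty$ and $\ga_i g m_i a_i\to h$ in $G$. Then $d(g m_i a_i o,\ga_i^{-1} o)=d(\ga_i g m_i a_i o,o)\le d(\ga_i g m_i a_i o,ho)+d(ho,o)$, which is at most $N:=1+d(ho,o)$ for all large $i$. Thus the flat $gM_\theta A^+o$ meets $B(\ga_i^{-1} o,N)$, and since $go=o$ this says exactly that $\xi\in O^\theta_N(o,\ga_i^{-1} o)$, again by the $gM_\theta A^+o$ description. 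As $gm_i a_i\to\infty$ while $\ga_i g m_i a_i$ stays bounded we have $\ga_i\to\infty$, so setting $\delta_i=\ga_i^{-1}$ gives an infinite sequence in $\Ga$ with $\xi\in\bigcap_i O^\theta_N(o,\delta_i o)$.

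I expect the only genuine subtlety to be the careful handling of the point at infinity of the orbit: one must verify that a nonempty $\limsup$ forces $\|\log a_i\|\to\infty$ (so that the witnessing $\ga_i$ are genuinely infinitely many) and, conversely, that an infinite family of shadows forces the $A^+$-parameters to diverge. Both follow from discreteness of $\Ga$ together with the compactness of $K$, and crucially the argument uses neither $\theta$-regularity nor $\theta$-antipodality, so the equivalence holds for an arbitrary discrete subgroup. The remaining bookkeeping---matching the two descriptions of the shadow and tracking the $M_\theta$-ambiguity in the representative $g$---is routine.
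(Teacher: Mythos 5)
Your proof is correct and, for the substantive direction, is essentially the paper's own argument: for $(\Leftarrow)$ you use the $gM_\theta A^+o$ description of the shadow together with $M_\theta=K\cap P_\theta$ to reduce the $i$-dependent $K$-representatives of $\xi$ to a single $g\in K$, observe that distinctness of the $\ga_i$ forces $\|\log a_i\|\to\infty$, and extract a convergent subsequence of the bounded sequence $\ga_i^{-1}gm_ia_i$; this is exactly what the paper does (the paper instead takes $k_i\in K$ with $k_iP_\theta=\xi$, passes to a limit $k$, and writes $k_i=km_i$, which is the same manipulation in the opposite order). Your $(\Rightarrow)$, which the paper dismisses as clear, is also spelled out correctly.

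One caveat, precisely at the point you flag as "the only genuine subtlety." You claim that, \emph{because $\Ga$ is discrete and $g, M_\theta$ are bounded}, an accumulation point of $\Ga gM_\theta A^+$ can only arise along sequences with $\|\log a_i\|\to\infty$. As literally stated this is false: each single translate $\ga gM_\theta A^+$ is a positive-dimensional subset of $G$ and accumulates on itself, with bounded $a_i$, no matter how discrete $\Ga$ is; under that naive reading of $\limsup$ (accumulation points of the set) the lemma itself would fail, since every $\xi$ would be conical. The resolution is that $\limsup \Ga gM_\theta A^+$ must be read as the upper (Kuratowski) limit of the \emph{family} $\{\ga gM_\theta A^+\}_{\ga\in\Ga}$, i.e.\ limits of points $\ga_igm_ia_i$ along infinitely many distinct $\ga_i$ -- the distinctness is part of the definition, not a consequence of discreteness. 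This is the reading the paper uses implicitly both here and in Proposition \ref{reg}. Once it is adopted, your equivalence "distinct $\ga_i$ $\iff$ $a_i\to\infty$, given convergence" is correct and both implications go through verbatim. (A second, smaller gloss: your $(\Rightarrow)$ assumes the conical condition \eqref{cd} is witnessed by the representative $g\in K$ you fixed, whereas the set-builder allows any $g\in G$; passing between representatives $g$ and $gp$, $p\in P_\theta$, requires knowing the corresponding cones stay at finite Hausdorff distance. The paper's "clear" elides the same point, so this matches the paper's level of detail.)
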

\begin{proof}  The direction $\Rightarrow$ is clear.
  To see the other direction, suppose that $ \xi\in \bigcap_i O^\theta_N(o,\ga_i o)$ for some $N>0$ and an infinite sequence $\ga_i\in \Ga$, that is,
there exist sequences $k_i\in K$ and $ a_i\to \infty $ in $ A^+ $ such that $\xi=k_iP_\theta$ and the sequence $\gamma_i^{-1} k_i a_i $ is  bounded. By passing to a subsequence, we may assume $k_i$ converges to some $k\in K$. Since $\xi=k_iP_\theta$ for all $i$, we have $\xi=kP_\theta$. Since $k_iP_\theta=kP_\theta$
and $M_\theta=P_\theta\cap K$, we have $k_i=k m_i$
for some $m_i\in M_\theta$. Since $\gamma_i^{-1} k m_i a_i=\ga_i^{-1} k_i a_i$ is bounded, we have $\xi=kP_\theta\in \La_\theta^{\mathsf{con}}$.
\end{proof}
We remark that  we may replace $o$ by any $p\in X$ in the above lemma.

For each $N>0$, we set $$
\La_\theta^N:=
\left\{ \xi\in\F_\theta :\text{ there exists }\ga_i\to\infty\text{ in }\Ga\text{ such that }\xi\in \bigcap_i O^\theta_N(o,\ga_i o)\right\}.$$
By Lemma \ref{cons}, we have 
\be\label{lan} \La_\theta^{\mathsf{con}}=\bigcup_{N=1}^\infty \La_\theta^N .\ee

  \begin{Def} \label{reg1} For a $\theta$-discrete subgroup $\Ga$, we say that $\Ga$ is {\it $\theta$-regular}
if for any sequence $\ga_i \to \infty$ in $\Ga$, we have 
$$\min_{\alpha\in \theta} \alpha(\mu({\ga}_i))\to \infty .$$
\end{Def}
Observe that $\theta$-regularity is same as $\theta\cup \i(\theta)$-regularity by \eqref{mu} and that
not every $\theta$-discrete subgroup is $\theta$-regular.
\begin{prop} \label{compact}\label{reg}
If $\Ga$ is $\theta$-regular, then
 \begin{enumerate}
\item $\La^{\mathsf{con}}_{\theta}\subset \lat$;
\item  for any compact subset $Q\subset G$,
the union $\G Q\cup  \lat$ is compact where the topology is given by the convergence in Definition \ref{fc}; that is, any infinite sequence has a limit.
\end{enumerate}
\end{prop}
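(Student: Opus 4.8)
The plan is to derive both statements from the convergence lemmas of Section~\ref{sec.convergence}, with $\theta$-regularity (Definition~\ref{reg1}) serving as the device that converts the bounded-distance information coming from shadows and from a compact set $Q$ into a genuine $\theta$-regular escape to infinity, so that Lemma~\ref{lem.210inv} and Definition~\ref{fc} apply.

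For (1), I would start with $\xi\in\La_\theta^{\mathsf{con}}$ and invoke Lemma~\ref{cons} to produce an infinite sequence $\ga_i\to\infty$ in $\Ga$ and $N>0$ with $\xi\in\bigcap_i O_N^\theta(o,\ga_i o)$. Unwinding the definition of the shadow, for each $i$ there are $k_i\in K$ with $\xi=k_i\xi_\theta$ and $a_i\in A^+$ with $d(k_ia_io,\ga_io)\le N$; equivalently $\ga_i=k_ia_iq_i^{-1}$, where $q_i$ ranges in the fixed compact set $\{g\in G:d(go,o)\le N\}$. Since $\ga_i$ is $\theta$-regular and $q_i$ is bounded, Lemma~\ref{lem.cptcartan} gives $\|\mu(\ga_i)-\log a_i\|=O(1)$, whence $\min_{\alpha\in\theta}\alpha(\log a_i)\to\infty$. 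Thus $k_ia_i\to\xi$ in the sense of Definition~\ref{fc} (its Cartan $K$-component is $k_i$ and $k_i\xi_\theta=\xi$ is constant), and applying Lemma~\ref{lem.210inv} with the bounded perturbation $q_i^{-1}o$ yields $\ga_io=(k_ia_i)(q_i^{-1}o)\to\xi$. By the definition of the limit set this says $\xi=\lim\ga_i\in\La_\theta$, proving $\La_\theta^{\mathsf{con}}\subset\La_\theta$.

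For (2), I would first show that every infinite sequence in $\Ga Q$ subconverges to a point of $\La_\theta$, and then assemble compactness. Given distinct $\ga_i\in\Ga$ and $q_i\in Q$, $\theta$-regularity gives $\min_{\alpha\in\theta}\alpha(\mu(\ga_i))\to\infty$; passing to a subsequence so that the Cartan $K$-component of $\ga_i$ converges, Definition~\ref{fc} shows $\lim\ga_i=\kappa_0\xi_\theta\in\La_\theta$ for some $\kappa_0\in K$. As $q_io$ is bounded, Lemma~\ref{lem.210inv} then gives $\ga_iq_io\to\kappa_0\xi_\theta\in\La_\theta$. Finally, compactness of $\Ga Q\cup\La_\theta$ follows by a routine case analysis on a sequence $(x_n)$: if infinitely many $x_n$ lie in the compact set $\La_\theta\subset\F_\theta$ we are done; if infinitely many lie in $\Ga Q$, then either some $\ga$ repeats, in which case they subconverge inside a single compact translate $\ga Q$, or infinitely many distinct $\ga$'s occur and the previous step produces a limit in $\La_\theta$.

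The main obstacle is the identification step in (1): a priori the shadow condition only controls the \emph{distance} $d(\ga_io,k_ia_io)$, not the regularity of the direction $a_i$, so without $\theta$-regularity the escaping direction need not be $\theta$-regular and the conical point need not be a limit point. The crux is therefore to promote $\theta$-regularity of $\ga_i$ to $\min_{\alpha\in\theta}\alpha(\log a_i)\to\infty$ via Lemma~\ref{lem.cptcartan}, after which the convergence lemmas identify $\xi$ unambiguously as $\lim\ga_i$; part (2) is then a repackaging of the same machinery together with the local finiteness of $\Ga Q$ in $G$ and the compactness of $\F_\theta$.
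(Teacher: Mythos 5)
Your proof is correct and follows essentially the same route as the paper: both arguments use $\theta$-regularity together with Lemma~\ref{lem.cptcartan} to promote regularity of $\mu(\ga_i)$ to the $A^+$-components $a_i$, and then invoke the convergence machinery of Definition~\ref{fc} and Lemma~\ref{lem.210inv} to identify the conical point (resp.\ the limit of $\ga_i q_i o$) as $\lim\ga_i\in\La_\theta$. The only cosmetic difference is that you enter part (1) through the shadow reformulation of Lemma~\ref{cons}, where $k_i\xi_\theta=\xi$ is constant, whereas the paper works from the definition \eqref{cd} with varying $m_i\in M_\theta$ and disposes of them via Lemma~\ref{lem.211inv}; the two maneuvers are interchangeable.
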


\begin{proof}To show (1), 
    let $\xi\in \lat^{\mathsf{con}}$. Then there exist $g\in G$, a sequence $\ga_i\in \Ga$, $m_i\in M_\theta$ and $a_i\in A^+$
    such that $\xi=g\xi_\theta$ and $d(gm_ia_i o, \ga_i o)$
    is uniformly bounded. Since $\mu (\ga_i)-\log a_i$ is uniformly bounded by Lemma \ref{lem.cptcartan}, and $\min_{\alpha\in \theta} \alpha(\mu(\ga_i))\to \infty $ by the $\theta$-regularity,
    we have  $\min_{\alpha\in \theta} \alpha(\log a_i)\to \infty $ as $i\to \infty$. 
     We may assume that $m_i$ converges to some $ m\in M_\theta$ by passing to a subsequence.
     Therefore as $i\to \infty$, $g m_i a_i o \to gm \xi_\theta =g\xi_\theta$ by Lemma \ref{lem.211inv}.
    This implies that $\ga_i o\to g\xi_\theta$ by Lemma \ref{bdd}.  Hence
    $\xi\in \La_\theta$. 
    For (2), if $\ga_i\in \Ga$ is an infinite sequence and $q_i\in Q$, then $\min_{\alpha\in \theta}\alpha(\mu(\ga_i q_i))\to \infty$ by the $\theta$-regularity of $\Ga$ and Lemma \ref{lem.cptcartan}.
    Hence the claim 
    is now immediate from Definition \ref{fc} and Lemma \ref{bdd}.
\end{proof}

\subsection*{Conical convergence}

From the viewpoint of Lemma \ref{cons}, we define the conical convergence as follows.

\begin{Def} We say that a sequence $g_i\in G$ converges to $\xi\in \F_\theta$
conically if $g_i \to \xi$ in the sense of Definition \ref{fc}
and there exists $R>0$ such that $\xi\in O_R^{\theta}(o, g_io)$ for all $i\ge 1$. Note that if 
$\ga_i\in \Ga$ converges to $\xi\in \F_\theta$ conically, then $\xi\in \La_{\theta}^{\mathsf{con}}$.
\end{Def} 

The following lemma is stated in \cite[Lemma 5.29]{Kapovich2017anosov} in a different language. We give a more direct proof.
\begin{lemma}\label{lem.klpconical}
    Let $g_i \in G$ be a sequence which converges to $\xi \in \F_{\theta}$. Then the following are equivalent:
    \begin{enumerate}
        \item The convergence $g_i \to \xi$ is conical.
        \item For any $\eta \in \F_{\i(\theta)}$ such that $(\xi, \eta) \in \F_{\theta}^{(2)}$, the sequence $g_i^{-1}(\xi, \eta)$ is precompact in $\F_{\theta}^{(2)}$.
         \item For some $\eta \in \F_{\i(\theta)}$ such that $(\xi, \eta) \in \F_{\theta}^{(2)}$, the sequence $g_i^{-1}(\xi, \eta)$ is precompact in $\F_{\theta}^{(2)}$.
    \end{enumerate}
\end{lemma}
\begin{proof}
The map $gL_\theta \to (gP_\theta, gw_0 P_{\i(\theta)})$ is a $G$-equivariant homeomorphism from $G/L_\theta$ to $\F_\theta^{(2)}$.
We first prove  $(1) \Rightarrow (2)$. Suppose (1). 
Then there exist sequences $k_i \in K$ and $a_i \to \infty$ in $A^+$ such that $\xi = k_i P_{\theta}$ for all $i$ and the sequence $g_i^{-1}k_i a_i$ is bounded. If $(\xi, \eta)\in \F_\theta^{(2)}$, then $\xi=hP_\theta$
and $\eta= hw_0P_{\i(\theta)}$ for some $h\in G$. Since $hP_{\theta} = k_i P_{\theta}$, $h = k_i p_i m_i$ for some $p_i \in P$ and $m_i \in M_{\theta}$, by using $P_{\theta} = P M_{\theta}$. In other words, we have $k_i^{-1} h m_i^{-1} = p_i$ and hence $p_i$ is a bounded sequence in $P$ since $k_i$ and $m_i$ are bounded sequences. In particular, it follows from $a_i \in A^+$ that the sequence $a_i^{-1} p_i a_i$ is bounded by Lemma \ref{lem.boundedofparabolic}. Therefore the sequence $g_i^{-1}hL_{\theta} = g_i^{-1}k_ip_iL_{\theta} = (g_i^{-1}k_ia_i)(a_i^{-1}p_ia_i)L_{\theta}$ is precompact in $G/L_{\theta}$, which is
equivalent to saying that $g_i^{-1}(\xi, \eta)$ is precompact, proving (2).
The implication  $(2) \Rightarrow (3)$ is clear. 

Now $(3) \Rightarrow (1)$ follows from Lemma \ref{lem.eventualshadow} below applied to the constant sequence $(\xi_i, \eta_i) = (\xi, \eta)$.
\end{proof}

\begin{lemma} \label{lem.eventualshadow}
    Let $g_i \in G$ and $\xi_i\in \F_\theta$  be sequences both  converging to some $\xi \in \F_{\theta}$. Suppose that 
    there exists a sequence $\eta_i\in \F_{\i(\theta)}$ converging to some $\eta\in \F_{\i(\theta)}$  such that $(\xi, \eta) \in \F_{\theta}^{(2)}$ and the sequence $g_i^{-1}(\xi_i, \eta_i)$ is precompact in $\F_{\theta}^{(2)}$. Then there exists $R > 0$ such that $$\xi_i \in O_R^{\theta}(o, g_i o) \quad \text{for all } i \ge 1.$$
\end{lemma}

\begin{proof}
    Under the identification $G/L_{\theta} = \F_{\theta}^{(2)}$ given by $g L_{\theta} = (g P_{\theta}, g w_0 P_{\i(\theta)})$, the hypothesis implies that there exists a sequence $h_i \in G$ with the limit $h \in G$ so that $(\xi_i, \eta_i) = h_i L_{\theta}$ for all $i \ge 1$ and $(\xi, \eta) = h L_{\theta}$.
    It follows from the precompactness of $g_i^{-1}(\xi_i, \eta_i)$ that there exists a sequence $\ell_i \in L_{\theta}$ such that $g_i^{-1}h_i \ell_i$ is a bounded sequence.
    
Since $L_{\theta} = M_{\theta}A M_{\theta}$, we can write $\ell_i = m_i a_i' m_i' \in M_{\theta} A M_{\theta}$, and hence we have $g_i^{-1} h_i m_i a_i'$ is bounded. For each $i$, let $w_i \in K$ be a representative of a Weyl element such that $w_i^{-1} a_i' w_i\in A^+$. After passing to a subsequence, we may assume that the sequence $m_i$ converges to some $m \in M_{\theta}$ and $w_i$ is a constant sequence, say $w$. We claim that $w\in M_\theta$.
Denoting by $a_i = w^{-1} a_i' w \in A^+$, \be \label{eqn.Levibdd}
\text{the sequence } g_i^{-1} h_i m_i w a_i \text{ is bounded.}
\ee Moreover, since $\min_{\alpha \in \theta} \alpha(\mu (g_i)) \to \infty$, we have $\min_{\alpha \in \theta} \alpha(\log a_i) \to \infty$ as $i \to \infty$ by Lemma \ref{lem.cptcartan}. Since $h_i m_i w a_i= g_i (g_i^{-1}h_i m_i w a_i)$,
$g_i \to \xi$,
and $g_i^{-1}h_i m_i w a_i$ is a bounded sequence by \eqref{eqn.Levibdd},
we have as $i\to \infty$, $$h_i m_i w a_i \to \xi$$ by  Lemma \ref{lem.210inv}. On the other hand, by  Lemma \ref{lem.211inv}, we have that as $i\to \infty$,
$$h_im_i w a_i \to hm w P_{\theta}.$$ Hence we have $hm wP_{\theta} = \xi = h P_{\theta}$. Since $m \in M_{\theta}$, it follows that
$$w \in K \cap P_{\theta} = M_{\theta}.$$ In particular, $\xi_i = h_i m_i w P_{\theta} $ for all $i$.

For each $i$, write $h_im_i w = k_i b_i n_i \in KAN$ in the Iwasawa decomposition. We then have $\xi_i = h_i m_i w P_{\theta} = k_i P_{\theta}$. Since the sequence $h_i m_i w$ is convergent and the product map $K \times A \times N \to G$ is a diffeomorphism, the sequences $b_i$ and $n_i$ are bounded.
Since $a_i \in A^+$, the sequence $a_i^{-1} n_ia_i$ is bounded  by Lemma \ref{lem.boundedofparabolic}, and so is the sequence $b_i a_i^{-1} n_i a_i$.
On the other hand, \eqref{eqn.Levibdd} implies that
\be \label{eqn.iwasawabdd}
\text{the sequence } g_i^{-1}k_ib_in_ia_i = (g_i^{-1} k_i a_i)(b_i a_i^{-1} n_i a_i) \text{ is bounded.}
\ee
Therefore it follows that $g_i^{-1} k_i a_i$ is bounded.  This mean that for some $R > 0$, $\xi_i = k_i P_{\theta} \in O_R^{\theta}(o, g_i o)$ for all $i$, as desired.
\end{proof}

\subsection*{Conformal measures}
The $\frak a$-valued Busemann map $\beta: \cal F\times G \times G \to\frak a $ is defined as follows: for $\xi\in \cal F$ and $g, h\in G$,
$$  \beta_\xi ( g, h):=\sigma (g^{-1}, \xi)-\sigma(h^{-1}, \xi)$$
where  $\sigma(g^{-1},\xi)\in \fa$ 
is the unique element such that we have the Iwasawa decomposition $g^{-1}k \in K \exp (\sigma(g^{-1}, \xi)) N$ for any $k\in K$ with $\xi=kP$.
We define the $\fa_{\theta}$-valued Busemann map $\beta^{\theta} : \F_{\theta} \times G \times G \to \fa_{\theta}$ as follows: for $(\xi,g,h)\in \cal F_\theta\times G\times G$, we set
 \be\label{Bu} \beta_{\xi}^\theta (g, h): = 
p_\theta ( \beta_{\xi_0} (g, h)) \quad\text{for $\xi_0\in \pi_\theta^{-1}(\xi)$};\ee 
this is well-defined independent of the choice of $\xi_0$ 
\cite[Lemma 6.1]{Quint2002Mesures}.

The following was shown for $\theta = \Pi$ in \cite[Lemma 5.7]{lee2020invariant} which directly implies the statement for general $\theta$ since $p_{\theta}$ is norm-decreasing.

\begin{lemma} \label{lem.buseandcartan} \label{shsh}
  There exists $\kappa > 0$ such that for any $g, h \in G$ and $R>0$, we have $$\sup_{\xi \in O^\theta_R(go, ho)}  \| \beta_{\xi}^{\theta}(g, h) - \mu_{\theta}(g^{-1}h) \| \le \kappa R.$$
\end{lemma}

Following the work of
Patterson-Sullivan (\cite{Patterson1976limit}, \cite{Sullivan1979density}) in rank one, Quint \cite{Quint2002Mesures} has introduced the notion of conformal measures in general. 
\begin{definition}[Conformal measures]
For a linear form $\psi \in \fa_{\theta}^*$ and a closed subgroup $\Ga<G$, a Borel probability measure $\nu$ on $\F_{\theta}$ is called a {\it $(\Ga, \psi)$-conformal measure} if $$\frac{d \gamma_*\nu}{d\nu}(\xi)=e^{\psi(\beta_\xi^\theta(e,\gamma))} \quad \text{for all $\gamma \in \Gamma$ and $ \xi \in \mathcal{F}_\theta$}. $$
 \end{definition}

\begin{prop}\label{qc}
 Suppose that $\Ga$ is Zariski dense and $\theta$-discrete. For any linear form $\psi \in \fa_{\theta}^*$ which is tangent to $\psi_{\Ga}^{\theta}$ at an interior direction of  $\fa_{\theta}^+$, there exists a $(\Ga, \psi)$-conformal measure supported on $\La_\theta$. 
\end{prop}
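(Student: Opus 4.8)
The plan is to construct the conformal measure via the classical Patterson–Sullivan shadow-construction, which in this higher-rank setting means building a sequence of measures on $\F_\theta$ from the orbit $\Ga o$ and extracting a weak-$*$ limit whose transformation law is governed by $\psi$. Since $\psi$ is tangent to $\psi_\Ga^\theta$ at some interior direction $u\in\inte\L_\theta$, Lemma~\ref{wd} and Lemma~\ref{lem.tent} tell us that the $\psi$-Poincar\'e series $\cal P_\psi(s)=\sum_{\ga\in\Ga}e^{-s\psi(\mu_\theta(\ga))}$ has abscissa of convergence $\delta_\psi=1$; this is the key normalization that makes the construction work at $s=1$.

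First I would set up the Patterson–Sullivan family. For $s>1$ (where $\cal P_\psi(s)<\infty$) define the measure
\[
\mu_s=\frac{1}{\cal P_\psi(s)}\sum_{\ga\in\Ga}e^{-s\psi(\mu_\theta(\ga))}\,D_{\ga\xi_\theta}
\]
on $\F_\theta$, where $\xi_\theta=[P_\theta]$ and $D_x$ is the Dirac mass at $x$. These are probability measures, so by compactness of $\F_\theta$ one can extract a weak-$*$ limit $\nu=\lim_{s_n\to1^+}\mu_{s_n}$ along some sequence $s_n\downarrow1$. If $\cal P_\psi$ diverges at $s=1$ the atoms spread out and escape to infinity, forcing the limit to be supported on $\La_\theta$; if $\cal P_\psi(1)<\infty$ one first replaces the weights by Patterson's standard trick, multiplying $e^{-s\psi(\mu_\theta(\ga))}$ by a slowly-varying function $h(\|\mu_\theta(\ga)\|)$ chosen so that the modified series diverges exactly at $s=1$ while $h(t+c)/h(t)\to1$, which preserves the conformality computation in the limit. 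In either case the atoms of $\mu_{s_n}$ accumulate only on $\La_\theta$ by Definition~\ref{tli} and Lemma~\ref{lq}, so $\supp\nu\subset\La_\theta$.

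The conformality is verified by the standard estimate relating the Busemann cocycle to the Cartan projection. For fixed $\eta\in\Ga$ and any $s>1$,
\[
\frac{d\,\eta_*\mu_s}{d\mu_s}(\ga\xi_\theta)=e^{-s\bigl(\psi(\mu_\theta(\eta^{-1}\ga))-\psi(\mu_\theta(\ga))\bigr)},
\]
and Lemma~\ref{lem.buseandcartan} shows that $\mu_\theta(\eta^{-1}\ga)-\mu_\theta(\ga)$ is uniformly close to $-\beta^\theta_{\ga\xi_\theta}(e,\eta)$ up to an error bounded in terms of a shadow width, provided $\ga\xi_\theta$ lies in a bounded shadow of $\ga o$ viewed from $o$. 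As $s_n\to1$ these Radon–Nikodym derivatives converge uniformly on compact sets of $\F_\theta$ to $e^{\psi(\beta^\theta_\xi(e,\eta))}$, using the continuity of $\beta^\theta$ and the fact that $\mu_\theta(\ga)\to\infty$ in every $\theta$-direction so the shadow-error term is controlled. Passing to the weak-$*$ limit then yields $\frac{d\,\eta_*\nu}{d\nu}(\xi)=e^{\psi(\beta^\theta_\xi(e,\eta))}$, which is exactly the $(\Ga,\psi)$-conformality relation.

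The main obstacle will be controlling the Busemann–Cartan discrepancy uniformly as the atoms escape to infinity: one must ensure that the total mass that lands \emph{outside} shadows of bounded width becomes negligible, so that Lemma~\ref{lem.buseandcartan} actually applies to $\nu$-almost every point in the limit. This requires a shadow-covering argument showing that $\mu_{s_n}$-mass concentrates on $\bigcup_N\La_\theta^N$, together with the divergence $\cal P_\psi(1)=\infty$ (arranged by the Patterson modification if necessary) to guarantee that no mass is lost to the part of the orbit in directions where $\psi(\mu_\theta(\cdot))$ is small relative to $\|\mu_\theta(\cdot)\|$. Here the tangency hypothesis is essential: because $\psi(u)=\psi_\Ga^\theta(u)$ at an interior direction, the orbit points contributing to the divergence of the Poincar\'e series genuinely accumulate at $\La_\theta$ in the direction $u$, so the resulting $\nu$ is supported on $\La_\theta$ rather than escaping into a lower-dimensional or spurious part of $\F_\theta$.
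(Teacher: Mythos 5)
The paper's own proof is a two-line reduction: Quint proved the $\theta=\Pi$ case (\cite[Theorem 8.4]{Quint2002Mesures}) using the concavity of the growth indicator, and the paper establishes concavity of $\psi_\Ga^\theta$ (Proposition \ref{st}) precisely so that Quint's argument carries over. Your proposal instead runs the naive Patterson--Sullivan construction, and its central assertion --- ``the atoms of $\mu_{s_n}$ accumulate only on $\La_\theta$ by Definition \ref{tli} and Lemma \ref{lq}'' --- is exactly the point that needs proof and is not justified. Proposition \ref{qc} assumes only that $\Ga$ is $\theta$-\emph{discrete}, not $\theta$-regular: $\La_\theta$ consists of limits of $\theta$-\emph{regular} sequences, i.e.\ one needs $\min_{\alpha\in\theta}\alpha(\mu(\ga_i))\to\infty$ together with convergence of the Cartan directions $\kappa_{\ga_i}\xi_\theta$ (Definition \ref{fc}). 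A $\theta$-discrete group may contain sequences $\ga_i\to\infty$ that are not $\theta$-regular, and along such sequences neither $\ga_i\xi_\theta$ nor $\kappa_{\ga_i}\xi_\theta$ need approach $\La_\theta$; nothing in your construction prevents a definite fraction of the mass of $\mu_{s_n}$ from sitting on such elements. Restricting to orbit points with $\mu_\theta(\ga)$ in a narrow cone about $u$ does not repair this either, because $\alpha(\mu(\ga))$ and $\alpha(\mu_\theta(\ga))$ are not comparable for general $\theta$: in $\PSL_3(\br)$ with $\theta=\{\alpha_1\}$, an element with $\mu(\ga)=(n,n-1,1-2n)$ has $\alpha_1(\mu(\ga))=1$ bounded while $p_\theta(\mu(\ga))$ tends to infinity in the interior direction of $\fa_\theta^+$. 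Showing that the mass carried by such ``non-regular'' orbit points is negligible as $s\to 1$ is the heart of Quint's proof and is where concavity of the growth indicator enters; your argument never invokes concavity, so the key mechanism is missing. (When $\Ga$ is $\theta$-regular, the construction you describe does work, because $\Ga o\cup\La_\theta$ is then compact by Proposition \ref{reg}; but that is the paper's Proposition \ref{measure}, a genuinely different statement with a stronger hypothesis.)

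Two further steps are also unjustified. First, the normalization $\delta_\psi=1$: Lemmas \ref{wd} and \ref{lem.tent} both \emph{hypothesize} that $\psi$ is $(\Ga,\theta)$-proper, which Proposition \ref{qc} does not assume and which does not follow from tangency at an interior direction of $\fa_\theta^+$ --- a tangent form can vanish on part of $\partial\L_\theta$, in which case $\cal P_\psi(s)$ may diverge for every $s$ and the construction at $s=1$ has no meaning. Second, the conformality argument: Lemma \ref{lem.buseandcartan} controls $\beta_\xi^\theta(g,h)-\mu_\theta(g^{-1}h)$ only for $\xi$ in the shadow $O_R^\theta(go,ho)$, and your atoms $\ga\xi_\theta$ do not lie in $O_R^\theta(o,\ga o)$ for any uniform $R$; that shadow contains the Cartan direction $\kappa_\ga\xi_\theta$ (where $\ga\in\kappa_\ga A^+K$), which is in general a different point of $\F_\theta$. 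In higher rank the Busemann--Cartan asymptotic fails along arbitrary sequences with $\ga\xi_\theta\to\xi$, so the claimed uniform convergence of the Radon--Nikodym derivatives, and hence the conformality of the limit measure, does not follow from the lemmas you cite.
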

\begin{proof}
For $\theta = \Pi$,
this was shown by Quint using the concavity of $\psi_\Ga$ \cite[Theorem 8.4]{Quint2002Mesures}. Now that we established the concavity of the $\theta$-growth indicator $\psi_\Ga^\theta$ (Proposition \ref{st}), the same proof works for general $\theta$.
\end{proof}
As in the Patterson-Sullivan construction, the conformal measure in the above proposition can be obtained as a limit of a sequence of
certain weighted counting measures on $\Ga o$.
The assumption that $\psi$ is tangent to $\psi_\Ga^\theta$ at an {\it interior} direction of $\fa_\theta^+$ is needed to guarantee that the limiting measure
is supported on the limit set $\La_\theta$.
For a $\theta$-regular subgroup $\Gamma$,  the union $ \Ga o \cup \La_\theta$ is a compact space, and hence
the assumption that the tangent direction belongs to $\inte\fa_\theta^+$ is unnecessary.
The proof below is an easy adaptation of the standard construction of Patterson-Sullivan (see also  \cite[Section 2]{kmoHD}, \cite[Section 5]{sambarino2022report}, \cite{Canary2023}).

\begin{prop} \label{measure}
    Suppose that $\Ga$ is $\theta$-regular.
    For any  $(\Ga, \theta)$-proper $\psi\in \fa_\theta^*$ such that $\delta_\psi=1$ and
   $ \sum_{\ga\in \Ga} e^{-\psi (\mu_\theta(\ga))} =\infty $,
    there exists a $(\Ga, \psi)$-conformal measure supported on $\La_\theta$. 
\end{prop}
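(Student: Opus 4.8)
The plan is to run the classical Patterson--Sullivan construction. The key point is that the hypotheses put us in the divergence case at the abscissa of convergence, so Patterson's regularizing trick is not needed: by Lemma \ref{wd} the abscissa of convergence of $\cal P_\psi(s)=\sum_{\ga\in\Ga}e^{-s\psi(\mu_\theta(\ga))}$ is $\delta_\psi=1$, and by assumption $\cal P_\psi(1)=\infty$. Thus for every $s>1$ the series converges, and I would define the probability measures
\[
\mu_s=\frac{1}{\cal P_\psi(s)}\sum_{\ga\in\Ga}e^{-s\psi(\mu_\theta(\ga))}\,D_{\ga o},
\]
each regarded as a Borel measure on the space $\Ga o\cup\La_\theta$, which is compact by $\theta$-regularity (Proposition \ref{reg}(2)). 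Weak-$*$ compactness then provides a sequence $s_k\to 1^+$ with $\mu_{s_k}\to\nu$ for some Borel probability measure $\nu$ on $\Ga o\cup\La_\theta$.

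First I would verify that $\nu$ is supported on $\La_\theta$. Since $\Ga$ is $\theta$-discrete, every closed ball $B\subset X$ meets $\Ga o$ in a finite set, so $\mu_s(B)\le \cal P_\psi(s)^{-1}\sum_{\ga o\in B}e^{-s\psi(\mu_\theta(\ga))}\to 0$ as $s\to1^+$ because $\cal P_\psi(s)\to\infty$ by the assumed divergence at $s=1$. Exhausting $X$ by balls gives $\nu(X)=0$, so $\nu$ is supported on $(\Ga o\cup\La_\theta)\setminus X=\La_\theta$. The same mass-escape estimate shows that $\mu_s$ concentrates near $\La_\theta$ as $s\to1^+$, which I will use below.

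The substance is the conformality relation, namely $\int f(\ga_0 x)\,d\nu(x)=\int f(x)\,e^{\psi(\beta^\theta_x(e,\ga_0))}\,d\nu(x)$ for all $\ga_0\in\Ga$ and $f\in C(\Ga o\cup\La_\theta)$. Reindexing the defining sum by $\eta=\ga_0\ga$ yields
\[
\int f(\ga_0 x)\,d\mu_s(x)=\frac{1}{\cal P_\psi(s)}\sum_{\eta\in\Ga}e^{-s\psi(\mu_\theta(\ga_0^{-1}\eta))}f(\eta o),
\]
so I must compare $\psi(\mu_\theta(\ga_0^{-1}\eta))$ with $\psi(\mu_\theta(\eta))$. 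Applying Lemma \ref{lem.buseandcartan} twice, once with $(g,h)=(e,\eta)$ and once with $(g,h)=(\ga_0,\eta)$, and using the cocycle identity $\beta^\theta_\xi(e,\eta)-\beta^\theta_\xi(\ga_0,\eta)=\beta^\theta_\xi(e,\ga_0)$, one gets $\mu_\theta(\eta)-\mu_\theta(\ga_0^{-1}\eta)\approx\beta^\theta_\xi(e,\ga_0)$ for any $\xi$ lying in a common shadow of $\eta o$ seen from $o$ and from $\ga_0 o$, with an error bounded in terms of $d(o,\ga_0 o)$ and independent of $\eta$.

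The main obstacle is upgrading this bounded-error comparison to an exact identity in the limit. For this I would promote the shadow estimate to the convergence statement that $\mu_\theta(\eta)-\mu_\theta(\ga_0^{-1}\eta)\to\beta^\theta_\xi(e,\ga_0)$ whenever $\eta o\to\xi\in\La_\theta$; this follows by feeding Lemma \ref{lem.buseandcartan} into the convergence lemmas of Section \ref{sec.convergence} (Lemmas \ref{lem.29inv}--\ref{lem.211inv}), which guarantee that the relevant shadows localize at $\xi$. Consequently, for $x=\eta o$ the weight $e^{s\psi(\mu_\theta(\eta)-\mu_\theta(\ga_0^{-1}\eta))}$ converges, as $x\to\La_\theta$ and $s\to1^+$, to the continuous function $e^{\psi(\beta^\theta_x(e,\ga_0))}$. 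Combining this with the mass concentration near $\La_\theta$ through a standard $\e$-neighborhood estimate gives
\[
\lim_{k\to\infty}\Big|\int f(\ga_0 x)\,d\mu_{s_k}(x)-\int f(x)\,e^{\psi(\beta^\theta_x(e,\ga_0))}\,d\mu_{s_k}(x)\Big|=0,
\]
and passing to the limit produces the conformality of $\nu$, with dimension exactly $\psi$ since the coefficient $s_k\to1$. As this construction is an adaptation of Quint's \cite{Quint2002Mesures}, I expect the only delicate points to be the uniform control of the Busemann--Cartan comparison near $\La_\theta$ and its interplay with the concentration of $\mu_s$, both of which are supplied by $\theta$-regularity.
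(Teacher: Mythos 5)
Your proposal is correct and is essentially the paper's own proof: the paper likewise runs the Patterson--Sullivan construction with the measures $\frac{1}{\cal P_\psi(s)}\sum_{\ga\in\Ga}e^{-s\psi(\mu_\theta(\ga))}D_{\ga o}$ on the compact space $\Ga o\cup\La_\theta$ (compact by Proposition \ref{reg}), extracts a weak-$*$ limit as $s\to 1^+$, and uses $\cal P_\psi(1)=\infty$ to conclude the limit measure is supported on $\La_\theta$. The conformality verification that you sketch via the Busemann--Cartan comparison (Lemma \ref{lem.buseandcartan} together with the convergence lemmas of Section \ref{sec.convergence}) is precisely the step the paper leaves as ``standard to check,'' and your outline of it is sound.
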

\begin{proof} By Proposition \ref{reg},
$\G o\cup \Lambda_\theta$ is a compact space.
Recall that $\cal P_\psi(s)=\sum_{\ga\in \Ga} e^{-s\psi (\mu_\theta(\ga))}$.
As $\delta_\psi=1$,  $\cal P_\psi(s)<\infty$  for $s>1$.
and hence we may consider the probability measure on $\G o \cup  \Lambda_\theta$ given by 
\be\label{exp00} \nu_{\psi, s}:= \frac{1}{\cal P_\psi(s)} \sum_{\ga\in \Ga} e^{-s\psi(\mu_\theta(\ga))} D_{\ga o}
\ee 
where $D_{\ga o}$ is the point mass at $\ga o$.

Since the space of probability measures on a compact metric space
a weak$^*$ compact space,
 by passing to a subsequence, as $s\to 1$, $\nu_{\psi, s}$ weakly converges to a probability measure, say $\tilde \nu_\psi$, on $\G o\cup \La_\theta$. Since $\cal P_\psi(1)=\infty$, $ \nu_\psi$ is
supported on $\La_\theta$. It is standard to check that $\nu_\psi$ is a $(\Ga, \psi)$-conformal measure. 
\end{proof}

Although we will not be using this generality,  Proposition \ref{measure} holds without the hypothesis $ \sum_{\ga\in \Ga} e^{-\psi (\mu_\theta(\ga))} =\infty $ (see
\cite[Proposition 3.2]{Canary2023}).

\section{Transverse subgroups and multiplicity of $\theta$-shadows} \label{sec.shadow}
 We say that a discrete subgroup $\Ga<G$ is {\it $\theta$-antipodal}
if any two distinct points $\xi\ne \eta$ in
$\La_{\theta\cup \i(\theta)}$ 
are in general position, i.e., 
$$\xi= gP_{\theta\cup \i(\theta)}\;\; \text{ and } \;\; \eta=gw_0P_{{\theta\cup \i(\theta)}}$$
for some $g\in G$. Recall that a  discrete subgroup $\Ga<G$ is called $\ts$-transverse if $\Ga$ is both
$\ts$-regular and $\ts$-antipodal. A $\theta$-transverse subgroup $\Ga < G$ is called \emph{non-elementary} if $\# \La_{\theta} \ge 3$. Note that for $\theta_1\subset \theta_2$, $\theta_2$-transverse implies $\theta_1$-transverse.

\begin{Rmk} \label{strong}   We may try to define $\Ga$ to be {\it $\theta$-Antipodal}
if for any $(\xi,\eta) \in \La_\theta\times \La_{\i(\theta)}$ such that
$\pi_\theta^{-1}(\xi)\cap \pi_{\i(\theta)}^{-1}(\eta)=\emptyset$, 
$(\xi, \eta)$ is in general position, i.e., $\xi= gP_\theta$ and $\eta=gw_0P_{\i(\theta)}$ for some $g\in G$.
While the $\t$-antipodality implies the $\theta$-Antipodality,  the converse direction is not true in general; for instance, any lattice of $\op{PSL}_3(\br)$ is $\{\alpha_1\}$-Antipodal but not 
$\{\alpha_1, \alpha_2\}$-Antipodal, i.e., not $\{\alpha_1\}$-antipodal, where $\alpha_i(\op{diag}(u_1, u_2, u_3))=u_i-u_{i+1}$ for $i=1,2$. \end{Rmk}

 The main aim of this section is to prove the following proposition, which is the essential reason why the main results of this paper are proved for $\ts$-transverse subgroups.

\begin{prop}[Bounded multiplicity of $\theta$-shadows] \label{prop.mult}  Assume that $\Ga$ is $\ts$-transverse.
Let $\phi\in \fa_\theta^*$ be a $(\Ga, \theta)$-proper linear form.
Then for any $R, D > 0$, there exists $q = q(\phi, R, D) > 0$ such that for any $T > 0$, the collection of shadows $$\left\{O^{\theta}_R(o, \ga o) \subset \F_\theta : T \le \phi (\mu_{\theta}(\ga)) \le T + D \right\}$$ have multiplicity at most $q$, i.e.,
for and $\xi \in \F_{\theta}$ and $T > 0$, there are at most $q$ number of shadows from the above collection that contain $\xi$.

\end{prop}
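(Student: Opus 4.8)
The plan is to reduce the statement to the discreteness of $\Ga$. Multiplicity at most $q$ means that for every $\xi\in\F_\theta$ the set $\Ga_\xi:=\{\ga\in\Ga: \xi\in O^\theta_R(o,\ga o),\ T\le\phi(\mu_\theta(\ga))\le T+D\}$ has cardinality at most $q$. I would prove that there is a fixed compact set $F=F(\phi,R,D)\subset G$, independent of $T$ and $\xi$, such that $\ga_1^{-1}\ga_2\in F$ for all $\ga_1,\ga_2\in\Ga_\xi$. Granting this, discreteness of $\Ga$ makes $\Ga\cap F$ finite; fixing any $\ga_1\in\Ga_\xi$ we get $\Ga_\xi\subset\ga_1(\Ga\cap F)$, so $q:=\#(\Ga\cap F)$ works uniformly in $T$ and $\xi$.

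For the geometric setup, fix a representative $\xi=kP_\theta$ with $k\in K$. For each $\ga\in\Ga_\xi$ the shadow condition lets me write $\ga=k\,m_\ga a_\ga\rho_\ga$ with $m_\ga\in M_\theta$, $a_\ga\in A^+$ and $d(\rho_\ga o,o)\le R$; then $\mu(\ga)=\log a_\ga+O(R)$ by Lemma \ref{lem.cptcartan}, and $\theta$-regularity gives $\min_{\alpha\in\theta}\alpha(\log a_\ga)\to\infty$. Writing $\log a_\ga=u_\ga+v_\ga$ with $u_\ga=p_\theta(\log a_\ga)\in\fa_\theta$ and $v_\ga\in\log B_\theta$, and using that $M_\theta\subset L_\theta=Z_G(A_\theta)$ centralizes $A_\theta$, a direct computation yields, for $\ga_1,\ga_2\in\Ga_\xi$,
\[
\ga_1^{-1}\ga_2=\rho_1^{-1}\,\exp(u_{\ga_2}-u_{\ga_1})\,\bigl(\beta_1^{-1}m_1^{-1}m_2\beta_2\bigr)\,\rho_2 ,
\]
with $\beta_i=\exp v_{\ga_i}\in B_\theta^+$, $m_i=m_{\ga_i}\in M_\theta$ and $\rho_i$ of bounded displacement. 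Thus it suffices to bound separately the $\fa_\theta$-translation $u_{\ga_2}-u_{\ga_1}$ and the Levi factor $\beta_1^{-1}m_1^{-1}m_2\beta_2$.

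The $\fa_\theta$-part is the \emph{rank-one} direction and is controlled by the hypotheses already in hand. Lemma \ref{lem.buseandcartan} gives $\mu_\theta(\ga_i)=\beta^\theta_\xi(e,\ga_i)+O(R)$, so the common shadow point $\xi$ synchronizes the directions of the two $\fa_\theta$-vectors; combined with $(\Ga,\theta)$-properness of $\phi$ — which forces $\phi\ge c\|\cdot\|$ on a cone containing $\mu_\theta(\Ga)$ by Lemmas \ref{lem.propform} and \ref{tcone} — and the $\phi$-window $\phi(\mu_\theta(\ga_i))\in[T,T+D]$ of width $O(D)$, I would deduce $\|u_{\ga_2}-u_{\ga_1}\|=O(R+D)$. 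The convergence Lemmas \ref{lem.29inv}--\ref{lem.211inv} are the natural tools to make the synchronization of directions precise.

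The main obstacle is the Levi factor $\beta_1^{-1}m_1^{-1}m_2\beta_2$: the factors $\beta_i\in B_\theta^+$ point in the $\Pi-\theta$ directions, which $\theta$-regularity does \emph{not} control, and conjugation by a large $\beta_i$ can a priori expand $m_1^{-1}m_2$ without bound. This is exactly where $\theta$-antipodality must enter, and I would handle it by a compactness/contradiction argument: if along a sequence of configurations this factor escaped every compact set, I would extract limits of $\ga_i$ in $\lat$ and of $\ga_i^{-1}$ in $\La_{\i(\theta)}$; the shadow condition makes both sequences share the attracting $\theta$-flag $\xi$, while the degeneration of the Levi factor would force a collision between the limiting flags. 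The $\theta$-antipodality, i.e. general position of distinct points of $\La_{\theta\cup\i(\theta)}$ together with the transversality encoded in $\F_\theta^{(2)}$, would instead force these limiting flags into general position — a contradiction. This transversality input is what replaces the rank-one fact that two orbit points with a common shadow and comparable size are boundedly close. Once both the $\fa_\theta$-translation and the Levi factor are confined to fixed compact sets, the product formula places $\ga_1^{-1}\ga_2$ in a compact $F=F(\phi,R,D)$, and discreteness of $\Ga$ completes the proof.
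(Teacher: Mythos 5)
Your global strategy---confine $\ga_1^{-1}\ga_2$ to a compact set $F=F(\phi,R,D)\subset G$ and then invoke discreteness of $\Ga$---is not the paper's, and both places where you try to establish this compactness have genuine gaps. First, step (a): your parenthetical claim that $(\Ga,\theta)$-properness of $\phi$ forces $\phi\ge c\|\cdot\|$ on a cone containing $\mu_\theta(\Ga)$ is false. Lemma \ref{lem.propform}(2) gives only $\phi\ge 0$ on $\L_\theta$ and $\phi>0$ on $\inte\L_\theta$; the linear lower bound is the hypothesis of Lemma \ref{lem.propform}(1), which is a sufficient condition for properness, not an equivalence, and $\phi$ may vanish along boundary directions of $\L_\theta$ while $\phi\circ\mu_\theta|_\Ga$ remains proper. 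More fundamentally, a common $\theta$-shadow point does \emph{not} synchronize the directions of $\mu_\theta(\ga_1)$ and $\mu_\theta(\ga_2)$: both orbit points lie within $R$ of the same cone $kM_\theta A^+o$, but their $A^+$-coordinates can point in completely different chamber directions, so $\mu_\theta(\ga_2)-\mu_\theta(\ga_1)$ can a priori be a huge vector whose $\phi$-value lies in $[-D,D]$. Ruling this out is exactly the content of the paper's key Lemma \ref{cor.approxcartan}, whose proof is a limiting argument using $\theta$-antipodality (together with $\theta$-regularity) in an essential way, and whose conclusion is an either/or: $\|\mu_\theta(\ga_2)-\mu_\theta(\ga_1)-\mu_\theta(\ga_1^{-1}\ga_2)\|\le C_0$ \emph{or} $\|\mu_\theta(\ga_1)-\mu_\theta(\ga_2)-\mu_\theta(\ga_2^{-1}\ga_1)\|\le C_0$; the two alternatives are genuinely different because $\mu(g^{-1})=\i(\mu(g))$, so $\mu_\theta(\ga^{-1})$ is not determined by $\mu_\theta(\ga)$ unless $\theta=\i(\theta)$. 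You have therefore misplaced where antipodality enters: it is needed precisely for the $\fa_\theta$-part that you declared ``controlled by the hypotheses already in hand.''

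Second, step (b): confining the Levi factor $\beta_1^{-1}m_1^{-1}m_2\beta_2$ to a compact set by a geometric limiting argument is not achievable, and the paper never proves such a bound. Shadows, the $\phi$-window, and antipodality do not control the $B_\theta$-components; in the proof of Lemma \ref{cor.approxcartan} these components are simply annihilated by $p_\theta$ and remain unbounded. What actually terminates the paper's proof is not geometric compactness but group theory: from the dichotomy and the window one gets $\phi(\mu_\theta(\ga_1^{-1}\ga_i))\le D'$ or $\phi(\mu_\theta(\ga_i^{-1}\ga_1))\le D'$ with $D'=\|\phi\|C_0+D$, and then the $(\Ga,\theta)$-properness of $\phi$---properness of $\phi\circ\mu_\theta$ as a map on $\Ga$, a strictly stronger input than discreteness of $\Ga$ in $G$---makes $q=\#\{\ga\in\Ga:\phi(\mu_\theta(\ga))\le D'\}$ finite and yields multiplicity at most $2q$. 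Your intermediate claims are true a posteriori (indeed $\ga_1^{-1}\ga_2$ lands in a fixed finite subset of $\Ga$), but the route you sketch cannot be completed as stated: any correct completion would in effect reprove Lemma \ref{cor.approxcartan} and would still have to replace ``discreteness'' by $(\Ga,\theta)$-properness to handle the uncontrolled $B_\theta$-directions.
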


 The following lemma is a key ingredient of the proof of Proposition \ref{prop.mult}.

\begin{lem} \label{cor.approxcartan}\label{lem.comparecartan}
  Assume that $\Ga$ is $\ts$-transverse.
  For any compact subset $Q$  of $G$, there exists $C_0=C_0(Q)>0$ such that if ${\ga}_1,{\ga}_2\in\Ga$ are such that
   $Q\cap {\ga}_1 Qa^{-1}\cap {\ga}_2 Qb^{-1} m^{-1}\neq\emptyset$ for some  $a,b\in A^+$ and $m \in M_{\theta}$, then 
     \be\label{con} \min \{ \| \mu_\theta({\ga}_2) - \mu_\theta({\ga}_1)-\mu_\theta({\ga}_1^{-1}{\ga}_2)\|,
    \| \mu_\theta({\ga}_1)- \mu_\theta({\ga}_2)-\mu_\theta({\ga}_2^{-1}{\ga}_1)\|\} \le C_0.\ee 
\end{lem}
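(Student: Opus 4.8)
The plan is to reduce the inequality to a statement about the Cartan parts $a,b$ and $m$ alone, and then to extract the dichotomy from $\theta$-antipodality. First I would unwind the hypothesis: a nonempty intersection $Q\cap \ga_1 Q a^{-1}\cap \ga_2 Qb^{-1}m^{-1}\neq\emptyset$ produces $g,q_1,q_2\in Q$ with
$$\ga_1=g\,a\,q_1^{-1},\qquad \ga_2=g\,m\,b\,q_2^{-1},\qquad \ga_1^{-1}\ga_2=q_1\,a^{-1}mb\,q_2^{-1}.$$
Since $Q$ and $M_\theta$ are compact and $a,b\in A^+$, Lemma \ref{lem.cptcartan} gives
$$\mu_\theta(\ga_1)=p_\theta(\log a)+O(1),\qquad \mu_\theta(\ga_2)=p_\theta(\log b)+O(1),\qquad \mu_\theta(\ga_1^{-1}\ga_2)=\mu_\theta(a^{-1}mb)+O(1),$$
with implied constants depending only on $Q$; the analogous identities hold for $\ga_2^{-1}\ga_1=q_2\,b^{-1}m^{-1}a\,q_1^{-1}$. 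Thus the claim is equivalent to a uniform bound on the minimum phrased in terms of $a,b,m$ only. I note that this reduced statement is genuinely \emph{false} for arbitrary $a,b\in A^+$ (e.g. in $\PSL_3(\br)$ one can choose $\theta$-regular $\log a,\log b$ whose difference lies in neither $\pm\fa^+$, making both terms diverge), so antipodality must be what excludes such configurations.

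Reading off the attracting directions, Lemma \ref{lem.211inv} shows that when $\ga_1,\ga_2$ are far out — which by $\theta$-regularity forces $\min_{\alpha\in\theta}\alpha(\log a),\min_{\alpha\in\theta}\alpha(\log b)\to\infty$ — both $\ga_1 o$ and $\ga_2 o$ converge to the \emph{same} point $g\xi_\theta\in\lat$, using $m\xi_\theta=\xi_\theta$ since $M_\theta\subset P_\theta$. I would then exploit the two factorizations $\ga_2=\ga_1(\ga_1^{-1}\ga_2)$ and $\ga_1=\ga_2(\ga_2^{-1}\ga_1)$. The mechanism is the standard additivity of the Cartan projection along a product in general position: if the repelling point of $\ga_1$ in $\F_{\i(\theta)}$ and the attracting point of $\ga_1^{-1}\ga_2$ in $\F_\theta$ stay in general position in a quantitatively controlled way, then $\mu_\theta(\ga_2)=\mu_\theta(\ga_1)+\mu_\theta(\ga_1^{-1}\ga_2)+O(1)$, i.e. the first term of the minimum is bounded; symmetrically the second factorization controls the second term.

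The heart of the argument is to show that at least one of these two general-position conditions holds with a constant depending only on $Q$, and here I would argue by contradiction and compactness — precisely the situation the convergence Lemmas \ref{lem.29inv}--\ref{lem.211inv} are designed for. If the conclusion failed, there would be sequences $\ga_1^{(n)},\ga_2^{(n)}\in\Ga$ together with $g_n,q_i^{(n)}\in Q$, $m_n\in M_\theta$ realizing the intersection while both terms of the minimum diverge. Passing to subsequences, $g_n\to g$, $q_i^{(n)}\to q_i$, $m_n\to m$, while $\theta$-regularity forces $a_n,b_n\to\infty$ $\theta$-regularly; the attracting points converge to $g\xi_\theta$ and the repelling points to limit points $\eta_1,\eta_2\in\La_{\i(\theta)}$. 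Simultaneous divergence of both minimum terms is an extra degeneracy that, in the limit, forces two distinct points of $\La_{\theta\cup\i(\theta)}$ to fail to be in general position, contradicting $\theta$-antipodality; the uniformity of $C_0=C_0(Q)$ then comes from compactness of $Q$ and $M_\theta$.

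The main obstacle is exactly this last step: making the additivity of $\mu_\theta$ along a product quantitative in the degree of general position, and identifying which distinct pair among $\{g\xi_\theta,\eta_1,\eta_2\}$ is driven into a forbidden non-transverse configuration when both minimum terms blow up. Everything else is bookkeeping with Lemma \ref{lem.cptcartan} and the convergence lemmas, together with the observation that the coincidence of attracting points reflects the overlap of the shadows $O_R^\theta(o,\ga_1 o)$ and $O_R^\theta(o,\ga_2 o)$ that will drive the bounded-multiplicity statement in Proposition \ref{prop.mult}.
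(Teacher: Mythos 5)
Your skeleton matches the paper's proof in outline: unwinding the triple intersection into $\ga_1=ga q_1^{-1}$, $\ga_2=gmbq_2^{-1}$, reducing via Lemma \ref{lem.cptcartan} to a statement about $a,b,m$, observing (correctly, and your $\PSL_3(\br)$ example is a genuine counterexample) that this reduced statement fails without antipodality, and then running a contradiction/compactness argument that produces limit points in $\La_{\theta\cup\i(\theta)}$ to which antipodality is applied. But the step you yourself label ``the main obstacle'' is not a technical loose end --- it is the entire mathematical content of the lemma, and your proposal does not contain it. You appeal to a ``standard additivity of the Cartan projection along a product in general position'' with an $O(1)$ error controlled by a ``quantitatively controlled'' degree of transversality. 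No such quantitative statement is available in the paper (the convergence Lemmas \ref{lem.29inv}--\ref{lem.211inv} are purely qualitative), and it is unclear how to formulate one that applies here, because along the sequences you need to control, the transversality of the limiting flags is exactly what may degenerate; your final paragraph therefore restates the goal rather than proving it.

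The paper closes this gap with an exact algebraic computation that avoids any quantitative transversality. After reducing to symmetric $\theta$ (replacing $\theta$ by $\theta\cup\i(\theta)$, legitimate since $\|p_\theta(u)\|\le\|p_{\theta\cup\i(\theta)}(u)\|$), it writes $a_i=a_{1,i}a_{2,i}\in A_\theta^+B_\theta^+$ and $b_i=b_{1,i}b_{2,i}\in A_\theta^+B_\theta^+$, decomposes $a_{2,i}^{-1}m_ib_{2,i}=m_{1,i}c_im_{2,i}\in M_\theta B_\theta^+M_\theta$, and conjugates by a fixed Weyl representative $w\in N_K(A)$ so that $d_i:=w^{-1}a_{1,i}^{-1}b_{1,i}c_iw\in A^+$; then $\ga_{1,i}^{-1}\ga_{2,i}q_{2,i}=q_{1,i}(m_{1,i}w)d_i(w^{-1}m_{2,i})$, so $\mu_\theta(\ga_{1,i}^{-1}\ga_{2,i})=\log d_i+O(1)$ up to $p_\theta$. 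The two limit points are $q_1w_0\xi_\theta$ (from $\ga_{1,i}^{-1}q_{0,i}o$) and $q_1m_1w\xi_\theta$ (from $\ga_{1,i}^{-1}\ga_{2,i}q_{2,i}o$), both in $\lat$. Antipodality forces them to be either in general position or equal, and Corollary \ref{cor.genweyl} converts this dichotomy into the algebraic alternative $w\in M_\theta$ or $ww_0^{-1}\in M_\theta$. In the first case $d_i=(a_{1,i}^{-1}b_{1,i})(w^{-1}c_iw)$ with $a_{1,i}^{-1}b_{1,i}\in A_\theta^+$, yielding the exact identity $\mu_\theta(d_i)=\mu_\theta(b_i)-\mu_\theta(a_i)$ and hence boundedness of the first term in \eqref{con}; in the second case one gets $\mu_\theta(d_i^{-1})=\mu_\theta(a_i)-\mu_\theta(b_i)$ and boundedness of the second term --- contradicting the assumed divergence of both. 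This identification of the Weyl element, and the exact cancellation it produces inside $\fa_\theta$, is precisely what your proposal is missing; until you supply an argument of this kind (or a genuine quantitative transversality-additivity lemma), the proof is incomplete.
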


\begin{proof} Since $\|p_{\theta}(u)\|\le\| p_{\t\cup \i(\t)}(u)\|$ for all $u\in \fa$, it suffices to prove the lemma for $\t\cup \i(\t)$ in place of $\theta$. Therefore we may assume without loss of generality that $\i(\theta)=\theta$ by replacing $\theta$ with $\t\cup \i(\t)$.

We prove by contradiction. Suppose to the contrary that
 there exist sequences
$q_{0,i},q_{1,i},q_{2,i}\in Q$, $a_i, b_i\in A^+$, $m_i \in M_{\theta}$ and ${\ga}_{1,i},{\ga}_{2,i}\in\Ga$ such that
 \begin{align} & q_{0,i}={\ga}_{1,i}\,q_{1,i}\, a_i^{-1}={\ga}_{2,i}\,q_{2,i} b_i^{-1}m_i^{-1} ;\label{eq.w2}\\
 &\| \mu_\theta({\ga}_{2,i}) - \mu_\theta({\ga}_{1,i})-\mu_\theta({\ga}_{1,i}^{-1}{\ga}_{2,i})\| \to \infty ;\label{e1}\\
 &\|\mu_\theta({\ga}_{1,i}) - \mu_\theta({\ga}_{2,i})-\mu_\theta({\ga}_{2,i}^{-1}{\ga}_{1,i})\|\to \infty \label{ee2}.\end{align}

By Lemma \ref{lem.cptcartan}, it follows that all sequences
$\ga_{1,i}$, ${\ga_{2,i}}$, $\ga_{1,i}^{-1}\ga_{2,i}$
and $\ga_{2,i}^{-1}\ga_{1,i}$ are unbounded. 
Without loss of generality, we assume that each of these sequences tends to infinity. By \eqref{eq.w2} and Lemma \ref{lem.cptcartan}, there exists $C'=C'(Q) >1$ such that

\be \label{cc}
\sup_i \{\| \mu_{\theta}({\ga}_{1,i})-  \mu_{\theta}(a_i) \|, \| \mu_{\theta}({\ga}_{2,i}) - \mu_{\theta}(b_i) \|\} \le C'
\ee

As $\Ga$ is $\theta$-regular, as $i\to \infty$,
$$\min_{\alpha\in \theta} \alpha(\log a_{i}), \; \min_{\alpha\in \theta} \alpha(\log b_{i}) \to \infty.$$

Note that $\alpha (\log w_0^{-1} a^{-1} w_0)= \alpha (\i(\log a))= \i (\alpha)(\log a)$ for all $a\in A$ and all $\alpha\in \Phi$.
Since $\theta$ is symmetric,  it follows that
\be\label{symm} \min_{\alpha\in \theta} \alpha(\log (w_0^{-1} a_{i}^{-1} w_0)), \; \min_{\alpha\in \theta} \alpha(\log (w_0^{-1} b_{i}^{-1} w_0))
\to \infty.\ee
Passing to a subsequence, we may assume that $q_{1, i}$ converges to some $q_1 \in Q$. 
We claim that
\be \label{both} q_1 w_0 \xi_\theta\in \La_{\theta}\;\; \text{ and}\;\;  q_1 m_1 w \xi_\theta   \in \La_{\theta}
\ee 
for some $m_1\in M_\theta$ and $w\in N_K(A)$.
By Lemma \ref{compact},
we may also assume that ${\ga}_{1, i}^{-1}q_{0, i}o$ converges to some $\xi \in \La_{\theta}$ as $i \to \infty$. Since ${\ga}_{1, i}^{-1}q_{0, i} o = q_{1, i}a_i^{-1}o = q_{1, i}w_0 (w_0^{-1} a_i^{-1}w_0) o  $, it follows from Lemma \ref{lem.211inv} and
\eqref{symm} that $\xi = q_1 w_0 \xi_\theta$.
Therefore $$q_1 w_0 \xi_\theta  \in \La_{\theta}.$$

Since $A=A_\theta B_\theta$, we may
write $a_i = a_{1,i}a_{2,i} \in A_{\theta}^+ B_\theta^+$ and $b_i = b_{1,i} b_{2,i} \in A_{\theta}^+ B_\theta^+$.
Using $S_\theta=M_\theta B_\theta^+M_\theta$,
write $$a_{2,i}^{-1} m_i b_{2,i} = m_{1, i} c_i m_{2, i} \in M_{\theta} B_\theta^+ M_{\theta}.$$ 

Then
\begin{multline*}
{\ga}_{1,i}^{-1}{\ga}_{2,i}\,q_{2,i}=q_{1,i}\, a_i^{-1} m_i b_i
\\  = q_{1, i} (a_{1,i}^{-1} b_{1,i}) (a_{2,i}^{-1} m_i b_{2,i})
= q_{1, i} m_{1,i}  (a_{1,i}^{-1} b_{1,i}  c_i) m_{2,i}.
\end{multline*}
By passing to a subsequence, we have  $w\in N_K(A)$ such that for all $i\ge 1$, 
\be \label{eqn.decompwow}
d_i:= w^{-1}a_{1,i}^{-1}b_{1,i}c_i w\in A^+.
\ee
Then we have the following:
\be \label{eq.w3}
{\ga}_{1, i}^{-1} {\ga}_{2, i} q_{2, i} = q_{1, i}(m_{1, i} w ) d_i (w^{-1}m_{2, i}) \in q_{1, i}
KA^+K.
\ee

Since $\ga_{1,i}^{-1}\ga_{2,i}\to \infty$, by the $\theta$-regularity of $\Ga$, we have $\min_{\alpha\in \theta} \alpha(\log d_i)\to \infty.$
We may assume that $m_{1,i}\to m_1\in M_\theta$.
By Lemma \ref{compact} and Lemma \ref{lem.211inv}, we get
$$\lim_{i\to \infty}
{\ga}_{1, i}^{-1} {\ga}_{2, i} q_{2, i} =  q_1 m_{1} w\xi_\theta \in \La_{\theta}$$
by passing to a subsequence. Hence the claim \eqref{both} is proved. 

By the $\theta$-antipodal property of $\Ga$,
two distinct points of $\lat$ must be in general position; hence
\eqref{both} implies that we must have either
$$w_0 \xi_\theta=
 m_1 w \xi_\theta   \quad\text{or} \quad m_1 w \xi_\theta  \in N_\theta^+\xi_\theta. $$

First suppose that $(m_1w)\xi_\theta \in N_\theta^+\xi_\theta$. By Corollary \ref{cor.genweyl}, this implies that $w \in M_\theta$.
As  $a_{1, i}^{-1}b_{1,i}\in A_\theta$, using the commutativity of $M_\theta$ and $A_\theta$, we get from (\ref{eqn.decompwow})
that $d_i=(a_{1, i}^{-1}b_{1, i}) (w^{-1}c_i w)$.
Since $d_i\in A^+$, $a_{1, i}^{-1}b_{1, i}\in A_\theta$, and $w^{-1}c_i w\in B_\theta$, it follows  that
$a_{1, i}^{-1}b_{1, i}\in A_\theta^+$. 
 Hence \be \label{eqn.firstcontra}
\mu_{\theta}(d_i) = \log a_{1, i}^{-1}b_{1, i} = - \log a_{1, i} + \log b_{1, i} = -\mu_{\theta}(a_{i}) + \mu_{\theta}(b_{i}).
\ee
Since $\| \mu_{\theta}(\ga_{1, i}^{-1} \ga_{2, i}) - \mu_{\theta}(d_i) \|$ is uniformly bounded by \eqref{eq.w3} and Lemma \ref{lem.cptcartan}, \eqref{eqn.firstcontra} and \eqref{cc} imply that the sequence $\|\mu_{\theta}(\ga_{1, i}^{-1} \ga_{2, i}) + \mu_{\theta}(\ga_{1, i}) - \mu_{\theta}(\ga_{2, i}) \|$ is uniformly bounded. This contradicts \eqref{e1}.

Now suppose the other case that  $w_0 \xi_{\theta} = m_1 w \xi_{\theta}$. In this case, we have $$w \xi_{\theta} = m_1^{-1} w_0 \xi_{\theta} = w_0 (w_0^{-1} m_1^{-1} w_0) \xi_{\theta} = w_0 \xi_{\theta}$$ since $m_1 \in M_{\theta}$ and $w_0^{-1} M_{\theta} w_0 = M_{\theta}$ by the symmetricity of $\theta$. Hence we have $w \in w_0 (P_{\theta} \cap K) = w_0 M_{\theta} = M_{\theta} w_0$, and thus $ w w_0^{-1} \in  M_{\theta}$.
Since $w w_0^{-1} \in M_{\theta}$, we may write using \eqref{eqn.decompwow} that
$$\begin{aligned} 
    w_0 d_i^{-1} w_0^{-1} & = (ww_0^{-1})^{-1} a_{1, i} b_{1, i}^{-1} c_i^{-1} (w w_0^{-1}) \\
    & = (a_{1, i} b_{1, i}^{-1}) ((ww_0^{-1})^{-1}  c_i^{-1} (w w_0^{-1}))\in A_\theta B_\theta
    \end{aligned}$$
     Since $d_i \in A^+$, we have $w_0 d_i^{-1} w_0^{-1} \in A^+$.
    It follows that  $a_{1, i} b_{1, i}^{-1} \in A_{\theta}^+ $.
     Hence we have \be \label{eqn.secondcontra}
    \mu_{\theta}(d_i^{-1}) = p_\theta( \log (w_0 d_i^{-1} w_0^{-1})) = \log a_{1, i} - \log b_{1, i} = \mu_{\theta}(a_i) - \mu_{\theta}(b_i).
    \ee
    Similarly, \eqref{eq.w3} implies that the sequence $\| \mu_{\theta}(\ga_{2, i}^{-1} \ga_{1, i}) - \mu_{\theta}(d_i^{-1})\|$ is uniformly bounded. Hence it follows from \eqref{eqn.secondcontra} and \eqref{cc} that the sequence $\| \mu_{\theta}(\ga_{2, i}^{-1} \ga_{1, i}) - \mu_{\theta}(\ga_{1, i}) + \mu_{\theta}(\ga_{2, i}) \|$ is uniformly bounded, contradicting \eqref{ee2}.
 This completes the proof.
\end{proof}

\subsection*{Proof of Proposition \ref{prop.mult}}
Suppose that there exists $\xi \in \bigcap_{i = 1}^n O_R^{\theta}(o, {\ga}_i o)$ and $T \le \phi(\mu_{\theta}({\ga}_i)) \le T + D$ for some distinct ${\ga}_i \in \Ga$, $i = 1, \cdots, n$.  Setting $Q = KA_RK$ where $A_R := \{a \in A : d(o, a o) < R \}$,
let $C_0=C_0(Q)$ be as in Lemma \ref{cor.approxcartan}. Note also that $Q = \{ g \in G : d(o, go) < R \}$.  Set
$$D'  =D'(\phi, Q, D):= \|\phi\|C_0 + D$$
where $\|\phi\|$ is the operator norm
of $\phi:\fa_{\theta}\to \br$.  Then the following number 
$$q:=\# \{ \ga \in \Ga : \phi ( \mu_{\theta}(\ga) ) \le D' \}$$ 
 is finite by the $(\Ga,\theta)$-properness of $\phi$.
We claim that $$n\le 2q;$$ this proves the proposition.
It suffices to show that
\be \label{eqn.claimmaxmin}
\max_{i} \min \{ \phi( \mu_{\theta}({\ga}_1^{-1} {\ga}_i) ), \phi( \mu_{\theta}({\ga}_i^{-1} {\ga}_1) ) \} \le D',
\ee as this  implies that 
$$n=\# \{{\ga}_1, \cdots, {\ga}_n\} \le \# \{ {\ga}_1 \ga, {\ga}_1 \ga^{-1} : \ga \in \Ga, \phi(\mu_{\theta}(\ga)) \le D' \}\le 2q.$$

To prove \eqref{eqn.claimmaxmin}, for each $i = 1, \cdots, n$, there exist $k_i \in K$ and $a_i \in A^+$ such that $\xi = k_i \xi_\theta$ and $d(k_i a_i o, {\ga}_i o) < R$. Then $k_i = k_1 m_i$ for some $m_i \in K \cap P_{\theta} = M_{\theta}$. Hence we have $d({\ga}_1^{-1} k_1 a_1 o, o) < R$ and $d({\ga}_i^{-1} k_1m_{i} a_i o, o) < R$, which implies $$k_1 \in Q \cap {\ga}_1 Q a_1^{-1} \cap {\ga}_i Q a_i^{-1}m_{i}^{-1}.$$

By Lemma \ref{cor.approxcartan}, we have $$
\| \mu_{\theta}({\ga}_i) - \mu_{\theta}({\ga}_1) -\mu_{\theta}({\ga}_1^{-1}{\ga}_i) \|\le C_0\quad \mbox{or} \quad \|\mu_{\theta}({\ga}_1) - \mu_{\theta}({\ga}_i) - \mu_{\theta}({\ga}_i^{-1}{\ga}_1)\|\le C_0.$$

Suppose first that $\| \mu_{\theta}({\ga}_i) - \mu_{\theta}({\ga}_1) -\mu_{\theta}({\ga}_1^{-1}{\ga}_i) \|\le C_0$. Now we have
$$
\begin{aligned}
    \phi( \mu_{\theta}(\ga_1^{-1} \ga_i) ) & = \phi( \mu_{\theta}(\ga_1^{-1}\ga_i) - (\mu_{\theta}(\ga_i) - \mu_{\theta}(\ga_1)) ) + \phi (\mu_{\theta}(\ga_i) - \mu_{\theta}(\ga_1))\\
    & \le \|\phi\| C_0 + | \phi(\mu_{\theta}(\ga_i)) - \phi(\mu_{\theta}(\ga_1)) | \\
    & \le \| \phi \| C_0 + D=D'
\end{aligned}$$
where the last inequality follows from $\phi(\mu_{\theta}(\ga_1)), \phi(\mu_{\theta}(\ga_i)) \in [T, T + D]$.
When $\|\mu_{\theta}({\ga}_1) - \mu_{\theta}({\ga}_i) - \mu_{\theta}({\ga}_i^{-1}{\ga}_1)\|\le C_0$, similarly, we have $$\phi(\mu_{\theta}(\ga_i^{-1}\ga_1)) \le \|\phi \| C_0 + D=D'.$$ Therefore \eqref{eqn.claimmaxmin} follows.
\qed

 \section{Dimensions of conformal measures and growth indicators}\label{tangent} \label{sec.dimensions}

For a general Zariski dense discrete subgroup $\Ga<G$,
Quint \cite[Theorem 8.1]{Quint2002Mesures} showed that
if there exists a $(\Ga, \psi)$-conformal measure on $\F_\Pi$ for $\psi\in \fa^*$, then  $$\psi\ge \psi_\Ga.$$ The main aim of this section is to prove the following analogous inequality  for $\ts$-transverse subgroups, using Theorem \ref{tgg0} whose key ingredient is the control on multiplicity of shadows obtained in Proposition \ref{prop.mult}.

\begin{theorem}\label{tgg}
    Let $\Ga$ be a Zariski dense $\ts$-transverse subgroup of $G$. If there exists a $(\Ga, \psi)$-conformal measure $\nu$ on $\F_{\theta}$ for a $(\Ga, \theta)$-proper $\psi \in \fa_{\theta}^*$, then 
    \be\label{oppp}\psi\ge \psi_{\Ga}^{\theta}.\ee 
    Moreover  if $\sum_{\ga\in \Ga} e^{-\psi (\mu_\theta(\ga))}=\infty$ in addition, then $\delta_\psi=1$ and $\psi$ is $(\Ga, \theta)$-critical.
\end{theorem}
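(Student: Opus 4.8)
The plan is to reduce everything to the single estimate $\delta_\psi\le 1$, where $\delta_\psi$ is the abscissa of convergence of $\cal P_\psi$. Granting this, the first inequality follows at once: since $\delta_\psi\le 1<\infty$, Lemma \ref{lem.tent} shows $\delta_\psi\psi$ is $(\Ga,\theta)$-critical, so $\psi_\Ga^\theta\le\delta_\psi\psi$ on $\fa_\theta^+$; on $\L_\theta$ we have $\psi\ge 0$ by Lemma \ref{lem.propform}(2), hence $\delta_\psi\psi\le\psi$ there, giving $\psi_\Ga^\theta\le\psi$ on $\L_\theta$, while $\psi_\Ga^\theta=-\infty$ off $\L_\theta$ by Theorem \ref{three}(3); thus $\psi\ge\psi_\Ga^\theta$ everywhere. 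For the second assertion, the divergence hypothesis says $\cal P_\psi(1)=\infty$, i.e.\ $\delta_\psi\ge 1$, which combined with $\delta_\psi\le 1$ forces $\delta_\psi=1$, and then $\psi=\delta_\psi\psi$ is $(\Ga,\theta)$-critical by Lemma \ref{lem.tent} (equivalently Corollary \ref{oneone}).

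The heart of the matter is therefore $\delta_\psi\le 1$, which I would obtain from a shadow lemma together with the bounded multiplicity of $\theta$-shadows (Proposition \ref{prop.mult}). The easy half of the shadow lemma is immediate from conformality and Lemma \ref{lem.buseandcartan}: writing the conformality relation as $\nu(\ga^{-1}D)=\int_D e^{\psi(\beta_\xi^\theta(e,\ga))}\,d\nu(\xi)$ with $D=O_R^\theta(o,\ga o)$, the estimate $\|\beta_\xi^\theta(e,\ga)-\mu_\theta(\ga)\|\le\kappa R$ for $\xi\in O_R^\theta(o,\ga o)$ yields
\[
\nu\big(O_R^\theta(\ga^{-1}o,o)\big)\ \asymp\ e^{\psi(\mu_\theta(\ga))}\,\nu\big(O_R^\theta(o,\ga o)\big)
\]
up to a factor $e^{\pm\kappa R\|\psi\|}$, whence the upper bound $\nu(O_R^\theta(o,\ga o))\le e^{-\psi(\mu_\theta(\ga))+\kappa R\|\psi\|}$ follows from $\nu(O_R^\theta(\ga^{-1}o,o))\le 1$.

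The substantial half is the matching lower bound: for $R$ large there is $c=c(R)>0$ with $\nu(O_R^\theta(o,\ga o))\ge c\,e^{-\psi(\mu_\theta(\ga))}$ for all large $\ga$. By the displayed identity this reduces to a uniform lower bound $\nu(O_R^\theta(\ga^{-1}o,o))\ge c>0$. The key geometric point is that $O_R^\theta(\ga^{-1}o,o)=\ga^{-1}O_R^\theta(o,\ga o)$ is a \emph{large} set: for $R$ large it is the complement of a neighborhood, shrinking as $R\to\infty$, of the locus of flags in $\F_\theta$ not in general position with the repelling direction of $\ga$ (made precise through the convergence Lemmas \ref{lem.29inv}--\ref{lem.211inv} and $\theta$-regularity). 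To see $\nu$ of such sets is bounded below uniformly, I would use that $\supp\nu$ is closed and $\Ga$-invariant, hence contains the unique $\Ga$-minimal set $\La_\theta$ (Lemma \ref{lq}), and that $\nu$ is non-atomic because the Zariski dense $\Ga$ has no finite orbit on $\F_\theta$. Here $\theta$-antipodality enters decisively: the ``missed'' neighborhoods concentrate on the non-transverse locus of the repelling direction, which meets $\La_\theta$ in at most one point, and a dedicated argument (the technical core, using antipodality, non-atomicity and compactness of the set of repelling directions) shows these neighborhoods carry $\nu$-mass tending to $0$ uniformly in $\ga$ as $R\to\infty$.

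Finally I would assemble the count. Applying Proposition \ref{prop.mult} with $\phi=\psi$, fixed $D$, and any $T$, the shadows $\{O_R^\theta(o,\ga o):T\le\psi(\mu_\theta(\ga))\le T+D\}$ have multiplicity at most some $q=q(\psi,R,D)$; summing and using $\nu(\F_\theta)=1$ gives $\sum_{T\le\psi(\mu_\theta(\ga))\le T+D}\nu(O_R^\theta(o,\ga o))\le q$. The lower bound then forces $\#\{\ga:\psi(\mu_\theta(\ga))\in[T,T+D]\}\le c^{-1}q\,e^{T+D}$, and summing over unit subintervals of $[0,t]$ yields $\#\{\ga:\psi(\mu_\theta(\ga))\le t\}\le Ce^{t}$; by the counting formula for $\delta_\psi$ in Lemma \ref{wd} this gives $\delta_\psi\le 1$. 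The main obstacle is precisely the uniform shadow lower bound: unlike the rank-one Hopf--Tsuji--Sullivan setting, $\nu$ is not assumed supported on $\La_\theta$, so controlling its mass near the non-transverse locus uniformly in $\ga$ — where $\theta$-antipodality and non-atomicity must be used with care — is the genuinely hard and novel step.
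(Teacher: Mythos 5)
Your global architecture coincides with the paper's: the reduction of everything to $\delta_\psi\le 1$, the deduction of $\psi\ge\psi_\Ga^\theta$ from Lemma \ref{lem.tent} together with Lemma \ref{lem.propform}(2) and $\psi_\Ga^\theta=-\infty$ off $\L_\theta$, the treatment of the divergence case, the upper half of the shadow lemma via Lemma \ref{lem.buseandcartan}, and the final counting with Proposition \ref{prop.mult} (which is indeed where $\theta$-antipodality genuinely enters) are all exactly the paper's Theorem \ref{tgg0} argument. The problem is the step you yourself flag as the technical core: the uniform lower bound $\nu(O_R^\theta(\ga^{-1}o,o))\ge c>0$.

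Your proposed proof of that bound has two genuine defects. First, the non-atomicity claim is false: the absence of finite $\Ga$-orbits on $\F_\theta$ does not make conformal measures atomless. An atom at $\xi_0$ only forces the weighted sum $\sum_{\ga}\nu(\{\ga\xi_0\})$ over the (infinite) orbit to converge, and purely atomic $(\Ga,\psi)$-conformal measures supported on an orbit closure do exist — this is precisely the dissipative situation that the theorem's dichotomy allows, so it cannot be excluded. Second, the statement you aim for — that the $\nu$-mass of shrinking neighborhoods of the non-transverse loci tends to $0$ uniformly in $\ga$ — is stronger than needed and is false in general: since $\nu$ is not assumed supported on $\La_\theta$, it may charge the non-transverse locus $Z_{\eta_0}$ of a limit direction $\eta_0\in\La_{\i(\theta)}$ (antipodality only controls $Z_{\eta_0}\cap\La_\theta$, which is at most one point, and says nothing about $\nu(Z_{\eta_0})$ away from the limit set); for repelling directions converging to such an $\eta_0$, the complementary neighborhoods retain mass bounded away from $0$. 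The paper circumvents both issues by proving only what is needed, and by contradiction: if $\nu(O_{R_i}^\theta(\ga_i^{-1}o,o))\to 0$ with $R_i\to\infty$, write $\ga_i=k_i'a_i\ell_i^{-1}\in KA^+K$, pass to $\ell_i\to\ell$, and show every point of the translated open Bruhat cell $\ell N_\theta^+P_\theta$ lies in all but finitely many of these shadows; Fatou then gives $\nu(\ell N_\theta^+P_\theta)=0$, i.e.\ $\supp\nu$ avoids a nonempty Zariski open set. This contradicts the fact that $\supp\nu$, being closed, nonempty and $\Ga$-invariant, contains the unique $\Ga$-minimal set $\La_\theta$ (Lemma \ref{lq}), which is Zariski dense because $\Ga$ is. So the correct mechanism for the lower bound is Zariski density of the support — not antipodality, not non-atomicity, and no uniform mass decay; uniformity in $\ga$ comes for free from the compactness/subsequence argument. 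With the shadow lemma proved this way, the rest of your proposal goes through verbatim.
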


\begin{lemma}[$\theta$-shadow lemma] \label{lem.shadow} Let $\Ga < G$ be a discrete subgroup. Let $\nu$
be a $(\Ga, \psi)$-conformal measure on $\F_{\theta}$ for $\psi \in \fa_{\theta}^*$.  Suppose that $\supp \nu$ is not contained in $\F_\theta-\ell N_\theta^+P_\theta$ for any $\ell\in K$.
Then we have the following:
\begin{enumerate}
    \item for some $R = R(\nu) > 0$, we have $c := \inf_{\ga \in \Ga} \nu(O_R^{\theta}(\ga o, o)) > 0$; and
    \item for all $r \ge R$ and for all $\ga \in \Ga$, 
    \be \label{ineq} c e^{-\|\psi\|\kappa r} e^{-\psi(\mu_{\theta}(\ga))} \le \nu(O_r^{\theta}(o, \ga o)) \le e^{\|\psi\|\kappa r} e^{-\psi(\mu_{\theta}(\ga))}\ee  where $\kappa > 0$ is a constant given in Lemma \ref{lem.buseandcartan}.
\end{enumerate}
In particular, if $\Ga$ is Zariski dense, \eqref{ineq} holds for any $(\Ga, \psi)$-conformal measure $\nu$.

Moreover, if $\Ga$ is a  $\theta$-transverse subgroup, then \eqref{ineq} holds for any $(\Ga, \psi)$-conformal measure $\nu$ on $\F_\theta$ such that  $$(\op{supp}\nu, \eta) \cap  \F_{\theta}^{(2)} \ne \emptyset\quad\text{
for any $\eta \in \La_{\i(\theta)}$.}$$
\end{lemma}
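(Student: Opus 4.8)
The plan is to first reduce part (2) to part (1), then to establish (1) by a concentration-of-measure argument, and finally to verify the support hypothesis in the Zariski dense and in the transverse situations.

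\emph{Reduction of (2) to (1).} Since the metric $d$ is $G$-invariant, the shadow construction is equivariant, $O_r^\theta(o,\ga o)=\ga\,O_r^\theta(\ga^{-1}o,o)$. Using the conformality of $\nu$ together with $\beta_\xi^\theta(e,\ga^{-1})=-\beta_\xi^\theta(\ga^{-1},e)$, I would write
\[
\nu(O_r^\theta(o,\ga o))=\int_{O_r^\theta(\ga^{-1}o,o)} e^{\psi(\beta_\xi^\theta(e,\ga^{-1}))}\,d\nu(\xi).
\]
Applying Lemma \ref{lem.buseandcartan} with $g=\ga^{-1}$, $h=e$ gives $\|\beta_\xi^\theta(\ga^{-1},e)-\mu_\theta(\ga)\|\le\kappa r$ for $\xi\in O_r^\theta(\ga^{-1}o,o)$, so the integrand lies in the interval $[e^{-\psi(\mu_\theta(\ga))-\|\psi\|\kappa r},\,e^{-\psi(\mu_\theta(\ga))+\|\psi\|\kappa r}]$. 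The upper bound in \eqref{ineq} then follows from $\nu(O_r^\theta(\ga^{-1}o,o))\le 1$, and the lower bound follows from (1): as $\ga^{-1}$ ranges over $\Ga$ and $O_r^\theta\supseteq O_R^\theta$ for $r\ge R$, we have $\nu(O_r^\theta(\ga^{-1}o,o))\ge c$.

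\emph{Proof of (1).} I would argue by contradiction, assuming $\inf_{\ga}\nu(O_R^\theta(\ga o,o))=0$ for every $R>0$; then for each $n$ there is $\ga_n$ with $\nu(O_n^\theta(\ga_n o,o))<1/n$, and necessarily $\ga_n\to\infty$, since for a fixed $\ga$ one has $\bigcup_R O_R^\theta(\ga o,o)=\F_\theta$ and hence $\nu(O_n^\theta(\ga o,o))\to 1$. Writing the Cartan decomposition $\ga_n=\kappa_n a_n\eta_n^{-1}$ and passing to a subsequence with $\kappa_n\to\ell\in K$, the key step is to show that every $\xi$ in general position with $\ell P_\theta^+$, i.e.\ $\xi\in\ell N_\theta^+\xi_\theta$, satisfies $\xi\in O_n^\theta(\ga_n o,o)$ for all large $n$. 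Granting this, $\mathbbm 1_{O_n^\theta(\ga_n o,o)}\to 1$ pointwise on $\ell N_\theta^+\xi_\theta$, so Fatou's lemma gives $\nu(\ell N_\theta^+\xi_\theta)\le\liminf_n\nu(O_n^\theta(\ga_n o,o))=0$; hence $\supp\nu\subseteq\F_\theta-\ell N_\theta^+\xi_\theta$, contradicting the hypothesis. To prove the key step I would use the equivariance $\xi\in O_n^\theta(\ga_n o,o)\iff\ga_n^{-1}\xi\in O_n^\theta(o,\ga_n^{-1}o)$ and the convergence Lemmas \ref{lem.29inv} and \ref{lem.211inv}: since $\kappa_n^{-1}\ell\to e$, a Cartan computation shows that $\ga_n^{-1}\xi$ and the shadow center $\ga_n^{-1}o$ become $\theta$-asymptotic, so that $\ga_n^{-1}\xi$ is eventually captured by the shadow of the ball of radius $n$ about $\ga_n^{-1}o$.

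\emph{The two specializations.} For any conformal measure $\nu$, the set $\supp\nu$ is nonempty, closed and $\Ga$-invariant, hence contains the minimal set $\La_\theta$ (Lemma \ref{lq}). If $\Ga$ is Zariski dense, then $\La$ is Zariski dense in $\F$ by Benoist, so $\La_\theta=\pi_\theta(\La)$ is Zariski dense in $\F_\theta$ and meets the Zariski open cell $\ell N_\theta^+\xi_\theta$ for every $\ell\in K$; thus the support hypothesis holds automatically. If $\Ga$ is non-elementary $\theta$-transverse with $\supp\nu=\La_\theta$, then for each $\ell$ the set of points of $\F_\theta$ not in general position with the single flag $\ell P_\theta^+\in\F_{\i(\theta)}$ is a proper closed subset which, by $\theta$-antipodality, cannot contain the antipodal set $\La_\theta$ of at least three points; hence again $\La_\theta\cap\ell N_\theta^+\xi_\theta\ne\emptyset$ and part (1) applies.

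\emph{Main obstacle.} The crux is the exhaustion claim inside the proof of (1): showing uniformly that every point of the limiting cell $\ell N_\theta^+\xi_\theta$ eventually enters the shadow $O_n^\theta(\ga_n o,o)$. The delicate point is that the contradiction sequence $\ga_n$ need not be $\theta$-regular, so Lemma \ref{lem.29inv} does not apply to $\ga_n$ directly; it is precisely the growing radius $R=n\to\infty$, combined with a careful analysis of the Cartan parts of $\ga_n^{-1}\xi$ relative to $\ga_n^{-1}o$, that must compensate and make the convergence go through.
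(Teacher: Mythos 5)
Your reduction of (2) to (1) and your handling of the Zariski dense case are correct and coincide with the paper's argument. The two places you left delicate are, however, where the proposal genuinely breaks down.

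First, the exhaustion claim in (1) is both unproved and, as stated, false. With your normalization $\ga_n=\kappa_n a_n\eta_n^{-1}$ and shadows viewed \emph{from} $\ga_n o$, equivariance gives $O^\theta_n(\ga_n o,o)=\kappa_n O^\theta_n(a_n o,o)$, and a shadow of $B(o,n)$ seen from $a_n o$ (note $d(a_no,o)>n$, else the shadow is all of $\F_\theta$) clusters around $w_0\xi_\theta$, not around $\xi_\theta$. Already in $\PSL_2(\R)$ with $\kappa_n\equiv\ell$, the point $\ell\xi_\theta$ lies in $\ell N_\theta^+\xi_\theta$ but is never in $O^\theta_n(\ga_n o,o)$: the geodesic from $\ga_n o$ toward $\ell\xi_\theta$ moves away from $o$ and stays at distance $d(\ga_no,o)>n$ from it. The cell your shadows actually exhaust is $\ell w_0 N_\theta^+\xi_\theta$; since the hypothesis quantifies over all of $K$, this orientation error is repairable (replace $\ell$ by $\ell w_0\in K$, or decompose $\ga_n^{-1}$ rather than $\ga_n$, which is what the paper does). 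What is not repaired is the exhaustion itself, which you explicitly defer as the ``main obstacle'': as you yourself note, Lemmas \ref{lem.29inv} and \ref{lem.211inv} are unavailable because $a_n$ need not be $\theta$-regular. The paper's mechanism needs no regularity at all and is purely algebraic: writing $\ga_i=k_i'a_i\ell_i^{-1}$ and fixing $h\in N_\theta^+$, Iwasawa-decompose $a_ih=k_ib_in_i\in KAN$; since $a_iha_i^{-1}$ is bounded ($\op{Ad}_{a_i}$ does not expand $\op{Lie}N_\theta^+$) and equals $k_i(b_ia_i^{-1})(a_in_ia_i^{-1})\in KAN$, the factors $b_ia_i^{-1}$ and $n_i$ are bounded; then $g:=hn_i^{-1}b_i^{-1}=a_i^{-1}k_i$ satisfies $go=a_i^{-1}o$, $gP_\theta=hP_\theta$, and $ga_io=hn_i^{-1}(b_i^{-1}a_i)o\in B(o,C_h)$, whence $hP_\theta\in O^\theta_{R_i}(a_i^{-1}o,o)$ as soon as $R_i>C_h$. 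The growing radii enter only to absorb the constant $C_h$; this is the idea missing from your sketch.

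Second, your transverse specialization misapplies antipodality. You argue that the set of flags not in general position with $\ell[P_\theta^+]=\ell w_0 P_{\i(\theta)}$ cannot contain $\La_\theta$ ``by $\theta$-antipodality,'' but antipodality only constrains pairs of points of $\La_{\theta\cup\i(\theta)}$; it says nothing about general position with a flag outside the limit set, and nothing in your argument places $\ell w_0 P_{\i(\theta)}$ inside $\La_{\i(\theta)}$. The paper closes exactly this gap using $\theta$-regularity of $\Ga$: from $\ga_i^{-1}=\ell_i w_0(w_0^{-1}a_i^{-1}w_0)w_0^{-1}k_i'^{-1}$ with $w_0^{-1}a_i^{-1}w_0\in A^+$, one gets $\lim_i\ga_i^{-1}=\ell w_0 P_{\i(\theta)}$ in the sense of Definition \ref{fc}, so this flag is a genuine limit point in $\La_{\i(\theta)}$; only then do antipodality and $\#\La_\theta\ge 3$ produce a point of $\La_\theta$ in general position with it, contradicting $\La_\theta\cap\ell N_\theta^+P_\theta=\emptyset$ via \eqref{op}. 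Without this step, the case of a non-elementary $\theta$-transverse $\Ga$ with $\supp\nu=\La_\theta$ is not established.
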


\begin{proof}    This lemma was 
proved in \cite[Lemma 7.8]{lee2020invariant} for $\theta=\Pi$, and a general case can be
proved verbatim,  by replacing $P$ and $N$ by $P_\theta$ and $N_\theta$ respectively and noting that the projection $p_\theta:\fa \to \fa_\theta$ is a  Lipschitz map.
We provide a proof for completeness.
To prove (1), 
suppose not. Then there exist $R_i \to \infty$ and $\ga_i \in \Ga$ such that $\nu(O_{R_i}^{\theta}(\ga_i^{-1}o, o)) < 1/i$ for all $i\ge 1$. We write the Cartan decomposition $\ga_i = k_i' a_i \ell_i^{-1} \in KA^+K$ and after passing to a subsequence, we may assume that $k_i' \to k'$ and $\ell_i \to \ell$ as $i \to \infty$. We claim that $N_{\theta}^+ P_{\theta} \subset \limsup O_{R_i}^{\theta}(a_i^{-1} o, o).$ Let $h \in N_{\theta}^+$ and write $a_i h = k_i b_i n_i \in KAN$. Since $a_i ha_i^{-1}$ is bounded by Lemma \ref{lem.boundedofparabolic} and
$a_i h a_i^{-1} = k_i  (b_i a_i^{-1})(a_i n_i a_i^{-1}) \in KAN,$ it follows that both sequences $b_i a_i^{-1}$ and $n_i$ are bounded. Hence for all large $i\ge 1$, $h n_i^{-1} b_i^{-1} a_i o \in B(o, R_i)$ and hence $h P_{\theta} = h n_i^{-1}b_i^{-1} P_{\theta} \in O_{R_i}^{\theta}(hn_i^{-1}b_i^{-1}o, o)$. Since $h n_i^{-1} b_i^{-1} = a_i^{-1}k_i$, we have $h P_{\theta} \in O_{R_i}^{\theta}(a_i^{-1} o, o)$, proving the claim.

Since $O_{R_i}^{\theta}(\ga_i^{-1} o, o)  = \ell_i O_{R_i}^{\theta}(a_i^{-1}o, o)$ and $\ell_i\to \ell$, it follows that $\nu(\ell N_{\theta}^+P_{\theta}) = 0$.
Since $\ell N_{\theta}^+P_{\theta}$
is Zariski open in $\F_\theta$, it follows that $\text{supp }\nu \cap\ell  N_{\theta}^+P_{\theta}=\emptyset$.
This is a contradiction to the hypothesis. Hence this proves (1).
 To see (2), let $\ga \in \Ga$ and $r \ge  R$. By Lemma \ref{lem.buseandcartan}, for all $\xi \in O_r^{\theta}(\ga^{-1} o, o)$, we have $\| \beta_{\xi}^{\theta}(\ga^{-1} o, o) - \mu_{\theta}(\ga) \| \le \kappa r$. Since $\nu(O_r^{\theta}(o, \ga o)) = \int_{O_r^{\theta}(\ga^{-1} o, o)} e^{- \psi (\beta_{\xi}^{\theta}(\ga^{-1} o, o))} d\nu (\xi),$ (2) follows from (1).

If $\Ga$ is Zariski dense, then $\La_{\theta}$ is Zariski dense in $\F_\theta$ and is contained in
  $\supp\,\nu$. Hence any $\Ga$-conformal measure $\nu$ satisfies the hypothesis.

For the last claim in the statement, letting $\Ga$ be a  $\theta$-transverse subgroup and $\nu$ a $(\Ga, \psi)$-conformal measure such that for any $\eta \in \La_{\i(\theta)}$, $(\xi, \eta) \in \F_{\theta}^{(2)}$ for some $\xi \in \supp \nu$, it suffices to show that  $\inf_{\ga \in \Ga} \nu(O_R^{\theta}(\ga o, o)) > 0$.
If not, there exist $R_i \to \infty$ and $\ga_i \in \Ga$ such that $\nu(O_{R_i}^{\theta}(\ga_i^{-1} o, o)) < 1/i$ for all $i \ge 1$.
Write the Cartan decomposition $\ga_i = k_i' a_i \ell_i^{-1} \in KA^+K$ and assume  $\ell_i \to \ell \in K$ as $i \to \infty$. By the same argument as above, we have $\supp \nu \cap \ell N_{\theta}^+ P_{\theta} = \emptyset$. By \eqref{op}, this implies that every element of  $\supp \nu$ is not in general position with $\ell w_0 P_{\i(\theta)}$. On the other hand, it follows from $\ga_i^{-1} = \ell_i w_0 (w_0^{-1} a_i^{-1} w_0) w_0^{-1} k_i'^{-1}$ for all $i \ge 1$ that $\ell w_0 P_{\i(\theta)} = \lim_i \ga_i^{-1} \in \La_{\i(\theta)}$.  By the hypothesis on $\supp \nu$, there exists an element of $\supp \nu$ in general position with $\ell w_0 P_{\i(\theta)}$. This contradicts  $\supp \nu \cap \ell N_{\theta}^+ P_{\theta} = \emptyset$. This finishes the proof.
\end{proof}

\begin{theorem}\label{tgg0}     Let $\Ga$ be a Zariski dense $\ts$-transverse subgroup of $G$. If there exists a $(\Ga, \psi)$-conformal measure $\nu$ on $\F_{\theta}$ for a $(\Ga, \theta)$-proper $\psi \in \fa_{\theta}^*$,
  then $$\delta_\psi \le 1.$$
\end{theorem}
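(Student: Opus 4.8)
The plan is to show that the $\psi$-Poincar\'e series $\cal P_\psi(s)$ converges for every $s>1$; since $\delta_\psi=\inf\{s:\cal P_\psi(s)<\infty\}$ by Lemma \ref{wd}, this gives $\delta_\psi\le 1$. The two inputs are the $\theta$-shadow lemma (Lemma \ref{lem.shadow}) and the bounded multiplicity of $\theta$-shadows (Proposition \ref{prop.mult}), and the whole argument is a way of packaging them against the fact that $\nu$ is a probability measure.

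First I would invoke Lemma \ref{lem.shadow}. Since $\Ga$ is Zariski dense, the shadow estimate \eqref{ineq} holds for $\nu$, so there are constants $R>0$, $c>0$, and $\kappa>0$ with
$$\nu(O_R^{\theta}(o,\ga o))\ \ge\ c\,e^{-\|\psi\|\kappa R}\,e^{-\psi(\mu_\theta(\ga))}\qquad\text{for all }\ga\in\Ga.$$
Next, because $\psi$ is $(\Ga,\theta)$-proper, I would apply Proposition \ref{prop.mult} with $\phi=\psi$ and a fixed window width $D>0$: there is $q=q(\psi,R,D)>0$ so that for every $T$ the shadows $\{O_R^{\theta}(o,\ga o): T\le\psi(\mu_\theta(\ga))\le T+D\}$ have multiplicity at most $q$. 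Integrating the sum of their indicator functions against the probability measure $\nu$ yields
$$\sum_{\ga:\,T\le\psi(\mu_\theta(\ga))\le T+D}\nu(O_R^{\theta}(o,\ga o))\ \le\ q\,\nu(\F_\theta)=q.$$
Combining this with the lower bound above and setting $q':=q\,c^{-1}e^{\|\psi\|\kappa R}$, I get the uniform windowed bound
$$\sum_{\ga:\,T\le\psi(\mu_\theta(\ga))\le T+D}e^{-\psi(\mu_\theta(\ga))}\ \le\ q'\qquad\text{for every }T.$$

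Finally I would sum geometrically over windows. By $(\Ga,\theta)$-properness there are only finitely many $\ga$ with $\psi(\mu_\theta(\ga))<0$, contributing a finite amount to $\cal P_\psi(s)$; partition the remaining $\ga$ according to the half-open windows $[kD,(k+1)D)$, $k=0,1,2,\dots$, each contained in a closed window to which the bound applies. For $s>1$ and $\ga$ in the $k$-th window we have $e^{-s\psi(\mu_\theta(\ga))}\le e^{-(s-1)kD}e^{-\psi(\mu_\theta(\ga))}$, so
$$\cal P_\psi(s)\ \le\ C_0+q'\sum_{k=0}^{\infty}e^{-(s-1)kD}\ =\ C_0+\frac{q'}{1-e^{-(s-1)D}}\ <\ \infty,$$
where $C_0$ is the finite contribution of the negative part. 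Hence $\cal P_\psi(s)<\infty$ for all $s>1$ and $\delta_\psi\le 1$.

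The substantive content is already absorbed into Proposition \ref{prop.mult}, whose proof uses the full force of the $\theta$-transverse hypothesis through Lemma \ref{lem.comparecartan}; granting that, the present statement is a clean counting argument. The only points needing care are that $\psi$ \emph{itself} (not an auxiliary strictly positive form) is a valid input to Proposition \ref{prop.mult}, which is guaranteed precisely by the $(\Ga,\theta)$-properness assumption, and the harmless separate treatment of the finitely many $\ga$ with $\psi(\mu_\theta(\ga))<0$.
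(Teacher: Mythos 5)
Your proof is correct and takes essentially the same approach as the paper's: both combine the lower bound of the $\theta$-shadow lemma (Lemma \ref{lem.shadow}) with the bounded multiplicity of shadows (Proposition \ref{prop.mult} applied to $\psi$ itself) on windows of bounded $\psi$-width. The only difference is bookkeeping at the end — the paper converts the windowed bound into a counting estimate $\#\Ga_{\psi,n}\ll e^{n+1}$ and invokes the $\limsup$ characterization of $\delta_\psi$, whereas you sum the windowed bounds geometrically to show $\cal P_\psi(s)<\infty$ for all $s>1$; these are equivalent formulations of the same argument.
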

\begin{proof}
For each $n \in \Z$, we set $$\Ga_{\psi, n}:= \{ \ga \in \Ga : n \le \psi(\mu_{\theta}(\ga)) < n + 1\}.$$
    Since $\psi$ is $(\Ga, \theta)$-proper,
    $\bigcup_{n<0} \Ga_{\psi, n}$ is a finite subset, and hence
    can be ignored in the arguments below. Let $\nu$ be a $(\Ga, \psi)$-conformal measure.
    We fix a sufficiently large $R > 0$ satisfying the conclusion of Lemma \ref{lem.shadow} for $\nu$. Since $\psi$ is a $(\Ga, \theta)$-proper linear form, by Proposition \ref{prop.mult}, we have that for all $n\in \mathbb N$,
    $$1 \gg \sum_{\ga \in \Ga_{\psi, n}} \nu(O_R^{\theta}(o, \ga o)) \gg \sum_{\ga \in \Ga_{\psi, n}} e^{-\psi(\mu_{\theta}(\ga))} \ge e^{-(n+1)} \# \Ga_{\psi, n}$$ where the implied constants do not depend on $n$. It implies $$\# \Ga_{\psi, n} \ll e^{n+1}\quad\text{for each $n \ge 0$. }$$ Therefore, we have (cf. \cite[Lemma 3.1.1]{Quint2002divergence})
    \be \label{eqn.countinginnote}
    \begin{aligned}     \delta_{\psi} & \le \limsup_{N \to \infty}{\log \# \{ \ga \in \Ga : \psi(\mu_{\theta}(\ga)) < N\} \over N} \\
      & \le \limsup_{N \to \infty} {1 \over N} \log \sum_{ 0 \le n < N} e^{n+1} =1. 
    \end{aligned}
    \ee 
    Hence the claim follows.
\end{proof}

\subsection*{Proof of Theorem \ref{tgg}}
  By Lemma \ref{lem.tent} and Theorem \ref{tgg0}, we have that $\delta_{\psi} \le 1$ and $\delta_{\psi}\psi$ is tangent to $\psi_{\Ga}^{\theta}$, and therefore we have $$\delta_{\psi} \psi\ge  \psi_{\Ga}^{\theta} .$$
    Since $\psi$ is $(\Ga, \theta)$-proper, $\psi\ge 0$ on $\L_\theta$ by Lemma \ref{lem.propform} and hence $\psi\ge \delta_\psi \psi $ on $\L_\theta$. Therefore $\psi\ge \psi_\Ga^{\theta}$ on $\L_\theta$.
    Since $\psi_{\Ga}^{\theta} = -\infty$ outside of $\L_{\theta}$, 
    $\psi\ge \psi_\Ga^{\theta}$ on $\fa_\theta$. 
If $\sum_{\ga\in \Ga} e^{-\psi (\mu_\theta(\ga))}=\infty$ in addition, then $\delta_\psi\ge 1$ and hence $\delta_\psi=1$. In particular, $\psi = \delta_{\psi} \psi$ is tangent to $\psi_{\Ga}^{\theta}$.
Therefore this finishes the proof.
\qed

\section{Divergence of Poincar\'e series and conical sets} \label{sec.supp}
Let $\psi\in \fa_{\theta}^*$ and $\Ga<G$ be discrete subgroup.
Denote by ${\mathsf M}^\theta_\psi={\mathsf M}^\theta_{\Ga, \psi}$
the collection of all  $(\Ga,\psi)$-conformal (probability) measures on $\ft$. 
Define the following subset of $\mathsf{M}^{\theta}_{\psi}$:
$$
\mathsf{N}_{\Ga, \psi}^{\theta} = \mathsf{N}_{\psi}^{\theta} := \left\{ \nu \in {\mathsf M}^\theta_\psi : \begin{matrix}
\text{either }
\nu(\La_{\theta}) = 1 \text{ and } \# \supp \nu \ge 2, \quad  \text{or} \\
 (\supp \nu|_{\F_{\theta} - \La_{\theta}}, \eta) \cap  \F_{\theta}^{(2)} \neq \emptyset \text{ for all }  \eta \in \La_{\i(\theta)} 
 \end{matrix}
 \right\}.
$$
The reason for this definition is to guarantee that  for $\Ga$  $\theta$-transverse,  the shadow lemma (Lemma \ref{lem.shadow}) holds for any $\nu \in \mathsf N_{\psi}^\theta$ as well as its restriction $\nu|_{\F_{\theta} - \La_{\theta}}$ (if non-zero).

\begin{lem} \label{lem.npsiismpsi}
  \begin{enumerate}
      \item 
  If $\Ga$ is Zariski dense, then $$\mathsf{N}_{\psi}^{\theta} = \mathsf{M}_{\psi}^{\theta}.$$ 
  \item  If $\Ga$ is non-elementary $\theta$-transverse, then
  $$\mathsf{N}_{\psi}^{\theta} \supset \{\nu \in \mathsf{M}_{\psi}^{\theta}:\nu(\La_\theta)=1\}. $$
  \end{enumerate}
\end{lem}

\begin{proof}
    Since $\# \La_{\theta} \ge 3$ for a non-elementary $\theta$-transverse subgroup $\Ga$, (2) is straightforward.  For (1), suppose that $\Ga$ is a Zariski dense discrete subgroup. Then for any $\nu \in \mathsf{M}_{\psi}^{\theta}$, $\supp \nu$ is a closed $\Ga$-invariant set, and hence is Zariski dense in $\F_\theta$ (Lemma \ref{lq}). Therefore
if $\nu(\La_{\theta}) = 1$, then $\nu \in \mathsf{N}_{\psi}^{\theta}$. Otherwise, we have $\nu(\F_{\theta} - \La_{\theta}) > 0$, and $\supp \nu|_{\F_{\theta} - \La_{\theta}}$ is a non-empty closed $\Ga$-invariant set, and thus Zariski dense in $\F_\theta$. Given $\eta \in \La_{\i(\theta)}$, the set $\{ \xi \in \F_{\theta} : (\xi, \eta) \in \F_{\theta}^{(2)} \}$ is a  Zariski open subset of $\F_{\theta}$ and hence $(\supp \nu|_{\F_{\theta} - \La_{\theta}} , \eta) \cap \F_{\theta}^{(2)} \neq \emptyset$, finishing the proof.
\end{proof}

The main goal of this section is to prove the following theorem and discuss its applications. Note that we do not assume that $\psi$ is $(\Ga, \theta)$-proper in the following theorem.

\begin{theorem} \label{thm.divthensupp} \label{thm.convthen0} \label{ggg}

Let $\Ga$ be any $\theta$-transverse subgroup (which may be elementary).
Then the following are equivalent:
\begin{enumerate}
    \item  $\sum_{\ga \in \Ga} e^{-\psi(\mu_{\theta}(\ga))} = \infty$ \hspace{4em} (resp. $\sum_{\ga \in \Ga} e^{-\psi(\mu_{\theta}(\ga))} <\infty$)
    \item $\nu(\La^{\mathsf{con}}_{\theta}) = 1 $ for all $\nu \in \mathsf{N}^{\theta}_{\psi}$ \hspace{1.8em} (resp. $\nu(\La^{\mathsf{con}}_{\theta}) = 0 $ for all $\nu \in \mathsf{N}^{\theta}_{\psi}$).
\end{enumerate}

\end{theorem}

In the rest of this section, suppose that $\Ga$ is $\theta$-transverse.
We make the following simple observation:
\begin{lemma} \label{lem.posthenfull}\label{dicc}
  Suppose that $\nu(\La^{\mathsf{con}}_{\theta}) > 0$ for all $\nu\in \mathsf N_\psi^{\theta}$. 
 Then $$\nu(\La^{\mathsf{con}}_{\theta}) = 1\quad\text{for all $\nu\in \mathsf N_\psi^{\theta}$. }$$
\end{lemma}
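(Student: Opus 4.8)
The plan is to exploit that $\La_\theta^{\mathsf{con}}$ is a $\Ga$-invariant Borel subset of $\F_\theta$, together with the observation that conformal measures restrict to $\Ga$-invariant sets. First I would record that $\La_\theta^{\mathsf{con}}$ is $\Ga$-invariant: by definition $\La_\theta^{\mathsf{con}}=\{gP_\theta:\limsup \Ga g M_\theta A^+\ne\emptyset\}$, and for $\ga_0\in\Ga$ one has $\Ga\ga_0=\Ga$, so the defining condition is unchanged under $gP_\theta\mapsto \ga_0 gP_\theta$; measurability follows from the presentation $\La_\theta^{\mathsf{con}}=\bigcup_N\La_\theta^N$ in \eqref{lan}, each $\La_\theta^N$ being a countable $\limsup$ of shadows. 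I would then argue by contradiction: assume some $\nu\in\mathsf{M}_\psi^\theta$ has $\nu(\La_\theta^{\mathsf{con}})<1$, so that the complement $E:=\F_\theta\setminus\La_\theta^{\mathsf{con}}$, which is again $\Ga$-invariant and Borel, satisfies $\nu(E)>0$.

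The central step is to verify that the normalized restriction $\nu_E:=\nu(E)^{-1}\,\nu|_E$ is again a $(\Ga,\psi)$-conformal probability measure. For this I would use the $\Ga$-invariance of $E$ in the form $\ga^{-1}D\cap E=\ga^{-1}(D\cap E)$, which gives, for every Borel $D\subset\F_\theta$ and $\ga\in\Ga$,
$$(\ga_*\nu_E)(D)=\frac{1}{\nu(E)}\,\nu\big(\ga^{-1}(D\cap E)\big)=\frac{1}{\nu(E)}\int_{D\cap E}e^{\psi(\beta_\xi^\theta(e,\ga))}\,d\nu(\xi)=\int_D e^{\psi(\beta_\xi^\theta(e,\ga))}\,d\nu_E(\xi),$$
where the middle equality is the conformality of $\nu$. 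Hence $\frac{d\ga_*\nu_E}{d\nu_E}(\xi)=e^{\psi(\beta_\xi^\theta(e,\ga))}$, so $\nu_E\in\mathsf{M}_\psi^\theta$.

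Finally, since $\La_\theta^{\mathsf{con}}\cap E=\emptyset$, we have $\nu_E(\La_\theta^{\mathsf{con}})=0$, which contradicts the standing hypothesis that $\nu'(\La_\theta^{\mathsf{con}})>0$ for every $\nu'\in\mathsf{M}_\psi^\theta$. This forces $\nu(\La_\theta^{\mathsf{con}})=1$ for all $\nu\in\mathsf{M}_\psi^\theta$. The argument is essentially a zero-one law, so there is no serious obstacle; the only points requiring care are the Borel measurability and $\Ga$-invariance of $\La_\theta^{\mathsf{con}}$ and the bookkeeping in the restriction computation, all of which are routine.
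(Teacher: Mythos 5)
Your proof is correct and follows essentially the same route as the paper: the paper's proof also normalizes the restriction of $\nu$ to $F=\F_\theta-\La_\theta^{\mathsf{con}}$ and notes that $\nu_F\in\mathsf M_\psi^\theta$ with $\nu_F(\La_\theta^{\mathsf{con}})=0$, contradicting the hypothesis. Your additional verifications (Borel measurability via $\La_\theta^{\mathsf{con}}=\bigcup_N\La_\theta^N$, $\Ga$-invariance of the conical set, and the restriction computation showing $\nu_E$ is conformal) are exactly the details the paper leaves implicit.
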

\begin{proof}
    Suppose that for some $\nu \in \mathsf{N}_{\psi}^{\theta}$, we have
    $0 < \nu(\La^{\mathsf{con}}_{\theta}) < 1$. Then $\nu' := {1 \over \nu(\F_{\theta} - \La^{\mathsf{con}}_{\theta})}\nu|_{\F_{\theta} - \La^{\mathsf{con}}_{\theta}}$ belongs to $\mathsf{M}_{\psi}^{\theta}$. We now show that $\nu' \in \mathsf{N}_{\psi}^{\theta}$. There are two cases:
    \begin{enumerate}
        \item If $\nu$ satisfies that $(\supp \nu|_{\F_{\theta} - \La_{\theta}}, \eta) \cap \F_{\theta}^{(2)} \neq \emptyset$ for all $\eta \in \La_{\i(\theta)}$, then the same holds for $\nu'$, so $\nu' \in \mathsf{N}_{\psi}^{\theta}$.

        \item Otherwise, $\nu(\La_{\theta}) = 1$ and $\#\supp \nu \ge 2$. We consider the following two subcases:
        \begin{itemize}
            \item If $\Ga$ is non-elementary, then $\supp \nu' = \La_{\theta}$. Since $\# \La_{\theta} = \infty$ in this case, we have $\nu \in \mathsf{N}_{\psi}^{\theta}$.

            \item 
        If $\Ga$ is elementary, then $\# \La_{\theta} \le 2$. Since $\nu(\La_{\theta}) = 1$ and $\# \supp \nu \ge 2$, we have $\supp \nu = \La_{\theta}$ and $\# \La_{\theta} = 2$. Since $0 < \nu(\La_{\theta}^{\sf con}) < 1$, we must have that $\# \La_{\theta}^{\sf con} = 1$. Since both $\La_{\theta}$ and $\La_{\theta}^{\sf con}$ are $\Ga$-invariant, this implies that each point of $\La_{\theta}$ is $\Ga$-invariant. Now for $\xi \in \La_{\theta} - \La_{\theta}^{\sf con}$, let $\ga_i \in \Ga$ be a sequence that converges to $\xi$. For $\eta \in \La_{\i(\theta)}$ such that $(\xi, \eta) \in \F_{\theta}^{(2)}$, we have $\ga_i^{-1}(\xi, \eta) = (\xi, \eta) \in \F_{\theta}^{(2)}$, and therefore the convergence $\ga_i \to \xi$ is conical by Lemma \ref{lem.klpconical}. This contradicts $\xi \notin \La_{\theta}^{\sf con}$.

        \end{itemize}
    \end{enumerate}
    Therefore, in any case, we have $\nu' \in \mathsf{N}_{\psi}^{\theta}$. On the other hand, $\nu'(\La_{\theta}^{\sf con}) = 0$, contradicting the hypothesis. This finishes the proof.
\end{proof}

We will use the following: \begin{lemma}[Kochen-Stone Lemma \cite{Kochen1964note}] \label{lem.KS}
Let $(Z, \nu)$ be a finite measure space. If $\{\mathsf A_n\}$ is a sequence of measurable subsets of $Z$ such that
\be \label{eqn.KS}
\sum_{n = 1}^{\infty} \nu(\mathsf A_n) = \infty \quad \mbox{and} \quad \liminf_{N \to \infty} {\sum_{m = 1}^N \sum_{n = 1}^N \nu(\mathsf A_n \cap \mathsf A_m) \over \left( \sum_{n = 1}^N \nu(\mathsf A_n) \right)^2} < \infty,
\ee then $\nu (\limsup_n \mathsf A_n) >0$.
\end{lemma}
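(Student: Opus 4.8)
The plan is to use the second moment method---the Paley--Zygmund / Cauchy--Schwarz approach---which is the natural quantitative refinement of the second Borel--Cantelli lemma suited to the non-independent setting here. The starting point is the identification
$$\limsup_n \mathsf A_n = \left\{ x \in Z : x \in \mathsf A_n \text{ for infinitely many } n \right\} = \bigcap_{M \ge 1} \bigcup_{n \ge M} \mathsf A_n ,$$
so that proving $\nu(\limsup_n \mathsf A_n) > 0$ amounts to bounding the measure of the tail unions $\bigcup_{n \ge M} \mathsf A_n$ from below, uniformly in $M$.

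First I would fix integers $1 \le M \le N$ and introduce the truncated counting function $S_{M,N} = \sum_{n=M}^N \mathbbm{1}_{\mathsf A_n}$ on $Z$. Its first and second moments against $\nu$ are exactly
$$\int_Z S_{M,N} \, d\nu = \sum_{n=M}^N \nu(\mathsf A_n), \qquad \int_Z S_{M,N}^2 \, d\nu = \sum_{m=M}^N \sum_{n=M}^N \nu(\mathsf A_n \cap \mathsf A_m) .$$
Applying Cauchy--Schwarz to the factorization $S_{M,N} = S_{M,N} \cdot \mathbbm{1}_{\{S_{M,N} > 0\}}$, and noting $\{S_{M,N} > 0\} = \bigcup_{n=M}^N \mathsf A_n$, gives
$$\left( \sum_{n=M}^N \nu(\mathsf A_n) \right)^2 \le \nu\!\left( \bigcup_{n=M}^N \mathsf A_n \right) \cdot \sum_{m=M}^N \sum_{n=M}^N \nu(\mathsf A_n \cap \mathsf A_m) .$$
This is the one genuinely nontrivial inequality; everything that follows is bookkeeping.

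Next I would pass the tail sums back to the full sums appearing in the hypothesis. Bounding the double sum over $[M,N]$ by the one over $[1,N]$ in the denominator, and using $\sum_{n=1}^N \nu(\mathsf A_n) \to \infty$ to absorb the fixed finite correction $\sum_{n=1}^{M-1}\nu(\mathsf A_n)$ so that the ratio $\sum_{n=M}^N \nu(\mathsf A_n) \big/ \sum_{n=1}^N \nu(\mathsf A_n) \to 1$ as $N \to \infty$, I obtain for each fixed $M$ that
$$\limsup_{N \to \infty} \nu\!\left( \bigcup_{n=M}^N \mathsf A_n \right) \ge \limsup_{N \to \infty} \frac{\left( \sum_{n=1}^N \nu(\mathsf A_n) \right)^2}{\sum_{m=1}^N \sum_{n=1}^N \nu(\mathsf A_n \cap \mathsf A_m)} = c ,$$
where $c > 0$ is the reciprocal of the finite $\liminf$ in \eqref{eqn.KS}. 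Since $\bigcup_{n=M}^N \mathsf A_n$ increases to $\bigcup_{n\ge M} \mathsf A_n$ as $N\to\infty$, continuity from below upgrades this to $\nu\big(\bigcup_{n \ge M} \mathsf A_n\big) \ge c$ for every $M$.

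Finally, the tail unions $\bigcup_{n \ge M} \mathsf A_n$ decrease as $M$ grows, and here is the one place where finiteness of $(Z,\nu)$ is essential: continuity from above yields
$$\nu(\limsup_n \mathsf A_n) = \lim_{M \to \infty} \nu\!\left( \bigcup_{n \ge M} \mathsf A_n \right) \ge c > 0 .$$
I expect the main obstacle to be conceptual rather than computational: one must work with the tail-truncated sums $S_{M,N}$ from the outset, because a naive application with $M = 1$ only controls $\nu\big(\bigcup_n \mathsf A_n\big)$ and not the ``infinitely often'' event; and one must invoke finiteness of $\nu$ to legitimately take the decreasing limit over $M$ at the last step.
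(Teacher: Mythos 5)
Your proof is correct. Note that the paper does not prove this lemma at all --- it is quoted as a known result with a citation to Kochen--Stone's original article --- so there is no internal proof to compare against; your argument is essentially the classical second-moment proof of that result. Each step checks out: the Cauchy--Schwarz inequality applied to $S_{M,N}\cdot \mathbbm{1}_{\{S_{M,N}>0\}}$ gives the key bound $\bigl(\sum_{n=M}^N \nu(\mathsf A_n)\bigr)^2 \le \nu\bigl(\bigcup_{n=M}^N \mathsf A_n\bigr)\sum_{m=M}^N\sum_{n=M}^N \nu(\mathsf A_n\cap \mathsf A_m)$; the passage from tail sums to full sums is legitimate because the numerator correction is a fixed finite quantity while $\sum_{n=1}^N \nu(\mathsf A_n)\to\infty$, and enlarging the denominator to the full double sum only weakens the bound; and your two limit passages (continuity from below in $N$, then continuity from above in $M$, the latter using finiteness of $\nu$) are exactly where they need to be. You were also right to flag that truncating at $M$ from the outset is what makes the argument capture the ``infinitely often'' event rather than merely $\bigcup_n \mathsf A_n$.
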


\subsection*{Proof of Theorem \ref{ggg}}
 Suppose that $\sum_{\ga \in \Ga} e^{-\psi(\mu_{\theta}(\ga))} = \infty$.
By Lemma \ref{lem.posthenfull}, it suffices to show that
$\nu(\La^{\mathsf{con}}_{\theta}) >0 $ for all $\nu \in \mathsf{N}_{\psi}^{\theta}$.
 Let $\nu \in \mathsf{N}_{\psi}^{\theta}$. Since $\Ga$ is $\theta$-transverse, it follows from the definition of $\mathsf{N}_{\psi}^{\theta}$ that $\nu$ satisfies Lemma \ref{lem.shadow}.

 We fix $\alpha \in \theta$. 
 Since $\Ga$ is $\theta$-regular, $\alpha \in \theta$ is $(\Ga, \theta)$-proper; in particular, $\alpha(\mu_\theta(\Ga))$ is a discrete closed subset of $[0, \infty)$. Therefore we may enumerate $\Ga = \{{\ga}_1, {\ga}_2, \cdots \}$ so that $\alpha(\mu_{\theta}({\ga}_n)) \le \alpha(\mu_{\theta}({\ga}_{n+1}))$ for all $n \in \N$. Fix a sufficiently large $R$ which satisfies the conclusion of Lemma \ref{lem.shadow}. Setting $\mathsf A_n:=O_R^{\theta}(o, {\ga}_n o)$, we then have
 $$\sum_{n = 1}^{\infty} \nu(\mathsf A_n) \gg  \sum_{\ga \in \Ga} e^{-\psi(\mu_{\theta}(\ga))} =  \infty$$
where the implied constant depends only on $R$.
Since $\limsup_n \mathsf A_n \subset \La^{\mathsf{con}}_{\theta}$,
by Lemma \ref{lem.KS}, it suffices to show that
\be\label{ann} \liminf_{N \to \infty} {\sum_{m = 1}^N \sum_{n = 1}^N \nu(\mathsf A_n \cap \mathsf A_m) \over \left( \sum_{n = 1}^N \nu(\mathsf A_n) \right)^2} < \infty .\ee 

Set  $Q := K A_R^+K$ where $A_R^+=\{a\in A^+:\|\log a\|\le R\}$ and $C_0=C_0(Q)$ be as in
 Lemma \ref{cor.approxcartan}. Note that $Q = \{g \in G : d(o, g o) \le R \}$.
Define $$T_N := \max \{ n \in \N : \alpha( \mu_{\theta}({\ga}_n) ) \le \alpha(\mu_{\theta}({\ga}_N) ) + \|\alpha\|C_0\}$$
for each $N\ge 1$.  Clearly, $N \le T_N$. Unless mentioned otherwise, all implied constants in this proof are independent of $N$. Since $\Ga$ is $\theta$-regular, $\alpha|_{\fa_\theta}$ is $(\Ga,\theta)$-proper. 
Proposition \ref{prop.mult} implies that the collection $\sf A_n$, $N\le n\le T_N$, has multiplicity at most $q=q(\alpha, R, \|\alpha\| C_0)$, and hence
$$\sum_{ N\le n \le  T_N} \nu({\sf A_n})  \le  q\cdot \nu(\F_\theta).$$
 Therefore by Lemma \ref{lem.shadow}, we have that for all $N\ge 1$,
 \begin{align*}
   & \left| \sum_{n = 1}^{T_N} e^{-\psi(\mu_{\theta}({\ga}_n))} - \sum_{n = 1}^N e^{-\psi(\mu_{\theta}({\ga}_n))} \right|\ll   \sum_{n = N + 1}^{T_N} \nu(\sf A_n)
   \\ & \ll 
    \nu(\F_\theta)= e^{\psi(\mu_{\theta}({\ga}_1))}e^{-\psi(\mu_{\theta}({\ga}_1))}\le  e^{\psi(\mu_{\theta}({\ga}_1))} \sum_{n = 1}^N e^{-\psi(\mu_{\theta}({\ga}_n))}
    \end{align*}
    with all implied constants independent of $N$.
Therefore we have:
\be \label{lem.TNtoN} \sum_{n = 1}^{T_N} e^{-\psi(\mu_{\theta}({\ga}_n))} \ll  \sum_{n = 1}^N e^{-\psi(\mu_{\theta}({\ga}_n))}.\ee

 Fix $N \in \N$. If $\mathsf A_n\cap \mathsf A_m \neq \emptyset$ for some $n, m \le N$, 
then there exist $k \in K$ and $m_{\theta} \in M_{\theta}$ such that $d(kA^+o, {\ga}_n o) < R$ and $d(km_{\theta}A^+o, {\ga}_m o) < R$.
Since $K\subset Q$, it follows that 
$$Q \cap {\ga}_n Q a_n^{-1} \cap {\ga}_m Q a_m^{-1} m_{\theta}^{-1} \neq \emptyset$$ for some $a_n, a_m \in A^+$. Hence, setting $$
\begin{aligned}
    E_{1} & = \{(n, m) : n, m \le N \mbox{ and }\|\mu_{\theta}({\ga}_n) - (\mu_{\theta}({\ga}_m) + \mu_{\theta}({\ga}_m^{-1}{\ga}_n))\| \le C_0\}, \\
    E_{2} & = \{(n, m) : n, m \le N \mbox{ and } \|\mu_{\theta}({\ga}_m) - (\mu_{\theta}({\ga}_n) + \mu_{\theta}({\ga}_n^{-1}{\ga}_m))\| \le C_0\},
\end{aligned}$$
we get from Lemma \ref{cor.approxcartan} 
 that \be \label{eqn.part1and2}
\sum_{n, m \le N}  \nu(\sa_n\cap \sa_m) 
 \le  \sum_{(n, m) \in E_{1}} \nu(\sa_n) + \sum_{(n, m) \in E_{2}} \nu(\sa_m).\ee

For all $(n,m)\in E_1$,
we have \be\label{bes} 
\begin{aligned}
\alpha(\mu_{\theta}({\ga}_m^{-1}{\ga}_n)) & \le \alpha(\mu_\theta ({\ga}_m) + \mu_{\theta}({\ga}_m^{-1}{\ga}_n)) \\
& = \alpha(\mu_\theta ({\ga}_m) + \mu_{\theta}({\ga}_m^{-1}{\ga}_n) - \mu_{\theta}(\ga_n)) + \alpha(\mu_{\theta}(\ga_n)) \\
& \le \| \alpha \| C_0 + \alpha(\mu_{\theta}(\ga_n)).
\end{aligned}
\ee  
Therefore, by Lemma \ref{lem.shadow},
\be \label{eqn.part1}
\begin{aligned}
    \sum_{(n, m) \in E_{1}} \nu(\sa_n) & \ll \sum_{(n, m) \in E_{1}} e^{-\psi(\mu_{\theta}({\ga}_n))} \\
    & \ll  \sum_{(n, m) \in E_{1}} e^{-\psi(\mu_{\theta}({\ga}_m))} e^{-\psi(\mu_{\theta}({\ga}_m^{-1}{\ga}_n))}
    \\ &\le  \sum_{m = 1}^N \sum_{j = 1}^{T_N} e^{-\psi(\mu_{\theta}({\ga}_m))} e^{-\psi (\mu_{\theta}({\ga}_j))};
\end{aligned}
\ee
the last inequality follows because, for each fixed $1\le m\le N$, 
the correspondence $n \leftrightarrow \ga_m^{-1}\ga_n$ is one-to-one and when $(n, m) \in E_1$, ${\ga}_j={\ga}_m^{-1}{\ga}_n $ for some $j\le T_n\le T_N$ by \eqref{bes}. 
Similarly, we have $$\sum_{(n, m) \in E_{2}} \nu(\sa_m) \ll   \sum_{n = 1}^N \sum_{j = 1}^{T_N} e^{-\psi(\mu_{\theta}({\ga}_n))} e^{-\psi (\mu_{\theta}({\ga}_j))}.$$

By \eqref{eqn.part1and2}, we have 
\begin{align*}
 \sum_{n, m \le N}  \nu(\sa_n\cap \sa_m) &\ll \sum_{n = 1}^N \sum_{j = 1}^{T_N} e^{-\psi(\mu_{\theta}({\ga}_n))} e^{-\psi (\mu_{\theta}({\ga}_j))} \\
    & =  \left(\sum_{n = 1}^N e^{-\psi(\mu_{\theta}({\ga}_n))} \right) \left(\sum_{n = 1}^{T_N}  e^{-\psi (\mu_{\theta}({\ga}_n))} \right)\\ &\ll 
    \left(\sum_{n = 1}^N e^{-\psi(\mu_{\theta}({\ga}_n))} \right)^2 \ll 
\left( \sum_{n = 1}^N \nu(\mathsf A_n) \right)^2
\end{align*}
where we have applied \eqref{lem.TNtoN} for the second last inequality and  Lemma \ref{lem.shadow} for the last inequality. Hence \eqref{ann} is verified, completing the proof of the first statement.

We now suppose that $\sum_{\ga \in \Ga} e^{-\psi(\mu_{\theta}(\ga))} < \infty$. 
Consider the following increasing sequence
$$\La_{\theta}^N =  \left\{ \xi \in \F_{\theta} : \begin{matrix}
    \exists \text{ infinite sequence } \ga_n \in \Ga \text{ s.t.} \\
    \xi \in \bigcap_{n \ge 1} O_N^{\theta}(o, \ga_n o)
    \end{matrix} \right\}, \quad N\ge 1.$$
Since $\La^{\mathsf{con}}_{\theta} = \bigcup_N \La_{\theta}^N$,
 it suffices to show $\nu(\La_{\theta}^N) = 0$ for all sufficiently large $N\ge1$.
Since 
    $$\La_{\theta}^N \subset \bigcup_{\ga \in \Ga, \|\mu_{\theta}(\ga)\| > t} O^\theta_N(o, \ga o)$$
    for any $t > 0$,
we get from Lemma \ref{lem.shadow} that 
for all $t>0$, $$\nu(\La_{\theta}^N) \ll \sum_{\ga \in \Ga, \|\mu_{\theta}(\ga)\| > t} e^{-\psi(\mu_{\theta}(\ga))}$$
where the implied constant depends only on $N$.
Since $\sum_{\ga \in \Ga} e^{-\psi(\mu_{\theta}(\ga))} <  \infty$ implies that 
$\lim_{t\to \infty} \sum_{\ga \in \Ga, \|\mu_{\theta}(\ga)\| > t} e^{-\psi(\mu_{\theta}(\ga))}=0$, we have
$\nu(\La_{\theta}^N) =0$, finishing the proof.
\qed

 \subsection*{Comparing with $\psi_\Ga$}
 Quint showed that for a Zariski dense discrete subgroup $\Ga<G$, the existence of a $(\Ga, \psi)$-conformal measure on $\F_\theta$ for $\psi\in \fa_\theta^*$ implies the inequality \be\label{quint} \psi \circ p_\theta+ 2\rho_{\Pi - \theta} \ge \psi_\Ga \quad \text{on $\fa$,}\ee
where $2\rho_{\Pi - \theta}$ is the sum of all positive roots which can be written as $\mathbb Z$-linear combinations of elements of $\Pi - \theta$ (counted with multiplicity) \cite[Theorem 8.1]{Quint2002Mesures}. For $\theta$-transverse subgroups, Theorem \ref{main}  and \eqref{qq} imply that the term $2\rho_{\Pi - \theta}$ turns out to be redundant:

 \begin{cor}\label{coo}
  Let $\G<G$ be a  Zariski dense $\t$-transverse subgroup and $\psi\in \fa_\theta^*$ be $(\Ga, \theta)$-proper.
If there exists a $(\Ga, \psi)$-conformal measure $\nu$ on $\F_\theta$,
then \be\label{pos} \psi \circ p_\theta \ge \psi_\Ga \quad \mbox{on } \fa.\ee
Moreover, if $\nu(\La_{\theta}^{\mathsf{con}}) > 0$, then $\psi \circ p_\theta$ is tangent to $\psi_\Ga$.
 \end{cor}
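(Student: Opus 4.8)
The plan is to obtain both assertions as formal consequences of three ingredients already in place: the lower bound $\psi \ge \psi_\Gamma^\theta$ from Theorem \ref{tgg}, the comparison $\psi_\Gamma^\theta \circ p_\theta \ge \psi_\Gamma$ from Lemma \ref{qu}, and the dichotomy of Theorem \ref{ggg}; the upgrade from inequality to tangency will be achieved by transporting the statement into the $\theta = \Pi$ theory.

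First I would prove the inequality \eqref{pos}. Since $\nu$ is a $(\Gamma, \psi)$-conformal measure and $\psi$ is $(\Gamma, \theta)$-proper, Theorem \ref{tgg} gives $\psi \ge \psi_\Gamma^\theta$ on $\mathfrak{a}_\theta$. For an arbitrary $v \in \mathfrak{a}$, I would evaluate this at $p_\theta(v) \in \mathfrak{a}_\theta$ and chain it with Lemma \ref{qu} to get
$$(\psi \circ p_\theta)(v) = \psi(p_\theta(v)) \ge \psi_\Gamma^\theta(p_\theta(v)) \ge \psi_\Gamma(v),$$
which is exactly \eqref{pos}; at directions where $\psi_\Gamma^\theta(p_\theta(v)) = -\infty$ the inequality is vacuous.

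For the tangency claim I would argue as follows. Assuming $\nu(\La_\theta^{\mathsf{con}}) > 0$, the convergence half of Theorem \ref{ggg} rules out $\sum_{\gamma} e^{-\psi(\mu_\theta(\gamma))} < \infty$ (which would force $\nu(\La_\theta^{\mathsf{con}}) = 0$), so the Poincar\'e series must diverge, and hence $\delta_\psi = 1$ by the second part of Theorem \ref{tgg}. The crucial step is to reinterpret $\psi \circ p_\theta$ as a linear form on all of $\mathfrak{a}$ and feed it into the $\theta = \Pi$ machinery: because $\mu_\theta = p_\theta \circ \mu$, one has $(\psi \circ p_\theta)(\mu(\gamma)) = \psi(\mu_\theta(\gamma))$ for every $\gamma$, so $\psi \circ p_\theta$ is $(\Gamma, \Pi)$-proper and its Poincar\'e series agrees with $\cal P_\psi$, giving $\delta_{\psi \circ p_\theta} = \delta_\psi = 1$. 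Since any discrete subgroup is $\Pi$-discrete, I would then apply Lemma \ref{lem.tent} with $\theta = \Pi$ to conclude that $\delta_{\psi \circ p_\theta}(\psi \circ p_\theta) = \psi \circ p_\theta$ is $(\Gamma, \Pi)$-critical, i.e.\ tangent to $\psi_\Gamma$.

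I do not expect a serious obstacle, as the corollary is a recombination of established results; the only delicate point is the bookkeeping of two viewpoints on $\psi$, namely as a $(\Gamma, \theta)$-proper form evaluated against $\mu_\theta$ and as the $(\Gamma, \Pi)$-proper form $\psi \circ p_\theta$ evaluated against $\mu$, together with checking that the two associated critical exponents coincide so that Lemma \ref{lem.tent} is legitimately applicable in the $\theta = \Pi$ case.
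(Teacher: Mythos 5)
Your proposal is correct, and its first half coincides exactly with the paper's proof: the inequality \eqref{pos} is obtained by chaining $\psi \ge \psi_\Ga^\theta$ (Theorem \ref{tgg}) with $\psi_\Ga^\theta \circ p_\theta \ge \psi_\Ga$ (Lemma \ref{qu}).

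For the tangency claim, both arguments share the same skeleton but finish differently. Both begin with Theorem \ref{ggg}: since $\nu(\La_{\theta}^{\mathsf{con}}) > 0$, the convergence alternative is impossible, so $\sum_{\ga\in\Ga} e^{-(\psi\circ p_\theta)(\mu(\ga))} = \sum_{\ga\in\Ga} e^{-\psi(\mu_\theta(\ga))} = \infty$. The paper then concludes in one stroke: if $\psi\circ p_\theta$ were strictly larger than $\psi_\Ga$, Quint's convergence criterion \cite[Lemma 3.1.3]{Quint2002divergence} would force this series to converge, a contradiction, so equality must hold somewhere. You instead extract $\delta_\psi = 1$ from the ``moreover'' clause of Theorem \ref{tgg}, use the identity $(\psi\circ p_\theta)\circ\mu = \psi\circ\mu_\theta$ to transfer $(\Ga,\theta)$-properness and the critical exponent to the $\Pi$-level, and apply Lemma \ref{lem.tent} with $\theta = \Pi$ (legitimate, since every discrete subgroup is $\Pi$-discrete) to conclude that $\psi\circ p_\theta = \delta_{\psi\circ p_\theta}\,(\psi\circ p_\theta)$ is $(\Ga,\Pi)$-critical. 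This is valid, and the bookkeeping you flag as the delicate point is handled correctly. The difference is largely one of packaging: the proof of Lemma \ref{lem.tent} itself terminates with precisely the Quint-lemma contradiction that the paper deploys directly, so your route is a factored version of theirs. What your version buys is that the argument stays entirely inside lemmas already stated in the paper, with no direct appeal to Quint's Lemma 3.1.3; what it costs is reliance on the stronger ``moreover'' part of Theorem \ref{tgg}, which the paper's one-line contradiction does not need.
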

 
 \begin{proof}
The first statement follows from Theorem \ref{tgg} and Lemma \ref{qu}. For the second claim, if $\nu(\La_{\theta}^{\mathsf{con}}) > 0$, then we have $\sum_{\ga \in \Ga} e^{-(\psi \circ p_{\theta})(\mu(\ga))} = \infty$ by Theorem \ref{ggg}. If $\psi \circ p_{\theta} $ were strictly bigger than $\psi_{\Ga}$, then  by \cite[Lemma 3.1.3]{Quint2002divergence}
we would have $\sum_{\ga \in \Ga} e^{-(\psi \circ p_{\theta})(\mu(\ga))} < \infty$. Therefore $\psi \circ p_\theta$ must be tangent to $\psi_\Ga$.
\end{proof}

\section{Properly discontinuous actions of $\Ga$}
\label{pdt}
Recall $ \F_{\theta}^{(2)}=\{(\xi, \eta)\in \F_\theta\times \F_{\i(\theta)}: \text{$\xi$, $\eta$ are in general position}\}$ and
consider the action of $G$  on the space $\F_{\theta}^{(2)} \times \fa_{\theta}$ defined as 
\be\label{hopf} g . (\xi, \eta, u) = (g
\xi, g \eta, u + \beta_{\xi}^{\theta}(g^{-1}, e)) \ee 
for all $g\in G$ and $(\xi, \eta, u)\in \F_{\theta}^{(2)} \times \fa_{\theta}$. 
A discrete subgroup $\Ga<G$ preserves the subspace 
$\La_{\theta}^{(2)} \times \fa_{\theta}$
where $$\La_{\theta}^{(2)} = 
 (\La_{\theta} \times \La_{\i(\theta)})\cap \F_\theta^{(2)} .$$

 When $\theta=\Pi$,
the Hopf parametrization of $G/M$ gives a $G$-equivariant homeomorphism between $\F^{(2)}\times \fa $ and  $G/M$, and hence any discrete subgroup $\Ga<G$ acts properly discontinuously on $\F^{(2)} \times \fa$ and hence the quotient space 
$\Ga\ba \La_{\Pi}^{(2)} \times \fa$ is a locally compact Hausdorff space. For a general $\theta$, this is not the case.
The aim of this section is to establish the following two theorems on
properly discontinuous actions of $\theta$-transverse subgroups.

\begin{theorem}\label{tproper}
    If $\Ga$ is a non-elementary $\theta$-transverse subgroup, the $\Ga$-action on $\La_{\theta}^{(2)} \times \fa_{\theta}$ is properly discontinuous and hence $$\Omega_\theta:=\Ga \ba \La_{\theta}^{(2)} \times \fa_{\theta}$$ is a locally compact Hausdorff space.
\end{theorem}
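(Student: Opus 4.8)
The plan is to prove proper discontinuity directly and then read off the topological conclusions, the action being the one in \eqref{hopf0}. Suppose, for contradiction, that the $\Ga$-action on $\La_\theta^{(2)}\times\fa_\theta$ is not properly discontinuous. Then there are a compact set $Q\subset\La_\theta^{(2)}\times\fa_\theta$ and infinitely many distinct $\ga_i\in\Ga$ together with points $(\xi_i,\eta_i,u_i)\in Q$ whose images $(\xi_i',\eta_i',u_i'):=\ga_i\cdot(\xi_i,\eta_i,u_i)$ also lie in $Q$. Passing to a subsequence, I may assume $(\xi_i,\eta_i,u_i)\to(\xi,\eta,u)$ and $(\xi_i',\eta_i',u_i')\to(\xi',\eta',u')$ in $Q$, with $\ga_i\to\infty$. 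Writing the Cartan decomposition $\ga_i=k_ia_i\ell_i^{-1}$ and passing to a further subsequence, $k_i\to k_0$ and $\ell_i\to\ell_0$ in $K$. Since $\Ga$ is $\theta$-regular, $\min_{\alpha\in\theta}\alpha(\log a_i)\to\infty$, and since $\Ga$ is $\theta$-discrete (being $\theta$-transverse), $\mu_\theta|_\Ga$ is proper, so $\mu_\theta(\ga_i)\to\infty$.

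Everything hinges on the third coordinate. From $u_i+\beta_{\xi_i}^\theta(\ga_i^{-1},e)=u_i'$ together with $u_i\to u$ and $u_i'\to u'$, the cocycle $\beta_{\xi_i}^\theta(\ga_i^{-1},e)\to u'-u$ stays bounded. I want to contradict this with Lemma \ref{lem.buseandcartan} applied to $g=\ga_i^{-1}$, $h=e$: if $\xi_i$ lies in a shadow $O_R^\theta(\ga_i^{-1}o,o)$ of uniformly bounded radius $R$, then $\|\beta_{\xi_i}^\theta(\ga_i^{-1},e)-\mu_\theta(\ga_i)\|\le\kappa R$, which is impossible as $\mu_\theta(\ga_i)\to\infty$. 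The whole proof thus reduces to this \emph{shadow-membership}, which via \eqref{op} and the contraction $a_in a_i^{-1}\to e$ for $n\in N_\theta^+$ from the proof of Lemma \ref{lem.29inv} (writing $\xi_i\in\ell_0N_\theta^+\xi_\theta$ for large $i$) follows from the single transversality
\[ \xi \text{ is in general position with } p^-:=\lim_i\ga_i^{-1}o\in\La_{\i(\theta)}. \]

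The heart of the matter is to establish this transversality, and this is exactly where $\theta$-antipodality enters. Set $p^+:=\lim_i\ga_i o=k_0\xi_\theta\in\La_\theta$, and let $q^+:=\lim_i\ga_i o\in\F_{\i(\theta)}$ and $q^-:=\lim_i\ga_i^{-1}o\in\F_\theta$ be the ``cross'' limits in the opposite flag varieties; the convergence lemmas of Section \ref{sec.convergence} (Lemma \ref{lem.29inv} and its variants, applied to $\ga_i$ and to $\ga_i^{-1}$ on both $\F_\theta$ and $\F_{\i(\theta)}$) compute each of $\xi',\eta',\xi,\eta$ as soon as the moving point avoids the relevant repelling flag. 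I would split according to whether $\eta'$ is in general position with $p^+$. If it is, then $\eta_i=\ga_i^{-1}\eta_i'\to p^-$, so $\eta=p^-$, and $(\xi,\eta)\in\La_\theta^{(2)}$ gives $\xi\perp\eta=p^-$. If it is not, then $\theta$-antipodality forces $\eta'$ to be the unique non-transverse companion of $p^+$, namely $\eta'=q^+$; feeding the data $\xi'\perp\eta'=q^+$ into the convergence lemma yields $\xi=\lim_i\ga_i^{-1}\xi_i'=q^-$, and since $q^-$ and $p^-$ are the $\F_\theta$- and $\F_{\i(\theta)}$-limits of the single sequence $\ga_i^{-1}o$ they form a transverse pair (a $K$-translate of the standard pair $(\xi_\theta,[P_\theta^+])$), so again $\xi\perp p^-$. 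In both cases the required transversality holds, completing the contradiction.

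The main obstacle is precisely this last step: converting the failure of proper discontinuity into a rigid configuration of limit flags and then excluding it. The delicate technical points are the correct identification of the attracting/repelling flags of $\ga_i^{\pm1}$ in $\F_\theta$ and $\F_{\i(\theta)}$ (tracking the longest Weyl element $w_0$ appearing in the Cartan form of $\ga_i^{-1}$), and the use of $\theta$-antipodality in the form ``distinct limit points are in general position, hence a non-transverse pair of limit points is a companion pair.'' I would also need the uniform version of Lemma \ref{lem.29inv}, which is immediate from its proof because $\xi_i\to\xi$ eventually enters the open set $\ell_0N_\theta^+\xi_\theta$. Finally, once proper discontinuity is known, $\Omega_\theta=\Ga\ba(\La_\theta^{(2)}\times\fa_\theta)$ is locally compact and Hausdorff since $\La_\theta^{(2)}\times\fa_\theta$ is locally compact Hausdorff and $\Ga$ is discrete; the right $\fa_\theta$-translation action commutes with \eqref{hopf0} and descends to $\Omega_\theta$.
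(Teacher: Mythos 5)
Your reduction of proper discontinuity to unboundedness of the cocycle $\beta_{\xi_i}^\theta(\ga_i^{-1},e)$ is sound, and your Case 1 is correct; but Case 2 contains a genuine error. The assertion that $q^-$ and $p^-$ ``form a transverse pair (a $K$-translate of the standard pair $(\xi_\theta,[P_\theta^+])$)'' is false. Writing $\ga_i^{-1}=(\ell_i w_0)\,b_i\,(k_i w_0)^{-1}$ with $b_i=w_0a_i^{-1}w_0\in A^+$, one gets $q^-=\ell_0 w_0\xi_\theta$ and $p^-=\ell_0 w_0[P_{\i(\theta)}]$: these are the $\theta$- and $\i(\theta)$-faces of the \emph{same} limit chamber $\ell_0 w_0[P]$, i.e.\ a $K$-translate of the pair $(\xi_\theta,[P_{\i(\theta)}])$, not of $(\xi_\theta,w_0[P_{\i(\theta)}])$; since $w_0^2=e$, the transverse partner of $q^-$ is $\ell_0[P_{\i(\theta)}]$, not $p^-$. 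The pair $(\xi_\theta,[P_{\i(\theta)}])$ is not in general position: in rank one it is a pair of \emph{equal} boundary points, and in $\PSL_3(\br)$ with $\theta=\{\alpha_1\}$ it is a line together with a plane containing it. Consequently, in your Case 2 the conclusion $\xi=q^-$ makes $\xi$ precisely the unique point of $\La_\theta$ whose lift to $\La_{\theta\cup\i(\theta)}$ (Lemma \ref{bij}) coincides with that of $p^-$, so the transversality ``$\xi$ in general position with $p^-$'' --- the hinge on which you reduced everything --- fails, and with it the shadow membership $\xi_i\in O_R^\theta(\ga_i^{-1}o,o)$: Weyl chambers issued from $\ga_i^{-1}o$ toward $\xi_i\to q^-$ need not return near $o$ at all.

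Case 2 can be repaired inside your framework, but by a mirror argument rather than by that transversality. By the cocycle property, $\beta_{\xi_i}^\theta(\ga_i^{-1},e)=\beta_{\ga_i\xi_i}^\theta(e,\ga_i)=\beta_{\xi_i'}^\theta(e,\ga_i)$. In Case 2 you do know that $\xi'$ is in general position with $\eta'=q^+$, i.e.\ $\xi'\in k_0w_0N_\theta^+\xi_\theta$; the same contraction/Iwasawa computation you invoked, applied now to $\ga_i=k_ia_i\ell_i^{-1}$ instead of $\ga_i^{-1}$, yields $\xi_i'\in O_R^\theta(\ga_i o,o)$ for a uniform $R$ and all large $i$, whence Lemma \ref{lem.buseandcartan} with $(g,h)=(\ga_i,e)$ gives $\|\beta_{\xi_i'}^\theta(\ga_i,e)-\mu_\theta(\ga_i^{-1})\|\le\kappa R$, so $\beta_{\xi_i}^\theta(\ga_i^{-1},e)=-\beta_{\xi_i'}^\theta(\ga_i,e)$ is unbounded, the desired contradiction. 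This two-sided structure, with the cocycle forced to diverge in opposite directions in the two cases, is exactly how the paper organizes its argument (Lemma \ref{lem.convinpair} and Cases A and B of Proposition \ref{mainte}, the dichotomy there being reached via the convergence-group property rather than your direct Cartan-decomposition analysis); there is no single transversality statement covering both cases, which is what your write-up implicitly assumed.
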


For a $(\Ga,\theta)$-proper form $\varphi\in \fa_\theta^*$,
consider the $\Ga$-action 
\be\label{hopf3} \ga . (\xi, \eta, s) = (\ga
\xi, \ga \eta, s + \varphi(\beta_{\xi}^{\theta}(\ga^{-1}, e))) \ee 
for all $\ga\in \Ga$ and $(\xi, \eta, s)\in \La_{\theta}^{(2)} \times \br$.

\begin{theorem}\label{tproper2}
    Let $\Ga$ be a non-elementary $\theta$-transverse subgroup of $G$ and $\varphi\in \fa_\theta^*$ a $(\Ga, \theta)$-proper form.  Then the action $\Ga $ on $\La_{\theta}^{(2)} \times \br$ given by \eqref{hopf3} is properly discontinuous and hence $$\Omega_{\varphi}:=\Ga \ba \La_{\theta}^{(2)} \times \br$$ is a locally compact Hausdorff space. Moreover, 
    $\Omega_\varphi$ is compact if and only if $\Ga$ is $\theta$-Anosov.
\end{theorem}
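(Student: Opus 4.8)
The plan is to prove the equivalent statement that for every compact $L \subset \La_{\theta}^{(2)} \times \br$ the set $\{\ga \in \Ga : \ga L \cap L \neq \emptyset\}$ is finite; granting this, $\Omega_\varphi$ is automatically locally compact and Hausdorff, since $\La_{\theta}^{(2)} \times \br$ is. So suppose not: I would choose distinct $\ga_n \in \Ga$ and points $(\xi_n,\eta_n,s_n) \in L$ with $\ga_n.(\xi_n,\eta_n,s_n) \in L$. Passing to a subsequence, I may assume $(\xi_n,\eta_n) \to (\xi,\eta)$, $(\ga_n\xi_n,\ga_n\eta_n) \to (\xi^+,\eta^+)$, $s_n \to s$, and that $s_n + \varphi(\beta_{\xi_n}^{\theta}(\ga_n^{-1},e))$ converges; in particular $\varphi(\beta_{\xi_n}^{\theta}(\ga_n^{-1},e))$ stays bounded. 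Crucially, because $L$ is compact in $\La_{\theta}^{(2)} \times \br$, its projection to $\La_{\theta}^{(2)}$ is a compact, hence closed, subset of $\La_{\theta}^{(2)}$; therefore the limits $(\xi,\eta)$ and $(\xi^+,\eta^+)$ again lie in $\La_{\theta}^{(2)}$, i.e. remain in general position. This is exactly the point that will later rule out a fully degenerate configuration.

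Since the $\ga_n$ are distinct and $\Ga$ is $\theta$-transverse (hence $\theta$-regular and $\theta$-discrete), $\ga_n \to \infty$ and $\min_{\alpha \in \theta}\alpha(\mu(\ga_n)) \to \infty$; the same holds for $\ga_n^{-1}$ because $\theta$-regularity coincides with $\i(\theta)$-regularity. I would write the Cartan decomposition $\ga_n = k_n a_n \ell_n^{-1}$ and assume $k_n \to k$, $\ell_n \to \ell$ in $K$. By the contraction lemmas of Section \ref{sec.convergence} (Lemma \ref{lem.29inv}), $\ga_n$ attracts every point in general position with its repelling point — represented in $\F_{\i(\theta)}$ by $\ell P_\theta^+$ and in $\F_{\theta}$ by $\lim \ga_n^{-1}o$ — toward the attracting point $k\xi_\theta$. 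Let $\hat y^- \in \La_{\theta \cup \i(\theta)}$ be a lift of this repelling point, and $\hat\xi,\hat\eta$ lifts of $\xi,\eta$; by $\theta$-antipodality, any two distinct points among $\hat\xi,\hat\eta,\hat y^-$ are in general position.

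The analytic heart is a comparison, in the attracting regime, between the Busemann cocycle and the Cartan projection: if $g_n \to \infty$ $\theta$-regularly with Cartan decomposition $g_n = k_n' a_n' (\ell_n')^{-1}$ and $\zeta_n \to \zeta$ with $\zeta$ in general position with $\ell' P_\theta^+$, then $\|\beta_{\zeta_n}^{\theta}(g_n^{-1},e) - \mu_{\theta}(g_n)\|$ is bounded. Unlike the $\theta$-shadow lemma (Lemma \ref{lem.buseandcartan}), which only controls $\beta^{\theta}$ on a \emph{shrinking} shadow, this estimate must hold uniformly on compact subsets of the \emph{open} attracting basin; I would obtain it from the Iwasawa-cocycle version of the contraction computation underlying Lemma \ref{lem.29inv}, using that $a_n'$ conjugates $N_\theta^+$ to the identity in the relevant directions. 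Establishing this uniform estimate, and isolating precisely the general-position hypothesis it requires, is the step I expect to be the main technical obstacle.

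With this estimate in hand I would conclude by an antipodal dichotomy. If $\hat\xi \neq \hat y^-$, then $\xi$ is in general position with the repelling point, the estimate gives $\beta_{\xi_n}^{\theta}(\ga_n^{-1},e) = \mu_{\theta}(\ga_n) + O(1)$, and $(\Ga,\theta)$-properness of $\varphi$ forces $\varphi(\mu_{\theta}(\ga_n)) \to \infty$, so $\varphi(\beta_{\xi_n}^{\theta}(\ga_n^{-1},e)) \to +\infty$. If instead $\hat\xi = \hat y^-$, then $\hat\eta \neq \hat\xi$ (as $(\xi,\eta)$ is in general position, secured in the first paragraph), so $\eta$ is in general position with the repelling point of the $\F_{\i(\theta)}$-dynamics; hence $\ga_n\eta_n \to \eta^+$ is the attracting point, and general position of $(\xi^+,\eta^+)$ then places $\ga_n\xi_n \to \xi^+$ in general position with the repelling point of $\ga_n^{-1}$. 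Applying the same estimate to $\ga_n^{-1}$ together with the cocycle identity $\beta_{\xi_n}^{\theta}(\ga_n^{-1},e) = -\beta_{\ga_n\xi_n}^{\theta}(\ga_n,e)$ yields $\beta_{\xi_n}^{\theta}(\ga_n^{-1},e) = -\mu_{\theta}(\ga_n^{-1}) + O(1)$, whence $\varphi(\beta_{\xi_n}^{\theta}(\ga_n^{-1},e)) \to -\infty$ since $\varphi(\mu_{\theta}(\ga_n^{-1})) \to \infty$. In either case this contradicts the boundedness established at the outset, proving finiteness and hence proper discontinuity.
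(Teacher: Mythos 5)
Your proposal is correct, but it takes a genuinely different route from the paper's. The paper first reduces proper discontinuity to a statement about a \emph{fixed} pair (Proposition \ref{mainte}): if $\ga_i(\xi,\eta)$ converges in $\La_\theta^{(2)}$, then $\varphi(\beta_\xi^\theta(\ga_i^{-1},e))$ is unbounded. The heart of that proof is Lemma \ref{lem.convinpair}, which invokes the convergence group action of $\Ga$ on $\La_{\theta\cup\i(\theta)}$ (Proposition \ref{prop.transisconv}, imported from Kapovich--Leeb--Porti) together with Lemma \ref{lem.klpconical} to conclude that $\ga_i^{-1}$ converges \emph{conically} to $\xi$ or to $\eta$; the two cases are then dispatched by the shadow estimate (Lemma \ref{shsh}) and by an explicit Busemann/Iwasawa computation, respectively. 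You instead (i) keep the points varying, which is what failure of proper discontinuity actually produces --- this is more careful than the paper's reduction, and your observation that the projection of $L$ is compact \emph{inside} $\La_\theta^{(2)}$, so that $(\xi,\eta)$ and $(\xi^+,\eta^+)$ remain in general position, is precisely the point needed; (ii) replace the convergence-group dichotomy by the elementary dichotomy ``$\hat\xi$ equals the lifted repelling point $\hat y^-$ or not,'' using only $\theta$-antipodality to upgrade ``distinct in $\La_{\theta\cup\i(\theta)}$'' to ``in general position''; and (iii) replace the paper's two separate estimates by a single uniform Busemann--Cartan comparison away from the repelling point, applied to $\ga_n$ in one case and to $\ga_n^{-1}$ (via the cocycle identity, whose signs you state correctly) in the other. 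Your unproven key estimate is true, with the right sign, and it does follow from the computation you sketch: writing $(\ell_n')^{-1}\zeta_n=u_n\xi_\theta$ with $u_n\in N_\theta^+$ bounded (openness of general position gives the required uniformity over $\zeta_n\to\zeta$), one contracts $u_n$ by $a_n'$ and is left with $\beta_{\xi_\theta}^\theta((a_n')^{-1},e)=p_\theta(\log a_n')=\mu_\theta(g_n)$ up to bounded error; this is essentially the same algebra as the paper's Case B, repackaged as a reusable lemma. What each approach buys: the paper's route produces conical convergence as a byproduct (Lemma \ref{lem.convinpair} is reused in the ergodicity arguments of Section \ref{ec}), whereas yours is more self-contained --- it avoids the convergence-group theorem entirely and repairs the varying-points looseness in the paper's reduction --- at the cost that the uniform estimate still has to be carried out in full, by the same manipulations the paper performs inline.
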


\begin{Def}\label{conver}
Let $Z$ be a compact metrizable space with at least $3$
points.
 An action of a countable group $\Ga$ on $Z$ by homeomorphisms
is called a {\it convergence group action} if for any sequence of distinct elements $\ga_n \in \Ga$, there exist  a subsequence $\ga_{n_k}$ and $a, b \in Z$ such that as $k \to \infty$, 
$\ga_{n_k}(z) $ converges to $ a $ for all $z\in Z-\{b\}$, uniformly on compact subsets.
\end{Def}

We will use the following property of a $\theta$-transverse subgroup:
\begin{prop} \cite[Theorem 4.16]{Kapovich2017anosov} \label{prop.transisconv}
For a $\theta$-transverse subgroup $\Ga$,
the action of  $\Ga$ on $\La_{\theta}$ is a convergence group action.
\end{prop}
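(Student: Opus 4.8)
The plan is to verify Definition \ref{conver} directly, extracting attracting and repelling points from the Cartan decomposition and invoking $\theta$-antipodality only at the single point where it is needed: to force the repelling set to be a singleton. First I note that $\La_\theta$ is a closed subset of the compact manifold $\F_\theta$, hence compact and metrizable, and that Zariski density makes $\La_\theta$ infinite (being the infinite $\Ga$-orbit closure / unique minimal set of Lemma \ref{lq}), so it has at least three points as required.

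Given a sequence of distinct elements, I pass to a subsequence and write the Cartan decomposition $\ga_n = k_n a_n \ell_n^{-1}$ with $k_n\to k_0$, $\ell_n\to\ell_0$ in $K$ and $a_n\in A^+$; since the $\ga_n$ are distinct and $\Ga$ is $\theta$-regular, $\min_{\alpha\in\theta}\alpha(\log a_n)\to\infty$. Set $a := k_0\xi_\theta\in\La_\theta$, which is $\lim\ga_n$ in the sense of Definition \ref{fc}. For the repelling data, I rewrite $\ga_n^{-1} = (\ell_n w_0)\exp(\i(\log a_n))(k_n w_0)^{-1}$, a Cartan decomposition because $\i$ preserves $\fa^+$; by \eqref{mu} and $\i(\theta)$-regularity (equivalent to $\theta$-regularity) the sequence $\ga_n^{-1}$ converges in $\F_{\i(\theta)}$ to $\eta_0 := \ell_0 w_0 P_{\i(\theta)}\in\La_{\i(\theta)}$, and, since $\theta' := \theta\cup\i(\theta)$ is symmetric and $\Ga$ is $\theta'$-regular, also in $\F_{\theta'}$ to a lift $\tilde\eta_0 := \ell_0 w_0 P_{\theta'}\in\La_{\theta'}$ projecting to $\eta_0$. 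Applying Lemma \ref{lem.29inv} with $h_i = \ell_i$, for every $\xi$ in general position with $\eta_0$ — equivalently $\xi\in\ell_0 N_\theta^+\xi_\theta$ by \eqref{op} — we get $\ga_n\xi\to a$.

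The key step, and the only use of antipodality, is to show that the exceptional set $E := \{\xi\in\La_\theta : (\xi,\eta_0)\text{ is not in general position}\}$ has at most one point. Since $\La_\theta = \pi(\La_{\theta'})$ for the projection $\pi\colon\F_{\theta'}\to\F_\theta$ (Lemma \ref{lq} for $\theta'$, composed with $\F_{\theta'}\to\F_\theta$), each $\xi\in E$ lifts to some $\tilde\xi\in\La_{\theta'}$. I would then use the $G$-equivariant compatibility of general position with flag projections: writing a general-position pair in $\F_{\theta'}\times\F_{\i(\theta')}$ as $(gP_{\theta'}, gw_0P_{\theta'})$ and projecting by $\pi$ and by $\pi'\colon\F_{\theta'}\to\F_{\i(\theta)}$ shows that if $\tilde\xi,\tilde\eta_0$ are in general position then $(\pi\tilde\xi,\pi'\tilde\eta_0)=(\xi,\eta_0)$ is in general position. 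Contrapositively, $(\xi,\eta_0)$ failing general position forces $\tilde\xi=\tilde\eta_0$; but $\theta$-antipodality says any two distinct points of $\La_{\theta\cup\i(\theta)}=\La_{\theta'}$ are in general position, so $\tilde\xi=\tilde\eta_0$ is the only option. Hence every $\xi\in E$ equals $b := \pi(\tilde\eta_0)$, i.e. $E\subseteq\{b\}$.

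Finally I upgrade pointwise convergence to locally uniform convergence on $\La_\theta-\{b\}$. For a compact $C\subset\La_\theta-\{b\}$, the sets $\ell_n^{-1}C$ lie, for all large $n$, in a fixed compact subset of $N_\theta^+\xi_\theta$ because $\ell_n\to\ell_0$ and $C$ avoids $b$; writing $\ell_n^{-1}\xi = n_{\xi,n}\xi_\theta$ with $n_{\xi,n}$ in a fixed compact subset of $N_\theta^+$, the contraction $a_n n_{\xi,n} a_n^{-1}\to e$ (uniform, since $\min_{\alpha\in\theta}\alpha(\log a_n)\to\infty$, exactly as in the proof of Lemma \ref{lem.29inv}) yields $\ga_n\xi = k_n(a_n n_{\xi,n}a_n^{-1})\xi_\theta\to k_0\xi_\theta = a$ uniformly in $\xi\in C$. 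This verifies Definition \ref{conver}. The main obstacle is precisely the singleton statement for $E$: it forces the passage to the symmetric subset $\theta\cup\i(\theta)$ together with the compatibility of the general-position relation with the flag projections, which is what converts the antipodality hypothesis — phrased on $\La_{\theta\cup\i(\theta)}$ — into genuine north–south control of the dynamics on $\La_\theta$.
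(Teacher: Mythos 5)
Your proof is correct, and it is worth noting that the paper itself offers no argument for this proposition: it is quoted from Kapovich--Leeb--Porti \cite[Theorem 4.16]{Kapovich2017anosov}, so your write-up is a genuinely self-contained alternative assembled from the paper's own Section \ref{sec.convergence} toolkit. The three pillars all check out. First, the attracting point $a=k_0\xi_\theta$ and the pointwise convergence come from the Cartan decomposition together with Lemma \ref{lem.29inv}, and your Cartan decomposition of the inverses, $\ga_n^{-1}=(\ell_n w_0)\exp(\i(\log a_n))(k_n w_0)^{-1}$, is valid because $w_0^2=e$ and $\i$ preserves $\fa^+$, with $\theta$-regularity equal to $\theta\cup\i(\theta)$-regularity via \eqref{mu}; this correctly places $\tilde\eta_0=\ell_0 w_0 P_{\theta\cup\i(\theta)}$ in $\La_{\theta\cup\i(\theta)}$. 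Second, your reduction of the exceptional set to a singleton is exactly the right way to exploit the antipodality hypothesis, which is phrased on $\La_{\theta\cup\i(\theta)}$ rather than on $\La_\theta\times\La_{\i(\theta)}$: the compatibility of general position with the flag projections (write a general-position pair as $(gP_{\theta\cup\i(\theta)}, gw_0P_{\theta\cup\i(\theta)})$ and project to $(gP_\theta, gw_0P_{\i(\theta)})$) converts failure of general position downstairs into coincidence of the lifts upstairs, and lifts exist since $\La_\theta=\pi(\La_{\theta\cup\i(\theta)})$ by Lemma \ref{lq} (or Lemma \ref{bij}). Third, the locally uniform upgrade is sound: a compact $C\subset\La_\theta-\{b\}$ lies in $\ell_0 N_\theta^+\xi_\theta$ by the singleton claim, hence $\ell_n^{-1}C$ eventually lies in a fixed compact subset of $N_\theta^+\xi_\theta$, and $a_n n a_n^{-1}\to e$ uniformly on compact subsets of $N_\theta^+$ because every $\alpha\in\Phi^+\setminus[\Pi-\theta]$ has a positive coefficient on some root of $\theta$. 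Two further points in your favor: you avoid circularity (you never invoke Lemma \ref{lem.convinpair}, whose proof in the paper depends on this proposition), and the only cost relative to the citation is that the KLP theorem comes packaged with extra facts the paper later uses (e.g.\ the identification of $\La_\theta$ with the limit set of the convergence action, cited in Remark \ref{rmk.nonelt}), which your argument does not reprove.
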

It is also proved in \cite{Kapovich2017anosov} that
$\La_{\theta}$ is same as the limit set as the convergence group action; this also follows from Lemma \ref{lem.29inv}.
In particular, if $\Ga$ is non-elementary, then the $\Ga$-action on $\La_{\theta}$ is minimal.

The following observation is useful to transfer statements from $\theta$ symmetric to general $\theta$.

\begin{lem}\label{bij}
 Suppose that $\Ga$ is $\theta$-antipodal.
For any $\theta_1\subset \theta_2\subset \theta\cup \i(\theta)$,
the projection map
$p: \La_{\t_2}\to \La_{\theta_1}$ given by $gP_{\t_2}\to gP_{\t_1}$
is a $\Ga$-equivariant homeomorphism. In particular, 
for any $(\Ga, \psi)$-conformal measure $\nu$ supported on $\La_{\theta_1}$ for $\psi\in \fa_{\theta_1}^*\subset \fa_{\theta_2}^*$, the pull back $p^*\nu$ is a $(\Ga, \psi)$-conformal measure on $\La_{\theta_2}$.
\end{lem}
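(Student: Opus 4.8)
The plan is to factor everything through the symmetric subset $\Theta := \theta\cup\i(\theta)$, which contains $\theta_1\subset\theta_2$ and satisfies $\i(\Theta)=\Theta$. Writing $q_1:\La_\Theta\to\La_{\theta_1}$ and $q_2:\La_\Theta\to\La_{\theta_2}$ for the restrictions of the canonical flag projections $gP_\Theta\mapsto gP_{\theta_1}$ and $gP_\Theta\mapsto gP_{\theta_2}$, both are continuous and $\Ga$-equivariant and satisfy $p\circ q_2=q_1$. I would therefore reduce the whole statement to the claim that for every non-empty $\theta'\subset\Theta$ the flag projection $q_{\theta'}:\La_\Theta\to\La_{\theta'}$ is a $\Ga$-equivariant homeomorphism: applying this to $\theta'=\theta_1$ and $\theta'=\theta_2$ shows that $p=q_1\circ q_2^{-1}$ is a composition of homeomorphisms, hence a homeomorphism, and $\Ga$-equivariance is immediate from $p(\ga gP_{\theta_2})=\ga gP_{\theta_1}$. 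The advantage of passing through the symmetric $\Theta$ is that antipodality is only available on $\La_\Theta$, while $\theta_2$ itself need not be symmetric.

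Continuity and equivariance of $q_{\theta'}$ are clear, so only bijectivity is at issue. For surjectivity I would invoke Lemma~\ref{lq}(2): from $\La_{\theta'}=\pi_{\theta'}(\La)$, $\La_\Theta=\pi_\Theta(\La)$ and the factorization $\pi_{\theta'}=q_{\theta'}\circ\pi_\Theta$ of the projections $\F\to\F_\Theta\to\F_{\theta'}$, one gets $\La_{\theta'}=q_{\theta'}(\La_\Theta)$. (If one prefers to use only $\theta$-regularity: given $\xi\in\La_{\theta'}$ realized by $\ga_j\in\Ga$, $\theta$-regularity forces $\min_{\alpha\in\Theta}\alpha(\mu(\ga_j))\to\infty$, so after passing to a subsequence $\kappa_{\ga_j}\xi_\Theta$ converges to some $\tilde\xi\in\La_\Theta$ projecting to $\xi$.) Injectivity is the heart of the argument and the only place the antipodal hypothesis is used. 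Given distinct $\tilde\xi\neq\tilde\eta$ in $\La_\Theta$, $\theta$-antipodality puts them in general position, so $\tilde\xi=gP_\Theta$ and $\tilde\eta=gw_0P_\Theta$ for some $g\in G$; then $q_{\theta'}(\tilde\xi)=gP_{\theta'}$ and $q_{\theta'}(\tilde\eta)=gw_0P_{\theta'}$ are distinct precisely because $w_0\notin P_{\theta'}$. For the latter I would use the elementary fact that the Weyl elements contained in $P_{\theta'}$ form the subgroup generated by $\{s_\alpha:\alpha\in\Pi-\theta'\}$, which contains the longest element $w_0$ only when $\Pi-\theta'=\Pi$, i.e. $\theta'=\emptyset$; this is where the non-emptiness of $\theta_1$ (hence of each $\theta'$) is needed. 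A continuous bijection from the compact space $\La_\Theta$ onto the Hausdorff space $\La_{\theta'}$ is automatically a homeomorphism, finishing the reduction.

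I expect the injectivity step to be the main obstacle in the sense that it is the substantive use of the hypotheses: antipodality on $\La_\Theta$ together with the escape of $w_0$ from proper standard parabolics. Everything else (continuity, equivariance, surjectivity, and the compact-to-Hausdorff upgrade) is routine.

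For the final assertion I would set $p^*\nu:=(p^{-1})_*\nu$, a Borel probability measure on $\F_{\theta_2}$ supported on $\La_{\theta_2}$ since $p:\La_{\theta_2}\to\La_{\theta_1}$ is a homeomorphism and $\nu$ is supported on $\La_{\theta_1}$. Because $p$ is $\Ga$-equivariant one has $\ga_*(p^*\nu)=p^*(\ga_*\nu)$, so for $\xi\in\La_{\theta_2}$, $\tfrac{d\ga_*(p^*\nu)}{d(p^*\nu)}(\xi)=\tfrac{d\ga_*\nu}{d\nu}(p(\xi))=e^{\psi(\beta^{\theta_1}_{p(\xi)}(e,\ga))}$. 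Choosing a lift $\xi_0\in\F$ of $\xi$, which also lifts $p(\xi)$, and recalling $\beta^{\theta_i}_{\,\cdot}(e,\ga)=p_{\theta_i}(\beta_{\xi_0}(e,\ga))$, the identities $\psi=\psi\circ p_{\theta_1}=\psi\circ p_{\theta_2}$ on $\fa$—valid since $\psi\in\fa_{\theta_1}^*\subset\fa_{\theta_2}^*$—give $\psi(\beta^{\theta_1}_{p(\xi)}(e,\ga))=\psi(\beta_{\xi_0}(e,\ga))=\psi(\beta^{\theta_2}_\xi(e,\ga))$. Hence $p^*\nu$ satisfies the $(\Ga,\psi)$-conformality relation on $\F_{\theta_2}$ and is supported on $\La_{\theta_2}$, as claimed.
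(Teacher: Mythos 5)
Your proof is correct and follows essentially the same route as the paper's: reduce to the symmetric case $\theta_2=\theta\cup\i(\theta)$, then use antipodality to write two distinct points of $\La_{\theta\cup\i(\theta)}$ as $gP_{\theta\cup\i(\theta)}$ and $gw_0P_{\theta\cup\i(\theta)}$, whose projections $gP_{\theta_1}$ and $gw_0P_{\theta_1}$ are distinct because $w_0\notin P_{\theta_1}$. The paper leaves implicit the points you spell out (surjectivity via Lemma \ref{lq}(2), the compact-to-Hausdorff upgrade, the Weyl-group reason that $w_0\notin P_{\theta'}$ for non-empty $\theta'$, and the conformal-measure verification), so your write-up is just a more detailed version of the same argument.
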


\begin{proof}
It suffices to show that $p$ is injective when $\theta_2=\theta\cup \i(\theta)$. Suppose that $\xi\ne  \eta\in \La_{\t \cup \i(\t)}$. By the $\theta$-antipodality of $\Ga$,
 $\xi=g P_{\t\cup \i(\t)}$ and $\eta= g w_0 P_{\t\cup \i(\t)}$ for some $g\in G$. Then $p(\xi)=g P_{\t_1}$ and  $p(\eta)=
 g w_0 P_{\t_1}$, and hence $p(\xi)\ne p(\eta)$, showing that $p$ is injective.
\end{proof}

The following observation will be useful:

\begin{lemma} \label{lem.convseqconvseq} \label{cor.convinpair}
    Let $\Ga$ be a non-elementary $\theta$-transverse subgroup and $\ga_i \in \Ga$ an infinite sequence. Let $(\xi_i, \eta_i) \in \La_{\theta}^{(2)}$ be a convergent sequence in $\La_{\theta}^{(2)}$. If the sequence $\ga_i (\xi_i, \eta_i)$ converges in $\La_{\theta}^{(2)}$, then there exists $R > 0$ so that either $$\begin{aligned} 
    \xi_i &\in O_R^{\theta}(o, \ga_i^{-1} o) & \text{for all }i \ge 1; &\text{ or} \\
    \eta_i & \in O_R^{\i(\theta)}(o, \ga_i^{-1} o) &\text{for all } i \ge 1.&
    \end{aligned}$$

      In particular,
      if the sequence $\gamma_i (\xi, \eta) \in \La_{\theta}^{(2)} $ converges in
        $\La_{\theta}^{(2)}$ for some  $(\xi, \eta) \in \La_{\theta}^{(2)}$,
       then  $\ga_i^{-1}$ converges conically either to $\xi$ or $\eta$.
\end{lemma}

\begin{proof}
    Set  $(\xi, \eta) = \lim_i (\xi_i, \eta_i) \in \La_{\theta}^{(2)}$ and $(\xi_0, \eta_0) = \lim_i \ga_i (\xi_i, \eta_i) \in \La_{\theta}^{(2)}$.
Since the projections $\La_{\theta \cup \i(\theta)} \to \La_{\theta}$ and $\La_{\theta \cup \i(\theta)} \to \La_{\i(\theta)}$ are $\Ga$-equivariant homeomorphisms by Lemma \ref{bij}, we also let $\xi', \xi_0', \xi_i' \in \La_{\theta \cup \i(\theta)}$ be the preimages of $\xi$, $\xi_0$, and $\xi_i$ for all $i \ge 1$ under the projection $\La_{\theta \cup \i(\theta)} \to \La_{\theta}$ respectively, and similarly $\eta', \eta_0', \eta_i' \in \La_{\theta \cup \i(\theta)}$ the preimages of $\eta$, $\eta_0$, and $\eta_i$. Note that $\xi' \neq \eta'$, $\xi_0' \neq \eta_0'$, and $\xi_i' \neq \eta_i'$ for all $i \ge 1$ and $\xi_i' \to \xi'$, $\eta_i' \to \eta'$, $\ga_i \xi_i' \to \xi_0'$, and $\ga_i\eta_i'\to \eta_0'$ as $i\to \infty$. 
    
    Since the action of $\Ga$ on $\La_{\theta \cup \i(\theta)}$ is a convergence group action by Proposition \ref{prop.transisconv}, there exist $a, b \in \La_{\theta \cup \i(\theta)}$ such that
    \be\label{exp0}\ga_i|_{\La_{\theta \cup \i(\theta)} - \{b\}} \to a\ee  uniformly on compact subsets, after passing to a subsequence. That is, for any compact subsets $C_a \subset \La_{\theta \cup \i(\theta)} - \{a\}$ and $C_b \subset \La_{\theta \cup \i(\theta)} - \{b\}$, 
    $$\# \{ \ga_i : \ga_i C_b \cap C_a \neq \emptyset\} < \infty,$$ 
   or equivalently $\# \{ \ga_i^{-1} : \ga_i^{-1} C_a \cap C_b \neq \emptyset\} < \infty$. Therefore we have, as $i\to \infty$,  \be\label{exp}\ga_i^{-1} |_{\La_{\theta \cup \i(\theta)} - \{a\}} \to b\ee  uniformly on compact subsets.

 We claim that \be\label{ab} (a, b) = (\eta_0', \xi')\quad\text{or}\quad (a, b) = (\xi_0', \eta').\ee 
 Suppose $\xi'\ne b$. Excluding finitely many elements from $\{\xi_i' : i \ge 1 \}$, we may assume that $\{\xi_i' : i \ge 1\} \cup \{\xi'\}$ is a compact subset of $\La_{\theta \cup \i(\theta)} - \{b\}$. Hence \eqref{exp0} implies that $\xi_0' = \lim_i \ga_i \xi_i' = a$.
 If $\eta'$ were not equal to $ b$, then we may also assume that $\{\eta_i' : i \ge 1\} \cup \{\eta'\}$ is a compact subset of $\La_{\theta \cup \i(\theta)} - \{b\}$ and hence \eqref{exp0} implies $\eta_0' = \lim_i \ga_i \eta_i' =a$. Since $\xi_0'\ne \eta_0'$, this is a contradiction. This implies $\eta'=b$.
 Now suppose that $\xi'=b$. 
 Since $\eta' \neq \xi' = b$, we have $\eta_0' = \lim_i \ga_i \eta_i' = a$ by the above argument.
 This proves the claim.

 Now \eqref{exp} and \eqref{ab} imply that
    \be \label{eqn.propdiscconvergence}
    \ga_i^{-1}|_{\La_{\theta \cup \i(\theta)} - \{\eta_0'\}} \to \xi' \quad \text{or} \quad \ga_i^{-1}|_{\La_{\theta \cup \i(\theta)} - \{\xi_0'\}} \to \eta'
    \ee uniformly on compact subsets.

    Since $\Ga$ is $\theta \cup \i(\theta)$-regular, we may assume that by passing to a subsequence, the sequence $\ga_i^{-1}$ converges to some point, say, $z=\lim_i \ga_i^{-1}$, in $ \La_{\theta \cup \i(\theta)}$ in the sense of Definition \ref{fc}. We  claim that $z$ is either $\xi'$ or $\eta'$.
    Write $\ga_i^{-1} = k_i b_i \ell_i^{-1} \in KA^+K$ using the Cartan decomposition. By passing to a subsequence, we may assume that 
    $k_i\to k_0\in K$ and $\ell_i\to \ell_0\in K$. Choose $x\in \La_{\theta\cup \i(\theta)} - \{\eta_0', \xi_0', \}$ in general position with $\ell_0 w_0 P_{\theta\cup \i(\theta)} = \lim_i \ga_i$, which is possible by  the $\theta$-antipodality and non-elementary hypothesis of $\Ga$. Since $\Ga$ is $\theta \cup \i(\theta)$-regular, by Lemma \ref{lem.cptcartan},
    we have $\min_{\alpha\in \theta \cup \i(\theta)}\alpha(\log b_i)\to \infty$. Hence, by Lemma \ref{lem.29inv}, we have $$\ga_i^{-1} x \to z=k_0P_{\theta\cup \i(\theta)} .$$ Therefore, it follows from \eqref{eqn.propdiscconvergence} that $z = \xi' \text{ or } \eta'$.
    
    If $\lim_i \ga_i^{-1} = \xi'$, then by Lemma \ref{lem.eventualshadow}, there exists $R_1 > 0$ such that $\xi_i' \in O_{R_1}^{\theta \cup \i(\theta)} (o, \ga_i^{-1} o)$ for all $i \ge 1$. Otherwise, if $\lim_i \ga_i^{-1} = \eta'$, then we apply Lemma \ref{lem.eventualshadow} to the sequence $(\eta_i', \xi_i')$ to obtain $R_2 > 0$ such that $\eta_i' \in O_{R_2}^{\theta \cup \i(\theta)} (o, \ga_i^{-1} o)$ for all $i \ge 1$. Setting $R := \max(R_1, R_2)$ and taking the projections $\La_{\theta \cup \i(\theta)} \to \La_{\theta}$ and $\La_{\theta \cup \i(\theta)} \to \La_{\i(\theta)}$, we have either $$\begin{aligned} 
    \xi_i &\in O_R^{\theta}(o, \ga_i^{-1} o) & \text{for all }i \ge 1; &\text{ or} \\
    \eta_i & \in O_R^{\i(\theta)}(o, \ga_i^{-1} o) &\text{for all } i \ge 1,&
    \end{aligned}$$ completing the proof.
\end{proof}

\begin{prop}\label{mainte}

        Let $\Ga$ be a non-elementary $\theta$-transverse subgroup and $\varphi \in \fa_{\theta}^*$ a $(\Ga, \theta)$-proper form.
        Let $\ga_i \in \Ga$ be an infinite sequence and $(\xi_i, \eta_i) \in \La_{\theta}^{(2)}$ a convergent sequence in $\La_{\theta}^{(2)}$.
      If the sequence $\gamma_i (\xi_i, \eta_i) \in \La_{\theta}^{(2)} $ converges in
        $\La_{\theta}^{(2)}$,
       then the sequence
   $\varphi (\beta_{\xi_i}^{\theta}(\ga_i^{-1}, e))$  is  unbounded. In particular, $\beta_{\xi_i}^{\theta}(\ga_i^{-1}, e)$ is unbounded.

\end{prop}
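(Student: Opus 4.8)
The plan is to invoke Lemma \ref{lem.convinpair} to split into the two conical regimes and, in each, to compare $\beta_{\xi}^{\theta}(\ga_i^{-1}, e)$ with a Cartan projection of $\ga_i$ or $\ga_i^{-1}$, after which the $(\Ga, \theta)$-properness of $\varphi$ forces the sequence to escape to $\pm\infty$. Since it suffices to exhibit a subsequence along which $\varphi(\beta_{\xi}^{\theta}(\ga_i^{-1}, e))$ tends to $+\infty$ or $-\infty$, I may freely pass to a subsequence; by Lemma \ref{lem.convinpair} I then assume $\ga_i^{-1}$ converges conically either to $\xi$ or to $\eta$. Throughout I use that the $\ga_i$ are distinct, so $\ga_i \to \infty$ and $\ga_i^{-1} \to \infty$ in $\Ga$, and that $\varphi(\mu_{\theta}(g)) \to \infty$ as $g \to \infty$ since $\varphi$ is $(\Ga, \theta)$-proper.

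\textbf{Case 1: $\ga_i^{-1} \to \xi$ conically.} Here there is $R > 0$ with $\xi \in O_R^{\theta}(o, \ga_i^{-1} o)$ for all $i$. Applying the $\theta$-shadow estimate of Lemma \ref{shsh} to $g = \ga_i^{-1}$ and the point $\xi$, I get $\| \beta_{\xi}^{\theta}(e, \ga_i^{-1}) - \mu_{\theta}(\ga_i^{-1}) \| \le \kappa R$. Since $\beta_{\xi}^{\theta}(\ga_i^{-1}, e) = - \beta_{\xi}^{\theta}(e, \ga_i^{-1})$ by antisymmetry of the Busemann cocycle, this yields $\varphi(\beta_{\xi}^{\theta}(\ga_i^{-1}, e)) = - \varphi(\mu_{\theta}(\ga_i^{-1})) + O(1) \to -\infty$.

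\textbf{Case 2: $\ga_i^{-1} \to \eta$ conically.} This is the main case, since the shadow lemma does not apply directly: one checks that $\xi$ need not lie in any bounded shadow viewed from $\ga_i^{-1} o$. Instead I would use the cocycle identity for the Iwasawa projection $\sigma$ together with the vanishing of $\sigma$ on $K$. Writing the Cartan decomposition $\ga_i = c_i b_i e_i^{-1} \in K A^+ K$ with $\log b_i = \mu(\ga_i)$, and lifting $\xi$ to $\xi_0 \in \pi_\theta^{-1}(\xi)$, the cocycle relation collapses to $\beta_{\xi}^{\theta}(\ga_i^{-1}, e) = p_{\theta}(\sigma(\ga_i, \xi_0)) = p_{\theta}(\sigma(b_i, e_i^{-1}\xi_0))$. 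Since $\xi$ is in general position with $\eta$ and $\eta$ is the conical limit of $\ga_i^{-1}$, the points $e_i^{-1}\xi_0$ converge into the open big cell and stay in a compact subset of it, which I would verify using Lemma \ref{bij} and \eqref{op}. As $\min_{\alpha \in \theta} \alpha(\log b_i) \to \infty$ by $\theta$-regularity, the contraction of the big cell under the dominant sequence $b_i$ — exactly of the type already exploited in the proof of Lemma \ref{lem.29inv} — gives $\| \sigma(b_i, e_i^{-1}\xi_0) - \mu(\ga_i) \| = O(1)$. Hence $\beta_{\xi}^{\theta}(\ga_i^{-1}, e) = \mu_{\theta}(\ga_i) + O(1)$ and $\varphi(\beta_{\xi}^{\theta}(\ga_i^{-1}, e)) = \varphi(\mu_{\theta}(\ga_i)) + O(1) \to +\infty$.

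In both cases $\varphi(\beta_{\xi}^{\theta}(\ga_i^{-1}, e))$ is unbounded; and since $\varphi$ is a bounded linear form, $|\varphi(v)| \le \|\varphi\| \, \|v\|$ forces $\beta_{\xi}^{\theta}(\ga_i^{-1}, e)$ itself to be unbounded, giving the final assertion. I expect the only real obstacle to be Case 2, specifically the contraction estimate: the delicate point is to turn general position of $\xi$ with the conical limit $\eta$ into a \emph{uniform} containment of $e_i^{-1}\xi_0$ in a compact part of the big cell, so that the $O(1)$ error genuinely stays bounded as $i \to \infty$, with $\theta \cup \i(\theta)$-regularity supplying the contraction that kills the unipotent part.
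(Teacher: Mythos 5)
Your argument is correct, and it is worth separating the two cases. Case 1 (conical convergence of $\ga_i^{-1}$ to $\xi$) is exactly the paper's argument: Lemma \ref{shsh}, antisymmetry of the Busemann cocycle, and $(\Ga,\theta)$-properness. In the main case, however, you take a genuinely different route. The paper exploits conicality of the convergence $\ga_i^{-1}\to\eta$ directly: writing $\xi=gP_\theta$, $\eta=gw_0P_{\i(\theta)}$, it extracts a bounded sequence $g_i=\ga_i g m_i w_0 a_i$ from the conical data and splits $\beta_{\xi}^{\theta}(\ga_i^{-1},e)$ by the cocycle identity into bounded terms plus $\beta_{\xi_\theta}^{\theta}(w_0a_i,e)=p_\theta(\i(\log a_i))$, which is then compared with $\mu_\theta(\ga_i)$ via Lemma \ref{lem.cptcartan}. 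You instead use the Cartan decomposition $\ga_i=c_ib_ie_i^{-1}$ and the Iwasawa cocycle, reducing everything to the classical contraction estimate on the big cell. Your key step, the uniform containment of $e_i^{-1}\xi$ in a compact part of $N_\theta^+\xi_\theta$, is exactly right and is provable with the paper's tools: after passing to a subsequence with $e_i\to e_\infty$, the Cartan decomposition of $\ga_i^{-1}$ gives $\lim\ga_i^{-1}=e_\infty w_0P_{\i(\theta)}$ in the sense of Definition \ref{fc}; uniqueness of such limits (provable from Lemma \ref{lem.29inv} by testing against a point in the intersection of two big cells, as in the proof of Lemma \ref{bdd}) forces $\eta=e_\infty w_0P_{\i(\theta)}$, and then general position of $(\xi,\eta)$ together with \eqref{op} puts $e_\infty^{-1}\xi$ in $N_\theta^+\xi_\theta$; compactness of $K$ does the rest. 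Note that your Case 2 uses only convergence $\ga_i^{-1}\to\eta$, not conicality, so in this respect it is more economical than the paper's Case B, which consumes the full conical hypothesis.

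One caveat in Case 2: the full-norm estimate $\| \sigma(b_i, e_i^{-1}\xi_0) - \mu(\ga_i) \| = O(1)$ is not correct for an arbitrary lift $\xi_0$. Lifts of $e_i^{-1}\xi$ have the form $n_i^+ l P$ with $l\in L_\theta$, and for $l=w$ a nontrivial Weyl element of $L_\theta$ one gets $\sigma(b_i, n_i^+ w P)=\op{Ad}_{w^{-1}}(\log b_i)+O(1)$; since $\Ga$ is only $\theta$-regular, the $\log B_\theta$-component of $\mu(\ga_i)$ is uncontrolled, so the difference from $\mu(\ga_i)$ can be unbounded. This does not damage your proof: $p_\theta\circ\sigma(b_i,\cdot)$ is independent of the lift (this is precisely the well-definedness of $\beta^{\theta}$), so you should run the computation with the canonical lift $n_i^+P$, $n_i^+\in N_\theta^+$ bounded, and state only the projected estimate $\beta_{\xi}^{\theta}(\ga_i^{-1},e)=\mu_\theta(\ga_i)+O(1)$, which is all you use to conclude $\varphi(\beta_{\xi}^{\theta}(\ga_i^{-1},e))\to+\infty$.
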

    
\begin{proof}

By Lemma \ref{lem.convseqconvseq}, there exists $R > 0$ so that either $$\begin{aligned} 
    \xi_i &\in O_R^{\theta}(o, \ga_i^{-1} o) & \text{for all }i \ge 1; &\text{ or} \\
    \eta_i & \in O_R^{\i(\theta)}(o, \ga_i^{-1} o) &\text{for all } i \ge 1.&
    \end{aligned}$$

We consider these two cases separately.

\medskip
\noindent{\bf Case A.} 
Suppose that $\xi_i \in O_R^{\theta}(o, \ga_i^{-1} o)$ for all $i \ge 1$. By Lemma \ref{shsh}, we have
$$\sup_i \| \beta_{\xi_i}^{\theta}(e, \ga_i^{-1}) - \mu_{\theta}(\ga_i^{-1})\| < \infty$$ 
and hence
$$\sup_i |\varphi (\beta_{\xi_i}^{\theta}(e, \ga_i^{-1}) - \mu_{\theta}(\ga_i^{-1}))| < \infty.$$ 

The $\theta$-regularity of $\Ga$ implies $\mu_{\theta}(\ga_i^{-1}) \to \infty$ as $i \to \infty$.
Since $\varphi$ is $(\Ga, \theta)$-proper, we have $\varphi(\mu_{\theta}(\ga_i^{-1})) \to \infty$. Therefore $$\varphi (\beta_{\xi_i}^{\theta}(\ga_i^{-1}, e) )= - \varphi(\beta_{\xi_i}^{\theta}(e, \ga_i^{-1})) \to - \infty,$$ as desired. 

\medskip 
 \noindent{\bf Case B.}
Now suppose that $\eta_i \in O_R^{\i(\theta)}(o, \ga_i^{-1} o)$ for all $i \ge 1$. Then there exist a sequence $k_i \in K$ and a sequence $a_i \to \infty$ in $A^+$ such that $\eta_i = k_i P_{\i(\theta)}$ for all $i \ge 1$ and the sequence $\ga_i k_i a_i$ is bounded. By the hypothesis that the sequence $(\xi_i, \eta_i)$ converges in $\La_{\theta}^{(2)}$, there exists a bounded sequence $h_i \in G$ such that $(\xi_i, \eta_i) = h_i L_{\theta}$, which means that $\xi_i = h_i P_{\theta}$ and $\eta_i = h_i w_0 P_{\i(\theta)}$. Since $\eta_i = h_i w_0 P_{\i(\theta)} = k_i P_{\i(\theta)}$ for each $i$, we have $h_i w_0 m_i' p_i = k_i$ for some $m_i' \in M_{\i(\theta)}$ and $p_i \in P$, using $P_{\i(\theta)} = M_{\i(\theta)} P$. Since the sequences $h_i$, $k_i$, and $m_i'$ are bounded, the sequence $p_i \in P$ is bounded as well. This implies that the sequence $a_i^{-1} p_i a_i$ is bounded since $a_i \in A^+$ by Lemma \ref{lem.boundedofparabolic}. Hence it follows from the boundedness of the sequence  $\ga_i k_i a_i = \ga_i h_i w_0 m_i' p_i a_i = \ga_i h_i w_0 m_i' a_i (a_i^{-1} p_i a_i)$ that  $$\text{the sequence } g_i := \ga_i h_i w_0 m_i' a_i \text{ is bounded.}$$

For each $i$, set $m_i=w_0m_i'w_0^{-1}\in M_\theta $.
Then $$\eta_i = h_iw_0 P_{\i(\theta)} = h_i w_0 m_i' P_{\i(\theta)} = h_i m_i w_0 P_{\i(\theta)}, \quad \xi_i = h_iP_{\theta} = h_im_iP_{\theta}$$ and
$$g_i = \ga_i h_i w_0 m_i' a_i = \ga_i h_i m_i w_0 a_i.$$
Using $\xi_i=h_im_i P_\theta$, we have 
$$\begin{aligned}
    \beta_{\xi_i}^{\theta}(\ga_i^{-1}, e) & = \beta_{\ga_i \xi_i}^{\theta}(e, \ga_i) = \beta_{\ga_i \xi_i}^{\theta}(e, g_i) + \beta_{\ga_i \xi_i}^{\theta}(g_i, \ga_i) \\
    & = \beta_{\ga_i \xi_i}^{\theta}(e, g_i) + \beta_{\xi_i}^{\theta}(h_i m_i w_0 a_i, e)
    \\ &= \beta_{\ga_i \xi_i}^{\theta}(e, g_i) + \beta_{P_{\theta}}^{\theta}( w_0 a_i, e) 
   + \beta_{P_\theta}^{\theta}(e, m_i^{-1} h_i^{-1}).
\end{aligned} $$
Since $g_i$ and $m_i^{-1}h_i^{-1}$ are bounded sequences,
the sequences $\beta_{\ga_i \xi_i}^{\theta}(e, g_i)$ and $\beta_{P_\theta}^{\theta}(e, m_i^{-1} h_i^{-1})$ are bounded by \cite[Lemma 5.1]{lee2020invariant}. 

Hence it suffices to show that  as $i\to \infty$, \be\label{betaf} \varphi(\beta_{P_\theta}^{\theta}( w_0 a_i, e)) \to \infty.\ee 
Note that $\beta_{P_\theta}^{\theta}(w_0 a_i, e) = p_{\theta}(\beta_{P}(w_0a_i, e))$ and $$\beta_{P}( w_0 a_i, e)=
\beta_{P}( w_0 a_iw_0^{-1}, e) =\i (\log a_i).$$
Since the sequences $g_i = \ga_i h_i m_i w_0 a_i$ and $h_im_i$ are bounded and $\ga_i^{-1} g_i= h_i m_i w_0 a_i $, we have 
$\|\mu(\ga_i^{-1}) - \log a_i\| = \|\mu(\ga_i) - \i(\log a_i)\|$ is uniformly bounded by Lemma \ref{lem.cptcartan} and the identity \eqref{mu}.
Therefore
$$\sup_i |\varphi(\mu_{\theta}(\ga_i) - (p_{\theta} \circ \i)(\log a_i))| <\infty .$$
It follows from the $\theta$-regularity of $\Gamma$ and the $(\Ga, \theta)$-properness of $\varphi$ that $\varphi(\mu_\theta(\ga_i)) \to \infty$ as $i \to \infty$, and hence $\varphi((p_{\theta} \circ \i)(\log a_i)) \to \infty$, implying \eqref{betaf}. Therefore, we have $\varphi(\beta_{\xi_i}^{\theta}(\ga_i^{-1}, e))\to \infty$. This finishes the proof.
\end{proof}

Recall the definition of a $\theta$-Anosov subgroup given in
the introduction. Anosov subgroups are word hyperbolic.
The notion of a $\theta$-conical set in \cite{Kapovich2017anosov} is equal to the one we use here for $\theta$-Anosov subgroups, by the Morse property of $\theta$-Anosov subgroups obtained in   \cite{Kapovich2017anosov}.

\begin{theorem} \label{anosov4} \cite[Theorem 1.1]{Kapovich2017anosov}
For a $\theta$-transverse subgroup $\Ga$, $\Ga$ is $\theta$-Anosov if and only if     
$\lat=\La^{\mathsf{con}}_{\theta}.$
\end{theorem}

\subsection*{Proof of Theorems \ref{tproper} and \ref{tproper2}}

 Suppose to the contrary that the $\Ga$-action on $\La_{\theta}^{(2)} \times \fa_{\theta}$ is not properly discontinuous. Then there exists a compact subset $Q \subset \La_{\theta}^{(2)} \times \fa_{\theta}$ such that $\ga_i Q \cap Q \neq \emptyset$ for an infinite sequence $\ga_i \in \Ga$. In particular, there exists a sequence $(\xi_i, \eta_i, u_i) \in Q$ such that $\ga_i(\xi_i, \eta_i, u_i) \in Q$ for all $i \ge 1$. By passing to a subsequence, we may assume that the sequences $(\xi_i, \eta_i, u_i)$ and $\ga_i(\xi_i, \eta_i, u_i)$ converge in $Q \subset \La_{\theta}^{(2)} \times \fa_{\theta}$. On the other hand, $$\ga_i(\xi_i, \eta_i, u_i) = (\ga_i \xi_i, \ga_i \eta_i, u_i + \beta_{\xi_i}^{\theta}(\ga_i^{-1}, e)) \quad \text{for all } i \ge 1$$ which cannot converge by Proposition \ref{mainte}, yielding a contradiction.
 Hence Theorem \ref{tproper} follows.
 
The first part of Theorem \ref{tproper2} follows from Proposition \ref{mainte} as well.
Now suppose that  $\Omega_{\varphi}$ is compact. Fix a sequence $s_i\to +\infty$ and let $\xi\in \La_\theta$. Choose any $\eta \in \La_{\i(\theta)}$ so that
 $(\xi, \eta) \in \La_{\theta}^{(2)}$. Then there exists a sequence $\ga_i \in \Ga$ such that the sequence $\ga_i(\xi, \eta, s_i) = (\ga_i \xi, \ga_i \eta, s_i+ \varphi(\beta_{\xi}^{\theta}(\ga_i^{-1}, e)))$
converges  by passing to a subsequence. Hence  the sequence $\ga_i(\xi, \eta)$ is convergent in $\La_{\theta}^{(2)}$ and $\varphi(\beta_{\xi}^{\theta}(\ga_i^{-1}, e)) \to -\infty$ as $i \to \infty$. By Lemma \ref{cor.convinpair}, the sequence $\ga_i^{-1}$ converges to $\xi$ or $\eta$ conically as $i \to \infty$. If $\ga_i^{-1} \to \eta$ conically, then as in the Case B of the proof of Proposition \ref{mainte}, we must have $\varphi(\beta_{\xi}^{\theta}(\ga_i^{-1}, e)) \to +\infty$, which is impossible. Therefore, $\ga_i^{-1} \to \xi$ conically as $i \to \infty$, and hence $\xi \in \La_{\theta}^{\sf con}$. Since $\xi$ is arbitrary, we have $\La_{\theta} = \La_{\theta}^{\sf con}$. 
By Theorem \ref{anosov4}, $\Ga$ is $\theta$-Anosov.
 
 Suppose that $\Ga$ is Anosov. 
 By \cite[Theorem 10.1]{CZZ_rel}, we have $\varphi > 0$ on $\L_{\theta} - \{0\}$.
 Hence it is a consequence of the H\"older reparametrization theorem for Anosov subgroups (\cite[Proposition 4.1]{BCLS_gafa}, \cite[Theorem 4.15]{CS_local}) that $\Omega_{\varphi}$ is compact (see also \cite[Theorem 3.5]{CS_local}). This finishes the proof.
 \qed

\section{Bowen-Margulis-Sullivan measures on $\Omega_\theta$ and $\Omega_\varphi$} \label{sec.BMS measures}

Let $\Ga < G$ be a non-elementary  $\theta$-transverse subgroup and  $\psi \in \fa_{\theta}^*$. As $\psi$ can be considered as a linear form on $\fa$ which is $p_\theta$-invariant and hence
$\psi\circ \i$ is a linear form on $\fa$ which is $p_{\i(\theta)}$-invariant, we have $ \psi\circ \i\in \fa_{\i(\theta)}^*$.
For a pair of a $(\Ga, \psi)$-conformal measure $\nu$ on $\La_{\theta}$ and  a $(\Ga, \psi \circ \i)$-conformal measure 
$\nu_{\i}$ on $\La_{\i(\theta)}$, we  define a Radon measure $d \tilde{\mathsf m}_{\nu, \nu_{\i}}$ on $\La_{\theta}^{(2)} \times \fa_{\theta}$ as follows: \be\label{bms} d\tilde{\mathsf m}_{\nu, \nu_{\i}} (\xi, \eta, u) = e^{\psi \left( \beta_{\xi}^{\theta}(e, g) + \i(\beta_{\eta}^{\i(\theta)}(e, g))\right)} d\nu(\xi) d \nu_{ \i}(\eta) du\ee  where $g \in G$ is chosen so that $(\xi, \eta) = (gP_{\theta}, g w_0 P_{\i(\theta)})$ and $du$ is the Lebesgue measure on $\fa_{\theta}$. This definition is independent of the choice of $g$ by Lemma \ref{welld} below.
The measure $d\tilde{\mathsf m}_{\nu, \nu_{\i}} $
is left $\Ga$-invariant and right $A_\theta$-invariant.
We denote by 
\be\label{bdef} {\mathsf m}_{\nu, \nu_{\i}} \ee  the $A_\theta$-invariant Borel measure on $\Omega_\theta$ induced by $\tilde{\mathsf m}_{\nu, \nu_{\i}} $, which we call the Bowen-Margulis-Sullivan measure associated to the pair $(\nu, \nu_{\i})$.

\begin{lem}\label{welld}
 If  $g, g'\in G$ satisfy  $(\xi, \eta)=(gP_{\theta}, g w_0 P_{\i(\theta)}) =(g'P_{\theta}, g' w_0 P_{\i(\theta)})$,
 then
  $$\beta_{\xi}^{\theta}(e, g) + \i(\beta_{\eta}^{\i(\theta)}(e, g))=\beta_{\xi}^{\theta}(e, g') + \i(\beta_{\eta}^{\i(\theta)}(e, g')).$$
\end{lem}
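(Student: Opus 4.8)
The plan is to reduce the statement to a computation on the Levi subgroup $L_\theta$ and then evaluate two $\fa_\theta$-valued Busemann cocycles via the Iwasawa decomposition. First I would note that the two equalities $gP_\theta=g'P_\theta$ and $gw_0P_{\i(\theta)}=g'w_0P_{\i(\theta)}$ say exactly that $\ell:=g^{-1}g'$ lies in $P_\theta\cap w_0P_{\i(\theta)}w_0^{-1}=P_\theta\cap P_\theta^+=L_\theta$. Writing $g'=g\ell$ and using the cocycle identity $\beta^\theta_\xi(e,g\ell)=\beta^\theta_\xi(e,g)+\beta^\theta_\xi(g,g\ell)$ together with the $G$-equivariance $\beta^\theta_\xi(g,g\ell)=\beta^\theta_{g^{-1}\xi}(e,\ell)$ (both inherited from the full Busemann map of \eqref{Bu} through $p_\theta$ and linearity), and noting that $g^{-1}\xi=\xi_\theta=[P_\theta]$ and $g^{-1}\eta=w_0\xi_{\i(\theta)}$ with $\xi_{\i(\theta)}:=[P_{\i(\theta)}]$, the difference of the two sides of the claimed identity collapses (using also that $\i$ is linear) to
\[
\beta^\theta_{\xi_\theta}(e,\ell)+\i\bigl(\beta^{\i(\theta)}_{w_0\xi_{\i(\theta)}}(e,\ell)\bigr).
\]
Thus it suffices to show that this expression vanishes for every $\ell\in L_\theta$.

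Next I would compute the two terms by Iwasawa. I write $\ell=zh$ with $z\in A_\theta$ (central in $L_\theta$) and $h\in S_\theta$. The key structural input is that the Iwasawa $A$-component of any element of $S_\theta$ lies in $B_\theta=S_\theta\cap A$: indeed $\Pi-\theta$ is the set of simple roots of $S_\theta$ and the associated positive root groups sit inside $N$, so the minimal-parabolic Iwasawa data of $S_\theta$ embed compatibly into $G=KAN$. Since $\fa=\fa_\theta\oplus\log B_\theta$ with $\log B_\theta=\ker p_\theta$, the $h$-contribution is annihilated by $p_\theta$. Taking $\xi_0=[P]\in\pi_\theta^{-1}(\xi_\theta)$ and using $\beta^\theta_{\xi_\theta}(e,\ell)=p_\theta\bigl(\sigma(e,[P])-\sigma(\ell^{-1},[P])\bigr)$ together with $\sigma(e,[P])=0$ and the decomposition of $\ell^{-1}=z^{-1}h^{-1}$, this yields $\beta^\theta_{\xi_\theta}(e,\ell)=\log z$.

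For the second term I would conjugate by $w_0$. With $\eta_0=w_0[P]\in\pi_{\i(\theta)}^{-1}(w_0\xi_{\i(\theta)})$, the relevant Iwasawa $A$-component is that of $\ell^{-1}w_0=w_0\,(w_0\ell w_0^{-1})^{-1}$, where $w_0\ell w_0^{-1}=z'h'\in A_{\i(\theta)}S_{\i(\theta)}=L_{\i(\theta)}$ with $z'=w_0zw_0^{-1}$ and $\log z'=\op{Ad}_{w_0}\log z=-\i(\log z)$ (using $w_0A_\theta w_0^{-1}=A_{\i(\theta)}$ and $w_0L_\theta w_0^{-1}=L_{\i(\theta)}$). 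The same $S_{\i(\theta)}$-compatibility shows the $h'$-part is killed by $p_{\i(\theta)}$, giving $\beta^{\i(\theta)}_{w_0\xi_{\i(\theta)}}(e,\ell)=\log z'=-\i(\log z)$. Applying $\i$ and using $\i^2=\id$ produces $\i(-\i(\log z))=-\log z$, so the sum is $\log z+(-\log z)=0$, which is exactly what is needed.

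The main obstacle I anticipate is the bookkeeping around the opposite parabolic and the $w_0$-conjugation rather than any deep idea: one must (i) read off $\ell\in L_\theta$ correctly from the two flag equalities, (ii) track that $w_0L_\theta w_0^{-1}=L_{\i(\theta)}$, $w_0A_\theta w_0^{-1}=A_{\i(\theta)}$ and $\log z'=\op{Ad}_{w_0}\log z=-\i(\log z)$, and (iii) justify carefully that the Iwasawa $A$-component of a semisimple factor lies in $\log B_\theta$ (resp. $\log B_{\i(\theta)}$) and is therefore invisible to $p_\theta$ (resp. $p_{\i(\theta)}$). Once these structural points are in place, the two Busemann terms reduce to $\log z$ and $-\i(\log z)$, and the desired cancellation $\log z-\log z=0$ follows immediately.
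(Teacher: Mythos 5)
Your proof is correct and takes essentially the same route as the paper's: both reduce to showing $\beta^\theta_{\xi_\theta}(e,\ell)+\i(\beta^{\i(\theta)}_{w_0\xi_{\i(\theta)}}(e,\ell))=0$ for $\ell=g^{-1}g'\in L_\theta$, split $\ell$ into its $A_\theta$- and $S_\theta$-parts, discard the $S_\theta$-part because its Iwasawa $\fa$-component lies in $\log B_\theta$ (resp.\ $\log B_{\i(\theta)}$) and is killed by $p_\theta$ (resp.\ $p_{\i(\theta)}$), and then compute the $A_\theta$-part explicitly, where the $w_0$-conjugation yields $\log z-\i^2(\log z)=0$. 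The only cosmetic difference is that you verify the vanishing of each Busemann term's $S_\theta$-contribution separately, while the paper asserts it for the sum at once.
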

\begin{proof} The hypothesis on $g$ and $g'$ means that
$g'=gh$ for some $h \in L_\theta$. 

Since \begin{align*}
  &  \beta_{\xi}^{\theta}(e, g') + \i(\beta_{\eta}^{\i(\theta)}(e, g'))
\\&= (\beta_{\xi}^{\theta}(e, g) + \i(\beta_{\eta}^{\i(\theta)}(e, g)))
+ (\beta_{P_\theta}^{\theta}(e, h) + \i(\beta_{w_0P_{\i(\theta)}}^{\i(\theta)}(e, h)))
\end{align*}
it suffices to prove that 
$$\beta_{P_\theta}^{\theta}(e, h) + \i(\beta_{w_0P_{\i(\theta)}}^{\i(\theta)}(e, h))=0 .$$

Write $h=as$ where $a\in A_\theta$
and $s\in S_\theta$.
Since $p_\theta (\log (A\cap S_\theta) )=0$ and
$$\beta_{P_\theta}(e, s) +
\i(\beta_{w_0P_{\i(\theta)}}(e, s))\in  \log (A\cap S_\theta),$$
we have
$$ \beta_{P_\theta}^{\theta}(e, h) + \i(\beta_{w_0P_{\i(\theta)}}^{\i(\theta)}(e, h))=
 \beta_{P_\theta}^{\theta}(e, a) + \i(\beta_{w_0P_{\i(\theta)}}^{\i(\theta)}(e, a)).$$
On the other hand, by the definition of
the Busemann map, $\beta_{P}(e,a)=\log a$
and $\beta_{w_0P }(e, a)= \beta_{P}(e, w_0 aw_0^{-1})=  \op{Ad}_{w_0}(\log a) = -\i (\log a)$.
Hence
$$\beta_{P}( e,a) + \i(\beta_{w_0P }( e, a))= \log a -\i^2 (\log a) =0,$$
finishing the proof.
\end{proof}

For a $(\Ga, \theta)$-proper form $\varphi$,
consider the $\Ga$-equivariant projection
$\Lambda_\theta^{(2)}\times \fa_\theta \to \Lambda_\theta^{(2)}\times \br$
given by $(\xi, \eta, u)\to (\xi, \eta, \varphi(u))$.
By Theorem \ref{tproper2}, this induces an affine bundle with fiber $\ker \varphi$:
 \be\label{bun} \Omega_\theta\to \Omega_\varphi;\ee 
it is a standard fact that such a bundle is indeed a trivial vector bundle
and hence we have a homeomorphism
\be\label{bdef1} \Omega_\theta\simeq \Omega_\varphi \times \ker \varphi\simeq \Omega_\varphi \times \br^{\#\theta -1}.\ee 
We denote by the push-forward of the measure 
${\mathsf m}_{\nu, \nu_{\i}}$ on $\Omega_\varphi$ by ${\mathsf m}^{\varphi}_{\nu, \nu_{\i}}$ which is an $\br$-invariant Radon measure on $\Omega_\varphi$. Then  
\be\label{bdef2}{\mathsf m}_{\nu, \nu_{\i}} = {\mathsf m}^\varphi_{\nu, \nu_{\i}} \otimes \text{Leb}_{\ker \varphi} .\ee

By the following proposition, the measures $\mathsf{m}_{\nu, \nu_{\i}}$ and $\mathsf{m}_{\nu, \nu_{\i}}^{\varphi}$ are non-zero.

\begin{proposition} \label{prop.nontrivialmeasure}
    Let $\Ga < G$ be a discrete subgroup and let $\la$ and $\la_{\i}$ be probability measures on $\F_{\theta}$ and $\F_{\i(\theta)}$ respectively. Suppose one of the following:
    \begin{enumerate}
        \item $\Ga$ is Zariski dense and $\la$ is $\Ga$-quasi-invariant.
        \item $\Ga$ is non-elementary $\theta$-transverse, $\la$ is $\Ga$-quasi-invariant, and $\la$ and $\la_{\i}$ are supported on $\La_{\theta}$ and $\La_{\i(\theta)}$ respectively.
    \end{enumerate}
    Then \be \label{eqn.suppongenpos}
    (\la \times \la_{\i})(\F_{\theta}^{(2)}) > 0.
    \ee

\end{proposition}

\begin{proof} 
Suppose $(\la \times \la_{\i})(\F_{\theta}^{(2)}) = 0$. Then by Fubini's theorem,
\be\label{eta}
\la \left( \{ \xi \in \F_{\theta} : (\xi, \eta) \in \F_{\theta}^{(2)} \} \right) = 0 \quad \text{for } \la_{\i}\text{-a.e. } \eta \in \F_{\i(\theta)}.
\ee 
We now deduce a contradiction in each case. 
In the case of (1),  let $\eta\in \F_{\i(\theta)}$. Since $\supp \nu \subset \F_{\theta}$ must be Zariski dense in this case by Lemma \ref{lq} and $\{ \xi \in \F_{\theta} : (\xi, \eta) \in \F_{\theta}^{(2)} \}$ is a non-empty Zariski open subset, we have $\la \left( \{ \xi \in \F_{\theta} : (\xi, \eta) \in \F_{\theta}^{(2)} \} \right) > 0$, contradicting  \eqref{eta}.

In the case (2),  let $\eta \in \La_{\i(\theta)}$. Since $\Ga$ is $\theta$-transverse,  there exists $\xi_0 \in \La_{\theta}$  such that
$\La_\theta-\{\xi_0\}\subset \{ \xi \in \F_{\theta} : (\xi, \eta) \in \F_{\theta}^{(2)} \} $.
Hence it suffices to note that $\lambda(\La_\theta-\{\xi_0\})>0$. If not,
$\lambda$ is supported on $\{\xi_0\}$, which must be fixed by $\Ga$ due to the quasi-invariance of $\lambda$.

Since the $\Ga$-action on $\La_{\theta}$ is minimal (Proposition \ref{prop.transisconv}), $\La_{\theta}=\{\xi_0\}$,
contradicting the non-elementary hypothesis on $\Ga$. 
\end{proof}

 \section{Conservativity and ergodicity of the $\fa_\theta$-action}\label{ec}
 In this section, we expand the dichotomies in Theorem \ref{ggg}
to a criterion on conservativity and ergodicity of $\fa_\theta$-action on the 
quotient space $\Omega_\theta=\Gamma\ba \La_\theta^{(2)}\times \fa_\theta$,
or equivalently a criterion on 
conservativity and ergodicity of $\br$-action on the 
quotient space $\Omega_\varphi=\Gamma\ba \La_\theta^{(2)}\times \br$,
when $\Ga$ is a non-elementary $\theta$-transverse subgroup and $\varphi$ is a $(\Ga, \theta)$-proper linear form. First of all, this makes sense thanks to Theorems \ref{tproper} and \ref{tproper2}.

We recall the notion of complete conservativity and ergodicity. Let $H$ be 
a locally compact unimodular group. We denote by $dh$ the Haar measure on $H$.
Consider the dynamical system $(H, \Omega, \lambda)$
where $\Omega$ is a separable, locally compact and $\sigma$-compact topological space on which $H$ acts continuously and $\lambda$ is a Radon measure which is quasi-invariant by $H$.  A Borel subset $B\subset \Omega$ is called wandering
if $\int_H {\mathbbm 1}_B(h. w)dh <\infty$ for $\mu$-almost all $w\in B$. The Hopf decomposition theorem says that $\Omega$ can be written as the disjoint union $\Omega_C\cup \Omega_D$ of $H$-invariant subsets where $\Omega_D$ is a countable union of wandering subsets which is maximal in the sense that $\Omega_C$
does not contain any wandering subset of positive measure. If $\lambda(\Omega_D)=0$, the system is called completely conservative. 
If $\lambda(\Omega_C)=0$, the system is called completely dissipative. 
The dynamical system
$(H, \Omega, \lambda)$ is ergodic if any $H$-invariant $\lambda$-measurable subset  is either null or co-null.
An ergodic system $(H, \Omega, \lambda)$ is either completely conservative or completely dissipative. 
If $(H, \Omega, \lambda)$ is ergodic, 
$H$ is countable and $\lambda$ is atomless, then it is completely conservative \cite[Theorem 14]{Kaimanovich2010hopf}.
The following is standard \cite[Lemma 6.1]{lee2022dichotomy}:

\begin{lem} \label{ccc} Suppose that $\lambda$ is $H$-invariant.
Then  $(H, \Omega, \lambda)$ is completely conservative if and only if for $\lambda$-a.e. $x\in \Omega$,
there exists a compact subset $B_x\subset \Omega$ such that $\int_{h\in H} {\mathbbm 1}_{B_x}(h.x)\; dh =\infty$. 
\end{lem}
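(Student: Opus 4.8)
The plan is to reduce everything to the behaviour of the \emph{sojourn function} $J_B(x):=\int_H \mathbbm{1}_B(h.x)\,dh$ together with the Hopf decomposition $\Omega=\Omega_C\sqcup\Omega_D$ recalled above. First I would fix a compact exhaustion $K_1\subset K_2\subset\cdots$ with $\bigcup_n K_n=\Omega$, which exists since $\Omega$ is $\sigma$-compact; as $\lambda$ is Radon, $\lambda(K_n)<\infty$. Since any compact set is contained in some $K_n$ and $J_B$ is monotone in $B$, the condition ``there is a compact $B_x$ with $J_{B_x}(x)=\infty$'' is equivalent to $\sup_n J_{K_n}(x)=\infty$. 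I would also record that each $J_B$ is $H$-invariant: because $H$ is unimodular, $J_B(h_0.x)=\int_H \mathbbm{1}_B(hh_0.x)\,dh=\int_H\mathbbm{1}_B(h.x)\,dh=J_B(x)$ by right-invariance of $dh$.

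For the forward implication I would argue purely from the definitions. Assume $(H,\Omega,\lambda)$ is completely conservative, i.e.\ $\lambda(\Omega_D)=0$, so that $\Omega$ contains no wandering set of positive measure. For each $n$ set $Z_n:=\{x\in K_n: J_{K_n}(x)<\infty\}$. Then $Z_n$ is wandering, because for $x\in Z_n$ one has $J_{Z_n}(x)\le J_{K_n}(x)<\infty$; hence $\lambda(Z_n)=0$. Consequently $J_{K_n}(x)=\infty$ for $\lambda$-a.e.\ $x\in K_n$, and discarding the null set $\bigcup_n Z_n$ shows that $\lambda$-a.e.\ $x\in\Omega$ lies in some $K_n$ with $J_{K_n}(x)=\infty$; thus $B_x=K_n$ works.

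For the reverse implication the key point, and the main obstacle, is the claim that for $\lambda$-a.e.\ $x\in\Omega_D$ one has $J_B(x)<\infty$ for \emph{every} compact $B$. The definition of a wandering set only controls the sojourn of an orbit in the wandering set itself, so this uniform control over all compact targets is exactly where nontrivial input is needed. I would supply it through the standard test-function description of the Hopf decomposition: choosing a strictly positive $f_0\in L^1(\lambda)$ (e.g.\ $f_0=\sum_n c_n\mathbbm{1}_{K_n\setminus K_{n-1}}$ with $c_n>0$ chosen so that $\sum_n c_n\lambda(K_n\setminus K_{n-1})<\infty$), one has, up to null sets, $\Omega_D=\{x:\int_H f_0(h.x)\,dh<\infty\}$, and this set is independent of the choice of strictly positive $f_0\in L^1(\lambda)$. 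Applying this to the strictly positive $L^1$ function $f_0+\mathbbm{1}_B$ for a fixed compact $B$ gives $\int_H(f_0+\mathbbm{1}_B)(h.x)\,dh<\infty$, hence $J_B(x)<\infty$, for a.e.\ $x\in\Omega_D$; running $B$ over the countable family $\{K_n\}_n$ then yields the claim.

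Granting the claim, the reverse implication follows immediately: if $\lambda$-a.e.\ $x$ admits a compact $B_x$ with $J_{B_x}(x)=\infty$, then a.e.\ $x$ fails the property in the claim, so $\lambda(\Omega_D)=0$, i.e.\ the system is completely conservative. This closes the equivalence. I expect the only delicate ingredient to be the test-function characterization of $\Omega_D$ and its independence of the chosen strictly positive $L^1$ density, which is where the substance of the Hopf decomposition (beyond its bare definition) enters; everything else is bookkeeping with the exhaustion and the $H$-invariance of $J_B$.
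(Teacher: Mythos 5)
Your argument is correct, but there is no internal proof to compare it with: the paper does not prove this lemma at all, it quotes it as standard with a citation to Lemma 6.1 of Lee--Oh \cite{lee2022dichotomy}. So the only question is whether your blind proof holds up, and it essentially does. The forward direction is complete and elementary: each $Z_n=\{x\in K_n : \int_H \mathbbm{1}_{K_n}(h.x)\,dh<\infty\}$ is wandering because $Z_n\subset K_n$, and complete conservativity (combined with the maximality of $\Omega_D$ built into the Hopf decomposition, which kills wandering sets inside $\Omega_C$ as well) forces $\lambda(Z_n)=0$. The reverse direction correctly identifies the crux --- the wandering condition only controls the sojourn of an orbit in the wandering set itself --- and resolves it by invoking the test-function characterization of $\Omega_D$ for invariant measures. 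That characterization is indeed standard and valid in this generality (unimodular $H$, invariant $\sigma$-finite Radon $\lambda$), so this is a legitimate reduction rather than a gap; moreover, the half of it you actually use admits a short self-contained proof, which you could include to make the whole lemma self-contained: writing $J_A(x):=\int_H\mathbbm{1}_A(h.x)\,dh$, if $W$ is wandering and $W_M:=W\cap\{J_W\le M\}$, then $J_{W_M}$ is an $H$-invariant function (unimodularity), is at most $M$ on $W_M$, and vanishes off the orbit-saturation of $W_M$, hence $J_{W_M}\le M$ on all of $\Omega$; by Fubini and invariance of $\lambda$, $\int_{W_M} J_B\, d\lambda=\int_B J_{W_M}\,d\lambda\le M\lambda(B)<\infty$ for every Borel set $B$ of finite measure, so $J_B<\infty$ a.e.\ on $W_M$, and letting $M\to\infty$ and running $B$ over $\{K_n\}$ gives exactly your key claim that a.e.\ point of $\Omega_D$ has finite sojourn in every compact set (strict positivity of the test function is not even needed for this direction). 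One slip you should fix: your preliminary remark that the existence of a compact $B_x$ with infinite sojourn is equivalent to $\sup_n J_{K_n}(x)=\infty$ is false --- in a dissipative flow one can have $J_{K_n}(x)\to\infty$ while every compact sojourn is finite --- the correct reformulation is that $J_{K_n}(x)=\infty$ for some $n$. This is harmless here, since neither of your two implications actually uses the $\sup$ formulation.
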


The following theorem implies Theorem \ref{c} in the introduction. For a non-elementary $\theta$-transverse subgroup $\Ga < G$ and $\psi \in \fa_{\theta}^*$, we denote by $$\mathcal{M}_{\psi}^{\theta}\subset \mathsf{M}_\psi^{\theta}$$
the space of all $(\Ga,\psi)$-conformal measures {\it  supported on} $\La_\theta$. 

\begin{theorem}\label{ceq}
   Let $\Ga<G$ be a non-elementary
$\ts$-transverse subgroup. Let $\psi \in\fa_\theta^*$ be $(\Ga, \theta)$-proper such that $\mathcal{M}_{\psi}^{\theta} \neq \emptyset$. Then the following are equivalent to each other.
\begin{enumerate}

\item $\sum_{\ga\in \Ga}e^{-\psi(\mu_\theta(\ga))}=\infty$
(resp. $\sum_{\ga\in \Ga}e^{-\psi(\mu_\theta(\ga))}<\infty$);

  \item For any $\nu \in \mathcal{M}_{\psi}^{\theta}$, $\nu(\La_{\theta}^{\mathsf{con}}) > 0$
 (resp. $\nu(\La_{\theta}^{\mathsf{con}}) = 0$);
    \item For any $\nu \in \mathcal{M}_{\psi}^{\theta}$, $\nu(\La_{\theta}^{\mathsf{con}}) = 1$
     (resp. $\nu(\La_{\theta}^{\mathsf{con}}) = 0$);

\item For any $(\nu, \nu_{\i}) \in \mathcal{M}_{\psi}^{\theta} \times \mathcal{M}_{\psi \circ \i}^{\i(\theta)}$, the $\Ga$-action on $(\La_\theta^{(2)}, \nu\times \nu_{\i})$ is completely conservative and ergodic (resp. completely dissipative and non-ergodic);

\item  For any $(\nu, \nu_{\i}) \in \mathcal{M}_{\psi}^{\theta} \times \mathcal{M}_{\psi \circ \i}^{\i(\theta)}$, the $\fa_\theta$-action on $(\Omega_\theta, {\mathsf m}_{\nu,\nu_{\i}})$  is completely conservative and ergodic (resp. completely dissipative and non-ergodic);

\item For any $(\nu, \nu_{\i}) \in \mathcal{M}_{\psi}^{\theta} \times \mathcal{M}_{\psi \circ \i}^{\i(\theta)}$ and any $(\Ga, \theta)$-proper $\varphi \in \fa_{\theta}^*$, the $\br$-action on $(\Omega_\varphi, {\mathsf m}^\varphi_{\nu,\nu_{\i}})$  is completely conservative and ergodic (resp. completely dissipative and non-ergodic).

 \end{enumerate}
 \end{theorem}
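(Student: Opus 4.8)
The plan is a standard ``dichotomy'' scheme: I will prove that divergence of $\cal P_\psi$ (the first alternative in $(1)$) forces the first alternative in each of $(2)$--$(6)$, and symmetrically that convergence forces the second alternative in each. Since exactly one alternative in $(1)$ holds, and since in each of $(2)$--$(6)$ the two listed alternatives are mutually exclusive --- positive versus vanishing conical mass, and complete conservativity versus complete dissipativity, the latter being disjoint by the Hopf decomposition --- all the converse implications follow at once. The block $(1)\Leftrightarrow(2)\Leftrightarrow(3)$ is immediate from section \ref{sec.supp}: $(1)\Rightarrow(3)$ is Theorem \ref{ggg}, $(3)\Rightarrow(2)$ is trivial, and $(2)\Rightarrow(3)$ is Lemma \ref{dicc} in the divergent case, while $(2)$ and $(3)$ coincide in the convergent case.

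The equivalences among the three dynamical systems $(4)$, $(5)$, $(6)$ are formal, resting on the product structures built in section \ref{pdt}. For $(5)\Leftrightarrow(6)$ I will invoke the trivialization \eqref{bdef1} and the measure decomposition \eqref{bdef2}, namely $\Omega_\theta\simeq\Omega_\varphi\times\ker\varphi$ and $\mathsf m_{\nu,\nu_{\i}}=\mathsf m^\varphi_{\nu,\nu_{\i}}\otimes\mathrm{Leb}_{\ker\varphi}$. Under this splitting the $\fa_\theta$-action is the product of the $\R$-flow on $\Omega_\varphi$ and the translation of $\ker\varphi$ on itself; since the latter is conservative and ergodic, a Fubini argument --- using Lemma \ref{ccc} to detect conservativity and the constancy of $\ker\varphi$-invariant functions along $\ker\varphi$ to detect ergodicity --- shows that the $\fa_\theta$-action and the $\R$-flow share their conservativity and ergodicity type. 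For $(4)\Leftrightarrow(5)$ I will regard $\Omega_\theta=\Gamma\ba(\La_\theta^{(2)}\times\fa_\theta)$ as the suspension of the $\Gamma$-action on $(\La_\theta^{(2)},\nu\times\nu_{\i})$ --- which is supported on $\La_\theta^{(2)}$ by Lemma \ref{dia}, since $\nu$ and $\nu_{\i}$ are atomless --- with cocycle the $\fa_\theta$-valued Busemann map; the same Fubini principle, again through Lemma \ref{ccc}, yields that the $\fa_\theta$-action on $\Omega_\theta$ inherits conservativity and ergodicity exactly from the $\Gamma$-action on $\La_\theta^{(2)}$.

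It then remains to settle the single implication $(1)\Rightarrow(4)$ in both of its forms. In the divergent case, conservativity comes from $(3)$: since $\nu$-a.e.\ $\xi\in\La_\theta^{\mathsf{con}}$, for $(\nu\times\nu_{\i})$-a.e.\ pair $(\xi,\eta)$ there is an infinite sequence $\ga_i\to\infty$ with $\ga_i^{-1}\to\xi$ conically; by Lemma \ref{lem.klpconical} the points $\ga_i(\xi,\eta)$ stay in a fixed compact subset of $\La_\theta^{(2)}$, while by the shadow estimate of Lemma \ref{shsh} the displacements $\beta_\xi^\theta(\ga_i^{-1},e)$ lie within a bounded distance of $\mu_\theta(\ga_i^{-1})$, whose norm tends to $\infty$ by $\theta$-regularity. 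Hence the $\fa_\theta$-orbit of an a.e.\ point returns to one fixed compact subset of $\Omega_\theta$ for a set of parameters of infinite Haar measure, and Lemma \ref{ccc} gives complete conservativity. In the convergent case $\sum_\ga\nu(O_R^\theta(o,\ga o))<\infty$ by the upper bound in the shadow lemma (Lemma \ref{lem.shadow}), so the first Borel--Cantelli lemma shows that a $(\nu\times\nu_{\i})$-a.e.\ orbit eventually leaves every compact set; thus the $\Gamma$-action is completely dissipative, and a fortiori non-ergodic.

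The genuine difficulty is ergodicity in the divergent (conservative) case, which I will establish for the $\Gamma$-action on $(\La_\theta^{(2)},\nu\times\nu_{\i})$ by a Hopf-type argument and then propagate to $(5)$ and $(6)$ through the transfers above. Starting from a $\Gamma$-invariant set $E$ of positive measure and a density point of $E$, I will use conical recurrence together with the convergence dynamics of $\Gamma$ on $\La_{\theta\cup\i(\theta)}$ (Proposition \ref{prop.transisconv}, carried to $\La_\theta$ and $\La_{\i(\theta)}$ by the homeomorphisms of Lemma \ref{bij}) to select elements $\ga_i$ whose action spreads a small product neighborhood of shadows around the density point over a definite portion of $\La_\theta^{(2)}$. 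The two-sided shadow estimates (Lemma \ref{lem.shadow}) control the Radon--Nikodym distortion of $\nu\times\nu_{\i}$ along this spreading, so the density of $E$ is transferred, forcing $(\nu\times\nu_{\i})(E)=1$. Keeping this distortion control uniform --- where the bounded multiplicity of shadows (Proposition \ref{prop.mult}) established in section \ref{sec.shadow} is the essential input --- is the delicate point; once ergodicity of the base action is in hand, ergodicity of the $\fa_\theta$- and $\R$-actions follows from $(4)\Leftrightarrow(5)\Leftrightarrow(6)$, completing the dichotomy.
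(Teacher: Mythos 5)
Your overall scheme --- divergence forces the first alternative in each of $(2)$--$(6)$, convergence forces the second, and mutual exclusivity yields the converses --- is the same as the paper's, and your treatment of $(1)\Leftrightarrow(2)\Leftrightarrow(3)$ (Theorem \ref{ggg} plus Lemma \ref{dicc}), of the transfers $(4)\Leftrightarrow(5)\Leftrightarrow(6)$, and of conservativity/dissipativity of the $\Ga$-action (conical recurrence, Lemma \ref{lem.klpconical}, Lemma \ref{lem.convinpair}, Lemma \ref{ccc}) matches the paper. However, there are two genuine gaps, located exactly at the two places where the real work of this theorem lies.

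First, in the convergent case you claim the $\Ga$-action is ``completely dissipative, and a fortiori non-ergodic.'' This inference is false: a completely dissipative system can be ergodic (translation of $\R$ on $(\R,\mathrm{Leb})$, or a countable group acting on a single orbit carrying an atomic measure). The implication ``ergodic $\Rightarrow$ completely conservative'' (Kaimanovich \cite{Kaimanovich2010hopf}, quoted in the paper) requires the measure to be \emph{atomless}, and measures in $\mathcal{M}_{\psi}^{\theta}$ may well have atoms in the convergent case --- indeed you yourself assume atomlessness of $\nu,\nu_{\i}$ without justification when invoking Lemma \ref{dia}. The paper's non-ergodicity proof is a substantive argument: if the action were ergodic, then $(\R,\Omega_\varphi,\mathsf m^\varphi_{\nu,\nu_{\i}})$ would be ergodic and completely dissipative, hence isomorphic to translation on $(\R,\mathrm{Leb})$, forcing $\nu\times\nu_{\i}$ to have an atom $(\xi_0,\eta_0)\in\La_\theta^{(2)}$; ergodicity then concentrates $\nu\times\nu_{\i}$ on the single orbit $\Ga(\xi_0,\eta_0)$, which is impossible because $\La_\theta$ is perfect (convergence group action, Proposition \ref{prop.transisconv}) while $\Ga\xi_0\subset\Ga_{\eta_0'}\xi_0\cup\{\eta_0'\}$ with $\Ga_{\eta_0'}$ elementary, so $\Ga\xi_0$ accumulates on at most two points. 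None of this appears in your proposal, and it cannot be waved away.

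Second, ergodicity in the divergent (conservative) case --- the deepest implication --- is only sketched. You outline a Hopf/Sullivan-type density-point argument and then explicitly flag the uniform distortion control as ``the delicate point'' without resolving it. That step is not routine: one would need a Lebesgue-differentiation (Vitali covering) theorem for $\nu$ with respect to $\theta$-shadows together with two-variable Radon--Nikodym control on $\La_\theta^{(2)}$, and the bounded multiplicity of shadows (Proposition \ref{prop.mult}) is not, by itself, the required input --- in the paper it serves the Borel--Cantelli/counting estimates, not a differentiation argument. The paper does not take this route at all: it deduces ergodicity from complete conservativity by citing \cite[Lemma 7.6]{KOW}, which rests on an observation of Blayac--Canary--Zhu--Zimmer \cite{BCZZ}. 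So as written, the key implication $(3)\Rightarrow(4)$ in the divergent case and the non-ergodicity statement in the convergent case are both open in your argument.
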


 In the proof of Theorem \ref{ceq}, we will use the following observation.

 \begin{lemma} \label{lem.ergodicatomless}
     Let $\Ga < G$ be a non-elementary $\theta$-transverse subgroup. Let $\la$ and $\la_{\i}$ be $\Ga$-quasi-invariant probability measures on $\La_{\theta}$ and $\La_{\i(\theta)}$ respectively. If the $\Ga$-action on $(\La_{\theta}^{(2)}, \la \times \la_{\i})$ is ergodic, then $\la \times \la_{\i}$ has no atom in $\La_{\theta}^{(2)}$.
 \end{lemma}

 \begin{proof} By Proposition \ref{prop.nontrivialmeasure},
    we have $(\la \times \la_{\i}) (\La_{\theta}^{(2)}) > 0$. 
    Suppose that $\la \times \la_{\i}$ has an atom, say $(\xi_0, \eta_0) \in \La_{\theta}^{(2)}$.
     By the ergodicity hypothesis, $\la \times \la_{\i}$ is supported on a single $\Ga$-orbit $\Ga(\xi_0, \eta_0) \subset \La_{\theta}^{(2)}$. Since $\la(\xi_0)>0$ and $\la_{\i}(\eta_0)>0$, we have $$(\Ga \xi_0 \times \Ga \eta_0) \cap \La_{\theta}^{(2)} \subset \Ga(\xi_0, \eta_0).$$ 
Since $\Ga$ is $\theta$-antipodal, $$\Ga \xi_0 \subset \Ga_{\eta_0} \xi_0 \cup \{\eta_0'\}$$ where $\Ga_{\eta_0}$ is the stabilizer of $\eta_0$ in $\Ga$ and $\eta_0'$ is the image of $\eta_0$ under the $\Ga$-equivariant homeomorphism $\La_{\i(\theta)} \to \La_{\theta}$ obtained in Lemma \ref{bij}.
In addition, the $\Ga$-equivariance of $\La_{\i(\theta)} \to \La_{\theta}$ implies that $\Ga_{\eta_0} = \Ga_{\eta_0'}$ and hence 
\be \label{eqn.noneltcontra}
\Ga \xi_0 \subset \Ga_{\eta_0'} \xi_0 \cup \{\eta_0'\}.
\ee
Since the $\Ga$-action on $\La_{\theta}$ is a convergence group action (Proposition \ref{prop.transisconv}), $\La_{\theta}$ is perfect and equal to the set of all accumulation points of $\Ga\xi_0$.
On the other hand,
$\Ga_{\eta_0'}$ is an elementary subgroup and hence  $\Ga_{\eta_0'} \xi_0$ has at most two accumulation points in $\La_{\theta}$ (\cite{Tukia_convergence}, \cite{Bowditch1999convergence}). Therefore, we obtain a contradiction.
 \end{proof}

\subsection*{Proof of Theorem \ref{ceq}}
Note that $\fa_{\theta}^*$ can be regarded as a subspace of $\fa_{\theta \cup \i(\theta)}^*$ and  that $\psi \in \fa_{\theta}^*$ is $(\Ga, \theta)$-proper if and only if $\psi \circ \i$ is $(\Ga, \i(\theta))$-proper. By Lemma \ref{bij}, we have $\Ga$-equivariant homeomorphisms $\La_{\theta}\to \La_{\theta\cup \i(\theta)} \to \La_{\i(\theta)}$ and hence we can push-forward measures in $\mathcal{M}_{\psi}^{\theta}$ to $\mathcal{M}_{\psi \circ \i}^{\i(\theta)}$. In particular, $\mathcal{M}_{\psi \circ \i}^{\i(\theta)} \neq \emptyset$. Note that since $\Ga$ is non-elementary $\theta$-transverse, the equivalence $(1)\Leftrightarrow (2) \Leftrightarrow (3)$ follows from Lemma \ref{lem.npsiismpsi} and Theorem \ref{ggg}.

\medskip
{\noindent \bf The divergent case.}
We will show  $(3) \Rightarrow (5) \Rightarrow (4)$, $(3) \Rightarrow (6) \Rightarrow (4)$, and $(4) \Rightarrow (1)$, which will then finish the proof of this case.

In order to show $(3) \Rightarrow (5)$, assume (3). Consider a pair $(\nu, \nu_{\i}) \in \mathcal{M}_{\psi}^{\theta} \times \mathcal{M}_{\psi \circ \i}^{\i(\theta)}$. Then for $\nu$-a.e. $\xi \in \La_{\theta}$, $\xi$ belongs to $ \La_{\theta}^{\mathsf{con}}$, that is, there exist $g \in G$ and sequences $\ga_i \in \Ga$, $m_i\in M_\theta$ and $a_i \in A^+$ such that $\xi = gP_{\theta}$, the sequence $\ga_i gm_i a_i$ is bounded, and the sequence $\ga_i$ is infinite. By the $\theta$-regularity of $\Ga$ and Lemma \ref{lem.cptcartan}, we have $\min_{\alpha\in \theta} \alpha(\log a_i) \to \infty$ as $i \to \infty$. For any $\eta \in \La_{\i(\theta)}$ such that $(\xi, \eta) \in \La_{\theta}^{(2)}$ and any $u \in \fa_{\theta}$, there exists $n \in N_{\theta}$ and $a \in A_{\theta}$ such that $gna S_{\theta} \in G/S_{\theta}$ represents $(\xi, \eta, u) \in \La_{\theta}^{(2)} \times \fa_{\theta} \subset G/S_{\theta}$. Since $a_i \in A^+$, the sequence
$$
\ga_i g na m_i a_i = (\ga_i g m_i a_i) ( a_i^{-1} m_i^{-1} n m_i a_i a) \quad \text{is bounded}.
$$
 This implies that writing $u_i = p_{\theta}(\log a_i) \in \fa_{\theta}^+$,
$$
\ga_i (\xi, \eta, u + u_i) \in \La_{\theta}^{(2)} \times \fa_{\theta} \quad \text{is precompact.}
$$
Moreover, since $\alpha(\log a_i) \to \infty$ for all $\alpha \in \theta$, we also have $u_i \to \infty$ in $\fa_{\theta}$. Projecting to $\Omega_{\theta}$, this implies that there exists a compact subset $Q \subset \Omega_{\theta}$ so that
$$
\int_{v \in \fa_{\theta}} {\mathbbm 1}_{Q}(\Ga (\xi, \eta, u + v)) dv = \infty.
$$
Since this holds for $\nu$-a.e. $\xi \in \La_{\theta}$, any $\eta \in \La_{\i(\theta)}$ with $(\xi, \eta) \in \La_{\theta}^{(2)}$, and any $u \in \fa_{\theta}$, the $\fa_{\theta}$-action on $(\Omega_\theta, {\mathsf m}_{\nu,\nu_{\i}})$  is completely conservative by Lemma \ref{ccc}. By \cite[Lemma 8.7]{KOW}, the complete conservativity implies the ergodicity, showing (5).

To see the implication $(5) \Rightarrow (4)$, note that for $(\nu, \nu_{\i}) \in  \mathcal{M}_{\psi}^{\theta} \times \mathcal{M}_{\psi \circ \i}^{\i(\theta)}$, the ergodicity of the $\fa_{\theta}$-action on $(\Omega_{\theta}, \mathsf{m}_{\nu, \nu_{\i}})$ is equivalent to the ergodicity of the $\Ga$-action on $(\La_{\theta}^{(2)}, \nu \times \nu_{\i})$ by the definition of $\mathsf{m}_{\nu, \nu_{\i}}$. Hence, if (5) holds, then $\nu \times \nu_{\i}$ has no atom in $\La_{\theta}^{(2)}$  by Lemma \ref{lem.ergodicatomless}. Consider the measure $\la$ on $\La_{\theta}^{(2)}$ defined by 
$$
d \la(\xi, \eta) := e^{\psi \left( \beta_{\xi}^{\theta}(e, g) + \i(\beta_{\eta}^{\i(\theta)}(e, g))\right)} d\nu(\xi) d \nu_{ \i}(\eta)
$$
where $g \in G$ is chosen so that $(\xi, \eta) = (gP_{\theta}, g w_0 P_{\i(\theta)})$ as in \eqref{bms}. Then $\la$ is $\Ga$-invaraint, $\Ga$-ergodic, and atomless.
Since $\Ga$ is countable, this implies that the $\Ga$-action on $(\La_{\theta}^{(2)}, \la)$ is completely conservative \cite[Theorem 14]{Kaimanovich2010hopf}. Therefore, (4) follows. This establishes $(3) \Rightarrow (5) \Rightarrow (4)$. The implications $(3) \Rightarrow (6) \Rightarrow (4)$ can be proved by a similar argument.

To show the implication $(4)
\Rightarrow (1)$,  fixing a pair $(\nu, \nu_{\i}) \in \mathcal{M}_{\psi}^{\theta} \times \mathcal{M}_{\psi \circ \i}^{\i(\theta)}$, we will show that the complete conservativity of the $\Ga$-action on $(\La_{\theta}^{(2)}, \nu \times \nu_{\i})$ implies (1).
Since $(\Ga, \La_{\theta}^{(2)}, \nu \times \nu_{\i})$  is completely conservative, it follows from Lemma \ref{ccc} that for $\nu \times \nu_{\i}$-a.e. $(\xi, \eta) \in \La_{\theta}^{(2)}$, there exists a compact subset $B_{(\xi, \eta)} \subset \La_{\theta}^{(2)}$ and a sequence $\ga_i \in \Ga$ such that $\ga_i (\xi, \eta) \in B_{(\xi, \eta)}$ for all $i$. In particular, after passing to a subsequence, we have that the sequence $\ga_i (\xi, \eta)$ is convergent in $\La_{\theta}^{(2)}$. By Lemma \ref{cor.convinpair}, we have $\ga_i^{-1} \to \xi$ or $\ga_i^{-1} \to \eta$ conically. In particular,
either $\xi\in \La_{\theta}^{\mathsf{con}}$ or $\eta \in \La_{\i(\theta)}^{\mathsf{con}}$, and therefore $$\max\{\nu(\La_{\theta}^{\mathsf{con}}), \nu_{\i}(\La_{\i(\theta)}^{\mathsf{con}})\} > 0.$$
In either case, it follows from Theorem \ref{ggg} that $$\sum_{\ga \in \Ga} e^{-\psi(\mu_{\theta}(\ga))} = \sum_{\ga \in \Ga} e^{-(\psi \circ \i)(\mu_{\i(\theta)}(\ga))} = \infty.$$ Now (1) follows.

\medskip
{\noindent \bf The convergent case.}
From the divergent case, we have the following equivalences for the convergent case:
$$
\begin{tikzcd}[column sep=small, row sep=small]
 & & & (4) \arrow[ld, Rightarrow] \\
(1) \arrow[r, Leftrightarrow] & (2) \arrow[r, Leftrightarrow] & (3)  & (5) \arrow[l, Rightarrow] \\
& & & (6) \arrow[ul, Rightarrow] 
\end{tikzcd}
$$

We first observe $(4) \Rightarrow (5)$ and $(4) \Rightarrow (6)$. As mentioned in the proof of the divergent case, the ergodicity in (4), (5), and (6) are all equivalent to each other. Moreover, if $B \subset \La_{\theta}^{(2)}$ is a wandering set for the $\Ga$-action on $(\La_{\theta}^{(2)}, \nu \times \nu_{\i})$ where $(\nu, \nu_{\i}) \in \mathcal{M}_{\psi}^{\theta} \times \mathcal{M}_{\psi \circ \i}^{\i(\theta)}$, then for any non-empty compact subset $V \subset \fa_{\theta}$, the set $\Ga (B \times V) \subset \Omega_{\theta}$ is a wandering set for the $\fa_{\theta}$-action on $(\Omega_{\theta}, \mathsf{m}_{\nu, \nu_{\i}})$. Since $\fa_{\theta}$ is $\sigma$-compact, this implies that if $(\La_{\theta}^{(2)}, \nu \times \nu_{\i})$ is a countable union of wandering subsets, then so is $(\Omega_{\theta}, \mathsf{m}_{\nu, \nu_{\i}})$, up to measure zero. Therefore, the complete dissipativity in (4) implies the one in (5), and hence $(4) \Rightarrow (5)$ follows. The implication $(4) \Rightarrow (6)$ can be shown similarly.

We finish the proof by showing $(1) \Rightarrow (4)$. Assume (1) and fix $(\nu, \nu_{\i}) \in \mathcal{M}_{\psi}^{\theta} \times \mathcal{M}_{\psi \circ \i}^{\i(\theta)}$. We first show that the $\Ga$-action on $(\La_{\theta}^{(2)}, \nu \times \nu_{\i})$ is completely dissipative. We write the Hopf decomposition $\La_{\theta}^{(2)} = \Omega_C \cup \Omega_D$ and suppose to the contrary that $(\nu \times \nu_{\i})(\Omega_C) > 0$. By applying Lemma \ref{ccc} to the restriction $(\nu \times \nu_{\i})|_{\Omega_C}$, we deduce that there exists a Borel subset $\Omega \subset \La_{\theta}^{(2)}$ with $(\nu \times \nu_{\i})(\Omega) > 0$ such that for any $(\xi, \eta) \in \Omega$, there exist a compact subset $B_{(\xi, \eta)} \subset \Omega$ and a sequence $\ga_i \in \Ga$ such that $\ga_i (\xi, \eta) \in B_{(\xi, \eta)}$ for all $i$. Hence after passing to a subsequence, the sequence $\ga_i (\xi, \eta)$ is convergent in $\Omega \subset \La_{\theta}^{(2)}$, and therefore it follows from Lemma \ref{cor.convinpair} that $\ga_i^{-1} \to \xi$ or $\ga_i^{-1} \to \eta$ conically. Since $(\nu \times \nu_{\i})(\Omega) > 0$, it implies $$\max\{\nu(\La_{\theta}^{\mathsf{con}}), \nu_{\i}(\La_{\i(\theta)}^{\mathsf{con}})\} > 0.$$
In either case, it follows from Theorem \ref{ggg} that $$\sum_{\ga \in \Ga} e^{-\psi(\mu_{\theta}(\ga))} = \sum_{\ga \in \Ga} e^{-(\psi \circ \i)(\mu_{\i(\theta)}(\ga))} = \infty,$$ which contradicts (1). Therefore, $(\nu \times \nu_{\i})(\Omega_C) = 0$ and hence the $\Ga$-action on $(\La_{\theta}^{(2)}, \nu \times \nu_{\i})$ is completely dissipative.

Now it remains to show that the $\Ga$-action on $(\La_{\theta}^{(2)}, \nu \times \nu_{\i})$ is non-ergodic. Suppose not. 
Then the $\Ga$-action on $(\La_{\theta}^{(2)}, \nu \times \nu_{\i})$ is ergodic, and hence $\nu \times \nu_{\i}$ has no atom in $\La_{\theta}^{(2)}$ by Lemma \ref{lem.ergodicatomless}. As before, since $\Ga$ is countable, this must imply that the $\Ga$-action on $(\La_{\theta}^{(2)}, \nu \times \nu_{\i})$ is completely conservative \cite[Theorem 14]{Kaimanovich2010hopf}. This is a contradiction, and (4) follows.
\qed

\subsection*{Proof of Theorem \ref{main}}

Theorem \ref{tgg} implies Theorem \ref{main}(1). Theorem \ref{main}(2) follows from Theorem \ref{ggg} and the following corollary.
\qed

\begin{cor} \label{cor.singleconfmeas} Let $\Ga$ be a Zariski dense $\theta$-transverse subgroup.
  If $\psi \in \fa_{\theta}^*$ is $(\Ga, \theta)$-proper with $\mathsf{M}_{\psi}^{\theta} \neq \emptyset$ and $\sum_{\ga \in \Ga} e^{-\psi(\mu_{\theta}(\ga))} = \infty$, then $\# {\mathsf M}^\theta_\psi=1$.
\end{cor}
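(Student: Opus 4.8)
The plan is to reduce the uniqueness to the ergodicity established in Theorem \ref{ceq}, after first showing that in the divergence case every conformal measure of dimension $\psi$ is supported on $\La_\theta$ and is atomless. Since $\Ga$ is $\theta$-regular, Proposition \ref{reg} gives $\La_\theta^{\mathsf{con}}\subset\La_\theta$; combined with Theorem \ref{ggg}, the hypothesis $\sum_{\ga\in\Ga} e^{-\psi(\mu_\theta(\ga))}=\infty$ forces $\nu(\La_\theta^{\mathsf{con}})=1$ for every $\nu\in\mathsf M_\psi^\theta$, so in fact $\mathsf M_\psi^\theta=\mathcal M_\psi^\theta$. To see that each such $\nu$ is atomless, I would argue by contradiction: an atom $\xi_0$ must be conical, since $\nu$ is concentrated on $\La_\theta^{\mathsf{con}}$, so by Lemma \ref{cons} there is an infinite sequence $\ga_i\to\infty$ in $\Ga$ and $N>0$ with $\xi_0\in O_N^\theta(o,\ga_i o)$ for all $i$. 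Enlarging $N$ beyond the constant $R(\nu)$ of Lemma \ref{lem.shadow}, the upper shadow bound gives $\nu(\{\xi_0\})\le\nu(O_N^\theta(o,\ga_i o))\ll e^{-\psi(\mu_\theta(\ga_i))}\to 0$, using the $(\Ga,\theta)$-properness of $\psi$ together with the $\theta$-regularity of $\Ga$; this contradicts $\nu(\{\xi_0\})>0$.

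Next I would set up the comparison. Fix $\nu_\i\in\mathcal M_{\psi\circ\i}^{\i(\theta)}$, which is nonempty by pushing forward a member of $\mathcal M_\psi^\theta$ along the $\Ga$-equivariant homeomorphisms of Lemma \ref{bij}, exactly as in the proof of Theorem \ref{ceq}; in particular $\nu_\i$ is atomless by the previous paragraph. Given two measures $\nu_1,\nu_2\in\mathsf M_\psi^\theta$, set $\bar\nu=\tfrac12(\nu_1+\nu_2)$, which again lies in $\mathcal M_\psi^\theta$. By Lemma \ref{dia} the products $\bar\nu\times\nu_\i$ and $\nu_1\times\nu_\i$ are then supported on $\F_\theta^{(2)}$, hence on $\La_\theta^{(2)}$; and since we are in the divergence case, Theorem \ref{ceq} tells us the $\Ga$-action on $(\La_\theta^{(2)},\bar\nu\times\nu_\i)$ is ergodic.

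Finally I would run the Radon--Nikodym argument. Both $\bar\nu\times\nu_\i$ and $\nu_1\times\nu_\i$ are quasi-invariant under $\Ga$ with the \emph{same} cocycle $e^{\psi(\beta_\xi^\theta(e,\ga))+(\psi\circ\i)(\beta_\eta^{\i(\theta)}(e,\ga))}$, so the density $f(\xi,\eta)=\tfrac{d(\nu_1\times\nu_\i)}{d(\bar\nu\times\nu_\i)}(\xi,\eta)=\tfrac{d\nu_1}{d\bar\nu}(\xi)$, which is well defined because $\nu_1\le 2\bar\nu$, is $\Ga$-invariant $\bar\nu\times\nu_\i$-almost everywhere. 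Ergodicity forces $f$ to be a.e. constant, and comparing the total masses of these two probability measures gives $f\equiv 1$; integrating out the $\eta$-variable then yields $\nu_1=\bar\nu$, and symmetrically $\nu_2=\bar\nu$, whence $\nu_1=\nu_2$. As $\nu_1,\nu_2\in\mathsf M_\psi^\theta$ were arbitrary, this gives $\#\mathsf M_\psi^\theta=1$. The only delicate point, and the step I expect to require the most care, is the passage to $\La_\theta^{(2)}$: one must rule out atoms so that Lemma \ref{dia} applies and the ergodicity of Theorem \ref{ceq} is genuinely ergodicity of the full product measure, and it is the shadow-lemma estimate above that makes this clean.
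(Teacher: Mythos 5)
Your proof is correct, and it takes a genuinely different route from the paper's. The paper's own proof is a short reduction to outside work: $\delta_\psi=1$ by Theorem \ref{tgg}, existence of a $(\Ga,\psi)$-conformal measure supported on $\La_{\theta\cup\i(\theta)}$ by Proposition \ref{measure}, uniqueness of that measure quoted from Canary--Zhang--Zimmer \cite{Canary2023} (proved there for symmetric $\theta$ and for measures on the limit set), and transfer back to $\F_\theta$ via the equivariant homeomorphisms of Lemma \ref{bij} --- a transfer which implicitly uses, as you make explicit, that in the divergence case Theorem \ref{ggg} and Proposition \ref{reg} force every member of ${\mathsf M}^\theta_\psi$ to charge only $\La_\theta$. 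Your argument instead re-proves uniqueness internally by the classical Hopf--Sullivan mechanism: atomlessness via the upper bound of the shadow lemma (Lemma \ref{lem.shadow}(2)), which indeed needs no hypothesis on $\supp \nu$, together with the monotonicity of shadows in the radius, so enlarging $N$ past $R(\nu)$ is harmless; Lemma \ref{dia} to place the product measures fully on $\La_\theta^{(2)}$; ergodicity from Theorem \ref{ceq}, whose hypothesis $\mathcal M_\psi^\theta\neq\emptyset$ you have verified; and the barycenter trick $\bar\nu=\tfrac12(\nu_1+\nu_2)$, which guarantees $\nu_1\ll\bar\nu$ and makes the Radon--Nikodym derivative a $\Ga$-invariant function of $\xi$ alone. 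There is no circularity, since Theorem \ref{ceq} and Theorem \ref{ggg} are established before and independently of this corollary. As for what each approach buys: the paper's proof is two lines but leans on the external uniqueness theorem of \cite{Canary2023}; yours stays within the paper's own theorems (though the ergodicity step of Theorem \ref{ceq} itself cites \cite{KOW} and \cite{BCZZ}, so neither route is fully self-contained) and it supplies the support and atomlessness details that the paper's proof leaves implicit.
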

\begin{proof}
 By Theorem \ref{tgg} and the hypothesis on $\psi$, we have $\delta_\psi=1$.
By Proposition \ref{measure},
there exists a $(\Ga,\psi)$-conformal measure on $\F_{\t \cup \i(\t)}$, and is supported on $\La_{\theta \cup \i(\theta)}$. Moreover it is unique by \cite[Theorem 1.4]{Canary2023}. 
It then follows from Lemma \ref{bij} that there exists a unique $(\Ga,\psi)$-conformal measure on $\F_{\theta}$ as well.
\end{proof}

\section{Lebesgue measures of conical sets and disjoint dimensions}\label{app}
In this section, we discuss some of consequences of Theorem \ref{ggg}.
\subsection*{Lebesgue measure of conical sets}
\begin{theorem} \label{Ahlfors}
    If $\Ga < G$ is a Zariski dense $\ts$-transverse subgroup, then $$\La_{\theta} = \F_{\theta} \quad \mbox{or} \quad
    \Leb_\theta(\La_{\theta}^{\mathsf{con}}) = 0.$$ Moreover, in the former case, $\theta$ is the simple root of a rank one factor of $G$.
\end{theorem}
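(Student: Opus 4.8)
The plan is to exhibit $\Leb_\theta$ as a conformal measure and feed it into Theorem \ref{ggg}. The $K$-invariant probability measure $\Leb_\theta$ on $\F_\theta = K/M_\theta$ is a $(\Ga, 2\rho_\theta)$-conformal measure, where $2\rho_\theta = \sum_{\alpha \in \Phi^+ - [\Pi-\theta]} (\dim \fg_\alpha)\,\alpha$ is the sum of the roots occurring in $\op{Lie} N_\theta$; this form lies in $\fa_\theta^*$ because it is invariant under the Weyl group of $L_\theta$, and the conformality relation is the standard Jacobian computation for the transitive $G$-action on $G/P_\theta$. With this in hand I would apply Theorem \ref{ggg} to $\nu = \Leb_\theta$ and $\psi = 2\rho_\theta$: if $\sum_{\ga\in\Ga} e^{-2\rho_\theta(\mu_\theta(\ga))} < \infty$ then $\Leb_\theta(\La_\theta^{\mathsf{con}}) = 0$, giving the second alternative directly; if the series diverges then $\Leb_\theta(\La_\theta^{\mathsf{con}}) = 1$. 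In the divergent case, $\theta$-regularity gives $\La_\theta^{\mathsf{con}} \subseteq \La_\theta$ (Proposition \ref{reg}), so $\Leb_\theta(\La_\theta) = 1$; since $\La_\theta$ is closed and $\Leb_\theta$ has full support, this forces $\La_\theta = \F_\theta$. This settles the dichotomy.

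For the ``moreover'' statement I assume $\La_\theta = \F_\theta$ and first upgrade this to $\La_{\i(\theta)} = \F_{\i(\theta)}$. By Lemma \ref{bij} the natural projections give homeomorphisms $\La_{\i(\theta)} \cong \La_{\theta \cup \i(\theta)} \cong \La_\theta = \F_\theta$, so $\La_{\i(\theta)}$ is a compact subset of the manifold $\F_{\i(\theta)}$ that is homeomorphic to the manifold $\F_\theta$ of the same dimension $\dim \op{Lie} N_\theta = \dim \op{Lie} N_{\i(\theta)}$. By invariance of domain the inclusion $\La_{\i(\theta)} \hookrightarrow \F_{\i(\theta)}$ has open image, and since $\La_{\i(\theta)}$ is also closed in the connected space $\F_{\i(\theta)}$, I conclude $\La_{\i(\theta)} = \F_{\i(\theta)}$. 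I then record the $\Ga$-equivariant boundary homeomorphism $\zeta : \F_\theta = \La_\theta \to \La_{\i(\theta)} = \F_{\i(\theta)}$ supplied by Lemma \ref{bij}.

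The heart of the argument is to translate $\theta$-antipodality through $\zeta$. Writing a point of $\La_{\theta\cup\i(\theta)}$ over $gP_{\theta\cup\i(\theta)}$ and using $P_{\theta\cup\i(\theta)} = P_\theta \cap P_{\i(\theta)}$, the statement that two distinct points of $\La_{\theta\cup\i(\theta)}$ are in general position unwinds, via the definition \eqref{f2} and \eqref{op}, to: $(u, \zeta(u')) \in \F_\theta^{(2)}$ for all $u \neq u'$ in $\F_\theta$. Fixing $u$ and letting $u'$ range over $\F_\theta - \{u\}$, the image $\zeta(u')$ ranges over all of $\F_{\i(\theta)} - \{\zeta(u)\}$ (here I use the surjectivity of $\zeta$ established above), so the non-transverse locus
$$C_u := \{\eta \in \F_{\i(\theta)} : (u,\eta) \notin \F_\theta^{(2)}\}$$
is contained in the single point $\{\zeta(u)\}$. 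Since $\theta \neq \emptyset$, the transverse locus is the open dense big cell and $C_u$ is its nonempty complement, whence $C_u = \{\zeta(u)\}$ is exactly one point.

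Finally I would extract the rank one conclusion from ``$C_u$ is a single point''. This says $\F_{\i(\theta)}$ is the one-point compactification of its affine big cell, hence a topological sphere. Decomposing $G = \prod_i G_i$ with $\F_{\i(\theta)} = \prod_i \F^{(i)}_{\i(\theta_i)}$, the complement of a product of big cells is a single point only if all but one factor is a point (so $\theta_i = \emptyset$ for $i \neq i_0$) and the remaining $\F^{(i_0)}_{\i(\theta_{i_0})}$ is a sphere with one-point big-cell complement; equivalently $|\mathcal W / \mathcal W_{L_{\i(\theta_{i_0})}}| = 2$. This holds precisely when $G_{i_0}$ has real rank one and $\i(\theta_{i_0}) = \Pi_{i_0}$, so that $\theta = \theta_{i_0} = \i(\Pi_{i_0}) = \Pi_{i_0}$ is the single simple root of the rank one factor $G_{i_0}$, as claimed. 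I expect the last structural input — that a generalized flag manifold whose open Bruhat cell has single-point complement is a rank one boundary sphere — to be the main obstacle, and I would justify it through the classification of flag manifolds that are topological spheres together with the elementary product analysis reducing the problem to one simple factor.
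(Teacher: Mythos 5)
Your treatment of the dichotomy is correct and is essentially the paper's own argument: exhibit $\Leb_\theta$ as a $\Ga$-conformal measure (the paper cites \cite[Lemma 6.3]{Quint2002Mesures}, with dimension $2\rho\circ p_\theta$, which agrees with your $2\rho_\theta$ as a linear form on $\fa_\theta$), feed it into the zero-one law of Theorem \ref{ggg}, and combine $\La_\theta^{\mathsf{con}}\subset\La_\theta$ (Proposition \ref{reg}) with the full support of $\Leb_\theta$. The paper phrases this contrapositively, but the content is identical.

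The ``moreover'' clause is where you genuinely diverge from the paper, and it contains a real gap, located exactly at the step you flag as the main obstacle. Your reductions up to that point are sound: Lemma \ref{bij} plus invariance of domain does give $\La_{\i(\theta)}=\F_{\i(\theta)}$, transporting antipodality through $\zeta$ does show that each non-transverse locus $C_u$ is a single point, and the product analysis correctly isolates one simple factor. But the remaining assertion---that a flag manifold of a simple group whose big cell has one-point complement can only occur in real rank one---is precisely the mathematical content of the ``moreover'' statement, and you do not prove it; you delegate it to a ``classification of flag manifolds that are topological spheres,'' which is not an off-the-shelf citable result. Moreover, the parenthetical ``equivalently $|\cal W/\cal W_{L_{\i(\theta_{i_0})}}|=2$'' is not a correct equivalence as stated. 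The big cell is the open orbit of the parabolic $P_{\theta}$ on $\F_{\i(\theta)}$, which is in general a union of many Bruhat cells, so a one-point complement does not formally translate into a two-element coset (or double coset) count; and in the other direction, two parabolic orbits do not give a one-point complement: for $\PSL_3(\R)$, the stabilizer of a line acting on planes has exactly two orbits, but the non-open orbit is a $\P^1$, not a point. Closing the gap needs genuine input, for instance: the singleton complement is $P_\theta$-invariant, hence a $P_\theta$-fixed point, which forces $\theta=\i(\theta)$ (equal dimensions plus conjugate standard parabolics being equal), and one is then reduced to showing that an irreducible Weyl group of rank at least $2$ admits no proper standard parabolic subgroup with exactly two double cosets---a $2$-transitivity classification; alternatively, one must invoke the fact that the total mod-$2$ Betti number of a real flag manifold equals its number of Schubert cells, so that the sphere conclusion yields two cells and hence an index-two, therefore normal, therefore trivial parabolic subgroup of $\cal W$. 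Either route is far from elementary.

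By contrast, the paper's Proposition \ref{one} settles this with root-system combinatorics alone: if a simple factor met by $\theta$ has rank at least $2$, Lemma \ref{weyl} produces a Weyl element $w$ with $\op{Ad}_w(\fa_\theta^+)$ equal to neither $\fa_\theta^+$ nor $-\fa_{\i(\theta)}^+$; then $\La_\theta=\F_\theta$ together with $\La_\theta=\pi_\theta(\La)$ (Lemma \ref{lq}) and Corollary \ref{cor.genweyl} yield two distinct points of $\La_{\theta\cup\i(\theta)}$ that are not in general position, contradicting antipodality, and a short product argument rules out two rank-one factors. Your topological reduction is attractive and could in principle be completed, but as written the key structural claim rests on an unproven classification, so the proof is incomplete.
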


We need the following proposition for the second claim of the above theorem.
\begin{prop}\label{one}
    Suppose that $\Ga$ is Zariski dense and $\ts$-antipodal and that $\La_\theta=\F_\theta$. Then $\theta$ consists of the simple root of a rank one factor of $G$.
\end{prop}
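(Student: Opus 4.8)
The plan is to reduce the whole statement to a finite combinatorial fact about orbits of a parabolic subgroup on a flag variety. Write $\sigma=\theta\cup\i(\theta)$. Since $\Ga$ is $\theta$-antipodal, Lemma \ref{bij} gives $\Ga$-equivariant homeomorphisms $p_\theta\colon\La_\sigma\to\La_\theta$ and $p_{\i(\theta)}\colon\La_\sigma\to\La_{\i(\theta)}$. Using $\La_\theta=\F_\theta$, I would form the $\Ga$-equivariant homeomorphism
\[
f:=p_{\i(\theta)}\circ\bigl(p_\theta|_{\La_\sigma}\bigr)^{-1}\colon \F_\theta\longrightarrow \La_{\i(\theta)}.
\]
Since $\F_\theta$ is a compact connected manifold of the same dimension as $\F_{\i(\theta)}$ (the opposition involution identifies $\langle\Pi\setminus\theta\rangle$ with $\langle\Pi\setminus\i(\theta)\rangle$, so $\dim\F_\theta=\dim\F_{\i(\theta)}$), invariance of domain forces $f(\F_\theta)=\La_{\i(\theta)}$ to be open and closed, whence $\La_{\i(\theta)}=\F_{\i(\theta)}$ and $f$ is a homeomorphism. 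Note this uses only $\theta$-antipodality and $\La_\theta=\F_\theta$, not Zariski density.

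Next I would transfer the antipodality through $f$. The key structural input is that two points $\zeta,\zeta'\in\F_\sigma$ are in general position iff both cross-pairs $(p_\theta\zeta,p_{\i(\theta)}\zeta')$ and $(p_\theta\zeta',p_{\i(\theta)}\zeta)$ lie in $\F_\theta^{(2)}$; this is a routine consequence of the identity $P_\theta^+=N_\theta^+(P_\theta\cap P_\theta^+)$ together with \eqref{op} (for $\PSL_3(\R)$ it reads $L\not\subset H'$ and $L'\not\subset H$). Granting this, antipodality of $\La_\sigma$ becomes: for all distinct $\xi,\xi'\in\F_\theta$ one has $(\xi,f(\xi'))\in\F_\theta^{(2)}$. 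Fixing $\eta\in\F_{\i(\theta)}$ and setting $\xi'=f^{-1}(\eta)$, this says the non-transverse locus $Y_\eta:=\{\xi\in\F_\theta:(\xi,\eta)\notin\F_\theta^{(2)}\}$ is contained in $\{\xi'\}$; as the ``diagonal'' point $\zeta_{\xi'}\in\La_\sigma$ is never in general position with itself, $\xi'\in Y_\eta$, so $Y_\eta=\{f^{-1}(\eta)\}$ is a single point for every $\eta\in\F_{\i(\theta)}$. Here surjectivity of $f$ (i.e.\ $\La_{\i(\theta)}=\F_{\i(\theta)}$) is precisely what promotes this to \emph{all} $\eta$, and this is where $\La_\theta=\F_\theta$ does the essential work.

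Now $Y_\eta$ is the complement of the open $\Stab_G(\eta)$-orbit on $\F_\theta$, and $\Stab_G(\eta)$ is a conjugate of $P_{\i(\theta)}$. That $Y_\eta$ is one point means $P_{\i(\theta)}$ fixes a point of $\F_\theta$, i.e.\ $P_{\i(\theta)}$ lies in a conjugate of $P_\theta$; since $\dim P_{\i(\theta)}=\dim P_\theta$ the two parabolics are conjugate, and conjugate standard parabolics are equal, so $\theta=\i(\theta)$. Thus $\sigma=\theta$, $\F_\sigma=\F_\theta$, and $Y_{[P_\theta]}=\{[P_\theta]\}$ says exactly that $P_\theta$ has precisely two orbits on $\F_\theta=G/P_\theta$; equivalently, writing $W$ for the (relative) Weyl group and $W_{\Pi\setminus\theta}=\langle s_\beta:\beta\in\Pi\setminus\theta\rangle$ for the Levi Weyl group, $|W_{\Pi\setminus\theta}\backslash W/W_{\Pi\setminus\theta}|=2$.

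Finally I would run the combinatorics. Writing $G$ as an almost direct product of simple factors $G_j$ with $\theta_j=\theta\cap\Pi_j$, the double-coset count factorizes, and a factor contributes $1$ exactly when $\theta_j=\emptyset$ (then $\F_{\theta_j}$ is a point). Hence exactly one factor $G_1$ has $\theta_1\neq\emptyset$, and its own count is $2$. If $G_1$ had rank $\ge 2$, then for any $\alpha\in\theta_1$ the elements $e$, $s_\alpha$, and the longest minimal double-coset representative would be three distinct double cosets: one checks $s_\alpha$ is a minimal representative of length $1$ (because $s_\alpha(\beta)\in\Phi^+$ for all simple $\beta\in\Pi_1\setminus\theta_1$), while the open coset has minimal length equal to the number of positive roots of $G_1$ outside the Levi, which is $\ge 2$ in rank $\ge 2$; this contradicts the count $2$. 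Therefore $G_1$ has rank one and $\theta_1=\Pi_1=\{\alpha\}$, so $\theta=\{\alpha\}$ is the simple root of the rank one factor $G_1$, as claimed. The step I expect to be the main obstacle is the second paragraph: establishing the precise dictionary between general position in $\F_\sigma$ and the cross-pairs in $\F_\theta\times\F_{\i(\theta)}$, and thereby squeezing $Y_\eta$ down to a single point; everything afterward is standard parabolic orbit theory and a finite Weyl-group check.
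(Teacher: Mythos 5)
Your proof is correct, but it takes a genuinely different route from the paper's. The paper never shows $\La_{\i(\theta)}=\F_{\i(\theta)}$ or $\theta=\i(\theta)$: instead, for a simple factor of rank at least two meeting $\theta$, it chooses a Weyl element $w$ with $\op{Ad}_w(\fa_\theta^+)$ equal to neither $\fa_\theta^+$ nor $-\fa_{\i(\theta)}^+$ (Lemma \ref{weyl}), lifts the point $kw\xi_\theta\in\F_\theta=\La_\theta$ to $\La_{\theta\cup\i(\theta)}$ using $\La_\theta=\pi_\theta(\La)$ (Lemma \ref{lq} --- this is exactly where the paper invokes Zariski density), and applies Corollary \ref{cor.genweyl} to exhibit two distinct points of $\La_{\theta\cup\i(\theta)}$ that are not in general position, contradicting antipodality; the possibility of two factors meeting $\theta$ is then excluded by a separate argument about products of rank one boundaries. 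Your argument --- invariance of domain to get $\La_{\i(\theta)}=\F_{\i(\theta)}$, antipodality recast as ``$Y_\eta$ is a singleton for every $\eta$'', then parabolic double-coset counting --- treats all simple factors uniformly (the factorization of the count $2$ replaces the paper's ad hoc product argument) and yields the stronger intermediate conclusions $\La_{\i(\theta)}=\F_{\i(\theta)}$ and $\theta=\i(\theta)$ along the way. Two remarks. First, your self-identified ``main obstacle'' is lighter than you think: only the trivial direction of the cross-pair dictionary (general position in $\F_{\theta\cup\i(\theta)}$ forces the cross-pairs into $\F_\theta^{(2)}$) together with non-transversality of the diagonal pair $(p_\theta\zeta,p_{\i(\theta)}\zeta)$ is ever used; the latter is immediate from \eqref{op} and Corollary \ref{cor.genweyl}, since $w_0\notin P_\theta$. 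The full equivalence is indeed true --- by the Bruhat decomposition it reduces to $W_{I}w_0W_{J}\cap W_{J}w_0W_{I}=W_{I\cap J}\,w_0$ for $I=\Pi\setminus\theta$, $J=\Pi\setminus\i(\theta)$, which follows from $w_0W_{J}w_0^{-1}=W_{I}$ --- but you never need it. Second, your claim to avoid Zariski density is only as good as Lemma \ref{bij}: the surjectivity of the projection $\La_{\theta\cup\i(\theta)}\to\La_\theta$, which the definition of $f$ requires, is not addressed in that lemma's proof and tacitly uses $\theta$-regularity of $\Ga$ (without it, $\La_{\theta\cup\i(\theta)}$ could even be empty while $\La_\theta=\F_\theta$); this is harmless in the transverse setting where the proposition is applied (Theorem \ref{Ahlfors}), but it means your proof trades the paper's Zariski density for a regularity hypothesis rather than eliminating both.
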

\begin{proof}
 We write $G$ as the almost direct product of simple real algebraic groups $G=G_1\cdots G_m$. Let $n$ be an index such that $\theta$ contains a simple root of $G_n$. Denoting by $\pi_n:G\to G_n$ the canonical projection,  $\pi_n(P_\theta)$ is a proper parabolic subgroup  of $G_n$ and the limit set of $\overline{\pi_n(\Ga)}$ in $G_n/\pi_n(P_\theta)$ is equal to all of $G_n/\pi_n(P_\theta)$, as the limit set of a Zariski dense subgroup is the unique minimal set (Lemma \ref{lq}). Suppose that the rank of $G_n$ is at least $2$. Fix $kP_{\theta\cup \i(\theta)}\in \La_{\theta\cup \i(\theta)}$ for some $k\in K$. Let $w$ be a Weyl element given by Lemma \ref{weyl} below  such that
$ w\notin w_0 N_{\theta}^+ P_{\theta}\cup P_{\theta}$. 
Noting that
$w_0 N_{\theta \cup \i(\theta)}^+ P_{\theta\cup \i(\theta)} M_\theta \subset w_0 P_{\theta}^+ P_{\theta}= w_0N_\theta^+P_\theta$, we have
\be\label{mtt} w\notin  w_0 N_{\theta \cup \i(\theta)}^+ P_{\theta\cup \i(\theta)}M_\theta\cup  P_{\theta\cup \i(\theta)}M_\theta.\ee 
Note again that both $\La_{\theta}$ and $\La_{\Pi}$ are unique $\Ga$-minimal subsets of $\F_{\theta}$ and $\F$, and hence the canonical projection $\F \to \F_{\theta}$ maps $\La_{\Pi}$ onto $\La_{\theta}$.
Since $\F=K/M$ and  $kwM_\theta\in \F_\theta=K/M_\theta=\La_\theta$, we may choose $m\in M_\theta$ such that $kwm P \in\La_{\Pi}$, and hence
$kwm P_{\theta\cup \i(\theta)}\in\La_{\theta\cup \i(\theta)}$.
Then by \eqref{mtt},  $$wm \notin w_0 N_{\theta \cup \i(\theta)}^+ P_{\theta\cup \i(\theta)}\cup P_{\theta\cup \i(\theta)}.$$
The condition that $wm\notin P_{\theta\cup \i(\theta)}$ implies that
$kw m P_{\theta\cup\i(\theta)}
 \cap kP_{\theta\cup \i (\theta)}=\emptyset$. 
 Also, by Corollary \ref{cor.genweyl},
 the condition that $wm \notin w_0 N_{\theta \cup \i(\theta)}^+ P_{\theta\cup \i(\theta)}$  implies 
that $(kw m P_{\theta\cup\i(\theta)}, kP_{\theta\cup \i(\theta)})
\notin G . (P_{\theta\cup\i(\theta)}, w_0P_{\theta\cup\i(\theta)}),$ that is,
$kwm P_{\theta\cup\i(\theta)}$ is not in general position with $P_{\theta\cup\i(\theta)}$.
This yields a contradiction to the $\theta\cup \i(\theta)$-antipodality of $\Ga$. Therefore 
for any $n$ such that $\theta$ contains a simple root of $G_n$,
the rank of $G_n$ must be one.
If there are $n\ne n'$ with this property, the map $\ga \to (\pi_n(\ga), \pi_{n'}(\ga))$
must be a Zariski dense subgroup of $G_nG_{n'}$ with full limit set $G_n/\pi_n(P_\theta) \times G_{n'}/\pi_{n'}(P_\theta)$.
However this yields a contradiction to the $\theta$-antipodal property, because the product of two rank one geometric boundaries does not have the antipodal property.
Therefore $\theta$ must be a singleton, proving the claim. 
\end{proof}

We now prove the following lemma which was used in the above proof.
\begin{lem}\label{weyl} If $G$ has a connected normal subgroup $G_n$ of rank at least $2$ and  $\theta\subset \Pi$ contains a simple root of $G_n$, then
we can find a representative of a Weyl element $w \in N_K(A)$ such that
 $w\notin  w_0 N_{\theta}^+ P_{\theta}\cup P_\theta$. 
\end{lem}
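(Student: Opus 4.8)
The plan is to translate the two group-theoretic avoidance conditions into pure Weyl-group combinatorics and then settle the combinatorics. Write $J=\Pi-\theta$, so that $\cal W_{\Pi-\theta}$ is the Weyl group of the Levi $L_\theta$. For a representative $w\in N_K(A)$ the conditions ``$w\in P_\theta$'' and ``$w\in w_0N_\theta^+P_\theta$'' depend only on the image of $w$ in $\cal W$, since the ambiguity $C_K(A)\subset M_\theta$ lies in both $P_\theta$ and $w_0N_\theta^+P_\theta$. Because $w\in N_K(A)\subset K$, we have $w\in P_\theta\iff w\in K\cap P_\theta=M_\theta\iff [w]\in\cal W_{\Pi-\theta}$. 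For the other condition, put $u=w_0^{-1}w$; by \eqref{op}, $u\in N_\theta^+P_\theta$ iff $uP_\theta$ is in general position with $P_\theta^+$, and by Corollary \ref{cor.genweyl} this happens iff $u\in M_\theta$, i.e. iff $w_0^{-1}[w]\in\cal W_{\Pi-\theta}$. Thus $w\in w_0N_\theta^+P_\theta\iff [w]\in w_0\cal W_{\Pi-\theta}$, and the lemma reduces to exhibiting a class in $\cal W\setminus(\cal W_{\Pi-\theta}\cup w_0\cal W_{\Pi-\theta})$.

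Next I would show that the two cosets $\cal W_{\Pi-\theta}$ and $w_0\cal W_{\Pi-\theta}$ are disjoint, i.e. $w_0\notin\cal W_{\Pi-\theta}$. Fix a simple root $\alpha\in\theta\cap\Pi_n$, where $\Pi_n$ denotes the simple roots of $G_n$ (such $\alpha$ exists by hypothesis). Every $v\in\cal W_{\Pi-\theta}$ is a product of reflections $s_\gamma$ with $\gamma\in J$, and each such reflection alters a vector only by a multiple of $\gamma$; hence $v\alpha=\alpha+\sum_{\gamma\in J}c_\gamma\gamma$ has $\alpha$-coefficient $1$ in the simple-root basis and is therefore a positive root. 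Since $w_0\alpha\in-\Pi$ is negative, $w_0\notin\cal W_{\Pi-\theta}$. As left cosets of $\cal W_{\Pi-\theta}$ are equal or disjoint, we get $|\cal W_{\Pi-\theta}\cup w_0\cal W_{\Pi-\theta}|=2|\cal W_{\Pi-\theta}|$, so it now suffices to prove the index bound $[\cal W:\cal W_{\Pi-\theta}]\ge 3$.

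Finally I would establish $[\cal W:\cal W_{\Pi-\theta}]\ge 3$ using the fundamental weight $\varpi_\alpha\in\fa^*$ dual to the simple coroots, so that $\langle\varpi_\alpha,\gamma^\vee\rangle=\delta_{\alpha\gamma}$ for $\gamma\in\Pi$. Since every $\gamma\in J$ is distinct from $\alpha$, each generator $s_\gamma$ of $\cal W_{\Pi-\theta}$ fixes $\varpi_\alpha$, so the orbit map $w\cal W_{\Pi-\theta}\mapsto w\varpi_\alpha$ is a well-defined surjection from $\cal W/\cal W_{\Pi-\theta}$ onto $\cal W\varpi_\alpha$; hence $[\cal W:\cal W_{\Pi-\theta}]\ge\#(\cal W\varpi_\alpha)$. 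Because $G_n$ has rank at least $2$, its connected Dynkin diagram has a node $\beta\in\Pi_n$ adjacent to $\alpha$, so $c:=\langle\alpha,\beta^\vee\rangle\ne0$. The three weights $\varpi_\alpha$, $s_\alpha\varpi_\alpha=\varpi_\alpha-\alpha$, and $s_\beta s_\alpha\varpi_\alpha=\varpi_\alpha-\alpha+c\beta$ are pairwise distinct, as they differ by the nonzero vectors $\alpha$, $\alpha-c\beta$, and $c\beta$ (using that $\alpha,\beta$ are linearly independent). Thus $\#(\cal W\varpi_\alpha)\ge 3$, so $[\cal W:\cal W_{\Pi-\theta}]\ge 3$, which yields the desired class and finishes the proof.

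The main obstacle is the first step: correctly matching the sets $P_\theta$ and $w_0N_\theta^+P_\theta$ with the unions of Weyl cosets $\cal W_{\Pi-\theta}$ and $w_0\cal W_{\Pi-\theta}$. This hinges on the Bruhat-type fact that the only Weyl representatives lying in the big cell $N_\theta^+P_\theta$ are those of $\cal W_{\Pi-\theta}$, which is exactly what Corollary \ref{cor.genweyl} supplies. Once this dictionary is in place the index estimate is short, and it is precisely in that estimate that the rank-$\ge 2$ hypothesis on $G_n$ is used, through the existence of a simple root $\beta\in\Pi_n$ adjacent to $\alpha$.
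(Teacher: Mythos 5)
Your proof is correct, and it rests on the same key input as the paper's proof --- Corollary \ref{cor.genweyl}, which identifies the Weyl representatives lying in $N_\theta^+P_\theta$ with those in $M_\theta$ --- but the way you finish is genuinely different. The paper first reduces to $G=G_n$, then encodes the two forbidden conditions geometrically: $w\in P_\theta$ forces $\op{Ad}_w(\fa_\theta^+)=\fa_\theta^+$ (since $M_\theta$ centralizes $\fa_\theta$), while $w\in w_0N_\theta^+P_\theta$ forces $\op{Ad}_w(\fa_\theta^+)=-\fa_{\i(\theta)}^+$ (via Corollary \ref{cor.genweyl}); it then simply \emph{asserts} that rank at least $2$ yields a Weyl element placing $\fa_\theta^+$ in a third position. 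You instead encode the conditions as coset membership $[w]\in\cal W_{\Pi-\theta}$, resp.\ $[w]\in w_0\cal W_{\Pi-\theta}$ (with $\cal W_{\Pi-\theta}$ generated by the $s_\gamma$, $\gamma\in\Pi-\theta$), and then actually prove the existence statement the paper leaves as a one-line claim: the two cosets are disjoint because $w_0\alpha$ is a negative root while every $v\in\cal W_{\Pi-\theta}$ keeps $\alpha$ positive, and $[\cal W:\cal W_{\Pi-\theta}]\ge 3$ because the $\cal W$-orbit of the fundamental weight $\varpi_\alpha$ already contains the three distinct points $\varpi_\alpha$, $\varpi_\alpha-\alpha$, $\varpi_\alpha-\alpha+\langle\alpha,\beta^\vee\rangle\beta$. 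So your argument is complete precisely where the real content lies, and it dispenses with the reduction to $G=G_n$; what it costs is the standard (but here unproved) fact that $N_K(A)\cap M_\theta$ maps exactly onto $\cal W_{\Pi-\theta}$, i.e.\ that the Weyl group of the Levi $L_\theta$ is $\cal W_{\Pi-\theta}$, whereas the paper's geometric criterion only needs the trivial half of this equivalence.

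Two minor points. First, your well-definedness justification is misstated: $C_K(A)$ is \emph{not} contained in $w_0N_\theta^+P_\theta$; what you actually use is that both $P_\theta$ and $w_0N_\theta^+P_\theta$ are stable under right multiplication by $P_\theta\supset C_K(A)$. Second, your appeal to ``its connected Dynkin diagram'' tacitly assumes $G_n$ is simple (irreducible restricted root system), not merely a connected normal subgroup: for $G=G_n=\PSL_2(\R)\times\PSL_2(\R)$ and $\theta$ a simple root of one factor, one has $\cal W=\cal W_{\Pi-\theta}\cup w_0\cal W_{\Pi-\theta}$, so the statement as literally phrased fails. This is not a defect of your argument relative to the paper: in the application (Proposition \ref{one}) $G_n$ is a simple factor, and the paper's own unproved existence assertion requires simplicity in exactly the same way.
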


\begin{proof} By replacing $\theta$ with the intersection of $\theta$ and the set of simple roots of $G_n$, we may assume without loss of generality that $G=G_n$.
Since the rank of $G$ is at least $2$,
we can find a representative $w \in N_K(A)$ of a Weyl element such that $\op{Ad}_w(\fa_{\theta}^+)$ is  equal to neither $\fa_{\theta}^+$ nor $-\fa_{\i(\theta)}^+$. If $w$ were contained in $P_{\theta}\cap K=M_\theta$, $w$ would commute with $\fa_\theta$ and hence $\op{Ad}_w(\fa_{\theta}^+)=\fa_{\theta}^+$. Therefore $w\notin P_\theta$. On the other hand, if $w \in w_0 N_{\theta}^+P_{\theta}$, then  $w_0^{-1}w  \in M_{\theta}$ by  Corollary \ref{cor.genweyl}, and hence $\op{Ad}_w(\fa_{\theta}^+)  = \op{Ad}_{w_0}(\fa_{\theta}^+) = - \fa_{\i(\theta)}^+$, which contradicts our choice of $w$. Hence $w\notin w_0N_{\theta}^+P_{\theta}.$
\end{proof}

\subsection*{Proof of Theorem \ref{Ahlfors}}
Note that $\op{Leb}_\theta$ is a $(\Ga, 2\rho \circ p_{\theta})$-conformal measure  where $\rho$ is the half sum of all positive roots of $(\fg, \fa^+)$ \cite[Lemma 6.3]{Quint2002Mesures}.   If $\La_{\theta} \neq \F_{\theta}$,  $\Leb_\theta(\lat^{\mathsf{con}})\le \Leb_\theta(\La_{\theta}) < 1$ as $\F_{\theta} - \La_{\theta}$ is a non-empty open subset. Therefore $\Leb_\theta(\La_{\theta}^{\mathsf{con}}) = 0$ by Theorem \ref{ggg}. The second claim follows from Proposition \ref{one} above.
\qed

\subsection*{Disjoint dimensions and entropy drop}

Recall from the introduction that
$$\cal D_\Ga^\theta=\{\psi\in \fa_\theta^*: (\Ga, \theta)\text{-proper}, \delta_\psi=1, \cal P_\psi(1)=\infty\} .$$

\begin{lem}\label{dsame}
    For a Zariski dense $\theta$-transverse $\Ga$, we have
$$\cal D_\Ga^\theta=\left\{\psi\in \fa_\theta^*: \text{$(\Ga, \theta)$-proper},  \text{$\exists$ a $(\Ga,\psi)$-conformal measure},
\cal P_\psi(1)=\infty \right\}.$$
\end{lem}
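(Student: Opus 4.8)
The plan is to observe that both sets share the two defining conditions that $\psi$ is $(\Ga,\theta)$-proper and $\cal P_\psi(1)=\infty$, so the claimed equality reduces to the statement that, for such a $\psi$, one has $\delta_\psi=1$ if and only if there exists a $(\Ga,\psi)$-conformal measure on $\F_\theta$. I would prove the two implications separately; both are short consequences of results already established in the excerpt.

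For the inclusion of $\cal D_\Ga^\theta$ into the right-hand set, I would take a $(\Ga,\theta)$-proper $\psi$ with $\delta_\psi=1$ and $\cal P_\psi(1)=\infty$. Since a $\theta$-transverse subgroup is in particular $\theta$-regular, Proposition \ref{measure} applies verbatim: the hypotheses $\delta_\psi=1$ and $\sum_{\ga\in\Ga}e^{-\psi(\mu_\theta(\ga))}=\cal P_\psi(1)=\infty$ are exactly what it requires, and it produces a $(\Ga,\psi)$-conformal measure (in fact one supported on $\La_\theta$). Hence $\psi$ belongs to the right-hand set.

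For the reverse inclusion, I would start from a $(\Ga,\theta)$-proper $\psi$ admitting a $(\Ga,\psi)$-conformal measure $\nu$ on $\F_\theta$ and satisfying $\cal P_\psi(1)=\infty$. The divergence $\cal P_\psi(1)=\infty$ gives $\delta_\psi\ge 1$ immediately from the definition $\delta_\psi=\sup\{s:\cal P_\psi(s)=\infty\}$ in Lemma \ref{wd}. On the other hand, since $\Ga$ is Zariski dense and $\theta$-transverse and a $(\Ga,\psi)$-conformal measure exists for the $(\Ga,\theta)$-proper form $\psi$, Theorem \ref{tgg0} yields $\delta_\psi\le 1$. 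Combining the two inequalities gives $\delta_\psi=1$, so $\psi\in\cal D_\Ga^\theta$.

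The argument is essentially a repackaging of Proposition \ref{measure} and Theorem \ref{tgg0}, so I do not anticipate a genuine obstacle. The only point requiring care is that Theorem \ref{tgg0} is stated for Zariski dense subgroups, which is automatic here since $\theta$-transverse subgroups are Zariski dense by our standing convention; alternatively, by Remark \ref{rmk.nonelt} the same conclusion remains valid assuming only that $\Ga$ is non-elementary.
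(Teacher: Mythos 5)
Your proposal is correct and follows essentially the same route as the paper: the inclusion $\subset$ via Proposition \ref{measure}, and the inclusion $\supset$ via Theorem \ref{tgg0} giving $\delta_\psi\le 1$ combined with the observation that $\cal P_\psi(1)=\infty$ forces $\delta_\psi\ge 1$ (the paper phrases this as the contrapositive: $\delta_\psi<1$ would imply $\cal P_\psi(1)<\infty$). No gaps.
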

\begin{proof} The inclusion $\subset$ follows from Proposition \ref{measure}.  If there exists a $(\Ga,\psi)$-conformal measure on $\F_{\theta}$ for $(\Ga, \theta)$-proper
$\psi$, then
$\delta_\psi \le 1$ by Theorem \ref{tgg0}. 
If $\delta_\psi<1$, $\cal P_\psi(1)<\infty$. Hence this implies the inclusion $\supset$.
\end{proof}

Note that any subgroup of a $\ts$-transverse subgroup of $G$ is again a $\ts$-transverse subgroup.
\begin{theorem} [Disjoint dimensions]\label{disss}
    Let $\Ga<G$ be a non-elementary $\t$-transverse subgroup. For any  subgroup $\Ga_0<\Ga$ with $\La_\theta(\Ga_0)\ne \La_\theta(\Ga)$, we have
     $$\mathcal D^\theta_{\Ga}\cap \mathcal D^\theta_{\Ga_0}=\emptyset.$$
\end{theorem}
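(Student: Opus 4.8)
The plan is to argue by contradiction: I would suppose that some $\psi\in\fa_\theta^*$ lies in $\mathcal D^\theta_{\Ga}\cap\mathcal D^\theta_{\Ga_0}$ and deduce $\La_\theta(\Ga_0)=\La_\theta(\Ga)$, contradicting the hypothesis. First I would record the inheritance facts needed. Since $\Ga_0<\Ga$ is Zariski dense and $\theta$-transversality is hereditary, $\Ga_0$ is itself $\theta$-transverse; in particular $\Ga_0$ is $\theta$-regular, so $\La_\theta^{\mathsf{con}}(\Ga_0)\subset\La_\theta(\Ga_0)$ by Proposition \ref{reg}. Moreover, since $\psi$ is $(\Ga,\theta)$-proper and $\Ga_0\subset\Ga$, the restriction $\psi\circ\mu_\theta|_{\Ga_0}$ is again proper, so $\psi$ is $(\Ga_0,\theta)$-proper as well.

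The heart of the argument is to produce a single conformal measure that the two subgroups ``see'' differently. Because $\psi\in\mathcal D^\theta_{\Ga}$, Lemma \ref{dsame} furnishes a $(\Ga,\psi)$-conformal probability measure $\nu$ on $\F_\theta$ (its existence also follows from Proposition \ref{measure}, using $\delta_\psi(\Ga)=1$ and $\sum_{\ga\in\Ga}e^{-\psi(\mu_\theta(\ga))}=\infty$). On the $\Ga$-side, the Radon--Nikodym cocycle $e^{\psi(\beta_\xi^\theta(e,\ga))}$ is everywhere positive, so $\supp\nu$ is a non-empty closed $\Ga$-invariant subset of $\F_\theta$; by the $\Ga$-minimality of $\La_\theta(\Ga)$ (Lemma \ref{lq}(3)) this forces $\La_\theta(\Ga)\subset\supp\nu$. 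On the $\Ga_0$-side, the same $\nu$, viewed through the inclusion $\Ga_0\subset\Ga$, is a $(\Ga_0,\psi)$-conformal probability measure on $\F_\theta$; since $\psi\in\mathcal D^\theta_{\Ga_0}$ gives $\sum_{\ga\in\Ga_0}e^{-\psi(\mu_\theta(\ga))}=\infty$, the divergence half of Theorem \ref{ggg} applied to $\Ga_0$ yields $\nu(\La_\theta^{\mathsf{con}}(\Ga_0))=1$. Combining with $\La_\theta^{\mathsf{con}}(\Ga_0)\subset\La_\theta(\Ga_0)$ and the closedness of $\La_\theta(\Ga_0)$, I obtain $\supp\nu\subset\La_\theta(\Ga_0)$.

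Chaining these inclusions with the trivial containment $\La_\theta(\Ga_0)\subset\La_\theta(\Ga)$ gives
$$\La_\theta(\Ga)\subset\supp\nu\subset\La_\theta(\Ga_0)\subset\La_\theta(\Ga),$$
so $\La_\theta(\Ga_0)=\La_\theta(\Ga)$, the desired contradiction. The main subtlety — and the only place where $\theta$-transversality of \emph{both} groups is genuinely used — is that the single measure $\nu$ must simultaneously be fed into the $\Ga$-minimality argument (pushing its support up to contain the larger limit set) and into the $\Ga_0$-divergence dichotomy (pulling its support down into the smaller limit set). Here one must check that $\nu$ is a legitimate $(\Ga_0,\psi)$-conformal \emph{probability} measure on $\F_\theta$, so that Theorem \ref{ggg} applies to $\Ga_0$ verbatim, and that the two support inclusions point in compatible directions; everything else is routine bookkeeping with the hereditary properties of transverse subgroups.
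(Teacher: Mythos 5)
Your proof is correct and is essentially the paper's argument: both construct the $\Ga$-Patterson--Sullivan measure $\nu$ via Proposition \ref{measure}, view it as a $(\Ga_0,\psi)$-conformal measure, apply the dichotomy of Theorem \ref{ggg} to $\Ga_0$, and play the support of $\nu$ against the two limit sets (the paper invoking minimality implicitly to see that $\La_\theta(\Ga)\setminus\La_\theta(\Ga_0)$ has positive $\nu$-mass). The only difference is organizational: the paper argues directly that $\nu(\La_\theta^{\mathsf{con}}(\Ga_0))<1$ forces convergence of the $\Ga_0$-Poincar\'e series, whereas you run the contrapositive as a proof by contradiction.
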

\begin{proof}
Let $\psi\in \mathcal D_\Ga^{\theta}$.
  By  Proposition \ref{measure}, there exists a $(\Ga, \psi)$-conformal measure $\nu$ on $\La_\theta(\Ga)$. By  Theorem \ref{thm.divthensupp},
$\nu(\La_\theta^{\mathsf{con}}(\Ga))=1$. 

Since $\La_\theta(\Ga_0)\ne \La_\theta(\Ga)$,
$\La_{\theta}(\Ga) - \La_{\theta}(\Ga_0)$ is a non-empty open subset of $\La_{\theta}(\Ga)$. Hence, it follows from the $\Ga$-minimality on $\La_{\theta}(\Ga)$ and the compactness of $\La_{\theta}(\Ga)$ that $\La_{\theta}(\Ga)$ is covered by translates of $\La_{\theta}(\Ga) - \La_{\theta}(\Ga_0)$ under finitely many elements of $\Ga$. Since $\nu$ is $\Ga$-quasi-invariant, this implies $\nu(\La_{\theta}(\Ga) - \La_{\theta}(\Ga_0)) > 0$, and hence,  $\nu(\La_\theta^{\mathsf{con}}(\Ga_0))< 1$. Moreover, by the $\theta$-antipodality of $\Ga$, it also follows from $\nu(\La_{\theta}(\Ga) - \La_{\theta}(\Ga_0)) > 0$ that  $\nu \in \mathsf{N}_{\Ga_0, \psi}^{\theta}$ in Theorem \ref{thm.divthensupp}.
Again by Theorem \ref{thm.divthensupp}, $\sum_{\ga\in \Ga_0}e^{-\psi(\mu_\theta(\ga))}<\infty$. Hence $\psi\notin \mathcal D_{\Ga_0}^{\theta}$, finishing the proof.
\end{proof}

This turns out to be equivalent to the entropy drop phenomenon which is proved by Canary-Zhang-Zimmer \cite[Theorem 4.1]{Canary2023} for $\theta=\i(\theta)$: 

\begin{cor}[Entropy drop] \label{entropy} Let $\Ga<G$ be a non-elementary   $\ts$-transverse subgroup. Let $\Ga_0<\Ga$ be a  subgroup such that $\La_\theta(\Ga_0)\ne \La_\theta(\Ga)$.
    If $\psi\in \fa_\theta^*$ with $\delta_\psi(\Ga)<\infty$
    and  $\sum_{\ga\in \Ga_0}e^{-\delta_{\psi}(\Ga_0) \psi(\mu_\theta(\ga))}=\infty$, then 
    $$\delta_\psi (\Ga_0)<\delta_\psi(\Ga).$$
\end{cor}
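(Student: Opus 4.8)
The plan is to deduce this from the disjoint dimensions theorem (Theorem \ref{disss}) by rescaling $\psi$ so that it becomes critical for $\Ga_0$. First I would record that since $\psi$ is $(\Ga,\theta)$-proper (which is implicit in $\delta_\psi(\Ga)<\infty$ via Lemma \ref{wd}), its restriction to the subgroup $\Ga_0<\Ga$ is again $(\Ga_0,\theta)$-proper: the preimage in $\Ga_0$ of a compact set under $\psi\circ\mu_\theta$ is contained in the corresponding preimage in $\Ga$ and hence finite. By Lemma \ref{wd} the exponent $\delta_\psi(\Ga_0)$ is then well-defined and positive, and the domination $\sum_{\ga\in\Ga_0}e^{-s\psi(\mu_\theta(\ga))}\le \sum_{\ga\in\Ga}e^{-s\psi(\mu_\theta(\ga))}$ gives $0<\delta_\psi(\Ga_0)\le\delta_\psi(\Ga)<\infty$.

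Next I would set $c:=\delta_\psi(\Ga_0)$ and $\varphi:=c\,\psi\in\fa_\theta^*$; this is again both $(\Ga,\theta)$- and $(\Ga_0,\theta)$-proper since $c>0$. The key computation is the scaling identity $\sum_{\ga}e^{-s\varphi(\mu_\theta(\ga))}=\sum_{\ga}e^{-cs\,\psi(\mu_\theta(\ga))}$, which yields $\delta_\varphi(\Ga_0)=c^{-1}\delta_\psi(\Ga_0)=1$ and $\delta_\varphi(\Ga)=c^{-1}\delta_\psi(\Ga)$. Evaluating the $\Ga_0$-series at $s=1$ gives $\sum_{\ga\in\Ga_0}e^{-\varphi(\mu_\theta(\ga))}=\sum_{\ga\in\Ga_0}e^{-\delta_\psi(\Ga_0)\psi(\mu_\theta(\ga))}=\infty$ by hypothesis. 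Hence $\varphi$ is a $(\Ga_0,\theta)$-proper form with $\delta_\varphi(\Ga_0)=1$ and divergent Poincar\'e series at $s=1$, i.e. $\varphi\in\cal D_{\Ga_0}^\theta$.

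Now Theorem \ref{disss}, applied to the Zariski dense subgroup $\Ga_0<\Ga$ with $\La_\theta(\Ga_0)\ne\La_\theta(\Ga)$, gives $\cal D_\Ga^\theta\cap\cal D_{\Ga_0}^\theta=\emptyset$, so $\varphi\notin\cal D_\Ga^\theta$. On the other hand, since $\Ga_0<\Ga$ and all terms are nonnegative, $\sum_{\ga\in\Ga}e^{-\varphi(\mu_\theta(\ga))}\ge\sum_{\ga\in\Ga_0}e^{-\varphi(\mu_\theta(\ga))}=\infty$, which forces $\delta_\varphi(\Ga)\ge 1$. If we had $\delta_\varphi(\Ga)=1$, then $\varphi$ would be a $(\Ga,\theta)$-proper form with $\delta_\varphi(\Ga)=1$ and divergent series at $s=1$, i.e. $\varphi\in\cal D_\Ga^\theta$, contradicting the previous sentence. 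Therefore $\delta_\varphi(\Ga)>1$, and unwinding $\delta_\varphi(\Ga)=c^{-1}\delta_\psi(\Ga)=\delta_\psi(\Ga)/\delta_\psi(\Ga_0)>1$ with $\delta_\psi(\Ga_0)>0$ yields exactly $\delta_\psi(\Ga_0)<\delta_\psi(\Ga)$.

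I do not expect a serious obstacle here: all the genuine content is packaged inside Theorem \ref{disss} (itself a consequence of the zero-one dichotomy of Theorem \ref{g} together with the construction of conformal measures). The only points requiring care are the bookkeeping of critical exponents under the positive rescaling $\psi\mapsto c\psi$ and the verification that $(\Ga,\theta)$-properness, and hence positivity of the critical exponent, pass to the subgroup $\Ga_0$. The one conceptual step worth isolating is recognizing that the divergence-type hypothesis $\sum_{\ga\in\Ga_0}e^{-\delta_\psi(\Ga_0)\psi(\mu_\theta(\ga))}=\infty$ is precisely what places the rescaled form $\varphi$ in $\cal D_{\Ga_0}^\theta$, which is what makes Theorem \ref{disss} applicable.
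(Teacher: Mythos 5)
Your proof is correct and takes essentially the same approach as the paper's: both rescale $\psi$ by a critical exponent so that the resulting form lies in $\cal D_{\Ga_0}^\theta$ and then invoke the disjoint dimensions theorem (Theorem \ref{disss}). The only difference is cosmetic: the paper argues by contradiction, assuming $\delta_\psi(\Ga_0)=\delta_\psi(\Ga)$ and rescaling by this common value, whereas you rescale by $\delta_\psi(\Ga_0)$ directly and deduce $\delta_\varphi(\Ga)>1$.
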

\begin{proof} Suppose that $\delta_{\psi}(\Ga) < \infty$; this implies that $\psi$ is $(\Ga, \theta)$-proper. Let $\Ga_0<\Ga$ be a  subgroup
such that
$\sum_{\ga\in \Ga_0}e^{-\delta_{\psi}(\Ga_0) \psi(\mu_\theta(\ga))}=\infty$ and  $\delta_\psi (\Ga_0)=\delta_\psi(\Ga)$.
    If we set $\phi=\delta_{\psi}(\Ga) \cdot \psi=\delta_{\psi}(\Ga_0) \cdot \psi$,
    then $\delta_{\phi}(\Ga)=\delta_{\phi}(\Ga_0)=1$.
Since $\infty= \sum_{\ga\in \Ga_0}e^{-\phi(\mu_\theta(\ga))} \le  \sum_{\ga\in \Ga}e^{-\phi(\mu_\theta(\ga))} $, we have $\phi\in \cal D_\Ga^{\theta}\cap \cal D_{\Ga_0}^{\theta}$. By Theorem \ref{disss}, this implies that 
$\La_\theta(\Ga_0) = \La_\theta(\Ga)$, proving the corollary.
\end{proof}

\section{Conformal measures for $\theta$-Anosov subgroups}\label{tangentA} \label{sec.Anosov}
Note that $\Ga$ is $\theta$-Anosov if and only if $\Ga$ is $\t\cup \i(\t)$-Anosov by \eqref{mu}.

\begin{proposition}[{\cite{Guichard2012anosov}, \cite[Theorem 1.1]{Kapovich2017anosov}}]
\label{prop.interior} If $\Ga$ is $\theta$-Anosov, then
\begin{enumerate}
    \item $\Ga$ is $\theta$-regular;
    \item $\La_{\theta} = \La_{\theta}^{\sf con}$;
    \item $\L_\theta-\{0\}\subset \inte\fa_\theta^+$;
    \item  $\theta$-antipodal.
    
    \end{enumerate}
\end{proposition}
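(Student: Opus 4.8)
The plan is to prove the four assertions in increasing order of difficulty, deriving (1) and (2) directly from the defining inequality \eqref{ma} and importing the structural results of the cited works for (3) and (4). Throughout I use the two elementary facts that $\Ga$ is finitely generated and discrete, so that balls in the word metric are finite.

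First I would establish (1). Fixing a finite symmetric generating set and setting $L:=\max_s d(o,so)$, the triangle inequality gives $\|\mu(\ga)\|=d(o,\ga o)\le L|\ga|$, so \eqref{ma} yields $\min_{\alpha\in\theta}\alpha(\mu(\ga))\ge (C/L)\|\mu(\ga)\|-C^{-1}$. To pass from the simple roots $\alpha\in\theta$ to $\mu_\theta$, I would use that each fundamental weight $\omega_\alpha$ with $\alpha\in\theta$ lies in $\fa_\theta^*$ (it annihilates the coroots $H_\beta$, $\beta\in\Pi-\theta$, hence satisfies $\omega_\alpha\circ p_\theta=\omega_\alpha$), that $\{\omega_\alpha\}_{\alpha\in\theta}$ is a basis of $\fa_\theta^*$, and that $\omega_\alpha\ge c_\alpha\,\alpha$ on $\fa^+$ since the inverse Cartan matrix has nonnegative entries with positive diagonal. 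As these weights are nonnegative and jointly positive on $\fa_\theta^+-\{0\}\ni\mu_\theta(\ga)$, one gets $\|\mu_\theta(\ga)\|\asymp\max_{\alpha\in\theta}\omega_\alpha(\mu(\ga))\gtrsim\min_{\alpha\in\theta}\alpha(\mu(\ga))\gtrsim|\ga|$; hence each sublevel set $\{\ga\in\Ga:\|\mu_\theta(\ga)\|\le R\}$ is finite and $\Ga$ is $\theta$-discrete. Then for any $\ga_i\to\infty$ we have $|\ga_i|\to\infty$, so $\min_{\alpha\in\theta}\alpha(\mu(\ga_i))\to\infty$, which is exactly $\theta$-regularity.

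For (2), any $u\in\L-\{0\}$ has the form $\lim_i t_i\mu(\ga_i)$ with $t_i\to 0$, and the bound $\min_{\alpha\in\theta}\alpha(\mu(\ga))\ge (C/L)\|\mu(\ga)\|-C^{-1}$ passes to the limit to give $\alpha(u)\ge (C/L)\|u\|>0$ for all $\alpha\in\theta$. Writing $u=p_\theta(u)+(u-p_\theta(u))$ with $u-p_\theta(u)\in\log B_\theta^+$ a nonnegative combination of the coroots $H_\beta$ ($\beta\in\Pi-\theta$), on which $\alpha\le 0$, I obtain $\alpha(p_\theta(u))\ge\alpha(u)>0$ for every $\alpha\in\theta$. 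Since $\L_\theta=p_\theta(\L)$ and $\inte\fa_\theta^+=\{v\in\fa_\theta:\alpha(v)>0\ \forall\alpha\in\theta\}$ (the restrictions $\alpha|_{\fa_\theta}$, $\alpha\in\theta$, forming a basis of $\fa_\theta^*$ that cuts out the cone $\fa_\theta^+$), this gives $\L_\theta-\{0\}\subset\inte\fa_\theta^+$.

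The substantive parts are (3) and (4), and these I would import rather than reprove. For (3), the $\theta$-Anosov property provides a continuous $\Ga$-equivariant limit map $\zeta:\partial\Ga\to\F_{\theta\cup\i(\theta)}$ whose image is $\La_{\theta\cup\i(\theta)}$ and which is transverse, i.e. $\zeta(x)$ and $\zeta(y)$ are in general position whenever $x\ne y$; this transversality is precisely $\theta$-antipodality (\cite{Guichard2012anosov}, \cite{Kapovich2017anosov}). For (4), the inclusion $\La^{\mathsf{con}}_\theta\subset\lat$ is Proposition \ref{reg}(1), which applies thanks to (1); for the reverse inclusion I would invoke the Morse property of $\theta$-Anosov subgroups from \cite{Kapovich2017anosov}: the orbit map is a Morse embedding, so every geodesic ray in the Cayley graph of $\Ga$ fellow-travels a genuine geodesic ray in $X$ with uniformly bounded width, realizing its endpoint in $\lat$ as a conical limit point and giving $\lat\subset\La^{\mathsf{con}}_\theta$. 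The main obstacle is exactly (3) and (4): neither can be extracted from the quantitative regularity bound \eqref{ma} alone, as both require the existence and transversality of boundary limit maps together with the Morse lemma for uniformly regular undistorted subgroups, which I would cite as black boxes.
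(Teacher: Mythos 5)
Your proposal is correct, but be aware that the paper itself offers no proof of this statement: Proposition \ref{prop.interior} is presented there as a pure citation to \cite{Guichard2012anosov} and \cite[Theorem 1.1]{Kapovich2017anosov}, followed only by remarks that the antipodality in fact comes from a pair of transverse $\Ga$-equivariant boundary maps $\partial\Ga\to\La_\theta$, $\partial\Ga\to\La_{\i(\theta)}$, and that the identification $\La_\theta=\La_\theta^{\mathsf{con}}$ rests on the Morse property of loc.\ cit.\ --- precisely the two black boxes you invoke for (3) and (4). Where you genuinely diverge is in deriving (1) and (2) directly from the defining inequality \eqref{ma}: your chain $\|\mu_\theta(\ga)\|\asymp\max_{\alpha\in\theta}\omega_\alpha(\mu(\ga))\gtrsim\min_{\alpha\in\theta}\alpha(\mu(\ga))\gtrsim|\ga|$ is valid, since the fundamental weights $\omega_\alpha$, $\alpha\in\theta$, annihilate the coroots $H_\beta$, $\beta\in\Pi-\theta$, which span $\ker p_\theta$ (so $\omega_\alpha\circ p_\theta=\omega_\alpha$), and the entries of inverse Cartan matrices are nonnegative; this yields $\theta$-discreteness and $\theta$-regularity at once. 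Likewise your proof of (2) is sound: passing \eqref{ma} to the limit along $t_i\mu(\ga_i)$ gives $\alpha(u)\gtrsim\|u\|>0$ for $\alpha\in\theta$ on $\L-\{0\}$, and the inequality $\alpha(p_\theta(u))\ge\alpha(u)$ for $u\in\fa^+$ holds because $u-p_\theta(u)\in\log B_\theta^+$ is a nonnegative combination of the coroots $H_\beta$, $\beta\in\Pi-\theta$ (again by positivity of the inverse Cartan matrix of the sub-root system), on which every $\alpha\in\theta$ is nonpositive; combined with $\L_\theta=p_\theta(\L)$ and the fact that the restrictions $\alpha|_{\fa_\theta}$, $\alpha\in\theta$, cut out $\fa_\theta^+$, this gives $\L_\theta-\{0\}\subset\inte\fa_\theta^+$. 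The trade-off: your route makes half the proposition self-contained and exhibits that (1)--(2) need nothing beyond the quantitative regularity in \eqref{ma} and elementary root-system facts, whereas the paper's wholesale citation is shorter and keeps the statement at the level of established Anosov theory, which is all that its later sections require; your honest flagging that (3)--(4) cannot be extracted from \eqref{ma} alone matches the paper's own division of labor.
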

In particular, a $\theta$-Anosov subgroup is $\ts$-transverse.

 Sambarino \cite[Theorem A]{sambarino2022report} showed that if $\Ga$ is $\theta$-Anosov, then
the set $\{\psi\in \fa_\theta^*:\delta_\psi=1\}$ is analytic and  is equal to the boundary 
of a strictly convex subset $\{\psi \in \fa_{\theta}^* : 0<\delta_\psi < 1\}$. 
By the duality lemma (\cite[Section 4]{Quint_indicator}, \cite[Lemma 4.8]{samb_hyper}), we then deduce the following property of  the $\theta$-growth indicator: 

\begin{theorem}\label{ver}
 If $\Ga$ is Zariski dense $\theta$-Anosov, then  $\psi_\Ga^\theta$ is strictly concave, differentiable on $\inte \L_{\theta}$, and vertically tangent.
\end{theorem}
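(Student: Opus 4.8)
The plan is to deduce both properties from the duality between the $\theta$-growth indicator and the critical exponent function $\psi \mapsto \delta_\psi$, feeding in Sambarino's description of the level set $\{\delta_\psi = 1\}$. First I would record the dual object. For a $(\Ga,\theta)$-proper $\psi \in \fa_\theta^*$, Lemma \ref{lem.tent} and Corollary \ref{oneone} give $\delta_\psi \le 1$ if and only if $\psi \ge \psi_\Ga^\theta$ on $\fa_\theta^+$; hence the convex region
$$\cal B := \{\psi \in \fa_\theta^* : \delta_\psi \le 1\} = \{\psi \in \fa_\theta^* : \psi \ge \psi_\Ga^\theta\}$$
has boundary $\partial\cal B = \{\delta_\psi = 1\}$ (the set of $(\Ga,\theta)$-critical forms) and recession cone the dual cone $\L_\theta^*$. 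Since $\Ga$ is $\theta$-Anosov, Proposition \ref{prop.interior} gives $\L_\theta - \{0\} \subset \inte\fa_\theta^+$, so $\L_\theta$ has non-empty interior and, by Theorem \ref{three}, $\psi_\Ga^\theta$ is homogeneous, upper semicontinuous, concave, equal to $-\infty$ off $\L_\theta$ and positive on $\inte\L_\theta$. The supporting hyperplane theorem then yields the biduality $\psi_\Ga^\theta(u) = \inf_{\psi \in \cal B} \psi(u)$ for $u \in \inte\L_\theta$, so that $\psi_\Ga^\theta$ and $\cal B$ are genuinely dual in the sense of \cite[Section 4]{Quint_indicator}, \cite[Lemma 4.8]{samb_hyper}.

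Next I would quote Sambarino \cite[Theorem A]{sambarino2022report}: for $\theta$-Anosov $\Ga$ the set $\{\delta_\psi = 1\}$ is analytic and is the boundary of the strictly convex region $\{\psi : 0 < \delta_\psi < 1\}$. In the notation above this says exactly that $\partial\cal B$ is an analytic (in particular $C^1$) hypersurface and that $\cal B$ is strictly convex, its boundary containing neither a line segment nor an asymptotic flat face.

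Finally I would apply the duality dictionary, which converts these two regularity features of $\cal B$ into the two asserted properties of $\psi_\Ga^\theta$. Smoothness of $\partial\cal B$ corresponds to strict concavity of $\psi_\Ga^\theta$: a failure of strict concavity would produce a single linear form tangent to $\psi_\Ga^\theta$ along a whole two-dimensional subcone of directions, hence a boundary point of $\cal B$ admitting more than one supporting hyperplane, contradicting the $C^1$-smoothness of $\partial\cal B$. Dually, strict convexity of $\cal B$ corresponds to vertical tangency of $\psi_\Ga^\theta$: if some critical form $\psi \in \partial\cal B$ were tangent to $\psi_\Ga^\theta$ at a boundary direction $u_0 \in \partial\L_\theta - \{0\}$, then $u_0$ would be an outer normal to $\cal B$ at the finite boundary point $\psi$, whereas for a strictly convex $\cal B$ with recession cone $\L_\theta^*$ every outer normal at a finite boundary point lies in $\inte\L_\theta$ (boundary directions being attained only asymptotically); this contradiction shows that every critical form is tangent to $\psi_\Ga^\theta$ only at interior directions, which is the vertical tangency. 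Combining the two equivalences gives the theorem.

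The main obstacle I anticipate is bookkeeping rather than a new idea: one must check that the duality lemma, stated by Quint and Sambarino for $\theta = \Pi$, applies verbatim to $\psi_\Ga^\theta$ on $\fa_\theta$. This is exactly where Proposition \ref{prop.interior} is essential, for since $\L_\theta$ lies in the open chamber $\inte\fa_\theta^+$ the function $\psi_\Ga^\theta$ has the same qualitative profile as an honest rank-$\#\theta$ growth indicator (finite, concave, equal to $-\infty$ outside a cone with non-empty interior), so the biduality and both regularity equivalences transfer without change. A secondary point to verify carefully is the correct matching of the two dual features — analyticity of $\partial\cal B$ with strict concavity, and strict convexity of $\cal B$ with vertical tangency — and that strict convexity is read in the strong sense that rules out asymptotic flat faces, which is what controls the normal directions at finite boundary points of $\cal B$.
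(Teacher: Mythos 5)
Your proposal is correct and follows essentially the same route as the paper: the paper's argument is precisely to quote Sambarino's Theorem A (analyticity of $\{\delta_\psi=1\}$ and strict convexity of $\{0<\delta_\psi<1\}$) and then invoke the duality lemma of Quint and Sambarino, with the same matching you describe — smoothness of the boundary of the dual convex set giving strict concavity, and strict convexity of that set (ruling out rays/flat faces in its boundary, which is exactly what a tangency at a direction in $\partial\L_\theta$ would produce) giving vertical tangency. Your write-up simply fills in the convex-duality bookkeeping that the paper leaves implicit.
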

  The vertical tangency means that
if $\psi^\theta_\Ga(u)=\psi(u)$ for some $(\Ga, \theta)$-critical form $\psi$ and $u\ne 0$, then 
$u\in \inte \L_\theta$. 
Recall 
$$ \cal T_\Ga^\theta :=\{\psi\in \fa_\theta^*:
\text{$\psi$ is $(\Ga, \theta)$-critical}\}.$$

\begin{corollary} \label{cor.equiv in 1.15}
Let $\Ga < G$ be a Zariski dense $\theta$-Anosov subgroup. For any subgroup $\Ga_0 < \Ga$,
    $$
    \cal T_{\Ga}^{\theta} \cap \cal T_{\Ga_0}^{\theta} = \emptyset \quad \Longleftrightarrow \quad \psi_{\Ga_0}^{\theta} < \psi_{\Ga}^{\theta} \text{ on } \inte \L_{\theta}(\Ga).
    $$
\end{corollary}

\begin{proof}
    Suppose that $\psi \in \cal T_{\Ga}^{\theta} \cap \cal T_{\Ga_0}^{\theta}$. Then there exists $u \in \L_{\theta}(\Ga_0)$ such that 
    $$
    \psi_{\Ga_0}^{\theta}(u) = \psi(u).
    $$
    Since $\psi_{\Ga_0}^{\theta} \le \psi_{\Ga}^{\theta} \le \psi$, it follows that $\psi$ is tangent to $\psi_{\Ga}^{\theta}$ at $u$ as well. By the vertical tangency of $\psi_{\Ga}^{\theta}$ (Theorem \ref{ver}), $u \in \inte \L_{\theta}(\Ga)$.
    Therefore, the implication $(\Leftarrow)$ follows.

    Conversely, suppose that $\psi_{\Ga_0}^{\theta}(u) = \psi_{\Ga}^{\theta}(u)$ for some $u \in \inte \L_{\theta}(\Ga)$. Then by the concavity of $\psi_{\Ga}^{\theta}$ (Theorem \ref{ver}), there exists $\psi \in \cal T_{\Ga}^{\theta}$ such that $\psi(u) = \psi_{\Ga}^{\theta}(u)$. Since $\psi_{\Ga_0}^{\theta} \le \psi_{\Ga}^{\theta} \le \psi$ and $\psi_{\Ga_0}^{\theta}(u) = \psi_{\Ga}^{\theta}(u)$, we have $\psi \in \cal T_{\Ga}^{\theta} \cap \cal T_{\Ga_0}^{\theta}$. This shows the implication $(\Rightarrow)$.
\end{proof}

\begin{lem} \label{cri} 
If $\Ga$ is Zariski dense $\theta$-Anosov, then
  $$\cal T_\Ga^\theta=  
\{\psi\in \fa_\theta^*: (\Ga, \theta)\text{-proper},\delta_\psi=1\} = \cal D_\Ga^\theta.$$
\end{lem}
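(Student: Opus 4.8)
The plan is to establish the two claimed equalities through a short cycle of inclusions among the three sets $\cal T_\Ga^\theta$, $\{\psi \in \fa_\theta^* : (\Ga,\theta)\text{-proper}, \delta_\psi = 1\}$, and $\cal D_\Ga^\theta$. Two inclusions come for free: the containment $\{\psi : (\Ga,\theta)\text{-proper}, \delta_\psi = 1\} \subseteq \cal T_\Ga^\theta$ is precisely the first assertion of Corollary \ref{oneone}, and $\cal D_\Ga^\theta \subseteq \{\psi : (\Ga,\theta)\text{-proper}, \delta_\psi = 1\}$ holds by the very definition of $\cal D_\Ga^\theta$. Thus the real content is (I) that every $(\Ga,\theta)$-critical form is automatically $(\Ga,\theta)$-proper with $\delta_\psi = 1$, and (III) that a $(\Ga,\theta)$-proper form with $\delta_\psi = 1$ has divergent Poincar\'e series $\cal P_\psi(1) = \infty$. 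The crucial input distinguishing the Anosov case is the vertical tangency and strict concavity of $\psi_\Ga^\theta$ supplied by Theorem \ref{ver}.

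For (I), let $\psi \in \cal T_\Ga^\theta$, so $\psi \ge \psi_\Ga^\theta$ on $\fa_\theta^+$ with $\psi(u) = \psi_\Ga^\theta(u)$ for some $u \ne 0$. I would first show $\psi > 0$ on $\L_\theta - \{0\}$. Indeed, if $\psi(v) \le 0$ for some $v \in \L_\theta - \{0\}$, then since $\psi \ge \psi_\Ga^\theta \ge 0$ on $\L_\theta$ (Theorem \ref{three}(3)) one forces $\psi(v) = \psi_\Ga^\theta(v) = 0$; hence $\psi$ is tangent to $\psi_\Ga^\theta$ at $v$, so the vertical tangency of Theorem \ref{ver} gives $v \in \inte \L_\theta$, contradicting $\psi_\Ga^\theta > 0$ on $\inte \L_\theta$. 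With $\psi > 0$ on $\L_\theta - \{0\}$ in hand, Lemma \ref{lem.propform}(1) yields that $\psi$ is $(\Ga,\theta)$-proper with $\delta_\psi < \infty$, and then the final assertion of Corollary \ref{oneone} (applicable since $\psi > 0$ on $\L_\theta$) upgrades criticality to $\delta_\psi = 1$. This proves $\cal T_\Ga^\theta \subseteq \{\psi : (\Ga,\theta)\text{-proper}, \delta_\psi = 1\}$, which together with Corollary \ref{oneone} gives the first equality.

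For (III), take $\psi$ that is $(\Ga,\theta)$-proper with $\delta_\psi = 1$; by Corollary \ref{oneone} it is $(\Ga,\theta)$-critical, and by vertical tangency its tangency direction $u$ lies in $\inte \L_\theta$, which sits inside $\inte \fa_\theta^+$ by Proposition \ref{prop.interior}(2). Hence Proposition \ref{qc} provides a $(\Ga,\psi)$-conformal measure $\nu$ supported on $\La_\theta$. Since $\Ga$ is $\theta$-Anosov we have $\La_\theta = \La_\theta^{\mathsf{con}}$ by Proposition \ref{prop.interior}(4), so $\nu(\La_\theta^{\mathsf{con}}) = 1$. If $\cal P_\psi(1) = \sum_{\ga \in \Ga} e^{-\psi(\mu_\theta(\ga))}$ were finite, the convergence half of Theorem \ref{ggg} would force $\nu(\La_\theta^{\mathsf{con}}) = 0$, a contradiction; therefore $\cal P_\psi(1) = \infty$ and $\psi \in \cal D_\Ga^\theta$. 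Combined with the trivial reverse inclusion this yields the second equality.

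The step I expect to be the genuine obstacle is (I), specifically the passage from positivity of $\psi_\Ga^\theta$ on the open cone $\inte \L_\theta$ to strict positivity of a critical form $\psi$ on the entire closed cone $\L_\theta - \{0\}$, including its boundary: this is exactly where vertical tangency is indispensable and where the argument would break down for a general $\theta$-transverse subgroup, whose growth indicator may vanish, together with a tangent form, along $\partial \L_\theta$. Everything else is bookkeeping with the positivity and properness lemmas already established.
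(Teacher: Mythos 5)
Your proposal is correct, and its treatment of the first equality coincides with the paper's: vertical tangency (Theorem \ref{ver}) forces a $(\Ga,\theta)$-critical form to be strictly positive on $\L_\theta - \{0\}$, whence $(\Ga,\theta)$-properness via Lemma \ref{lem.propform}(1) and $\delta_\psi = 1$ via the second claim of Corollary \ref{oneone}, while the reverse inclusion is exactly the first claim of Corollary \ref{oneone}. Where you genuinely diverge is the second equality $\{\psi : (\Ga,\theta)\text{-proper},\ \delta_\psi = 1\} = \cal D_\Ga^\theta$: the paper simply cites \cite[Section 5.9]{sambarino2022report} for this, whereas you derive it from the paper's own machinery — criticality of such $\psi$ (Corollary \ref{oneone}), Proposition \ref{qc} to produce a $(\Ga,\psi)$-conformal measure supported on $\La_\theta$, the Anosov identity $\La_\theta = \La_\theta^{\mathsf{con}}$ (Proposition \ref{prop.interior}(4)), and the convergence half of Theorem \ref{ggg} to rule out $\cal P_\psi(1) < \infty$. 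This buys a self-contained proof exhibiting the divergence-type property of Anosov subgroups as a direct consequence of the paper's main dichotomy rather than as an imported fact (the same strategy the paper itself uses in Corollary \ref{cor.singleconfmeas} and Theorem \ref{g2}(3), after divergence is known). One small streamlining: in your step (III), vertical tangency is not actually needed to invoke Proposition \ref{qc}, since any tangent direction $u$ automatically lies in $\L_\theta - \{0\}$ (because $\psi_\Ga^\theta = -\infty$ off $\L_\theta$), and $\L_\theta - \{0\} \subset \inte \fa_\theta^+$ already by Proposition \ref{prop.interior}(2); with that observation, Theorem \ref{ver} — the one ingredient still resting on Sambarino's work — enters only through the first equality.
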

\begin{proof} The second identity is proved in \cite[Section 5.9]{sambarino2022report}.
It suffices to prove the inclusion $\subset$ in the first equality due to Corollary \ref{oneone}.
   Suppose that $\psi \in \fa_{\theta}^*$ is tangent to $ \psi_{\Ga}^{\theta}$.
  Since  $\psi_\Ga^{\theta}$ is vertically tangent (Theorem \ref{ver}), $\psi>\psi_\Ga^{\theta}$ on $\partial \L_\theta$. It follows that
   $\psi>0$ on $\L_\theta$. Hence by the second claim in Corollary \ref{oneone}, $\delta_\psi=1$.
\end{proof}

\begin{lem}\label{fp}
    If $\Ga$ is a non-elementary $\theta$-Anosov subgroup and there exists a $(\Ga, \psi)$-conformal measure on $\F_\theta$ for $\psi\in \fa_\theta^*$, then $\psi$ is $(\Ga, \theta)$-proper.
\end{lem}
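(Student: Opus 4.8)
The plan is to verify the two defining conditions of $(\Ga,\theta)$-properness separately: that $\op{Im}(\psi\circ\mu_\theta)\subset[-\e,\infty)$ for some $\e>0$, and that each sublevel set $\{\ga\in\Ga:\psi(\mu_\theta(\ga))\le T\}$ is finite. The first is immediate from the $\theta$-shadow lemma. Since $\Ga$ is Zariski dense, Lemma \ref{lem.shadow} applies to $\nu$, so for a fixed $R$ with $c=\inf_\ga\nu(O_R^{\theta}(\ga o,o))>0$ we have $\nu(O_R^{\theta}(o,\ga o))\ge c\,e^{-\|\psi\|\kappa R}e^{-\psi(\mu_\theta(\ga))}$ for all $\ga$. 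As $\nu$ is a probability measure, the left side is $\le 1$, giving
$$\psi(\mu_\theta(\ga))\ \ge\ \log c-\|\psi\|\kappa R\ =:\ -\e\qquad\text{for all }\ga\in\Ga,$$
so $\op{Im}(\psi\circ\mu_\theta)\subset[-\e,\infty)$. It remains to prove finiteness of sublevel sets, which I would do by contradiction.

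Suppose $S:=\{\ga\in\Ga:\psi(\mu_\theta(\ga))\le T\}$ is infinite for some $T$. Choosing distinct $\ga_i\in S$ tending to infinity and using $\theta$-regularity (Proposition \ref{compact}), after passing to a subsequence we may assume $\ga_i o\to\xi_0$ for some $\xi_0\in\La_\theta$. The shadow lower bound together with $\psi(\mu_\theta(\ga_i))\le T$ gives $\nu\bigl(O_R^{\theta}(o,\ga_i o)\bigr)\ge c\,e^{-\|\psi\|\kappa R}e^{-T}=:\delta_0>0$. Because the sequence is $\theta$-regular and escapes to infinity, the shadows $O_R^{\theta}(o,\ga_i o)$ shrink to the point $\xi_0$: any point of such a shadow converges to $\xi_0$, which follows from the convergence lemmas \ref{lem.29inv}--\ref{lem.211inv}. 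Consequently every open neighborhood of $\xi_0$ has $\nu$-measure at least $\delta_0$, whence $\nu(\{\xi_0\})\ge\delta_0>0$; thus $\xi_0\in\La_\theta=\La_\theta^{\mathsf{con}}$ (Proposition \ref{prop.interior}) is an atom of $\nu$.

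I would then derive a contradiction by spreading this atom. Since $\xi_0$ lies in the shadow $O_R^{\theta}(o,\ga_i o)$ for large $i$, Lemma \ref{lem.buseandcartan} yields $\psi(\beta^{\theta}_{\xi_0}(e,\ga_i))\ge\psi(\mu_\theta(\ga_i))-\|\psi\|\kappa R\ge-\e-\|\psi\|\kappa R$, using the uniform lower bound from the first paragraph. The conformality relation $(\ga_i)_*\nu=e^{\psi(\beta^{\theta}_{\,\cdot}(e,\ga_i))}\nu$ applied to the atom gives $\nu(\{\ga_i^{-1}\xi_0\})=e^{\psi(\beta^{\theta}_{\xi_0}(e,\ga_i))}\nu(\{\xi_0\})\ge \delta_0\,e^{-\e-\|\psi\|\kappa R}=:\delta_1>0$. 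Hence all the points $\ga_i^{-1}\xi_0$ carry mass at least $\delta_1$; as soon as infinitely many of them are distinct, this contradicts $\nu(\F_\theta)=1$, and the proof is complete.

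The main obstacle is the distinctness of the spread atoms $\ga_i^{-1}\xi_0$, i.e. the statement that a $(\Ga,\psi)$-conformal measure has no atom on $\La_\theta^{\mathsf{con}}$. If the $\ga_i^{-1}\xi_0$ took only finitely many values, then $\Stab_\Ga(\xi_0)$ would be infinite, forcing $\xi_0$ to be a loxodromic fixed point; this degenerate case must be handled by spreading the atom through an element of $\Ga$ moving $\xi_0$ off its fixed-point pair, where the convergence-group structure of $\Ga$ acting on $\La_\theta$ (Proposition \ref{prop.transisconv}, together with the convergence dynamics of Tukia and Bowditch) is genuinely needed — the shadow lemma alone does not suffice. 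A secondary technical point to pin down carefully is the asserted shrinking of $\theta$-shadows of an escaping $\theta$-regular sequence to its limit point, which underlies the production of the atom in the second paragraph.
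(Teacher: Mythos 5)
Your route is genuinely different from the paper's, and its first half is sound: the shadow lemma (Lemma \ref{lem.shadow}, valid here since $\Ga$ is Zariski dense) does give the uniform lower bound $\psi(\mu_\theta(\ga))\ge \log c-\|\psi\|\kappa R$, and your atom production is correct, because every point of $O_R^{\theta}(o,\ga_i o)$ does converge to $\xi_0$ when $\ga_i\to\infty$ is $\theta$-regular, so every neighborhood of $\xi_0$ eventually contains these shadows and $\nu(\{\xi_0\})\ge\delta_0$. The first flaw is repairable but real: ``$\xi_0$ lies in the shadow $O_R^{\theta}(o,\ga_i o)$ for large $i$'' does not follow from anything you proved — convergence $\ga_i o\to\xi_0$ need not be conical (the shadows can shrink to $\xi_0$ without ever containing it). You must instead invoke $\La_\theta=\La_\theta^{\mathsf{con}}$ (Proposition \ref{prop.interior}) to extract a possibly different sequence $\ga_j'$ and radius $N$ with $\xi_0\in\bigcap_j O_N^{\theta}(o,\ga_j' o)$; since your lower bound $\psi(\mu_\theta(\ga))\ge-\e$ is uniform over all of $\Ga$, the spreading estimate then goes through for that sequence.

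The genuine gap is the degenerate case you only flag: all but finitely many spread atoms may coincide, i.e.\ $\Stab_\Ga(\xi_0)$ is infinite, and your proposed fix (``move $\xi_0$ off its fixed pair by some $\ga$ and spread again'') does not close it as sketched. If $h\in\Stab_\Ga(\xi_0)$ has infinite order, it is loxodromic and conformality at the fixed atom gives $\nu(\{\xi_0\})=e^{n\psi(\beta^{\theta}_{\xi_0}(e,h))}\nu(\{\xi_0\})$ for all $n$, hence only the relation $\psi(\lambda_\theta(h))=0$, where $\lambda_\theta$ is the Jordan projection — no contradiction, since $\psi$ is an arbitrary form (its positivity on the limit cone is essentially what the lemma is proving). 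For a third atom $\eta_0=\ga\xi_0$, the points $h^n\eta_0$ are indeed pairwise distinct by convergence dynamics, but their masses are $e^{\psi(\beta^{\theta}_{\eta_0}(e,h^{-n}))}\nu(\{\eta_0\})$ with $\beta^{\theta}_{\eta_0}(e,h^{-n})=-n\lambda_\theta(h)+O(1)$, so a priori they decay geometrically and are summable (this is exactly what happens in rank one for a conformal density of positive dimension). A contradiction comes only from combining the fixed-point relation $\psi(\lambda_\theta(h))=0$ with this Busemann asymptotic along the axis of $h$; neither ingredient appears in your outline, and supplying them (loxodromic structure of infinite-order elements of Anosov groups, plus the cocycle computation along $h^{-n}$) is the actual content of the missing case. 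Note that the paper avoids all of this with a two-line dichotomy: if $\sum_\ga e^{-\psi(\mu_\theta(\ga))}<\infty$ the sublevel sets are trivially finite, and if it diverges then Theorem \ref{ggg} — proved without assuming properness — gives $\nu(\La_\theta)=1$, whence $\limsup_T \frac{1}{T}\log\#\{\ga:\psi(\mu_\theta(\ga))<T\}<\infty$ by \cite[Theorem A]{sambarino2022report} and properness follows.
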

\begin{proof}
     If $\sum_{\ga \in \Ga} e^{-\psi(\mu_{\theta}(\ga))} < \infty$, then it implies that 
$\#\{\ga\in \Ga:\psi(\mu_\theta(\ga))<T\}$ is finite for any $T>0$.
Therefore  $\psi$ is $(\Ga, \theta)$-proper. 
If $\sum_{\ga \in \Ga} e^{-\psi(\mu_{\theta}(\ga))} =\infty$, then $\nu(\La_{\theta})  =1$ by Theorem \ref{thm.divthensupp}.
This implies that
$\limsup \frac{1}{T} \log \#\{\ga\in \Ga:\psi(\mu_\theta(\ga))<T\}<\infty$ by \cite[Theorem A]{sambarino2022report}. Therefore, $\psi$ is $(\Ga, \theta)$-proper in either case.
\end{proof}
\subsection*{Proof of Theorem \ref{g2}}
Let $\Ga$ be  Zariski dense $\theta$-Anosov. Note that a $\theta$-Anosov group is $\ts$-transverse. Hence (1) follows from Theorem \ref{tgg} since $\psi$
is $(\Ga, \theta)$-proper by Lemma \ref{fp}.

Since $\La_{\theta} = \La_{\theta}^{\sf con}$ (Proposition \ref{prop.interior}), $(a) \Leftrightarrow (b)$ in (2) follows from Theorem \ref{ggg}. The equivalence $(b) \Leftrightarrow (c)$
follows from Lemma \ref{cri} and Sambarino's parametrization of
the space of all conformal measures on $\La_\theta$ as
$\{\delta_\psi=1\}$ \cite[Theorem A]{sambarino2022report}, together with (1) shown above. 
For (3), let $\psi$ be a $(\Ga, \theta)$-critical form. 
By Lemma \ref{cri} and Proposition \ref{measure}, there exists a $(\Ga, \psi)$-conformal measure $\nu_{\psi}$ on $\La_\theta$, which is the unique $(\Ga, \psi)$-conformal measure on $\La_\theta$ by \cite[Theorem A]{sambarino2022report} (see also Corollary \ref{cor.singleconfmeas}).  Since  $\sum_{\ga \in \Ga} e^{-\psi(\mu_{\theta}(\ga))} = \infty$,
by Theorem \ref{ggg}, any
 $(\Ga, \psi)$-conformal measure on $\F_\theta$ is supported on $\La_\theta$. Moreover, by Theorem \ref{ceq}, the $\fa_{\theta}$-action on $(\Omega_{\theta}, \mathsf{m}_{\nu_{\psi}, \nu_{\psi \circ \i}})$ is completely conservative and ergodic.
 This finishes the proof.
 \qed

\subsection*{Proof of Corollary \ref{critical}}
Since a $\theta$-Anosov subgroup is $\ts$-transverse and $\La_{\theta} = \La_{\theta}^{\sf con}$ (Theorem \ref{anosov4}),
we deduce from Theorem \ref{Ahlfors} that either $\La_{\theta} = \F_{\theta}$ or $\Leb_{\theta}(\La_{\theta}) = 0$. In the former case, $\theta$
is the simple root of a rank one factor $G_0$
of $G$ with $\F_\theta=\La_\theta$ by Proposition \ref{one}, the projection of $\Ga$ to $G_0$ is a convex cocompact subgroup with full limit set, and hence a cocompact lattice of $G_0$.
\qed

\subsection*{Proof of Corollary \ref{coeq}}

Consider the map $\cal T_{\Ga}^{\theta} \to \{ u \in \inte \L_{\theta} : \|u\| = 1\}$ given by $\psi \mapsto u_{\psi}$, where $u_{\psi}$ satisfies $\psi(u_{\psi}) = \psi_{\Ga}^{\theta}(u_{\psi})$. By Theorem \ref{ver}, $\psi_{\Ga}^{\theta}$ is strictly concave and vertically tangent, and hence such a map is well-defined and also surjective. Moreover, since $\psi_{\Ga}^{\theta}$ is differentiable on $\inte \L_{\theta}$ (Theorem \ref{ver}), this map is injective as well, and therefore bijective. This gives the one-to-one correspondence between (1) and (2).

By Lemma \ref{cri} and \cite[Theorem A]{sambarino2022report}, for each $\psi \in \cal T_{\Ga}^{\theta}$, there exists a unique $(\Ga, \psi)$-conformal measure $\nu_{\psi}$ supported on $\La_{\theta}$, and vice versa. Hence the map $\psi \mapsto \nu_{\psi}$ is the one-to-one correspondence between (1) and (3).

Finally, by Theorem \ref{g2}, the sets (3) and (4) are in fact identical, which finishes the proof.
\qed

\subsection*{Proof of Corollary \ref{dis00}}
By Theorem \ref{disss} and Lemma \ref{cri},
 it remains to prove the second part. Since $\Ga_0<\Ga$, we have $\psi^\theta_{\Ga_0}\le \psi^\theta_\Ga$.
Suppose that $\psi^\theta_{\Ga_0}(u)= \psi^\theta_\Ga(u)$ for some $u$
in the interior of $\L_\theta(\Ga)$. Then there exists a tangent form $\psi$ to $\psi_\Ga^{\theta}$ at $u$ by Corollary \ref{cor.tangentinterior}. Since $\psi_{\Ga_0}^{\theta} \le \psi_{\Ga}^{\theta}$ and $\psi_{\Ga_0}^{\theta}(u) = \psi_{\Ga}^{\theta}(u)$, $\psi$ is also tangent to $\psi_{\Ga_0}^\theta$ at $u$. Hence $\psi\in\cal T_\Ga^\theta\cap \cal T_{\Ga_0}^\theta$, contradicting the first part.
\qed

\end{document}